\documentclass[11pt]{amsart}
    \pdfoutput=1
\usepackage[maxbibnames=99]{biblatex}
    \usepackage{graphicx}
    \usepackage[english]{babel}
    \usepackage{graphicx}
    \usepackage{framed}
    \usepackage[normalem]{ulem}
    \usepackage{amsmath}
    \usepackage{dynkin-diagrams}
    \usepackage{amsthm}
    \usepackage{amssymb}
    \usepackage{hyperref} 
    \usepackage[capitalize]{cleveref}
    \usepackage{comment}
    \usepackage{tikz}
    \usetikzlibrary{matrix,calc}
    \usepackage{mathtools}
    \usepackage{tikz-cd}
    \usepackage{amsfonts}
    \usepackage{enumerate}
    \usepackage[utf8]{inputenc}
    \usepackage[top=1 in,bottom=1in, left=1 in, right=1 in]{geometry}
       \usepackage{stmaryrd}

    \usepackage[colorinlistoftodos]{todonotes}
    \usepackage[all]{xy}
    \usepackage{mathrsfs}

    \newcommand{\R}{\mathbb{R}} 
    \newcommand{\N}{\mathcal{N}} 
    \newcommand{\Q}{\mathbb{Q}} 
    \newcommand{\Z}{\mathbb{Z}} 
    \newcommand{\G}{\mathbb{G}} 
    
    \newcommand{\SL}{\operatorname{SL}}

    \newcommand{\LGL}{\mathfrak{gl}}
    
    \newcommand{\LG}{\mathfrak{g}}
    
    \newcommand{\LT}{\mathfrak{t}}
    \newcommand{\LN}{\mathfrak{n}}
    \newcommand{\LB}{\mathfrak{b}}

    \newcommand{\QCoh}{\operatorname{QCoh}}

    \renewcommand{\O}{\mathcal{O}} 
    \newcommand{\F}{\mathcal{F}} 
    \renewcommand{\\}{\backslash}
    \renewcommand{\subset}{\subseteq}

    \theoremstyle{definition}
    \newtheorem{Theorem}{Theorem}[section]

    \newtheorem{Corollary}[Theorem]{Corollary}
    
    \newtheorem{Proposition}[Theorem]{Proposition}
    
    \newtheorem{Remark}[Theorem]{Remark}
    
    \newtheorem{Lemma}[Theorem]{Lemma}

    \title{Crepant Partial Resolutions of the Nilpotent Cone via Hamiltonian Reduction}

    \newcommand{\LTd}{\LT^{\ast}}
\newcommand{\Spec}{\mathrm{Spec}}
\newcommand{\characterlatticeforT}{X^{\bullet}(T)}
    
\begin{document}

\newcommand{\Cox}{\mathrm{Cox}}
\newcommand{\affineClosureofBasicAffineSpace}{\overline{G/U}}
\newcommand{\affineClosureOfCotangentBundleofBasicAffineSpace}{\overline{T^*(G/U)}}
\newcommand{\SpecOfSymofSectionsOfTangentBundleOfBasicAffineSpace}{\text{Spec}(\text{Sym}_{\affineClosureofBasicAffineSpace}^{\bullet}(\mathcal{T}_{\overline{G/U}}))}
\newcommand{\ringOfFunctionsForBasicAffineSpace}{A}
\newcommand{\ringOfFunctionsForCOTANGENTBUNDLEOfBasicAffineSpace}{R}
\newcommand{\tangentSheafForBasicAffineSpace}{\mathcal{T}_{G/U}}
\newcommand{\symOfDirectSumOfRepsofFundamentalWeights}{\text{Sym}(\oplus_i E(\omega_i))}
\newcommand{\Sym}{\text{Sym}}
\newcommand{\projectionFromAffineClosureofCotangentBundleToAffineClosureofSpace}{\overline{\pi}}
\newcommand{\momentMapFromAFFINECLOSUREofCotangentSpaceWithGROUPG}{\overline{\mu}_G}
\newcommand{\momentMapFromAFFINECLOSUREofCotangentSpaceWithGROUPT}{\overline{\mu}_T}
\newcommand{\momentMapFromAFFINECLOSUREofCotangentSpacewithTOTALGROUP}{\overline{\mu}}
\newcommand{\singularLocusOfAffineClosureOfCOTANGENTBUNDLEofBasicAffineSpace}{Z}
\newcommand{\singularLocusOfAffineClosureofBasicAffineSpace}{Z_0}
\newcommand{\idealForSingularLocusOfAffineClosureOfCOTANGENTBUNDLEofBasicAffineSpace}{I_Z}
\newcommand{\groundfield}{k}
\newcommand{\LGd}{\mathfrak{g}^*}
\newcommand{\rhocheck}{\rho^{\vee}}
\newcommand{\unipotentRadicalOfPARABOLICSUBGROUP}{U_P}
\newcommand{\lieAlgebraOfUnipotentRadicalOfPARABOLICSUBGROUP}{\mathfrak{u}_P}
\newcommand{\affinizationOfGrothendieckSpringerResolution}{\LGd \times_{\LTd\sslash W} \LTd}
\newcommand{\smoothLocusOfAffineclosureBasicAffineSpace}{\mathcal{S}}
\newcommand{\Proj}{\mathrm{Proj}}
\newcommand\blfootnote[1]{%
  \begingroup
  \renewcommand\thefootnote{}\footnote{#1}%
  \addtocounter{footnote}{-1}%
  \endgroup
}
\newcommand{\dominantCharactersForT}{X^+(T)}
\newcommand{\Pic}{\mathrm{Pic}}
\newcommand{\Mov}{\mathrm{Mov}}
\newcommand{\tcN}{\widetilde{\mathcal{N}}}
\newcommand{\LUP}{\mathfrak{u}_P}
\newcommand{\LP}{\mathfrak{p}}
\newcommand{\Symt}{\mathrm{Sym}(\mathfrak{t})}
\renewcommand{\L}{\mathcal{L}}
\newcommand{\LU}{\mathfrak{u}}
\newcommand{\LL}{\mathfrak{l}}
\newcommand{\tg}{\widetilde{\mathfrak{g}}}
\newcommand{\tgP}{\tg_P}

\author{Gwyn Bellamy}
\address{School of Mathematics and Statistics, University Place, Glasgow, G12 8QQ, Glasgow, UK.}
\email{gwyn.bellamy@glasgow.ac.uk}
\urladdr{https://www.gla.ac.uk/schools/mathematicsstatistics/staff/gwynbellamy/}

\author{Tom Gannon} 
\address{University of California, Riverside: 900 University Ave. Riverside, CA, USA.}
\email{tom.gannon@ucr.edu}
\urladdr{https://profiles.ucr.edu/app/home/profile/tomg}

\begin{abstract}
We show that the nilpotent cone associated to a simply connected semisimple algebraic group, together with all its crepant projective partial resolutions, can be constructed as Hamiltonian reductions of the affine closure of the cotangent bundle of base affine space for suitable choices of stability parameter of a maximal torus. We also realize the base change of the universal Poisson deformation of each of these crepant projective partial resolutions as the GIT quotient of the affine closure of the cotangent bundle of base affine space at the same stability parameter.
\end{abstract}

\maketitle
\section{Introduction}


In work in preparation \cite{BCSHamiltonian}, Bellamy-Craw-Schedler associate to each conic symplectic singularity $X$ its \textit{Hamiltonian Cox space} $Z$. This is defined to be the spectrum of the Cox ring of the universal graded Poisson deformation of a (any) $\Q$-factorial terminalization of $X$. It is conjectured that $Z$ should be a conic symplectic singularity with $\Q$-factorial terminal singularities, equipped with a Hamiltonian action of a torus $T$ such that the original singularity $X$ and all crepant projective\footnote{We say that a morphism of schemes $X \to Y$ is \textit{projective} if it satisfies \cite[Definition 17.3.1]{VakilRisingSeaFoundationsofAlgebraicGeometry}. If $Y$ is a quasiprojective scheme, this is equivalent to the existence of a closed embedding $X \xhookrightarrow{} \mathbb{P}_Y^n$ for some $n$ \cite[Exercise 17.3.A(c)]{VakilRisingSeaFoundationsofAlgebraicGeometry}.}  partial resolutions of $X$ can be obtained as Hamiltonian reductions of $Z$ by $T$ at suitable choices of the stability parameter. In this article, we confirm this conjecture in the case where $X$ is the nilpotent cone of a semisimple (complex) Lie algebra. 

Let $G$ be a simply connected 
semisimple group scheme over $k := \mathbb{C}$; choose a maximal torus $T$ contained inside a fixed Borel $B \subseteq G$, and let $U$ denote the unipotent radical of $B$.  It is standard (see, for example, \cite[Section 2.1]{GannonProofOftheGinzburgKazhdanConjecture} and the references therein) that the varieties $G/U$ and $T^*(G/U)$ are \textit{quasi-affine}; in other words, the affinization maps naturally exhibit $G/U$, respectively $T^*(G/U)$, as open subsets of \[\affineClosureofBasicAffineSpace := \Spec(\O(G/U))\text{, respectively } \affineClosureOfCotangentBundleofBasicAffineSpace := \Spec(\O(T^*(G/U)).\]
The algebraic nature of the definition of the Hamiltonian Cox space means that it is very difficult to identify the space $Z$ in examples. Since Grothendieck's simultaneous resolution $\tg := G \times^B (\LG/\LU)^*$ gives the universal graded Poisson deformation of the $\Q$-factorial terminalization $\tcN$ of the nilpotent cone, \cite[Lemma 3.6.1]{GinzburgRicheDifferentialOperatorsOnBasicAffineSpaceandtheAffineGrassmannian} implies that:

\begin{Theorem}
    The Hamiltonian Cox space of the nilpotent cone $\mathcal{N}$ is isomorphic to the affine closure $\affineClosureOfCotangentBundleofBasicAffineSpace$ of the cotangent bundle of base affine space. 
\end{Theorem}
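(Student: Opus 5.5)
We unwind the definition of the Hamiltonian Cox space. By definition, $Z = \Spec \Cox(\tg)$, where $\tg = G \times^B (\LG/\LU)^*$ is the universal graded Poisson deformation of $\tcN = T^*(G/B)$. Its Cox ring is
\[\Cox(\tg) = \bigoplus_{[\L] \in \Pic(\tg)} \Gamma(\tg, \L).\]
So the plan has three steps. First, identify $\Pic(\tg)$ with $\characterlatticeforT$. Second, identify $\tg$ as the quotient $T^*(G/U)/T$, with each line bundle $\L_\lambda$ pulling back to a trivial bundle on $T^*(G/U)$. Third, conclude that $\Cox(\tg) \cong \O(T^*(G/U))$.

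\textbf{Step 1: the Picard group.} The map $\tg \to G/B$ is a vector bundle, so pullback gives $\Pic(\tg) \cong \Pic(G/B)$. Since $G$ is simply connected, $\Pic(G/B) \cong \characterlatticeforT$, via $\lambda \mapsto \L_\lambda = G \times^B k_{\lambda}$. This group is free, so the Cox ring is well defined with no choice of splitting needed.

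\textbf{Step 2: the torus quotient.} Consider the free $T = B/U$ action on $G/U$ and its Hamiltonian lift to $T^*(G/U)$, with moment map $\mu_T \colon T^*(G/U) \to \LTd$. The quotient $T^*(G/U)/T$ is the bundle $G \times^B (\LG/\LU)^* = \tg$. This holds because $T^*(G/U) = G \times^U (\LG/\LU)^*$, and quotienting by $T$ replaces $U$ with $B$. Hence $q \colon T^*(G/U) \to \tg$ is a principal $T$-bundle, and $\L_\lambda$ is the line bundle associated to this torsor and the character $\lambda$ (up to a sign convention).

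\textbf{Step 3: decomposing functions.} For any principal $T$-bundle $q \colon P \to Y$, the functions decompose by weight:
\[\O(P) = \bigoplus_{\lambda} \O(P)_\lambda, \qquad \O(P)_\lambda = \Gamma(Y, \L_{\pm\lambda}).\]
The multiplication on $\O(P)$ matches the tensor product of sections, so $\O(T^*(G/U)) \cong \Cox(\tg)$ as graded rings. Taking $\Spec$ gives $Z \cong \affineClosureOfCotangentBundleofBasicAffineSpace$, compatibly with the $T$-action. This is the content of \cite[Lemma 3.6.1]{GinzburgRicheDifferentialOperatorsOnBasicAffineSpaceandtheAffineGrassmannian}, which we can invoke directly. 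We still need the finite generation of $\O(T^*(G/U))$, which is standard, so that $\Spec$ is an affine variety of finite type.

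\textbf{Main obstacle.} The main subtlety is matching definitions. Hamiltonian Cox spaces are defined using a $\Q$-factorial terminalization and its universal graded Poisson deformation, and we must check that $\tg$ is this deformation for $\tcN$. That $\tcN$ is a $\Q$-factorial terminalization is standard: it is smooth symplectic and crepant. That $\tg$ is its universal Poisson deformation follows from Namikawa's theory, since the base $\LTd \cong H^2(\tcN, \mathbb{C})$.

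One must also make sure that the Cox ring uses the whole Picard group and not only a nef or movable part. Because $\tg \to \LTd$ has fibres $\tg_t$ which are affine (Stein) for generic $t$, there is no issue with sections failing to extend. Indeed, $q$ is a torsor over all of $\tg$, and the weight decomposition holds globally, so no codimension-two extension argument is needed.
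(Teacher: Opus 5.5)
Your proposal is correct and follows the paper's route. The paper observes that $\tg \to \LTd$ is the universal graded Poisson deformation of the $\Q$-factorial terminalization $\tcN$, and then invokes Ginzburg--Riche's Lemma 3.6.1, whose content is exactly your computation: $T^*(G/U) = G \times^U (\LG/\LU)^* \to \tg$ is a $T$-torsor, so $\O(T^*(G/U)) \cong \bigoplus_{\lambda \in \characterlatticeforT} \Gamma(\tg, \L_{\lambda})$ with $\Pic(\tg) \cong \characterlatticeforT$; your aside about generic fibres of $\tg \to \LTd$ being affine is unnecessary, since the global torsor decomposition already suffices.
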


With this identification, the main result of \cite{GannonProofOftheGinzburgKazhdanConjecture} 
shows that the Hamiltonian Cox space $\affineClosureOfCotangentBundleofBasicAffineSpace$ of the nilpotent cone $\mathcal{N}$ is a conic symplectic singularity with $\Q$-factorial terminal singularities. The focus of this article is therefore to confirm the second half of the conjecture by Bellamy-Craw-Schedler, that the original singularity $\mathcal{N}$ and all crepant partial resolutions of $\mathcal{N}$ can be obtained as Hamiltonian reductions of $\affineClosureOfCotangentBundleofBasicAffineSpace$ by $T$ at suitable choices of the stability parameter.


\subsection{The main result} We now make this more precise. Let $\Pi^{\vee}$ denote the set of simple coroots associated to the inclusion $T \subseteq B$. The vanishing and non-vanishing of various $\alpha^{\vee} \in \Pi^{\vee}$ induce the structure of a fan on the set of dominant weights $X^+(T)_{\R}$ in $\characterlatticeforT_{\R}$--the associated set of strongly convex rational polyhedral cones is cut out by the vanishing and non-vanishing of the various simple coroots $\alpha^{\vee}$. 
To any cone $C$ in this fan there exists an associated parabolic subgroup $P_C$ containing $B$, and it is standard (see for example, \cite[Chapter 30.1]{HumphreysLinearAlgebraicGroups}) that this gives a complete classification of all subgroups of $G$ containing $B$. We will also label parabolic subgroups $P_C := P_{\lambda}$ by a fixed $\lambda$ in the (relative) interior of the cone $C$ of this fan.

We recall how the crepant (projective) partial resolutions of $\mathcal{N}$ are constructed. Assume $P$ is any parabolic subgroup of $G$. One can construct the variety 
\[\tcN^P := G \times^P (\mathcal{N} \cap (\LG/\mathfrak{u}_P)^*),\] 
where $\mathfrak{u}_P$ is the Lie algebra of the unipotent radical of $P$. Observe that a choice of $G$-invariant bilinear form induces an isomorphism 
\[
\tcN^P \cong G \times^P \mathfrak{p}_{\mathrm{nilp}},
\] where $\mathfrak{p}_{\mathrm{nilp}}$ denotes the set of nilpotent elements in the Lie algebra of $P$. Observe further that if $P \subseteq Q$ are parabolic subgroups containing $B$ then there is a natural $G$-equivariant map 
\[\nu^{Q}_P: \tcN^P \to \tcN^{Q}.
\]
It is known (and, as we explain in \cref{Partial Crepant Resolutions Are Fces of Mori Fan}, is a consequence of our results below) that every crepant partial resolution of the nilcone $\mathcal{N} = \tcN^G$ is given by some $\nu_P^G$ for a parabolic subgroup $P$ containing $B$.   

There is a natural action of $T$ on $G/U$ which induces a Hamiltonian $T$-action on $T^*(G/U)$. The moment map for $T^*(G/U)$ induces a $T$-equivariant map \[\momentMapFromAFFINECLOSUREofCotangentSpaceWithGROUPT: \affineClosureOfCotangentBundleofBasicAffineSpace \to \LTd\] on its affinization. For any character $\lambda \in \characterlatticeforT$, we define \begin{equation}\label{Definition of Hamiltonian Reduction by Group Character}\affineClosureOfCotangentBundleofBasicAffineSpace/\!/\!/_{\lambda} T := \momentMapFromAFFINECLOSUREofCotangentSpaceWithGROUPT^{-1}(0)\sslash_{\lambda} T := \Proj \left(\oplus_{n \ge 0} \, \O(\momentMapFromAFFINECLOSUREofCotangentSpaceWithGROUPT^{-1}(0))_{n\lambda} \right)\end{equation} where we equip $\oplus_{n \ge0} \, \O(\momentMapFromAFFINECLOSUREofCotangentSpaceWithGROUPT^{-1}(0))_{n\lambda}$ with the obvious multiplication map. Our main result says that the varieties \labelcref{Definition of Hamiltonian Reduction by Group Character} precisely give the crepant partial resolutions of the nilpotent cone.

\begin{Theorem}\label{Intro Main Theorem}
With the above notation, we have the following: \begin{enumerate}
    \item For any dominant character $\lambda \in \characterlatticeforT$, there is an isomorphism 
    \[
    \affineClosureOfCotangentBundleofBasicAffineSpace/\!/\!/_{\lambda} T \cong \tcN^{P_{\lambda}}
    \]
    which intertwines the variation of GIT quotient maps with the maps $\nu_{P_\lambda}^{P_{\lambda'}}$ for any $\lambda' \in \characterlatticeforT$ such that $P_{\lambda} \subseteq P_{\lambda'}$. In particular, the variety \labelcref{Definition of Hamiltonian Reduction by Group Character} is independent of character $\lambda$ in the (relative) interior of a given cone in the above fan. 
    \item Every crepant partial resolution of $\mathcal{N}$ has the form \labelcref{Definition of Hamiltonian Reduction by Group Character} for some $\lambda$ and, moreover, the assignment $C \mapsto \tcN^{P_C}$ induces an order-preserving bijective correspondence \[\{\text{crepant partial resolutions of } \mathcal{N}\} \leftrightarrow \{\text{(relative) interior of cones in }\dominantCharactersForT_{\mathbb{R}}\}.\]
    \end{enumerate}
\end{Theorem}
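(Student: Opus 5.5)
We will first handle the corresponding statement without the moment map condition, and then restrict to the zero fibre. Write $A=\O(G/U)=\bigoplus_{\mu\in X^+(T)}V(\mu)^*$ and $R=\O(T^*(G/U))=\O(\affineClosureOfCotangentBundleofBasicAffineSpace)$. Following \cite[Lemma 3.6.1]{GinzburgRicheDifferentialOperatorsOnBasicAffineSpaceandtheAffineGrassmannian}, we identify $R$ with the Cox ring of $\tg=G\times^B(\LG/\LU)^*$. Concretely, the $\lambda$-weight space of $R$ under $T$ is
\[
R_{\lambda}=\Gamma\bigl(\tg,\O_{\tg}(\lambda)\bigr),
\]
where $\O(\lambda)$ is the line bundle pulled back from $G/B$; the sign convention will be fixed so that dominant $\lambda$ is nef. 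The map $\momentMapFromAFFINECLOSUREofCotangentSpaceWithGROUPT$ corresponds to the projection $\tg\to\LTd$. Hence $\momentMapFromAFFINECLOSUREofCotangentSpaceWithGROUPT^{-1}(0)$ should have coordinate ring whose weight spaces are $\Gamma(\tcN,\O(\lambda))$. This requires two inputs. First, $\tg\to\LTd$ is flat with fibre $\tcN$ over $0$. Second, the higher cohomology $H^1(\tg,\O(\lambda))$ vanishes for dominant $\lambda$, by Grauert--Riemenschneider/Broer-type vanishing. Together these let us take $T$-invariants of the Koszul resolution of $0\in\LTd$ weight by weight. Since $T$ is reductive, taking weight spaces is exact. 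Therefore
\[
\O(\momentMapFromAFFINECLOSUREofCotangentSpaceWithGROUPT^{-1}(0))_{n\lambda}=\Gamma(\tcN,\O(n\lambda)).
\]

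For dominant $\lambda$, the next step is to compute $\Proj\bigoplus_n\Gamma(\tcN,\O(n\lambda))$. The bundle $\O(\lambda)$ is pulled back along $\tcN\to G/B\to G/P_\lambda$ from an ample line bundle $\O(\lambda)$ on $G/P_\lambda$. We factor $\tcN\to\tcN^{P_\lambda}\to G/P_\lambda$. The map $\nu_B^{P_\lambda}\colon \tcN\to\tcN^{P_\lambda}$ is proper and birational, and $\tcN^{P_\lambda}$ is normal: it is a bundle over $G/P_\lambda$ whose fibre is the normal variety $\mathfrak p_{\mathrm{nilp}}$, normal by Richardson/Broer. So $\nu_*\O_{\tcN}=\O_{\tcN^{P_\lambda}}$. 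The projection formula then gives
\[
\Gamma(\tcN,\O(n\lambda))=\Gamma(\tcN^{P_\lambda},\O(n\lambda)).
\]
The line bundle $\O(\lambda)$ is relatively ample for $\tcN^{P_\lambda}\to\mathcal N$, because it is ample on $G/P_\lambda$ and $\tcN^{P_\lambda}\hookrightarrow \mathcal N\times G/P_\lambda$ is a closed embedding. Therefore the Proj is $\tcN^{P_\lambda}$. Compatibility with variation of GIT follows from naturality. If $P_\lambda\subseteq P_{\lambda'}$, then $\lambda'$ lies in a face of the cone of $\lambda$, and the VGIT map is induced by comparing section rings of $\O(\lambda)$ and $\O(\lambda+\epsilon\lambda')$-type bundles. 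Both rings are pulled back from $\tcN^{P_\lambda}$, and the resulting map is $\nu^{P_{\lambda'}}_{P_\lambda}$. Independence of $\lambda$ within a relative interior is immediate, since $P_\lambda$ depends only on the cone.

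For (2), we will use the standard description of crepant partial resolutions of a conic symplectic singularity. By Namikawa, every crepant projective partial resolution $Y\to\mathcal N$ is dominated by the $\Q$-factorial terminalization $\tcN$. So $Y=\Proj\bigoplus_n\Gamma(\tcN,L^n)$ for some movable $L$, and hence $L$ is nef after flops. For $\mathcal N$, the terminalization $\tcN$ is unique up to isomorphism over $\mathcal N$: the movable cone equals the nef cone, because the Weyl group acts on $\Pic(\tcN)_\R=\characterlatticeforT_\R$ and the Namikawa Weyl group is all of $W$. So $L=\O(\lambda)$ with $\lambda$ dominant, and by (1) we get $Y\cong\tcN^{P_\lambda}$. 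Injectivity of the correspondence holds because distinct parabolics give non-isomorphic $\tcN^P$ over $\mathcal N$. They differ in the relative Picard rank, and more precisely in which curves of $\tcN$ are contracted, namely the fibres of $G/B\to G/P$. The correspondence is order-preserving because face inclusion corresponds to the existence of $\nu$.

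The main obstacle is the first step. We must show that the $T$-weight spaces of $\O(\momentMapFromAFFINECLOSUREofCotangentSpaceWithGROUPT^{-1}(0))$ are exactly $\Gamma(\tcN,\O(\lambda))$. This needs a scheme-theoretic identification of the zero fibre of $\momentMapFromAFFINECLOSUREofCotangentSpaceWithGROUPT$, taken with its reduced structure or not, together with the cohomology vanishing. The plan is to use the regular sequence given by the coordinates of $\LTd$ on $R$ (flatness, from \cite{GannonProofOftheGinzburgKazhdanConjecture}), and then to do the weight-wise Koszul argument with Broer's vanishing $H^{>0}(\tcN,\O(\lambda))=0$ for dominant $\lambda$.
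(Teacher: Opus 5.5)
Your proposal is correct. Part (1) is essentially the paper's argument. The paper also identifies $\O(\overline{\mu}_T^{-1}(0))_{n\lambda}=R_{n\lambda}\otimes_{\mathrm{Sym}(\mathfrak{t})}k$ with $\Gamma(\tcN,\L_{n\lambda})$ via the Ginzburg--Riche vanishing/base-change argument, which is your weight-wise Koszul step. It then descends sections to $\tcN^{P_\lambda}$ using normality, and uses ampleness of $\L_\lambda$ on $\tcN^{P_\lambda}$.

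Part (2) is where your route genuinely differs.
\begin{itemize}
\item \textbf{The paper's route.} It feeds the chamber of regular dominant weights into the Bellamy--Craw--Schedler theorem. This requires showing that $\overline{\mu}_T^{-1}(0)$ is an integral variety, computing its GIT fan, and checking that each wall of the dominant cone gives a divisorial contraction. The result identifies the GIT fan with the Mori fan of $\mathrm{Mov}(\tcN/\mathcal{N})$. San Miguel Malaney's description of crepant partial resolutions as faces of that fan then finishes the proof.
\item \textbf{Your route.} You import Namikawa's theorem that $\mathrm{Mov}$ is a fundamental domain for the Namikawa Weyl group, which here is $W$. So $\mathrm{Mov}=\mathrm{Nef}$ is the dominant cone and $\tcN$ is the unique $\Q$-factorial terminalization. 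Any crepant partial resolution $Y$ then receives a morphism from $\tcN$. Hence $Y$ is the ample model of a nef, and therefore dominant, $\L_\lambda$, and (1) applies. Given that morphism, the pulled-back bundle is nef outright, so your ``movable, nef after flops'' step is unnecessary.
\end{itemize}
Your route is shorter and avoids the integrality and wall analysis, at the price of heavier deformation-theoretic input. The paper's route proves more: VGIT on $\overline{\mu}_T^{-1}(0)$ realizes the whole Mori fan. It also sets up the framework reused for \cref{Intro Main Theorem2}.

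Two small repairs are needed.
\begin{itemize}
\item \textbf{Injectivity.} Relative Picard rank does not separate parabolics of equal semisimple rank. Your contracted-curve criterion does, once you note that any isomorphism $\tcN^{P}\cong\tcN^{P'}$ over $\mathcal{N}$ intertwines $\nu^{P}$ and $\nu^{P'}$. This holds because the two agree over $\mathcal{N}_{\mathrm{reg}}$.
\item \textbf{VGIT compatibility.} Your description in terms of ``$\O(\lambda+\epsilon\lambda')$-type bundles'' is muddled. The clean argument, which the paper uses, is that the two composites $\tcN^{P_\lambda}\to Y_{\lambda'}$ are morphisms over $\mathcal{N}$ that agree on a dense open set. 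They therefore agree everywhere, by separatedness and integrality of $\tcN^{P_\lambda}$.
\end{itemize}
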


\cref{Intro Main Theorem}(2) recovers \cite[Corollary 4.18]{SanMiguelMalaneyPartialResolutionsOfAffineSymplecticSingularities}. 

\subsection{Universal Poisson deformations via GIT}

The universal graded Poisson deformation of the nilpotent cone $\mathcal{N}$, respectively of the Springer resolution $\tcN$, is given by $\mathfrak{g}^* \to \mathfrak{t}^*/W$ and $\tg \to \mathfrak{t}^*$ respectively. So that they have the same base $\mathfrak{t}^*$, we base change $\mathfrak{g}^*$ to $\mathcal{X}' := \mathfrak{g}^* \times_{\mathfrak{t}^*/W} \mathfrak{t}^*$. Once again, we can recover these spaces and all other crepant (partial) resolutions of $\mathcal{X}'$ by VGIT of the affine closure of the cotangent bundle of base affine space; the difference is that we do not need to perform Hamiltonian reduction this time. 

For each parabolic $P \subset G$ with Lie algebra $\LP$, whose nilpotent radical is $\mathfrak{u}_P$, let $\tgP := G \times^P (\LG / \mathfrak{u}_P)^*$. Fixing a Levi subgroup $L$ with Lie algebra $\mathfrak{l}$, the scheme $\tgP$ admits a map to the quotient $\LL^*\sslash \! L$ and we\newcommand{\tgTHISBaseChanged}[1]{V_{#1}} set $\tgTHISBaseChanged{P} := \tgP \times_{\LL^*\sslash L} \LT^*$. The morphism $V_P \to \mathcal{X}'$ is a crepant partial resolution. If $P \subset Q$ then, as previously, there is a $G$-equivariant map $\tilde{\nu}^Q_P \colon \tgTHISBaseChanged{P} \to \tgTHISBaseChanged{Q}$, commuting with the morphisms to $\mathcal{X}'$. We will also use the fan structure on $\characterlatticeforT_\R$ cut out by the vanishing and non-vanishing of \textit{all} (not necessarily simple) coroots.

\begin{Theorem}\label{Intro Main Theorem2}
With the above notation, we have the following: \begin{enumerate}
    \item For any dominant character $\lambda \in \characterlatticeforT$, there is an isomorphism 
    \[
    \affineClosureOfCotangentBundleofBasicAffineSpace/\!/_{\lambda} T \cong V_{P_{\lambda}}
    \]
    which intertwines the variation of GIT quotient maps with the maps $\tilde{\nu}_{P_{\lambda}}^{P_{\lambda'}}$ for any $\lambda'$ such that $P_{\lambda} \subseteq P_{\lambda'}$.
    \item The domain of every crepant partial resolution of $ \mathcal{X}'$ is isomorphic to $V_{P_{\lambda^+}}$ for some dominant weight $\lambda^+$. Moreover, any crepant partial resolution of $\mathcal{X}'$ is isomorphic to the composite \[V_{P_{\lambda^+}} \to \mathcal{X}' := \LGd \times_{\LTd\sslash W} \LTd \xrightarrow{\mathrm{id} \times w} \LGd \times_{\LTd\sslash W} \LTd\] for some $w \in W$. This construction induces an order-preserving bijective correspondence \[ \{\text{crepant partial resolutions of } \mathcal{X}'\} \xrightarrow{\sim}\{\text{(relative) interior of cones in }\characterlatticeforT_{\mathbb{R}}\}.\]
    \end{enumerate}
\end{Theorem}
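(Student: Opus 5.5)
Throughout, $\affineClosureOfCotangentBundleofBasicAffineSpace/\!/_{\lambda} T$ means $\Proj \left(\oplus_{n \ge 0} \O(\affineClosureOfCotangentBundleofBasicAffineSpace)_{n\lambda}\right)$, the GIT quotient without Hamiltonian reduction.

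For (1), the first step is to identify the coordinate ring using the Ginzburg--Riche description cited above: $\O(\affineClosureOfCotangentBundleofBasicAffineSpace) = \O(T^*(G/U))$. The map $T^*(G/U) \to \tg = G\times^B(\LG/\LU)^*$ is a $T$-torsor, so the $\lambda$-weight space $\O(T^*(G/U))_{\lambda}$ is $\Gamma(\tg, \O(\lambda))$. Here $\O(\lambda)$ is the pullback of the line bundle $G\times^B k_\lambda$ on $G/B$ (up to a sign convention). Consequently
\[\affineClosureOfCotangentBundleofBasicAffineSpace/\!/_{\lambda} T \cong \Proj \bigoplus_{n\ge 0}\Gamma(\tg,\O(n\lambda)).\]
It then suffices to show that $\O(\lambda)$ on $\tg$ is the pullback of an ample line bundle along $\tg \to V_{P_\lambda}$, and that this map has connected fibers with $\pi_*\O = \O$. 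Granting this, the $\Proj$ of the section ring is $V_{P_\lambda}$.

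To establish this, I would factor the map as follows.
\begin{enumerate}
\item First, $\tg \to \tg_{P_\lambda}\times_{\LL^*\sslash L}\LT^* = V_{P_\lambda}$. This is the relative Grothendieck--Springer resolution for the Levi $L$, induced up along $G\times^{P}(-)$. Its fibers are Springer fibers of $L$ over the $\LL$-part, which are connected.
\item Second, $V_{P_\lambda}$ is projective over the affine $\mathcal{X}'$. Its base is $G/P_\lambda$, and $\O(\lambda)$ descends to $G\times^{P_\lambda} k_\lambda$, which is ample on $G/P_\lambda$ because $\lambda$ is regular with respect to exactly the simple roots outside $L$.
\end{enumerate}
Normality of $V_{P_\lambda}$ gives $\pi_*\O_{\tg}=\O_{V_{P_\lambda}}$. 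This follows by comparing with $\tg_P$: it is smooth, and the base change along the flat finite map $\LT^*\to\LL^*\sslash L$ stays normal, as a complete intersection that is regular in codimension one. The compatibility with VGIT maps for $P_\lambda\subseteq P_{\lambda'}$ is then formal, since both sides are obtained from the inclusion of section rings for faces.

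For (2), I would use the Namikawa-style description: crepant partial resolutions of $\mathcal{X}'$ correspond to faces of the movable cone of a $\Q$-factorial terminalization, here $\tg$. Since $\mathcal{X}'\to\LT^*$ is generically a deformation with affine fibers, every crepant resolution of $\mathcal{X}'$ is an isomorphism over the regular semisimple locus. The movable cone of $\tg$ over $\mathcal{X}'$ is then all of $\Pic(\tg)_\R = \characterlatticeforT_\R$. Its chambers are the $W$-translates of the dominant chamber: the $W$-action on $\mathcal{X}'$ through the $\LT^*$ factor produces the other resolutions $\tg \to \mathcal{X}' \xrightarrow{\mathrm{id}\times w}\mathcal{X}'$, and the nef cone of each is $w(X^+(T)_\R)$. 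Faces of these chambers are exactly the cones cut out by the vanishing of coroots. Applying (1) to $w^{-1}\lambda$, which is dominant, and twisting by $w$ gives the claimed description and the order-preserving bijection.

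The main obstacle is showing that the nef cone of $\tg$ over $\mathcal{X}'$ is exactly the dominant cone, and that the movable cone is everything with these chambers. I would prove this by using Mori dream space theory for the finitely generated Cox ring $\O(\affineClosureOfCotangentBundleofBasicAffineSpace)$. For every $\lambda$, generic, the GIT quotient is $\tg$ composed with some $w$, and this would be established by computing its $W$-twisted weight spaces via the $W$-action on $\O(\affineClosureOfCotangentBundleofBasicAffineSpace)$ (Gelfand--Graev).
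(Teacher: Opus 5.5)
\textbf{Part (1).} This is essentially the paper's argument. You identify $\mathcal{O}(\overline{T^*(G/U)})_{n\lambda}$ with sections of $\mathcal{L}_{n\lambda}$ on $\widetilde{\mathfrak{g}}$ (the Cox ring statement). You show $\mathcal{L}_\lambda$ on $V_{P_\lambda}$ is ample, being the pullback of an ample bundle on $G/P_\lambda$ along an affine map, and that it pulls back to $\mathcal{L}_\lambda^{\widetilde{\mathfrak{g}}}$. Normality of $V_{P_\lambda}$ then identifies the two section rings. For compatibility with the VGIT maps, your route (the map $\widetilde{\mathfrak{g}}\to V_P$ is surjective with $\pi_*\mathcal{O}=\mathcal{O}$, so two maps out of $V_P$ agreeing on $\widetilde{\mathfrak{g}}$ agree) is fine. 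The paper uses birationality and separatedness instead.

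\textbf{Part (2): the gap.} Your route differs from the paper's. You go directly through the Mori-fan description of crepant partial resolutions, whereas the paper checks the flipping and divisorial wall criteria of Bellamy--Craw--Schedler. Your route can work, but the step that separates this theorem from \cref{Intro Main Theorem} is not established: that $\mathrm{Mov}(\widetilde{\mathfrak{g}}/\mathcal{X}')$ is all of $X^\bullet(T)_{\mathbb{R}}$, with no divisorial walls.

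Your justification is that crepant resolutions are isomorphisms over the regular semisimple locus. This does not suffice. The complement of that locus in $\mathcal{X}'$ is the preimage of the root hyperplanes in $\mathfrak{t}^*$, which is a divisor, and its preimage in $\widetilde{\mathfrak{g}}$ is also a divisor. So nothing you say rules out an exceptional divisor of $\widetilde{\mathfrak{g}}\to\mathcal{X}'$, or of $\widetilde{\mathfrak{g}}\dashrightarrow\widetilde{\mathfrak{g}}_w$, lying there. In fact, over a generic $t$ on a root hyperplane, the fibre map $\widetilde{\mathfrak{g}}_t\to\mathcal{X}'_t$ does contract a divisor of the fibre. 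Only after letting $t$ vary does the exceptional set acquire codimension two in $\widetilde{\mathfrak{g}}$.

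Your fallback does not close this either. Gelfand--Graev gives an abstract isomorphism $\overline{T^*(G/U)}/\!/_{w\theta}T\cong\widetilde{\mathfrak{g}}_w$ over $\mathcal{X}'$. In Hu--Keel theory, however, a GIT chamber of the Cox ring lies in the movable cone only when the induced birational map from $\widetilde{\mathfrak{g}}$ is an isomorphism in codimension one. That is exactly what has not been shown.

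\textbf{How the paper supplies it.} The paper uses smallness of the Grothendieck--Springer map (Lusztig, Borho--MacPherson) in \cref{prop:smalGrothediecksimnew}: no VGIT map from a chamber to a wall contracts a divisor, so the GIT region is everything. It also bounds the unstable loci by codimension two. For this it combines Ginzburg's equality $\overline{T^*(G/U)}_{\mathrm{reg}}=T^*(G/U)_{\mathrm{reg}}$ with the Bezrukavnikov--Riche result that the complement of the regular locus in $\widetilde{\mathfrak{g}}$ has codimension two.

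\textbf{Other ways to close it.} You could invoke the general fact that two crepant resolutions (minimal models) of $\mathcal{X}'$ over the same base are isomorphic in codimension one. Either way, you should also say why the Mori-fan correspondence applies to $\mathcal{X}'$, which is not itself a symplectic singularity. The standard route is that $\widetilde{\mathfrak{g}}$ is a relative Mori dream space and every crepant partial resolution is dominated by a $\mathbb{Q}$-factorial terminalization.

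Once this is in place, the nef cone of $\widetilde{\mathfrak{g}}$ being the dominant cone does follow from your part (1), since $\widetilde{\mathfrak{g}}\to V_P$ is not an isomorphism for $P\neq B$.
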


We also relate the \textit{Gelfand-Graev action} of $W$ on $\affineClosureOfCotangentBundleofBasicAffineSpace$ to the Namikawa-Weyl group action on $\LTd$ obtained via deformation theory, see \cref{Gelfand Graev and Namikawa Weyl Subsection}.

\subsection*{Acknowledgments} We would like to thank Alberto San Miguel Malaney and Victor Ginzburg for interesting and useful discussions. The first author would also like to thank Alastair Craw and Travis Schedler for teaching him many things about symplectic singularities. The second author would also like to thank the University of Glasgow, where much of this project was completed, for their hospitality. He would also like to thank the first author for having him over for dinner during his visit to Glasgow.

The first author is supported by EPSRC grants
EP-W013053-1 and EP-R034826-1, and the second author is supported by an AMS-Simons Travel Grant.

A draft of this work was checked by generative artificial intelligence (more specifically, ChatGPT 5.6 Sol) which identified some minor errors that have been corrected, partly through conversations with the model.

\section{Notation}
\subsection{Notation for Groups} Let $G_{\mathbb{Z}}$ denote some simply connected split reductive group over $\mathbb{Z}$ with a choice of maximal torus $T_{\mathbb{Z}}$, and let $G := G_{k}$ and $T := T_k$ denote the respective base changes to $k := \mathbb{C}$; note that our assumption that $G_\Z$ is simply connected automatically implies that $G$ is semisimple. We let $X_{\bullet}(T)$ denote the lattice of cocharacters and let $\characterlatticeforT$ denote the lattice of characters for $T$.  Choose some Borel subgroup $B \supseteq T$, and let $U$ denote the unipotent radical of $B$. We let $\LG, \LB, \LU, \LT$ denote the Lie algebras of $G, B, U$, and $T$ respectively and, for any vector space $V$, we let $V^*$ denote the dual vector space. We fix a non-degenerate associative $G$-invariant bilinear form on $\LG$, which allows us, when necessary, to identify $\LG \cong \LG^*$ as $G$-representations. We will occasionally abuse notation by denoting $\LT(\mathbb{Q}) := \text{Lie}(T_{\mathbb{Q}})(\mathbb{Q})$ and $\LTd(\mathbb{Q}) := \text{Lie}(T_{\mathbb{Q}})^*(\mathbb{Q})$. With this notation, we have isomorphisms \begin{equation}\label{characterlatticeforT tensors to give Lie Algebra}
            \characterlatticeforT \otimes_{\mathbb{Z}} \mathbb{Q} \xrightarrow{\sim} \LTd(\mathbb{Q})
        \end{equation} and \begin{equation}\label{cocharacterlatticeforT tensors to give Lie Algebra}
            X_{\bullet}(T) \otimes_{\mathbb{Z}} \mathbb{Q} \xrightarrow{\sim} \LT(\mathbb{Q})
        \end{equation} both induced by the differential. 

Let $\Pi^{\vee} = \{ \alpha_1^{\vee}, ..., \alpha_r^{\vee} \}$ denote the set of simple coroots and $\Pi = \{ \alpha_1, \dots, \alpha_r \}$ the set of simple roots with respect to $T \subset B$. Let $\omega_i \in \LTd$ denote the \textit{fundamental weights}, that is, the dual basis to the set of simple coroots. Since $G$ is simply connected, 
$\omega_i \in \characterlatticeforT$ for every $i$ and the map $\Z^r \to \characterlatticeforT$ given by $\vec{n} \mapsto \sum_{i = 1}^rn_i\omega_i$ is an isomorphism. We say a weight is dominant if each $n_i$ is nonnegative under this isomorphism. The Weyl group $W := N_G(T)/T$ naturally acts on $T$, and there is a natural induced action on $\LT$ and $\LTd$. 

Recall that a parabolic subgroup $P \subset G$ is \textit{standard} if $B \subset P$. Each parabolic subgroup of $G$ is conjugate to a (unique) standard parabolic and the latter are parameterized by subsets $S$ of $\Pi^{\vee}$. At times, it is convenient to use the bijection $\Pi \stackrel{\sim}{\to} \Pi^{\vee}$, $\alpha \mapsto \alpha^{\vee}$, to parameterize the standard parabolics by subsets of $\Pi$. Then $P_S := P_{S^{\vee}}$ for $S \subset \Pi$. 

\subsection{Reminders on Affine Closure of $T^*(G/U)$}\label{Reminders on Affine Closure of Cotangent Bundle of Base Affine Space Subsection}
The projection map $T^*(G/U) \to G/U$ induces a map $\projectionFromAffineClosureofCotangentBundleToAffineClosureofSpace: \affineClosureOfCotangentBundleofBasicAffineSpace \to \affineClosureofBasicAffineSpace$ on affinizations. The following proposition follows from exactly the same argument as in \cite[Proposition 4.3]{GannonProofOftheGinzburgKazhdanConjecture}, replacing $\mathcal{S}$ with $G/U$: 

\begin{Proposition}\label{Preimage of G Mod U Under Projection Is Its Cotangent Bundle}
The natural inclusion $T^*(G/U) \subseteq \projectionFromAffineClosureofCotangentBundleToAffineClosureofSpace^{-1}(G/U)$ is in fact an equality.
\end{Proposition}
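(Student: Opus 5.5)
The plan is to compare functions on both sides over the open set $G/U$. Consider the open subscheme $V := \overline{\pi}^{-1}(G/U) \subseteq \overline{T^*(G/U)}$, which contains $T^*(G/U)$ as an open subscheme. The map $\pi\colon T^*(G/U) \to G/U$ is affine, being a vector bundle, and $G/U$ is quasi-affine. Hence
\[
\pi_*\O_{T^*(G/U)} = \Sym^{\bullet}_{G/U}(\mathcal{T}_{G/U}),
\]
and $\O(T^*(G/U)) = \Gamma(G/U, \Sym^{\bullet}(\mathcal{T}_{G/U}))$. The goal is to show that the restriction of $\overline{\pi}$ to $V$ is also affine over $G/U$, with sheaf of functions equal to this symmetric algebra. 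Then $V = \mathrm{Spec}_{G/U}\Sym^{\bullet}(\mathcal{T}_{G/U}) = T^*(G/U)$.

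First, $\overline{\pi}$ is a morphism of affine schemes, so it is affine, and therefore its restriction $V \to G/U$ is affine. Next, compute the pushforward sheaf. For an affine open $W \subseteq G/U$, we have $\overline{\pi}^{-1}(W) = \mathrm{Spec}(\O(W) \otimes_{A} R)$, where $A := \O(\overline{G/U})$ and $R := \O(T^*(G/U))$. It suffices to take $W$ to be a basic open $D(f)$ with $f \in A$, since $G/U$ is covered by such opens (it is quasi-affine). Then $\overline{\pi}^{-1}(D(f)) = \mathrm{Spec}(R_f)$. On the other hand, $\pi^{-1}(D(f))$ is the vector bundle over the affine scheme $D(f)$, so its ring of functions is $\Gamma(D(f), \Sym^{\bullet}\mathcal{T}) = \Gamma(G/U, \Sym^{\bullet}\mathcal{T})_f = R_f$. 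Localization commutes with global sections on the quasi-compact, quasi-separated scheme $G/U$ for the quasi-coherent sheaf $\Sym^{\bullet}\mathcal{T}$. Consequently the open immersion $\pi^{-1}(D(f)) \hookrightarrow \overline{\pi}^{-1}(D(f))$ is a morphism between affine schemes inducing an isomorphism on rings, hence an isomorphism. Gluing over a cover of $G/U$ by such $D(f)$ gives $T^*(G/U) = \overline{\pi}^{-1}(G/U)$.

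The main subtlety is justifying that $R_f$ equals the functions on $\pi^{-1}(D(f))$: that is, that $\O(T^*(G/U)) = \Gamma(G/U, \pi_*\O)$ localizes correctly. This is the standard fact $\Gamma(X,\F)_f = \Gamma(X_f, \F)$ for $X$ qcqs and $\F$ quasi-coherent. I would also check that the basic opens $D(f)$ with $f \in A$ really cover $G/U$, which holds because the affinization map is an open immersion (quasi-affineness, as recalled in the introduction). This is the same argument as \cite[Proposition 4.3]{GannonProofOftheGinzburgKazhdanConjecture} with $\mathcal{S}$ replaced by $G/U$.
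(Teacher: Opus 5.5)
Your proof is correct and is essentially the argument the paper has in mind; the paper simply defers to \cite[Proposition 4.3]{GannonProofOftheGinzburgKazhdanConjecture} with $\mathcal{S}$ replaced by $G/U$. That argument is the same as yours: cover $G/U$ by basic opens $D(f)\subseteq G/U$ with $f\in A$, identify $\overline{\pi}^{-1}(D(f))=\mathrm{Spec}(R_f)$ with the affine scheme $\pi^{-1}(D(f))$ via $\Gamma(G/U,\mathrm{Sym}^{\bullet}\mathcal{T}_{G/U})_f\cong\Gamma(D(f),\mathrm{Sym}^{\bullet}\mathcal{T}_{G/U})$ on the qcqs scheme $G/U$, and glue.
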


\newcommand{\zeroSectionAffinized}{\overline{z}}
The zero section $G/U \to T^*(G/U)$ also induces a map $\zeroSectionAffinized: \affineClosureofBasicAffineSpace \to \affineClosureOfCotangentBundleofBasicAffineSpace$. 

Since $T$ is abelian, there is a $T$-action on $G/U$ given by the formula $t(gU) := gtU$. This action gives rise to a moment map $T^*(G/U) \to \LTd$. This extends to a moment map $\momentMapFromAFFINECLOSUREofCotangentSpaceWithGROUPT: \affineClosureOfCotangentBundleofBasicAffineSpace \to \LTd$ for the Poisson variety $\affineClosureOfCotangentBundleofBasicAffineSpace$.

\subsection{Line Bundles Associated to $G$}\label{Line Bundle Notation Subsection} For any character $\lambda: T \to \mathbb{G}_m$, we let $\mathbb{A}^1_{\lambda}$ denote the one-dimensional representation of $B$ obtained by pullback by the natural quotient map. Let $\mathcal{L}_{\lambda}^{G/B} \in \QCoh(G/B)$ denote the sheaf of sections of the line bundle \[G \times^B \mathbb{A}^1_{-\lambda} \to G/B\] on $G/B$. 
For any scheme $E \to G/B$ over $G/B$, such as $\widetilde{\LG} := G \times^B (\LG/\mathfrak{u})^*$ or its subbundle 
\[\tilde{\mathcal{N}} := G \times^B (\LG/\LB)^* \cong T^*(G/B),
\]
we let $\mathcal{L}^E_{\lambda}$ denote the pullback of the line bundle $\mathcal{L}_{\lambda}^{G/B}$ under the map to $G/B$. 

We set similar notation for a more general case: to any standard parabolic $P$ (i.e. a parabolic containing $B$) with associated set of simple coroots $S_P$ and any character $\lambda \colon P \to \mathbb{G}_m$ (or, equivalently, any character $\lambda \colon T \to \mathbb{G}_m$ such that $\lambda(\alpha^{\vee}) = 0$ for all $\alpha^{\vee} \in S_P$) we let $\mathcal{L}_{\lambda}^{G/P} \in \QCoh(G/P)$ denote the sheaf of sections of the line bundle \[G \times^P \mathbb{A}^1_{-\lambda} \to G/P.\] More generally, let $\mathcal{L}_{\lambda}^{E}$ denote the pullback of the line bundle $\mathcal{L}_{\lambda}^{G/P}$ for any scheme $E$ equipped with a map to $G/P$.

We recall the following well known result that, up to isomorphism, this construction gives all line bundles on $E$ if $E$ is a vector bundle over $G/P$:

\begin{Proposition}\label{Picard Group of Vector BUndles on Partial Flag Variety} With the above notation, we have the following: \begin{enumerate}
    \item The assignment $\lambda \mapsto \mathcal{L}^E_{\lambda}$ induces an isomorphism $\characterlatticeforT \xrightarrow{\sim} \Pic(E)$ for any vector bundle $E$ on $G/B$.
    \item More generally, if $E$ is a vector bundle on $G/P$ for $P$ a parabolic subgroup and $S_P$ is the associated set of simple coroots, then the Picard group of $E$ can be identified with the subset of $\lambda \in \characterlatticeforT$ for which $\lambda(\alpha^{\vee}) = 0$ for all $\alpha^{\vee} \in S_P$. 
\end{enumerate}
\end{Proposition}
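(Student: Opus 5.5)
The plan is to reduce to the Picard group of the base $G/P$ and then compute the latter from the structure of $G$.

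\emph{Step 1: reduction to the base.} Let $p \colon E \to G/P$ be the projection of a vector bundle. Since $G/P$ is smooth, so is $E$, and therefore $\Pic(E) \cong \mathrm{Cl}(E)$. The zero section $s \colon G/P \to E$ satisfies $p \circ s = \mathrm{id}$, so $p^*$ is injective on Picard groups. For surjectivity we use homotopy invariance of the class group for vector bundles over a smooth (in particular normal, noetherian) base. Cover $G/P$ by opens $V$ over which $E$ is trivial, so $E|_V \cong V \times \mathbb{A}^n$. Then $\mathrm{Cl}(V \times \mathbb{A}^n) \cong \mathrm{Cl}(V)$ by [Hartshorne, II.6.6]. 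Next, for a prime divisor $D \subset E$, there are two cases:
\begin{itemize}
\item if $D$ does not dominate $G/P$, then $D = p^{-1}(p(D))$;
\item if $D$ dominates $G/P$, it restricts on each trivializing $V$ to a principal divisor modulo pullbacks.
\end{itemize}
The standard exact sequence $\mathrm{Cl}(X) \to \mathrm{Cl}(E) \to \mathrm{Cl}(E \setminus p^{-1}X)$ then reduces the problem to the generic fibre. Over the function field $K$, $\mathrm{Cl}(\mathbb{A}^n_K) = 0$, so every divisor on $E$ is $p^*$ of a divisor on $G/P$ up to a principal divisor. Hence $p^* \colon \Pic(G/P) \to \Pic(E)$ is an isomorphism. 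Moreover, $p^*\mathcal{L}^{G/P}_\lambda = \mathcal{L}^E_\lambda$ by definition, so it suffices to treat $E = G/P$.

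\emph{Step 2: $\Pic(G/P) \cong X^\bullet(P)$.} Use the exact sequence for the $P$-torsor $G \to G/P$. Since $G$ is simply connected semisimple, we have $\Pic(G) = 0$ and $\O(G)^\times = k^\times$ by Popov/Fossum–Iversen. The sequence
\[X^\bullet(G) \to X^\bullet(P) \to \Pic(G/P) \to \Pic(G)\]
then gives an isomorphism $X^\bullet(P) \xrightarrow{\sim} \Pic(G/P)$, $\lambda \mapsto G \times^P \mathbb{A}^1_{-\lambda}$; the sign is immaterial. Concretely, injectivity holds because a nowhere-vanishing section of $G\times^P \mathbb{A}^1_{-\lambda}$ corresponds to a unit $f \in \O(G)^\times = k^\times$ with $f(gp) = \lambda(p)^{\pm1} f(g)$, which forces $\lambda = 0$. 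Surjectivity holds because any line bundle pulled back to $G$ is trivial, and a $P$-linearization of the trivial bundle is given by a cocycle $G \times P \to \mathbb{G}_m$. Rosenlicht's lemma together with $\O(G)^\times = k^\times$ shows this cocycle is a character of $P$.

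\emph{Step 3: characters of $P$.} We have $X^\bullet(P) = X^\bullet(P/[P,P])$, and restriction identifies it with $X^\bullet(L/[L,L])$ for the Levi $L \supset T$. Restricting further to $T$ identifies it with the $\lambda \in X^\bullet(T)$ trivial on $T \cap [L,L]$. Since $G$ is simply connected, $[L,L]$ is simply connected, so $T \cap [L,L]$ is the torus generated by the images of $\alpha^\vee$ for $\alpha^\vee \in S_P$. The characters trivial on it are therefore exactly those with $\lambda(\alpha^\vee) = 0$ for all $\alpha^\vee \in S_P$. Taking $P = B$ gives part (1), since $S_B = \emptyset$.

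The main obstacle is Step 2's surjectivity, namely the input that $\Pic(G) = 0$ for simply connected $G$, which I would simply cite. Step 1 is standard but requires some care with the divisor bookkeeping.
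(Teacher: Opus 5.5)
Your proposal is correct, and its skeleton matches the paper's proof. Injectivity of $p^*\colon \Pic(G/P)\to \Pic(E)$ comes from the zero section, and $\Pic(G/P)\cong X^\bullet(P)$ comes from the exact sequence $X^\bullet(G)\to X^\bullet(P)\to \Pic(G/P)\to \Pic(G)$, which is exactly the sequence from Milne that the paper cites.

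The genuinely different ingredient is surjectivity of $p^*$. The paper quotes EGA IV, Corollaire 21.4.11 (Erratum), applied to the faithfully flat projection onto the normal variety $G/P$ with vector-space fibres. You instead run the Weil-divisor argument behind Hartshorne II.6.6. Your route is more elementary and self-contained, but it relies on $\Pic=\mathrm{Cl}$ on both $G/P$ and $E$, i.e.\ on smoothness of the base. That is harmless here, whereas the paper's citation is invoked using only normality.

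Your Step 1 bookkeeping should be cleaned up. The trivializing cover and the appeal to II.6.6 are not actually needed. The sequence $\mathrm{Cl}(X)\to\mathrm{Cl}(E)\to\mathrm{Cl}(E\setminus p^{-1}X)$ is misstated: its first term should be the free abelian group on the codimension-one components of $p^{-1}(X)$. The clean version runs as follows.
\begin{enumerate}
\item Restrict a divisor $D$ on $E$ to the generic fibre $E_K\cong \mathbb{A}^n_K$.
\item Since $\mathrm{Cl}(\mathbb{A}^n_K)=0$, write $D|_{E_K}=\operatorname{div}(f)$.
\item By a dimension count, every prime component of $D-\operatorname{div}(f)$ fails to dominate $G/P$, and therefore equals $p^{-1}$ of a prime divisor of $G/P$.
\end{enumerate}

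Your Step 3 spells out what the paper leaves as standard. Simple connectivity is not needed there: $T\cap[L,L]$ is always the maximal torus of $[L,L]$, and it is generated by the images of the coroots in $S_P$.
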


\begin{proof}
In the case where $E = G/P$, (2), and thus (1), follows directly from standard results, see for example the short exact sequence at the end of \cite[Chapter 18f]{MilneAlgebraicGroupsBook}. 
We next claim that if $E$ is any vector bundle over $G/P$, pullback induces an isomorphism \[\Pic(G/P) \xrightarrow{\sim} \Pic(E).\] Indeed, this map is injective since a left inverse is given by pullback by the zero section and the surjectivity follows from \cite[Corollaire IV.21.4.11, Erratum]{DieudonneGrothendieckEGA}, as the projection map is a faithfully flat morphism to a normal variety whose fibers are vector spaces. 
\end{proof}

If $f \colon Y \to X$ is a morphism then $\Pic(Y/X)$, the relative Picard group, is defined to be the quotient of $\Pic(Y)$ by the image of the pull-back map $f^* \colon \Pic(X) \to \Pic(Y)$. We recall the following fact: 

\begin{Proposition}
    The Picard group of $\mathcal{N}$ and $\LGd \times_{\LTd\sslash W} \LTd$ are trivial. In particular, $\Pic(\tcN/\mathcal{N}) = \mathrm{Pic}(\tcN)$ and $\Pic(\tg/(\LGd \times_{\LTd\sslash W} \LTd)) \cong \Pic(\tg)$.
\end{Proposition}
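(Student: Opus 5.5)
The proof will reduce the statement to the classical fact that a normal affine variety whose coordinate ring is a unique factorization domain has trivial Picard group. Both $\mathcal{N}$ and $\mathcal{X}' := \LGd \times_{\LTd\sslash W} \LTd$ are normal affine varieties. For $\mathcal{N}$ this is Kostant's theorem. For $\mathcal{X}'$, note that it is a complete intersection because $\LGd \to \LTd\sslash W$ is flat by Kostant. It is normal because it is the affinization of $\tg$: the map $\tg \to \mathcal{X}'$ is proper and birational, and by Borho--Kraft/Kostant $\O(\tg) = \O(\LGd)\otimes_{\O(\LTd)^W}\O(\LTd)$.

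Both varieties also carry a contracting $\mathbb{G}_m$-action with a unique fixed point, the origin. For $\mathcal{N}$ this is scaling. For $\mathcal{X}'$ it is scaling on $\LGd$ and on $\LTd$, which is compatible with the weighted scaling on $\LTd\sslash W$. Hence each coordinate ring is a nonnegatively graded algebra with degree zero part $k$.

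The key step is that for such a positively graded normal affine $X$, every line bundle is trivial. The argument goes as follows. Given $\mathcal{L}\in\Pic(X)$, the class is $\mathbb{G}_m$-linearizable after passing to the connected group $\mathbb{G}_m$. This uses normality, via the Knop--Kraft--Luna--Vust result that $\Pic^{\mathbb{G}_m}(X)\to\Pic(X)$ is surjective for normal $X$ and connected groups. A $\mathbb{G}_m$-equivariant line bundle then corresponds to a graded projective module $M$ of rank one over the graded ring $\O(X)$. The graded Nakayama lemma applies because $\O(X)_0=k$ and the grading is positive. Lifting a generator of $M/\O(X)_{>0}M \cong k$ gives a surjection $\O(X)(d)\to M$ of rank-one projectives, which is therefore an isomorphism. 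So $\mathcal{L}$ is trivial.

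A cleaner alternative for $\mathcal{X}'$ avoids linearization entirely. The restriction to the fixed point gives a homotopy, via the contracting action, between $\mathrm{id}$ and the constant map on the level of $\Pic$. This uses $\Pic(X\times\mathbb{A}^1)=\Pic(X)$, which requires seminormality, and that holds here since $X$ is normal. I would present the graded Nakayama argument as the main proof and cite the homotopy invariance as a remark.

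The "in particular" statements are then immediate from the definition of relative Picard group: the image of $f^*$ is trivial, so $\Pic(Y/X)=\Pic(Y)$ for $Y=\tcN$, resp. $\tg$, mapping to $\mathcal{N}$, resp. $\mathcal{X}'$.

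The main obstacle I expect is verifying normality of $\mathcal{X}'$ and justifying the linearization step. I would handle the first by citing the standard identification of $\mathcal{X}'$ with the affinization of $\tg$ (Borho--Kraft). For the second, I would cite Knop--Kraft--Luna--Vust. If a more elementary route is wanted, one can alternatively note that $\O(\mathcal{N})$ and $\O(\mathcal{X}')$ are graded with $k$ in degree zero and use that a positively graded Noetherian normal domain has $\Pic$ equal to that of its degree zero part.
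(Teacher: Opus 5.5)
Your argument is correct and is essentially the paper's. Normality of $\mathcal{N}$ (Kostant) and of $\mathcal{X}'$ (which the paper cites directly from Borho--Brylinski rather than deducing it from $\O(\tg)\cong\O(\mathcal{X}')$), together with a non-negative grading whose degree-zero part is $k$, forces $\Pic$ to be trivial; the paper simply cites Losev--Mason-Brown--Matvieievskyi for this last fact, while you prove it via Knop--Kraft--Luna--Vust linearization plus graded Nakayama (or $\mathbb{A}^1$-homotopy invariance for seminormal rings). The one thing to fix is your opening sentence: nothing in your proof uses factoriality, and $\O(\mathcal{N})$ is in general not a UFD (for $\SL_2$ it is $k[x,y,z]/(xy-z^2)$, with class group $\Z/2$), so state the reduction as ``normal plus non-negatively graded with $A_0=k$'' rather than ``UFD''.
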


\begin{proof}
Recall that $\mathcal{N}$, respectively $\LGd \times_{\LTd\sslash W} \LTd$, is normal by \cite[Theorem 0.8]{KostantLieGroupRepresentationsonPolynomialRings}, respectively \cite[Lemma~14]{BorhoBrylinski2}. The fact that the Picard groups are trivial follows from the fact that any normal affine variety $\Spec(A)$ which admits a contracting $\G_m$-action (or, equivalently, $A$ admits a non-negative grading with $A_0 = k$) has a trivial Picard group, see for example \cite[Lemma 7.1.1]{LosevMasonBrownMatvieievskyiUnipotentIdealsandHarishChandraBimodules}. 
\end{proof}

We also recall the following well known result, which follows from standard arguments, see for example \cite[II.4.4,Remarks 1)]{JantzenRepresentationsofAlgebraicGroups}: 

\begin{Proposition}\label{Ample Cone for Partial Flag Variety}
    If $P$ is a parabolic subgroup of $G$ and $S_P$ is its associated set of simple coroots, then, under the embedding $\mathrm{Pic}(G/P) \subseteq \characterlatticeforT$ of \cref{Picard Group of Vector BUndles on Partial Flag Variety}, the ample cone is the set of all $\lambda \in \characterlatticeforT_{\R}$ with $\lambda(\alpha^{\vee}) = 0$ for all $\alpha^{\vee} \in S_P$ and $\lambda(\alpha^{\vee}) > 0$ for all $\alpha^{\vee} \notin S_P$.
\end{Proposition}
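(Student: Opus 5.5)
The plan is to use the Borel--Weil description of sections together with the standard criterion for ampleness on a projective homogeneous space. By \cref{Picard Group of Vector BUndles on Partial Flag Variety}(2), $\Pic(G/P)$ is identified with the sublattice of $\lambda$ satisfying $\lambda(\alpha^{\vee}) = 0$ for all $\alpha^{\vee}\in S_P$. Such a $\lambda$ is a character of $P$ and also of its Levi $L_P$.

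We first reduce to the case $P = B$. Let $\pi\colon G/B \to G/P$ be the projection. Then $\pi^*\mathcal{L}^{G/P}_\lambda = \mathcal{L}^{G/B}_\lambda$, the map $\pi$ is a proper surjection, and $\pi_*\O_{G/B} = \O_{G/P}$ because the fibers $P/B$ are connected flag varieties.

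Next we describe global sections. With the sign conventions here, Borel--Weil gives
\[
H^0(G/P,\mathcal{L}_\lambda) = H^0(G/B,\mathcal{L}_\lambda) = \mathrm{Ind}_B^G(k_{\lambda})
\]
up to a dual, with the convention of \cite[II.2]{JantzenRepresentationsofAlgebraicGroups}. This space is nonzero if and only if $\lambda$ is dominant, in which case it is the irreducible module of highest weight $\lambda$ (or its dual).

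\emph{Ampleness implies the inequalities.} Suppose $\lambda$ is ample. Then some multiple $n\lambda$ has sections, so $\lambda$ is dominant. Now suppose $\lambda(\alpha^\vee) = 0$ for some $\alpha^\vee\notin S_P$. Let $P'\supsetneq P$ be the parabolic with $S_{P'} = S_P\cup\{\alpha^\vee\}$. Then $\lambda$ is a character of $P'$, and $\mathcal{L}_\lambda$ is pulled back from $G/P'$. So $\mathcal{L}_\lambda$ restricts trivially to the fibers $P'/P$ of the map $G/P\to G/P'$. These fibers are positive-dimensional projective varieties, and the restriction of an ample bundle to such a fiber cannot be trivial. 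This is a contradiction, so $\lambda(\alpha^\vee)>0$ for all $\alpha^\vee\notin S_P$.

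\emph{The inequalities imply ampleness.} Suppose $\lambda(\alpha^\vee)>0$ for all $\alpha^\vee\notin S_P$. Choose a highest weight vector $v$ in the Weyl module $V(\lambda)$; by the previous paragraph, $V(\lambda)^*$ is the space of sections. The line $[v]$ has stabilizer exactly $P$. Indeed, the stabilizer contains $B$, and the simple root $\alpha$ is included precisely when $\lambda(\alpha^\vee)=0$, since $f_\alpha v = 0$ if and only if $\langle\lambda,\alpha^\vee\rangle = 0$. Hence the orbit map $G/P\to \mathbb{P}(V(\lambda))$, $gP\mapsto[gv]$, is injective. It is also an immersion, because on tangent spaces the kernel of $\LG\to T_{[v]}\mathbb{P}$ is exactly $\LP$. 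It is proper because $G/P$ is projective, so it is a closed embedding. The pullback of $\O(1)$ along this embedding is $\mathcal{L}_\lambda$, with the sign convention $\mathbb{A}^1_{-\lambda}$ chosen precisely to arrange this. Therefore $\mathcal{L}_\lambda$ is very ample.

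Finally, both the ample cone and the stated cone are open in $\Pic(G/P)_\R$ relative to the hyperplane conditions, and both are generated by their lattice points. The integral statement just proved therefore gives equality of the real cones.

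The main obstacle is bookkeeping. We must check that the sign convention makes dominant weights, rather than antidominant ones, correspond to effective bundles. We must also verify that the stabilizer of the highest weight line is exactly $P$, which is the $\mathfrak{sl}_2$ computation on each simple root.
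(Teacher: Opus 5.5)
Your argument is correct and is the standard one. Necessity comes from dominance together with the trivial restriction of $\mathcal{L}_\lambda$ to the positive-dimensional fibres $P'/P$, and sufficiency comes from the closed embedding $gP \mapsto [g v_\lambda]$ into $\mathbb{P}(V(\lambda))$, which pulls $\mathcal{O}(1)$ back to $\mathcal{L}_\lambda$; your sign check with $\mathbb{A}^1_{-\lambda}$ and $B$ the positive Borel is right. The paper gives no proof of its own here, only the citation to Jantzen II.4.4, Remarks 1), and the argument there is essentially this one.
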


\subsection{Line Bundles In General}

We recall the following straightforward extension of a standard result in algebraic geometry (in the case $\L = \O_X$) to the setting of line bundles:

\begin{Lemma}\label{For Proper Birational Maps Unit Map Is Iso on Line Bundles}
    Assume $p: \mathcal{Y} \to Y$ is a proper birational map from an integral scheme $\mathcal{Y}$ onto a normal quasi-compact quasi-separated locally Noetherian scheme $Y$, and $\mathcal{L}$ is a locally free sheaf of rank one on $Y$. Then the unit map $u_p(\L): \L \to p_*p^*(\L)$ is an isomorphism.
\end{Lemma}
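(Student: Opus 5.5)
The strategy is to reduce to the classical case $\L = \O_Y$, where $\O_Y \to p_*\O_{\mathcal{Y}}$ is an isomorphism (Zariski's main theorem for proper birational maps onto normal schemes), and then to use the projection formula.

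\emph{Step 1: the structure sheaf.} Because $p$ is proper, $p_*\O_{\mathcal{Y}}$ is a coherent $\O_Y$-algebra. The question is local on $Y$, so we may take $Y = \Spec(A)$ with $A$ a Noetherian normal domain. Since $\mathcal{Y}$ is integral and $p$ is birational, $B := \Gamma(\mathcal{Y},\O_{\mathcal{Y}})$ is a subring of the common function field $K(\mathcal{Y}) = K(Y) = \mathrm{Frac}(A)$. By coherence, $B$ is a finitely generated $A$-module, so it is integral over $A$. Normality of $A$ then gives $B = A$, and the unit $A \to B$ is the inclusion, hence an isomorphism. If $Y$ is not irreducible, one first notes that the connected components of a normal locally Noetherian scheme are integral. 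Moreover, since $\mathcal{Y}$ is integral, $p$ is dominant and birational, so it only meets one component; the scheme-theoretic hypotheses ensure we may treat $Y$ as integral.

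\emph{Step 2: twisting by $\L$.} Because $\L$ is locally free of rank one, the projection formula gives a natural isomorphism $p_*p^*\L \cong p_*(\O_{\mathcal{Y}}) \otimes_{\O_Y} \L$. This is immediate locally: on an open $V \subseteq Y$ where $\L|_V \cong \O_V$, both sides are $p_*\O_{\mathcal{Y}}|_V$. Under this identification, the unit $u_p(\L)$ becomes $u_p(\O_Y) \otimes \mathrm{id}_{\L}$. To verify this compatibility, check it on a trivializing cover, where it reduces to the statement that the unit of $(p^*, p_*)$ is natural in $\L$ and commutes with the chosen trivializations. Since Step 1 shows $u_p(\O_Y)$ is an isomorphism, so is $u_p(\L)$. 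Equivalently, whether a map of sheaves is an isomorphism is a local question, and on each trivializing open $V$ the map $u_p(\L)|_V$ is identified with $u_{p}(\O_V)$ for the restricted map $p^{-1}(V) \to V$. That restricted map is again proper and birational from an integral scheme (an open subscheme of an integral scheme is integral) to a normal scheme.

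\emph{Main obstacle.} The only non-formal input is Step 1, specifically the birationality bookkeeping: that the global sections over each affine open $V$ of $Y$ land inside $K(Y)$, and that coherence of $p_*\O$ (the direct image theorem for proper maps, which needs the locally Noetherian and quasi-compact quasi-separated hypotheses) gives finiteness. Once these are in place, normality closes the argument, and the extension to $\L$ is formal.
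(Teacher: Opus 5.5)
Your proof is correct and follows the same route as the paper: reduce to an affine open on which $\mathcal{L}$ is trivial, then apply the finiteness-plus-normality argument from the proof of Hartshorne's Corollary III.11.4, which you write out in full while the paper only cites it; the paper checks compatibility of the unit with restriction to opens via a base-change diagram, whereas you use naturality of the unit under trivializations (equivalently, the projection formula). One phrase is loose: $Y$ is irreducible, and hence integral, because $p$ is dominant and proper and therefore surjective, not because $p$ ``only meets one component'' (if $p$ missed a component, the unit map would fail to be an isomorphism there).
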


\begin{proof}
Assume $j: U \to Y$ is any open embedding, and let $U' := p^{-1}(U)$. Assume $j'$ is the open embedding of $U'$ into $\mathcal{Y}$ and that $p': U' \to U$ is the restriction of $p$ to $U'$ so that the diagram \begin{equation}\label{Open Embedding Diagram}\begin{tikzcd}
      U' \ar[r, "p'"] \ar[d, "j'"] & U \ar[d, "j"] \\
        \mathcal{Y} \ar[r,"p"] & Y  
    \end{tikzcd}\end{equation} is Cartesian. Observe that, for any quasicoherent sheaf $\F$ on $Y$, the diagram \begin{equation}\begin{tikzcd}
      j^*(\F) \ar[r, "j^*u_{p}(\F)"] \ar[d, "u_{p'}(j^*(\F))"]  & j^*p_*p^*(\F) \ar[d, "\mathrm{b}(p^*\F)"] \\
        p'_*p'^*j^*(\F) \ar[r,"\cong"] & p'_*j'^*p^*(\F) 
    \end{tikzcd}\end{equation} commutes, where $b$ is the base change isomorphism and the horizontal arrow labelled \lq $\cong$\rq{} is given by the functoriality of pullback. Therefore, since it suffices to show $u_p(\F)$ is an isomorphism when restricted to any affine open cover, it suffices to prove that $u_p(\F)$ is an isomorphism in the case where $Y$ is affine and for which there is an isomorphism $\L \cong \O_Y$. In this case, the fact that $u_p(\L)$ is an isomorphism is standard: for example, it follows from the proof of \cite[Corollary III.11.4]{HartshorneAlgebraicGeometry}, using the fact that the pushforward of a coherent sheaf by a proper morphism is coherent \cite[Proposition 30.19.1, Tag 02O5]{StacksProject}.
\end{proof}

\begin{Remark}
Although we will not use this in what follows, \cref{For Proper Birational Maps Unit Map Is Iso on Line Bundles} holds more generally when $\L$ is any vector bundle on $Y$, with a similar proof.
\end{Remark}


Let $X,Y$ be Noetherian schemes and $f \colon Y \to X$ a projective morphism, which we assume factors as a composite \begin{equation}\label{eq:factorfgh}
        \begin{tikzcd}
            Y \ar[dr,"f"'] \ar[rr,"g"] & & Z \ar[dl,"h"] \\
            & X & 
        \end{tikzcd}
    \end{equation}
   for some Noetherian scheme $Z$ and projective morphisms $g, h$. The following results are relative versions of Zariski's lemma on the finite generation of section rings. We include proofs since we have not found references for the results in the generality we require. We recall that a line bundle $L$ on $Y$ is $f$-generated if the adjunction $f^* f_* L \to L$ is surjective. We say that $L$ is $f$-semiample if some power of $L$ is $f$-generated. 

\begin{Lemma}\label{lem:pullbackgeneratedisgenerated}
With the above notation, if $M \in \QCoh(Z)$ is $h$-generated then $L := g^* M$ is $f$-generated.
\end{Lemma}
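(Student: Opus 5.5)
We want to show that the adjunction $f^*f_*L \to L$ is surjective when $L = g^*M$ and $h^*h_*M \to M$ is surjective. The plan is to factor the counit for $f$ through the counits for $g$ and $h$. Since $f = h \circ g$, we have $f^* = g^*h^*$ and $f_* = h_*g_*$.

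First, consider the unit $u_g(M)\colon M \to g_*g^*M = g_*L$ of the adjunction $g^* \dashv g_*$. Applying $h_*$ gives a map
\[
h_*(u_g(M))\colon h_*M \to h_*g_*L = f_*L.
\]
Pulling this back by $f^* = g^*h^*$ produces
\[
\phi\colon g^*h^*h_*M \to f^*f_*L.
\]

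The key step is to check that the composite
\[
g^*h^*h_*M \xrightarrow{\phi} f^*f_*L \xrightarrow{\epsilon_f(L)} L
\]
equals $g^*(\epsilon_h(M))\colon g^*h^*h_*M \to g^*M = L$. This is a formal statement about composing adjunctions: the counit of $(hg)^* \dashv (hg)_*$ is obtained by combining the counits of $h$ and $g$. Concretely, I would verify it by passing to adjoints. Under $f^* \dashv f_*$, a map $f^*h_*M \to L$ corresponds to a map $h_*M \to f_*L = h_*g_*L$. Both composites correspond to $h_*(u_g(M))$. For the second composite this uses the triangle identities for $g$ and the naturality of the unit of $h$. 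This bookkeeping is the only mildly fiddly step, and it is routine.

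Given the factorization, the conclusion is immediate. Since $M$ is $h$-generated, $\epsilon_h(M)$ is surjective. The functor $g^*$ is right exact, so $g^*(\epsilon_h(M))$ is surjective. This map factors through $\epsilon_f(L)$, so $\epsilon_f(L)$ is surjective, i.e. $L$ is $f$-generated.

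Projectivity and the Noetherian hypotheses are not needed for this argument. They only guarantee that all the sheaves involved remain quasi-coherent.
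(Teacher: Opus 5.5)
Your proposal is correct and is essentially the paper's proof: the paper also shows that $c_f(g^*M)\circ f^*h_*(u_g(M))$ agrees, under $f^*\cong g^*h^*$, with $g^*$ applied to the counit $h^*h_*M\to M$, and then concludes by right exactness of $g^*$. One small bookkeeping correction: in your adjoint computation the identities actually used are the triangle identity for $h$ and the naturality of the unit of $g$. Alternatively, if you compose directly with $\epsilon_f(L)=\epsilon_g(L)\circ g^*\epsilon_h(g_*L)$, you use the triangle identity for $g$ and the naturality of the counit of $h$. Either way, this does not affect the argument.
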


\begin{proof}
Observe that the diagram     \begin{equation}\label{eq:commutative adjunction diagram}
        \begin{tikzcd}
            f^*h_*(M) \ar[rr, "f^*h_*(u_g(M))"] \ar[d, "\sim"] & & f^*h_*g_*g^*(M) = f^*f_*g^*(M) \ar[d, "c_f(g^*(M))"] \\
           g^*h^*h_*(M) \ar[rr, "g^*(u_h(M))"] & & g^*(M) 
        \end{tikzcd}
    \end{equation} commutes, where the unlabeled arrow is given by functoriality of pullback, $u_g$, respectively $u_h$, is the unit of the adjunction whose left adjoint is $g^*$, respectively $h^*$, and $c_f$ is the counit for the adjunction whose left adjoint is $f^*$. By assumption, $u_h(M)$ is surjective. Therefore, by the right exactness of $g^*$, $g^*(u_h(M))$ is also surjective. Since \labelcref{eq:commutative adjunction diagram} commutes, we deduce that $c_f(g^*(M)) = c_f(L)$ is surjective as well. 

\end{proof}

We note that if $L$ is $f$-ample then it is $f$-semiample since \cite[Lemma 29.39.7, Tag 01VR]{StacksProject} implies that if $L$ is $f$-very ample then it is $f$-generated. The relative version of Zariski's lemma says:

\begin{Lemma}\label{lem:Rsectionringfg}
    If $L$ is $f$-semiample then $\mathcal{B} := \bigoplus_{n \ge 0} f_* L^{\otimes n}$ is a finitely generated sheaf of $\mathcal{O}_X$-algebras. 
\end{Lemma}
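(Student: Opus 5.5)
First reduce to the case where $L$ itself is $f$-generated. If $L^{\otimes m}$ is $f$-generated, then $\mathcal{B}^{(m)} := \bigoplus_{n\ge 0} f_*L^{\otimes mn}$ is a subalgebra of $\mathcal{B}$. Moreover, $\mathcal{B} = \bigoplus_{r=0}^{m-1} \bigoplus_{n \ge 0} f_* L^{\otimes (mn+r)}$, and each summand $\bigoplus_n f_*L^{\otimes(mn+r)}$ is a $\mathcal{B}^{(m)}$-module. It therefore suffices to show two things: that $\mathcal{B}^{(m)}$ is a finitely generated $\O_X$-algebra, and that each of these $m$ modules is finitely generated over it. Since $X$ is Noetherian and finite generation is local on $X$, we may also assume $X = \Spec(R)$ is affine. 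All sheaves are then determined by their global sections, and $f_*$ of coherent sheaves are coherent because $f$ is projective, hence proper.

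Assume now that $L$ is $f$-generated, so the counit $f^*f_*L \to L$ is surjective. Since $f_*L$ is coherent on affine $X$, choose finitely many generating sections $s_0,\dots,s_N \in H^0(Y,L)$. These give a surjection $\O_Y^{N+1} \to L$, hence an $X$-morphism $\phi \colon Y \to \mathbb{P}^N_X$ with $\phi^*\O(1) \cong L$. The morphism $\phi$ is proper because $Y \to X$ is proper and $\mathbb{P}^N_X \to X$ is separated. Set $\mathcal{A} := \phi_*\O_Y$, a coherent $\O_{\mathbb{P}^N_X}$-algebra. By the projection formula,
\[
f_*L^{\otimes n} = H^0(\mathbb{P}^N_X, \mathcal{A}(n)).
\]
The graded $R[x_0,\dots,x_N]$-module $\Gamma_*(\mathcal{A}) = \bigoplus_{n\ge 0} H^0(\mathbb{P}^N_X,\mathcal{A}(n))$ is finitely generated in large degrees by Serre's theorem: for $n \ge n_0$ the multiplication map $H^0(\O(1)) \otimes H^0(\mathcal{A}(n)) \to H^0(\mathcal{A}(n+1))$ is surjective, by Castelnuovo–Mumford regularity or by Serre vanishing applied to the Koszul complex. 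Each lower-degree piece $H^0(\mathcal{A}(n))$ with $n<n_0$ is a finite $R$-module. Hence $\mathcal{B}$ is generated as an $R$-algebra by the images of $x_0,\dots,x_N$ together with finitely many generators of the pieces of degree $\le n_0$, so $\mathcal{B}$ is finitely generated.

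For the reduction step, apply the same Serre argument to the coherent sheaves $\phi_*(L^{\otimes r})$ on $\mathbb{P}^N_X$, where $\phi$ is now defined by $L^{\otimes m}$. This shows that each $\bigoplus_n f_*L^{\otimes(mn+r)}$ is a finitely generated graded module over $R[x_0,\dots,x_N]$, and hence over $\mathcal{B}^{(m)}$. Taking generators of $\mathcal{B}^{(m)}$ together with these module generators gives finitely many algebra generators of $\mathcal{B}$.

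I expect the main obstacle to be the surjectivity of multiplication in large degrees. The cleanest route is Serre vanishing for the coherent sheaf $\mathcal{A}$ combined with exactness of the twisted Koszul complex of $x_0,\dots,x_N$ on $\mathbb{P}^N_X$, which yields the surjectivity of $H^0(\mathcal{A}(n))^{N+1} \to H^0(\mathcal{A}(n+1))$ once $H^i(\mathcal{A}(n-i))=0$ for $i\ge1$. The factorization through $Z$ in \labelcref{eq:factorfgh} is not needed here; it matters only for producing semiample bundles via \cref{lem:pullbackgeneratedisgenerated}.
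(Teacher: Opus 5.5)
Your proof is correct, and its overall structure matches the paper's. Both arguments reduce to $X=\Spec R$ affine and use that $L^{\otimes m}$ is $f$-generated, so that finitely many sections of $L^{\otimes m}$ act on each residue-class piece $\bigoplus_{n\ge 0} f_*L^{\otimes(mn+r)}$. Each piece is then shown to be a finite graded module over the resulting polynomial-type algebra, which gives finite generation of $\mathcal{B}$.

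The difference is where the finiteness comes from. The paper sets $S=\Sym_R(f_*L^{\otimes m})$ and observes that $F_r=\bigoplus_{n\ge0}L^{\otimes(mn+r)}$ is a graded $f^*S$-module generated by $L^{\otimes r}$. It then cites the Stacks Project lemma that, for a proper morphism, $f_*$ of such a finite-type graded $f^*S$-module is a finite graded $S$-module, so everything happens on $Y$. You instead use the generating sections to build $\phi\colon Y\to\mathbb{P}^N_X$ and push $L^{\otimes r}$ forward to a coherent sheaf on $\mathbb{P}^N_R$ via the projection formula. You then invoke Serre's theorem that $\bigoplus_{n\ge0}H^0(\mathcal{F}(n))$ is a finite $R[x_0,\dots,x_N]$-module, which you justify via Serre vanishing and the twisted Koszul complex.

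Your route is more elementary and self-contained. Its cost is constructing $\phi$ and noting that the $R[x_0,\dots,x_N]$-action on $\Gamma_*$ is multiplication by the $s_i$ in $\mathcal{B}$, which holds because $\phi^*x_i=s_i$. The paper's route is shorter given the citation.

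One small simplification: the case $r=0$ of your final paragraph already gives finite generation of $\mathcal{B}^{(m)}$. So the separate treatment of the case where $L$ itself is $f$-generated is unnecessary.
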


\begin{proof}
    This statement is local on $X$, therefore we may assume that $X = \Spec \, A$ is affine. Then we need to show that $\bigoplus_{n \ge 0} H^0(Y,L^{\otimes n})$ is a finitely generated $A$-algebra. By assumption, there exists $m > 0$ such that $f^* f_* L^{\otimes m} \to L^{\otimes m}$ is surjective. Let $N = f_* L^{\otimes m}$ and note that $N$ is a finitely generated $A$-module since $f$ is projective. Let $S = \Sym_A N$, a finitely generated $A$-algebra. The surjection $f^* N \to L^{\otimes m}$ implies that each $F_j = \bigoplus_{n \ge 0} L^{\otimes (nm + j)}$ is generated as a graded $f^* S$-module by a coherent $\mathcal{O}_Y$-submodule. In fact, since 
    \[
    f^* \Sym^n_A N = \Sym^n_{\mathcal{O}_Y} f^*N \twoheadrightarrow \Sym^n L^{\otimes m} = L^{\otimes mn},
    \]
    where the final equality is because $L$ has rank one, $F_j$ is actually generated as a graded $f^* S$-module by $L^{\otimes j}$. Therefore, \cite[Lemma 69.20.4, Tag 08AP]{StacksProject} says that $f_* F_j$ is a finitely generated graded $S$-module. This implies that
    \[
    \bigoplus_{j = 0}^{m-1} f_* F_j = \bigoplus_{n \ge 0} H^0(Y,L^{\otimes n})  
    \]
     is a finitely generated graded $S$-module. It follows that it is a finitely generated graded $A$-algebra. 
\end{proof}

Since $g \colon Y \to Z$ is a projective morphism of Noetherian schemes, it factors through the Stein factorization 
    \begin{equation}\label{eq:gStein}
        \begin{tikzcd}
            Y \ar[dr,"g"'] \ar[rr,"{g'}"] & & Z' \ar[dl,"k"] \\
            & Z & 
        \end{tikzcd}
    \end{equation}
    where $Z' = \Spec_{Z} \, g_* \mathcal{O}_Y$. 

\begin{Proposition}\label{prop:Steinamplefactor}
    Assume that $M$ is a $h$-ample line bundle on $Z$ and let $L = g^* M$. Then there is an isomorphism $Z' \stackrel{\sim}{\longrightarrow} \mathrm{Proj}_X \mathcal{B}$, where $\mathcal{B} := \bigoplus_{m \ge 0} f_* L^{\otimes m}$, such that the diagram 
    \[
     \begin{tikzcd}
     & Y \ar[dr] \ar[dl,"{g'}"'] & \\
     Z' \ar[rr,"\sim"] \ar[dr,"q"'] & & \mathrm{Proj}_X \mathcal{B} \ar[dl] \\
            & X & 
        \end{tikzcd}
    \]
        is commutative, where $q := h \circ k$. 
\end{Proposition}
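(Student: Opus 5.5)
The plan is to identify both sides with relative Proj constructions over $Z$ and compare the section algebras. Since $Z' = \Spec_Z g_*\O_Y$ and $k$ is finite (hence affine), I will write $\mathcal{A} := g_*\O_Y$, so that $Z' = \Spec_Z \mathcal{A}$. The line bundle $k^*M$ on $Z'$ is $q$-ample. The reason is that $k$ is finite, so $k^*M$ is $k$-ample, and $M$ is $h$-ample; the composite statement then follows from \cite[Tag 0C4K]{StacksProject} or a similar reference, and uses that $h$ is proper, hence quasi-compact. Because $q = h\circ k$ is proper, hence projective in the Noetherian setting, and $k^*M$ is $q$-ample, the standard description of a proper scheme via an ample bundle gives
\[ Z' \cong \Proj_X \bigoplus_{m\ge 0} q_*\big((k^*M)^{\otimes m}\big), \]
compatibly with the maps to $X$. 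Here I use that for $q$-ample $N$ the natural map $Z' \to \Proj_X \bigoplus_m q_*N^{\otimes m}$ is an isomorphism (Stacks, Tag 01VJ together with 01Q1). If $k^*M$ is only known to have a very ample power, I pass to a Veronese subring, which does not change $\Proj$.

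Next I compare the algebras. Using the projection formula for the affine morphism $g'$, together with $g'_*\O_Y = \O_{Z'}$ (this is the definition of the Stein factorization), I get
\[ f_* L^{\otimes m} = q_* g'_* g'^*(k^*M)^{\otimes m} = q_*\big((k^*M)^{\otimes m} \otimes g'_*\O_Y\big) = q_*(k^*M)^{\otimes m}, \]
and this identification is multiplicative in $m$. Hence $\mathcal{B} \cong \bigoplus_m q_*(k^*M)^{\otimes m}$ as graded $\O_X$-algebras, which yields the isomorphism $Z' \xrightarrow{\sim} \Proj_X\mathcal{B}$ over $X$. Finite generation is not really needed here, but \cref{lem:Rsectionringfg} together with \cref{lem:pullbackgeneratedisgenerated} guarantees that $\mathcal{B}$ is finitely generated, so $\Proj_X\mathcal{B}$ is well behaved.

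The remaining task is commutativity of the upper triangle. The map $Y \to \Proj_X\mathcal{B}$ is the rational map defined by the surjection $f^*\mathcal{B}\to\bigoplus L^{\otimes m}$ in sufficiently divisible degrees; this is everywhere defined because $L$ is $f$-semiample by \cref{lem:pullbackgeneratedisgenerated}. Under the identification $L = g'^*k^*M$, that surjection is the pullback along $g'$ of the corresponding surjection on $Z'$, and on $Z'$ the latter defines the isomorphism constructed above. So by functoriality of the Proj morphism associated to a generated graded algebra, the triangle commutes.

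I expect the main obstacle to be the first step. There one must verify carefully that $k^*M$ is $q$-ample, and that for a $q$-ample bundle on a scheme proper over a Noetherian base the canonical map to the relative Proj of its section algebra is an isomorphism, rather than merely an open immersion. The properness of $Z'$ over $X$ closes that gap, since an open immersion from a proper scheme with dense image into a separated scheme is surjective and hence an isomorphism.
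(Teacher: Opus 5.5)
Your proposal is correct and follows the paper's proof: $k^*M$ is $q$-ample, $Z' \cong \Proj_X \bigoplus_{m} q_*(k^*M)^{\otimes m}$, and the projection formula along $g'$ together with $g'_*\O_Y = \O_{Z'}$ identifies this graded algebra with $\mathcal{B}$; you also spell out the commutativity of the upper triangle, which the paper leaves as ``the claim follows''. Two harmless slips: $g'$ is not affine in general, though the projection formula for a line bundle needs no hypothesis on the morphism; and proper does not imply projective even over a Noetherian base, though $q$ is projective anyway as the composite of the finite $k$ and the projective $h$ over the Noetherian $X$.
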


\begin{proof}
    Recall that, in the Stein factorization \eqref{eq:gStein}, $g'$ is projective with connected fibers and $g_*' \mathcal{O}_Y = \mathcal{O}_{Z'}$, whilst $k$ is a finite morphism \cite[III, Corollary~11.5]{HartshorneAlgebraicGeometry}. Let $M' = k^* M$. Then $M'$ is ample relative to $q$ by \cite[Lemma 29.38.7(2), Tag 0892]{StacksProject}. The projection formula \cite[Lemma 20.54.2, Tag 01E8]{StacksProject} gives an isomorphism
    \begin{align*}
  \bigoplus_{m \ge 0} f_* L^{\otimes m} & = \bigoplus_{m \ge 0} f_* ((g')^* M')^{\otimes m} \cong \bigoplus_{m \ge 0} q_* (g'_* ((g')^* M')^{\otimes m}) \\& \cong \bigoplus_{m \ge 0} q_* ((M')^{\otimes m} \otimes g'_* (\mathcal{O}_Y)) = \bigoplus_{m \ge 0} q_* (M')^{\otimes m},       
    \end{align*}
    since $q$ is a projective morphism. It can be checked that these isomorphisms are compatible with multiplication. Since $M'$ is $q$-ample, there is an isomorphism $Z' \cong \mathrm{Proj}_X  \bigoplus_{m \ge 0} q_* (M')^{\otimes m}$ as schemes over $X$. The claim follows. 
\end{proof}

The converse to Proposition~\ref{prop:Steinamplefactor} says:

\begin{Proposition}\label{prop:semiample-induced-morphism}
Let $f\colon Y\longrightarrow X$ be a projective morphism of Noetherian schemes and \(L\) an
\(f\)-semiample line bundle on $Y$. Then $\mathcal{B} :=
  \bigoplus_{n\geq 0} f_*L^{\otimes n}$ is a finitely generated graded \(\mathcal O_X\)-algebra. If $Z:=\operatorname{Proj}_X \mathcal{B}$ and $h\colon Z\longrightarrow X$, then there is a projective morphism $g\colon Y\longrightarrow Z$ such that the diagram 
\[
\begin{tikzcd}
    Y \ar[dr,"f"'] \ar[rr,"g"] & & Z \ar[dl,"h"] \\
    & X & 
\end{tikzcd}
\]
is commutative. 

Moreover, for sufficiently divisible \(m>0\), the sheaf $M:=\mathcal O_Z(m)$ is invertible, \(h\)-ample and $L^{\otimes m}\simeq g^*M$.
\end{Proposition}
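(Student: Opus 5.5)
Finite generation of $\mathcal{B}$ is exactly \cref{lem:Rsectionringfg}, so the work lies in constructing $g$ and checking the claim about $M$. Both questions are local on $X$: $\mathrm{Proj}_X$ commutes with restriction to opens of $X$, and morphisms defined over an open cover glue once they agree on overlaps. I will therefore first work over an affine open $X = \Spec A$, with $B := \bigoplus_n H^0(Y, L^{\otimes n})$, and construct everything canonically so that it glues.

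Fix $m > 0$ such that $L^{\otimes m}$ is $f$-generated. The plan is to replace $L$ by $L^{\otimes m}$ and $\mathcal{B}$ by its Veronese subring $\mathcal{B}^{(m)} = \bigoplus_n f_* L^{\otimes mn}$. Since $\mathrm{Proj}_X \mathcal{B} \cong \mathrm{Proj}_X \mathcal{B}^{(m)}$ canonically, this replacement costs nothing. Next I enlarge $m$: $\mathcal{B}$ is finitely generated, so for sufficiently divisible $m$ the Veronese $\mathcal{B}^{(m)}$ is generated in degree $1$ over $\mathcal{B}_0 = f_*\O_Y$. This is the standard fact for finitely generated graded algebras (e.g. EGA II, 2.1.6). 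A multiple of an $f$-generated power stays $f$-generated, because $f^*f_*$ of a tensor product receives the tensor product of the $f^*f_*$'s. So after these reductions we may assume $L$ is $f$-generated and $\mathcal{B}$ is generated in degree one.

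To define $g$, choose generators $s_0, \dots, s_N \in B_1 = H^0(Y, L)$. Surjectivity of $f^*f_*L \to L$ means the loci $Y_{s_i} = \{ s_i \neq 0 \}$ cover $Y$. On $Y_{s_i}$ define $g$ by the ring map $B_{(s_i)} \to \O(Y_{s_i})$, sending $b/s_i^k \mapsto b|_{Y_{s_i}} / s_i^k$, which lands in $\O$ because $s_i^k$ trivializes $L^{\otimes k}$ there. These local maps agree on overlaps and give a morphism $g \colon Y \to \mathrm{Proj}\, B$ over $X$. This is the usual morphism attached to the surjection $B_1 \otimes \O_Y \to L$, as in Stacks 01N8. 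By construction $g^*\O_Z(1) \cong L$, and this isomorphism is canonical (independent of the choice of generators), so both $g$ and the isomorphism glue over the cover of $X$. The diagram commutes by construction. Moreover $g$ is projective: $f$ is projective and $h$ is separated, hence so is $g$ (Stacks 01W7 / 0C4P).

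Since $\mathcal{B}$ is generated in degree $1$ and finitely generated, $\O_Z(1)$ is invertible and $h$-very ample (Stacks 01O4). So $M = \O_Z(1)$, which is $\O_Z(m)$ for the original $L$, satisfies $g^*M \cong L^{\otimes m}$.

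The main obstacle is bookkeeping rather than substance. One must ensure that a single $m$ simultaneously makes $L^{\otimes m}$ $f$-generated and the Veronese generated in degree one. One must also verify that the local constructions glue canonically. Quasi-compactness of $X$ (Noetherian) handles uniformity of $m$ over a finite affine cover. Canonicity of the morphism associated to a globally generated line bundle and a surjection handles the gluing.
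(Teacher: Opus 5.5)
Your proposal is correct and follows essentially the same route as the paper. In both, you pass to a sufficiently divisible $m$ so that $L^{\otimes m}$ is $f$-generated and the Veronese $\mathcal{B}^{(m)}$ is generated in degree one, then identify $Z$ with $\operatorname{Proj}_X \mathcal{B}^{(m)}$ so that $\mathcal{O}_Z(m)$ becomes the very ample $\mathcal{O}(1)$. The morphism $g$ comes from the surjection $f^*f_*L^{\otimes m} \to L^{\otimes m}$ via the universal property of Proj, and $g$ is projective because $f$ is projective and $h$ is separated. The only cosmetic difference is that you build $g$ by hand over affine opens of $X$ and glue, whereas the paper applies the relative universal property (Stacks 01N8) directly over $X$.
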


\begin{proof}
The fact that $\mathcal{B}$ is a finitely generated graded \(\mathcal O_X\)-algebra is precisely Lemma~\ref{lem:Rsectionringfg}. Choose \(m>0\), sufficiently divisible, such that \(L^{\otimes m}\) is \(f\)-generated and the \(m\)-th Veronese algebra
\[
  \mathcal{B}^{(m)}
  :=
  \bigoplus_{n\geq0} f_*L^{\otimes mn}
\]
is generated in degree one.  The latter is possible by
\cite[Lemma~10.56.2, Tag~0EGH]{StacksProject} since $X$ is assumed to be Noetherian.

Passing to a Veronese does not change relative Proj, so there is a
canonical isomorphism
\[
  Z
  =
  \operatorname{Proj}_X \mathcal{B}
  \cong
  \operatorname{Proj}_X \mathcal{B}^{(m)};
\]
see \cite[Lemma~27.11.8, Tag~0B5J]{StacksProject}.  Under this
isomorphism, $M:=\mathcal O_Z(m)$ corresponds to the twisting sheaf
\(\mathcal O_{\operatorname{Proj}_X \mathcal{B}^{(m)}}(1)\).  Since
\(\mathcal{B}^{(m)}\) is generated in degree one, this is an invertible
\(h\)-very ample sheaf.  In particular, $M$ is \(h\)-ample.

The graded algebra morphism
\[
  f^*\mathcal{B}^{(m)}
  \longrightarrow
  \bigoplus_{n\geq0}L^{\otimes mn}
\]
has degree-one component $f^*f_*L^{\otimes m} \longrightarrow L^{\otimes m}$, which is surjective because \(L^{\otimes m}\) is \(f\)-generated. The universal property of Proj (see \cite[Lemma~27.12.1, Tag~01N8]{StacksProject}) therefore gives an \(X\)-morphism
\[
  g\colon Y\longrightarrow
  \operatorname{Proj}_X \mathcal{B}^{(m)}
  \cong Z
\]
such that $g^*\mathcal O_Z(m)\cong L^{\otimes m}$. Since \(g\) is an
\(X\)-morphism, $f=h\circ g$. That is, the diagram is commutative. 

Finally, \(h\) is projective and hence separated, while
\(h\circ g=f\) is projective.  Hence \(g\) is projective by
\cite[Lemma~29.44.15, Tag~0C4Q]{StacksProject}.
\end{proof}

In applications, it is useful to note that, under the hypothesis of Proposition~\ref{prop:semiample-induced-morphism}:

\begin{Corollary}\label{cor:semiample-induced-morphism-corollary}
    If, in addition $X,Y$ are integral, $Y$ is normal and $f$ is birational then $Z$ is integral and normal, the morphisms $g,h$ are birational and $g_* \mathcal{O}_Y \cong \mathcal{O}_Z$. 
\end{Corollary}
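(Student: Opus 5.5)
The plan is to use the factorization $f = h \circ g$ from Proposition~\ref{prop:semiample-induced-morphism}, together with the Stein factorization, and prove the four claims in the order: $g_*\O_Y \cong \O_Z$, then integrality and normality of $Z$, then birationality of $g$ and $h$.

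\emph{Step 1: the key identification.} By construction $Z = \Proj_X \mathcal{B}$ and $L^{\otimes m} \cong g^* M$ with $M = \O_Z(m)$ $h$-ample. Proposition~\ref{prop:Steinamplefactor}, applied to this factorization, identifies the Stein factorization $Z' = \Spec_Z g_*\O_Y$ with $\Proj_X \bigoplus_{n\ge0} f_* L^{\otimes mn}$. The latter is $\Proj_X \mathcal{B}^{(m)} \cong Z$. I must check that the composite $Z' \to Z$ obtained this way is the finite map $k$ from \eqref{eq:gStein}. Both are $X$-morphisms through which $Y \to Z$ factors, and both pull $\O(1)$ back to the same sheaf, so this should follow from the universal property of Proj. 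Granting it, $k$ is an isomorphism, hence $g_*\O_Y \cong \O_Z$. This compatibility check is the main obstacle, because Proposition~\ref{prop:Steinamplefactor} only asserts an abstract isomorphism over $X$. My first attempt will be to trace the construction: the isomorphism comes from $q_* M'^{\otimes n} \cong f_* L^{\otimes mn}$, and composing with $k$ should give the tautological map of section algebras.

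\emph{Step 2: integrality and normality of $Z$.} Since $g_*\O_Y = \O_Z$, the map $g$ is dominant: an affine open $V \subseteq Z$ with $g^{-1}(V)$ empty would have $\O_Z(V) = 0$. As $g$ is also projective, hence closed, it is surjective. For affine $V$, the ring $\O_Z(V) = \O_Y(g^{-1}V)$ is a domain, because $Y$ is integral and $g^{-1}(V)$ is nonempty. So $Z$ is integral. For normality, take $s$ in the fraction field of $\O_Z(V)$ that is integral over $\O_Z(V)$. Then $s$ is a rational function on $Y$ integral over $\O_Y(W)$ for every affine $W \subseteq g^{-1}(V)$. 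Since $Y$ is normal, $s$ is regular on $g^{-1}(V)$, so $s \in \O_Y(g^{-1}V) = \O_Z(V)$.

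\emph{Step 3: birationality.} The map $f$ is birational, so there is a dense open $U \subseteq X$ over which $f$ is an isomorphism. Over $U$ we have $f_*\O_Y = \O_U$ and $L|_{f^{-1}U}$ is the pullback of a line bundle on $U$, which is locally trivial. Hence, locally on $U$, $\mathcal{B}|_U \cong \O_U[t]$ and $h^{-1}(U) = \Proj_U \O_U[t] \cong U$. Therefore $h$ is an isomorphism over $U$, and $h^{-1}(U)$ is a nonempty open, hence dense, subset of the integral scheme $Z$. So $h$ is birational, and then $g$ is birational, since $h \circ g = f$ is an isomorphism over $U$.
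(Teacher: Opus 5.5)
The paper gives no proof of this corollary; it is recorded as a consequence of Propositions~\ref{prop:Steinamplefactor} and~\ref{prop:semiample-induced-morphism}. Your outline is the natural way to fill that in, and it is correct in structure.

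Steps 2 and 3 are fine as written. Step 1 is the only place where it matters that $\mathcal{B}$ is the \emph{full} section algebra. For instance, if $f$ is the normalization of a cuspidal curve and $L=\O_Y$, replacing $\mathcal{B}$ by $\O_X[t]$ would give $Z=X$, which is not normal. So normality cannot come from Steps 2 and 3 alone, and Step 1 needs a genuine argument.

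Your stated reason for the compatibility in Step 1 is not sufficient. Let $\phi\colon Z'\to Z$ be the isomorphism from Proposition~\ref{prop:Steinamplefactor} composed with the Veronese isomorphism. Knowing that $k$ and $\phi$ both satisfy $(\cdot)\circ g'=g$ and pull $\O_Z(1)$ back to isomorphic sheaves does not force $k=\phi$ via the universal property of $\Proj$. A morphism to $\Proj_X\mathcal{B}^{(m)}$ is determined by a line bundle \emph{together with} a surjection onto it, and abstract isomorphism of the line bundles does not pin that surjection down.

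The missing ingredient is $g'_*\O_Y=\O_{Z'}$, which holds by construction of the Stein factorization. It has two consequences:
\begin{enumerate}
\item[(i)] $g'$ is surjective, since an open subset of $Z'$ missing the closed image would have zero ring of sections;
\item[(ii)] $\O_{Z'}\to g'_*\O_Y$ is injective.
\end{enumerate}
Together these make $g'$ an epimorphism of schemes. Two morphisms out of $Z'$ that agree after precomposition with $g'$ agree on points by (i) and on sheaves by (ii). Equivalently, in your universal-property language, the projection formula gives $N\cong g'_*g'^*N$ for every line bundle $N$ on $Z'$. So the data defining $k$ and $\phi$ are determined by their pullbacks to $Y$.

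You also need $\phi\circ g'=g$. This requires identifying the unlabeled arrow $Y\to\Proj_X\mathcal{B}$ in Proposition~\ref{prop:Steinamplefactor} with the morphism $g$ of Proposition~\ref{prop:semiample-induced-morphism}. That identification holds: the proof of Proposition~\ref{prop:Steinamplefactor} shows the composite $Y\to Z'\to\Proj$ is the morphism given by $g'^*M'=g^*M\cong L^{\otimes m}$ and its relative sections $f_*L^{\otimes mn}$. This is exactly the datum (the surjection $f^*f_*L^{\otimes m}\to L^{\otimes m}$) used to define $g$.

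With these two points, $k=\phi$ is an isomorphism, so $g^\#\colon\O_Z\to g_*\O_Y$ is an isomorphism. The rest of your argument then goes through unchanged.
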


We recall that a $\mathbb{Q}$-Cartier divisor $D$ is $f$-movable if there exists some $m > 0$ such that $mD$ is a Cartier divisor and the support of the cokernel of $f^* f_* \mathcal{O}(mD) \to \mathcal{O}(mD)$ has codimension at least two in $Y$. The following technical result is required in the proof of Corollary~\ref{cor:MoveconeGIT}.  

\begin{Lemma}\label{lem:technicalnonmovable}
Let $f,g,h$ be as in \eqref{eq:factorfgh} and assume that $Y,Z$ are integral and normal and $g$ birtaional. Assume, moreover, that $Y$ is $\mathbb{Q}$-factorial. If $E \subset Y$ is a $g$-exceptional prime divisor and $A$ a $h$-ample Cartier divisor on $Z$ then $g^* A + \epsilon E$ is not $f$-movable for all rational $\epsilon > 0$. 
\end{Lemma}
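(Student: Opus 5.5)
We argue by contradiction. Suppose that for some rational $\epsilon > 0$ the divisor $D := g^*A + \epsilon E$ is $f$-movable. Choose $m > 0$ such that $mD$ is Cartier, $mA$ is Cartier and $h$-very ample, $m\epsilon \in \Z$, and the cokernel of $f^*f_*\O(mD) \to \O(mD)$ is supported in codimension at least two. The question is local on $X$, so we may assume $X = \Spec A_0$ is affine. Then $f$-movability says that the base locus of the linear system $H^0(Y, \O(mD))$ has codimension at least two. In particular, the generic point of $E$ is not a base point, so some section $s \in H^0(Y, \O(mD))$ does not vanish identically along $E$. Viewing $s$ as a rational function with $\mathrm{div}(s) + mD \ge 0$, this says that the effective divisor $\mathrm{div}(s) + m g^*A + m\epsilon E$ does not contain $E$ in its support.

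The key step is to compare sections on $Y$ with sections on $Z$. Since $Z$ is normal and $g$ is birational and projective, \cref{cor:semiample-induced-morphism-corollary} (or directly Zariski's main theorem) gives $g_*\O_Y = \O_Z$. The divisor $m\epsilon E$ is effective and $g$-exceptional, and here I want the standard negativity statement
\[ g_*\O_Y(g^*(mA) + m\epsilon E) = \O_Z(mA). \]
To prove it, work on the open set $V \subset Z$ whose complement has codimension at least two, over which $g$ is an isomorphism away from exceptional divisors. Any section of $\O_Y(g^*(mA) + m\epsilon E)$ over $g^{-1}(W)$ restricts, over $W \cap V$ with the exceptional locus removed, to a section of $\O_Z(mA)$. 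By normality of $Z$ (Hartogs), it extends to a section $t$ of $\O_Z(mA)$ on $W$, and then $g^*t$ agrees with the original section as rational sections. Hence
\[ H^0(Y, \O(mD)) = g^*H^0(Z, \O(mA)), \]
where the inclusion "$\supseteq$" is clear.

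Consequently $s = g^*t$ for some $t$. Then $\mathrm{div}(s) + m g^*A = g^*(\mathrm{div}(t) + mA)$ is effective, which forces
\[ \mathrm{div}(s) + mD = g^*(\mathrm{div}(t) + mA) + m\epsilon E \ge m\epsilon E. \]
So $E$ lies in the support of the divisor of every section, i.e. $E$ is contained in the base locus. This contradicts the first paragraph, since $E$ is a divisor and so has codimension one. The $\Q$-factoriality of $Y$ is what makes $D$ $\Q$-Cartier, so that the movability condition makes sense. Note also that $g^*A$ need not itself be Cartier-pulled in any subtle way: $A$ is Cartier, so $g^*A$ is Cartier.

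The main obstacle is the Hartogs/negativity step: the section of $\O_Z(mA)$ obtained on a big open set must extend across codimension-two loci of $Z$, and the pullback must coincide with $s$. This needs exactly the normality of $Z$ together with the fact that $g(E)$ has codimension at least two in $Z$, because $E$ is $g$-exceptional. I would state and prove this carefully using that reflexive sections of the line bundle $\O_Z(mA)$ extend over codimension-two subsets of a normal scheme.
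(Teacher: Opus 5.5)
Your proof is correct and is essentially the paper's argument: both hinge on the identity $g_*\mathcal{O}_Y(g^*(mA)+m\epsilon E)\cong\mathcal{O}_Z(mA)$, which follows from $g_*\mathcal{O}_Y=\mathcal{O}_Z$, the fact that $g(E)$ has codimension at least two, and normality of $Z$, and which forces every section of $\mathcal{O}_Y(mD)$ to vanish along $E$. The differences are only presentational. The paper avoids localizing to affine $X$ by noting that the counit $f^*f_*\to\mathrm{id}$ factors through $g^*g_*\to\mathrm{id}$. Your extra requirement that $mA$ be $h$-very ample is never used, and imposing it would need the remark that base loci shrink under passing to multiples.
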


\begin{proof}
    Choose $m > 0$ such that $m \epsilon$ is an integer and $m \epsilon E$ is Cartier (possible since $Y$ is assumed to be $\mathbb{Q}$-factorial). Let $A' = m A$. Since $m \epsilon E$ is an effective Cartier divisor it has a canonical section $s \in H^0(Y,\mathcal{O}_Y(m \epsilon E))$ \cite[Definition~31.15.1,~Tag 01WX]{StacksProject}. Equivalently, $s$ is an injection $\mathcal{O}_Y \hookrightarrow \mathcal{O}_Y(m \epsilon E)$. 
    
    Since $Z$ is assumed normal and $E$ is $g$-exceptional, the composite 
    \[
    \mathcal{O}_Z \stackrel{\sim}{\longrightarrow} g_* \mathcal{O}_Y \stackrel{g_* s}{\longrightarrow} g_* \mathcal{O}_Y(m \epsilon E)
    \]
    is an isomorphism. Indeed, the first isomorphism is a consequence of Zariski's main theorem and the composite is an isomorphism away from the closed set $g(E)$. Since $g(E)$ has codimension at least two in $Z$, which is normal and hence $S_2$, and $g_* \mathcal{O}_Y(m \epsilon E)$ is a reflexive rank one sheaf, it follows that the composite is an isomorphism. 
    
Multiplication by $s$ also defines an injection $j \colon \mathcal{O}_Y(g^* A') \hookrightarrow \mathcal{O}_Y(g^* A' + m \epsilon E)$ and it follows from the previous isomorphism together with the projection formula that $g_* j : g_* \mathcal{O}_Y(g^* A') \to g_* \mathcal{O}_Y(g^* A' + m \epsilon E)$ is an isomorphism. Therefore, we have a commutative diagram 
    \[
    \begin{tikzcd}
        g^* g_* \mathcal{O}_Y(g^* A') \ar[r,"g^* g_* j"] \ar[d] & g^* g_* \mathcal{O}_Y(g^* A' + m \epsilon E) \ar[d,"\phi"] \\
       \mathcal{O}_Y(g^* A') \ar[r,"j"] & \mathcal{O}_Y(g^* A' + m \epsilon E)
    \end{tikzcd}
    \]
    where the top horizontal morphism is an isomorphism and the left vertical morphism is surjective since $g^* A'$ is $g$-generated. This implies that the image of $\phi$ is contained in $s \cdot \mathcal{O}_Y(g^* A')$. But $s$ vanishes along $E$, therefore every local section of the image of $\phi$ also vanishes along $E$. Finally, since $f = h \circ g$, the adjunction $f^* f_* \mathcal{O}_Y(g^* A' + m \epsilon E) \to \mathcal{O}_Y(g^* A' + m \epsilon E)$ factors through $g^* g_* \mathcal{O}_Y(g^* A' + m \epsilon E) \to \mathcal{O}_Y(g^* A' + m \epsilon E)$. Therefore, $E$ is contained in the support of the cokernel of $f^* f_* \mathcal{O}_Y(g^* A' + m \epsilon E) \to \mathcal{O}_Y(g^* A' + m \epsilon E)$. Since $g^* A' + m \epsilon E = m (g^* A + \epsilon E)$ and $m$ was arbitrary subject to $m \epsilon \in \mathbb{Z}$ and $m \epsilon E$ Cartier, we deduce that $g^* A + \epsilon E$ is not $f$-movable.  
\end{proof}

\section{Geometry of $\widetilde{\mathcal{N}}^P$} In this section, we set some notation and record some results on the geometry of $\tcN^P$ and related varieties. Since it simplifies the exposition, in this section (only) we identify $\LG \cong \LG^*$ and think of $\mathcal{N}$ as a closed subvariety of $\LG$. 

\subsection{Preliminary Geometry of $\tcN^P$} First, we let $\LP \cap \N$ denote the intersection of $\LP$ and $\N$ in $\LG$ \textit{with its reduced scheme structure}. 

\begin{Proposition}\label{Integrality of Intersection}
There is an isomorphism $\LP \cap \N \cong \N_L \times \LU_P$. In particular, the reduced scheme $\LP \cap \N$ is integral.
\end{Proposition}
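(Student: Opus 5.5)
We prove the claimed isomorphism $\LP \cap \N \cong \N_L \times \LU_P$ using the Levi decomposition $\LP = \LL \oplus \LU_P$, where $L$ is the standard Levi subgroup of $P$ containing $T$. Let $\pi \colon \LP \to \LL$ be the projection with kernel $\LU_P$; it is a $P$-equivariant Lie algebra homomorphism, since $\LU_P$ is an ideal in $\LP$.

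First we show that, as sets, $\LP \cap \N = \pi^{-1}(\N_L)$, where $\N_L \subset \LL$ is the nilpotent cone of $\LL$.
\begin{enumerate}
\item Let $x \in \LP$. Its semisimple part $x_s$ lies in $\LP$, because $\LP$ is an algebraic Lie subalgebra. Up to $P$-conjugacy, $x_s$ lies in a Cartan subalgebra of $\LP$, and we may take this Cartan subalgebra to be $\LT \subset \LL$.
\item Write $x = l + u$ with $l \in \LL$ and $u \in \LU_P$. Since $\pi$ is a homomorphism, $\pi(x) = l$ has Jordan decomposition $\pi(x_s) + \pi(x_n)$.
\item If $x$ is nilpotent, then $\pi(x)$ is nilpotent in $\LL$.
\item Conversely, suppose $l$ is nilpotent. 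Choose a Borel subalgebra $\LB_L$ of $\LL$ containing $l$ in its nilradical. Then $\LB_L \oplus \LU_P$ is a Borel subalgebra of $\LG$ whose nilradical is $\mathfrak{n}_L \oplus \LU_P$. Both $l$ and $u$ lie in this nilradical, so $x = l+u$ is nilpotent.
\end{enumerate}
Hence $\LP \cap \N = \pi^{-1}(\N_L)$ set-theoretically.

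Next, the linear isomorphism $\LP \cong \LL \times \LU_P$ identifies $\pi^{-1}(\N_L)$ with $\N_L \times \LU_P$ as closed subsets. Since $\LP \cap \N$ carries its reduced structure, it suffices that $\N_L \times \LU_P$ is reduced. Here $\N_L$ is taken with its reduced structure, and $\N_L$ is integral: it is the nilpotent cone of the reductive Lie algebra $\LL$, equal to the nilcone of $[\LL,\LL]$, which is irreducible as the closure of the regular nilpotent orbit. A product of a reduced irreducible variety with an affine space over $\mathbb{C}$ is reduced and irreducible, hence integral. Both reduced closed subschemes have the same underlying set, so they coincide, giving the isomorphism and the integrality.

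The main step is the set-theoretic equality, specifically the converse direction. The cleanest route is the Borel-containment argument in step (4) above. Alternatively, one can use a cocharacter $\mathbb{G}_m \to Z(L)$ that acts on $\LU_P$ with positive weights, so that $\lim_{t\to 0} t\cdot(l+u) = l$. Since $\N$ is closed and conic and $l$ is nilpotent, the characteristic polynomial of $\operatorname{ad}$ is constant along this orbit closure, so $l+u$ has the same nilpotency invariants as $l$. Irreducibility of $\N_L$ is taken from Kostant's results applied to $[\LL,\LL]$.
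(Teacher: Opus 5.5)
Your argument is correct, but it follows a different route from the paper.

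\textbf{The paper's route.} The paper works scheme-theoretically with invariants. An element $h \in \mathrm{Lie}(Z(L))$ acting on $\mathfrak{u}_P$ with positive eigenvalues forces $\mathcal{O}(\mathfrak{p})^L = \mathcal{O}(\mathfrak{l})^L$. Hence $\mathfrak{p} \hookrightarrow \mathfrak{g} \to \mathfrak{g}\sslash G$ factors through $\mathfrak{p} \to \mathfrak{l} \to \mathfrak{l}\sslash L$. This gives $\mathfrak{p}\times_{\mathfrak{g}\sslash G}\{0\} \cong \mathfrak{p}\times_{\mathfrak{l}\sslash L}\mathrm{Spec}(S)$, where $S = \mathcal{O}(\mathfrak{l})^L \otimes_{\mathcal{O}(\mathfrak{g})^G} k$ is finite-dimensional. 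After passing to reductions, Kostant's theorem that $\mathfrak{l}\times_{\mathfrak{l}\sslash L}\{0\}$ is reduced finishes the proof.

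\textbf{Your route.} You instead prove $\mathfrak{p}\cap\mathcal{N} = \pi^{-1}(\mathcal{N}_L)$ on closed points. One direction uses Jordan decomposition; the other uses the fact that $\mathfrak{b}_L \oplus \mathfrak{u}_P$ is an $L$-conjugate of $\mathfrak{b}$. You then compare reduced structures.

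\textbf{Comparison.} Your approach is more elementary and fully sufficient, since $\mathfrak{p}\cap\mathcal{N}$ is defined with its reduced structure, and it needs only the irreducibility of $\mathcal{N}_L$. The paper's version yields slightly more. It exhibits the scheme-theoretic intersection as a nilpotent thickening of $\mathcal{N}_L\times\mathfrak{u}_P$. It also identifies $\mathcal{N}_L$ directly with the scheme-theoretic fibre $\mathfrak{l}\times_{\mathfrak{l}\sslash L}\{0\}$. That is the form in which Kostant's normality and complete-intersection results are later applied to $\widetilde{\mathcal{N}}^P$; with your reduced $\mathcal{N}_L$ you would cite Kostant's reducedness to match the two. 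Your cocharacter alternative is essentially the paper's argument carried out pointwise.

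\textbf{Two small points.}
\begin{itemize}
\item In step (2), $\pi$ preserves Jordan decompositions because it is the differential of the morphism of algebraic groups $P \to P/U_P \cong L$. Being a Lie algebra homomorphism alone does not suffice, since $\mathfrak{p}$ is not semisimple.
\item Step (1) is never used.
\end{itemize}
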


\begin{proof}
Fix a Levi subalgebra $\mathfrak{l} \subset \mathfrak{p}$ so that $\mathfrak{p} = \mathfrak{l} \oplus \LU_P$, where $\LU_P$ is the nilpotent radical of $\mathfrak{p}$. Let $L \subset P$ be the connected closed subgroup whose Lie algebra is $\mathfrak{l}$. We claim that the diagram
\begin{equation}\label{The Rep Theory Diagram for P}
\begin{tikzcd}
\mathfrak{p} \ar[d] \ar[rr,hook] & & \mathfrak{g} \ar[d] \\
 \mathfrak{l} \ar[r] & \mathfrak{l} \sslash L \ar[r] & \mathfrak{g} \sslash G 
\end{tikzcd}
\end{equation} obtained from the obvious closed embedding and quotient maps commutes. If we denote by $\phi \colon \mathfrak{p} \to \mathfrak{g} \sslash G$ the composite going right then down and $\psi \colon \mathfrak{p} \to \mathfrak{g} \sslash G$ the composite going down then right, then for $f \in \mathcal{O}(\mathfrak{g})^G$ and $x \in \mathfrak{p}$, $\phi^*(f)(x) = f(x)$ and $\psi^*(f)(x) = f(x_{\mathfrak{l}})$ where $x = x_{\mathfrak{l}} + x_{\LU_P}$ is the decomposition in $\mathfrak{p} =  \mathfrak{l} \oplus \LU_P$. We must show that $f(x) = f(x_{\mathfrak{l}})$. We may assume that $f \in \mathcal{O}(\mathfrak{p})^L = (\mathcal{O}(\mathfrak{l}) \oplus \LU_P^* \mathcal{O}(\mathfrak{p}))^L$. Let $h \in \mathrm{Lie}(Z(L))$ such that the eigenvalues of $\mathrm{ad}(h)$ on $\LU_P$ are all strictly positive. Then $(\LU_P^* \mathcal{O}(\mathfrak{p}))^{\mathrm{ad}(h)} = 0$ and hence $\mathcal{O}(\mathfrak{p})^L = \mathcal{O}(\mathfrak{l})^L$. Then it is clear that $f(x) = f(x_{\mathfrak{l}})$.

Observe that both squares in the diagram 
\begin{equation}\label{The Rep Theory Diagram for P Over Quotient of 0}
\begin{tikzcd}
\mathfrak{p} \times_{\mathfrak{l}\sslash L} \Spec(S) \ar[d] \ar[r,hook] & \Spec(S) \ar[d, hook]\ar[r]  & \overline{0} \ar[d, hook] \\
 \mathfrak{p} \ar[r] & \mathfrak{l}\sslash L \ar[r] & \LG\sslash G 
\end{tikzcd}
\end{equation} are Cartesian, where $S := \O(\mathfrak{l})^L \otimes_{\O(\LG)^G} k$. Because of this, we obtain a canonical isomorphism \[\mathfrak{p} \times_{\mathfrak{l}\sslash L} \Spec(S)  \xrightarrow{\sim} \LP \times_{\LG\sslash G} \overline{0}\] from which we deduce \begin{equation}\label{Cartesian Diagram and Reduction}\LP \cap \N := (\LP \times_{\LG \sslash G} \overline{0})^{\mathrm{red}} \xleftarrow{\sim} (\LP \times_{\mathfrak{l}\sslash L} \Spec(S))^{\mathrm{red}}.\end{equation} 

Next, observe that $S$ is a finite extension of $k$: this follows, for example, from the fact that the composite $\LT \to \mathfrak{l}\sslash L \to \LG\sslash G$ is a finite map and \cite[Theorem 11.1.1]{VakilRisingSeaFoundationsofAlgebraicGeometry}. Therefore all positively graded elements $S_+$ of $S$ are nilpotent. We therefore obtain that \begin{equation}\label{Reduced Is Integral}\N_L \times \LU_P \cong (\LP \times_{\mathfrak{l}\sslash L} \Spec(S))^{\mathrm{red}}.\end{equation} from the fact that \[\Spec((\O(\LP) \otimes_{\O(\mathfrak{l})^L} S)/S_+) \cong \LP \times_{\mathfrak{l}\sslash L} \overline{0} \cong \N_L \times \LU_P\] is reduced. Combining \labelcref{Cartesian Diagram and Reduction} and \labelcref{Reduced Is Integral} we deduce that $\LP \cap \N \cong \N_L \times \LU_P$ as desired.
%
\end{proof}

Let $\tcN^P := G \times^P (\LP \cap \N).$ Observe that the inclusion $P' \subseteq P$ naturally induces a map $\nu_{P'}^{P}: \tcN^{P'} \to \tcN^{P}$. In particular, if $P' = B$ we have constructed a map $\nu_B^{P}: \widetilde{\mathcal{N}} = \widetilde{\mathcal{N}}^{B} \to \widetilde{\mathcal{N}}^{P}$ which we will also denote by $\nu^{P}$. 

We now record some results on the geometry of $\tcN^P$, which are likely well known, but which we were unable to locate proofs in the literature outside of the case $B = P$ \cite{SpringerTheUnipotentVarietyofaSemisimpleGroup} and $P = G$ \cite{KostantLieGroupRepresentationsonPolynomialRings}:

\begin{Lemma}\label{tcNP is normal} For any parabolic subgroup $P$, we have:
\begin{enumerate}
    \item The scheme $\tcN^P$ is integral, separated, quasiprojective, and of finite type over $k$. 
    \item The dimension of $\tcN^P$ is equal to that of the nilpotent cone. 
    \item The maps $\nu^P_{P'}$ are projective and birational for any parabolic $P' \subseteq P$.
    \item The variety $\tcN^P$ is normal and a local complete intersection; in particular, $\tcN^P$ is Gorenstein and Cohen-Macaulay.
    \item The variety $\tcN^P$ is $\mathbb{Q}$-factorial. 
\end{enumerate}
\end{Lemma}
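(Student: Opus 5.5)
Most of the statement follows from the description of $\tcN^P$ as an associated bundle over $G/P$ with fiber $\LP \cap \N \cong \N_L \times \LU_P$ (\cref{Integrality of Intersection}). We treat the five parts in order.

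\emph{Parts (1) and (2).} The projection $\tcN^P \to G/P$ is Zariski locally trivial, since $G \to G/P$ is locally trivial (via big cells). Its fiber is the integral affine variety $\N_L \times \LU_P$. Hence $\tcN^P$ is integral, of finite type and separated. It embeds as a closed subvariety of the vector bundle $G \times^P \LP \to G/P$, which is quasiprojective because $G/P$ is projective; so $\tcN^P$ is quasiprojective. For the dimension, Kostant gives $\dim \N_L = \dim L - \mathrm{rk}$, so
\[
\dim \tcN^P = \dim G/P + \dim \N_L + \dim \LU_P = \dim \LU_P + (\dim L - \mathrm{rk}) + \dim \LU_P = \dim G - \mathrm{rk} = \dim \N,
\]
using $\dim G/P = \dim \LU_P$ and $\dim G = \dim L + 2\dim \LU_P$.

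\emph{Part (3).} The map $\nu^P_{P'}$ factors as a closed embedding followed by a base change of the proper map $G/P' \to G/P$:
\[
G\times^{P'}(\LP'\cap\N) \hookrightarrow G \times^{P'} (\LP\cap\N) \cong G/P' \times_{G/P} \tcN^P .
\]
It is therefore projective, since $G/P' \to G/P$ is projective. For birationality we use the regular nilpotent orbit. The Springer map $\tcN \to \N$ is birational, and it factors as $\nu^P \circ \nu_B^{P}$ composed with $\tcN^P \to \N$. Every map in this factorization is dominant between irreducible varieties of the same dimension, so each has generic degree $1$ and is birational.

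\emph{Part (4).} Both properties are local in the fiber, so they reduce to $\N_L \times \LU_P$. Normality follows from Kostant's theorem for the reductive group $L$: $\N_L$ is normal, and a product with an affine space is normal. For the complete intersection property, Kostant shows $\N_L$ is cut out in $\mathfrak l$ by the $\mathrm{rk}$ basic invariants and has codimension $\mathrm{rk}$, so $\N_L$ is a complete intersection. Hence $\tcN^P$ is lci, and therefore Gorenstein and Cohen--Macaulay.

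\emph{Part (5).} This is the step I expect to be the main obstacle. The plan is to relate $\mathrm{Cl}$ and $\mathrm{Pic}$ through the birational map $\nu^P \colon \tcN \to \tcN^P$.
\begin{enumerate}
\item Since $\tcN$ is smooth, $\mathrm{Cl}(\tcN)=\mathrm{Pic}(\tcN)\cong X^\bullet(T)$ by \cref{Picard Group of Vector BUndles on Partial Flag Variety}.
\item Since $\tcN^P$ is normal, pushforward of Weil divisors gives a surjection $\mathrm{Cl}(\tcN)\to\mathrm{Cl}(\tcN^P)$. Its kernel is spanned by the $\nu^P$-exceptional prime divisors.
\item Hence $\mathrm{rk}\,\mathrm{Cl}(\tcN^P) = r - \#\{\text{exceptional divisors}\}$. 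On the other hand, $\mathrm{Pic}(\tcN^P)$ has rank $r-|S_P|$ by \cref{Picard Group of Vector BUndles on Partial Flag Variety}(2).
\end{enumerate}
So it suffices to show that $\nu^P$ has at least $|S_P|$ exceptional divisors. The candidates are the loci $D_\alpha$ for $\alpha\in S_P$: the preimages of the non-regular nilpotent locus of $L$ along the rank-one direction, i.e.\ divisors in $\tcN$ lying over the codimension-two subregular-type strata of $\N_L$. Equivalently, reduce to the Levi case: $\tcN \to \tcN^P$ is locally over $G/P$ the map $(P\times^B (\LB\cap\N)) \to \LP\cap\N$, which is essentially the Springer resolution of $\N_L$ times $\LU_P$. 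For the Springer resolution of $\N_L$ the relative Picard group is $X^\bullet(T_{L'})$ of the semisimple part, of rank $|S_P|$, and the $|S_P|$ irreducible components of the subregular fiber sweep out $|S_P|$ exceptional divisors.

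A cleaner alternative I would try first is to bypass divisor counting. Since $\N_L$ has rational singularities and $\mathrm{Cl}(\N_L)$ is torsion (indeed trivial, as $\N_L$ is a graded normal affine variety whose class group is controlled by the simply connected cover), one gets that $\mathrm{Cl}$ of the fiber is finite. Then local triviality over $G/P$ shows that every Weil divisor on $\tcN^P$ is, up to torsion, pulled back from $G/P$, hence $\mathbb Q$-Cartier.
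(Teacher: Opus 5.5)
Parts (1), (2) and (4) follow the paper's proof: local triviality over translates of the big cell, a closed embedding into a bundle over the projective $G/P$, and Kostant's results for $\N_L$. In (3) your projectivity argument is more direct. The paper first proves $\nu^G_{P'}$ projective and then cancels, using that the relative diagonal of $\nu^G_P$ is a closed embedding. Your factorization through $G\times^{P'}(\LP\cap\N)\cong G/P'\times_{G/P}\tcN^P$ avoids this. For birationality you still need to say why $\nu^{P'}_B$ is dominant. The paper notes that, since the dimensions agree, only the generic point of $\tcN^{P'}$ can map to the generic point of $\N$.

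For (5) your main plan is a genuinely different, global argument. It can be completed, but three steps need repair.

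\emph{(i) Independence.} The equality $\mathrm{rk}\,\mathrm{Cl}(\tcN^P)=r-\#\{\text{exceptional divisors}\}$ needs the exceptional prime divisors to be independent in $\mathrm{Cl}(\tcN)$. Without it, knowing there are at least $|S_P|$ of them gives no upper bound on $\mathrm{rk}\,\mathrm{Cl}(\tcN^P)$. Independence does hold: if $\sum a_iE_i=\mathrm{div}(f)$ with each $E_i$ exceptional, then $\mathrm{div}_{\tcN^P}(f)=\nu^P_*\mathrm{div}(f)=0$. So $f$ is a unit on the normal variety $\tcN^P$, hence on $\tcN$, and all $a_i=0$.

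\emph{(ii) Picard rank.} \cref{Picard Group of Vector BUndles on Partial Flag Variety}(2) does not apply, since $\tcN^P$ is not a vector bundle over $G/P$ when $P\neq B$. You only need $\mathrm{rk}\,\Pic(\tcN^P)\ge r-|S_P|$, though. This holds because the zero section $G/P\to\tcN^P$ makes $\Pic(G/P)\to\Pic(\tcN^P)$ injective.

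\emph{(iii) Counting exceptional divisors.} Your count via components of the subregular Springer fibre is inaccurate. In non-simply-laced type that fibre has more components than the rank (three for $B_2$), and $L$ may have several simple factors. Argue directly instead: for $\alpha\in S_P$, the map $\nu^{P_\alpha}_B$ sends $D_\alpha=G\times^B\LU_\alpha$ onto $G\times^{P_\alpha}\LU_\alpha$, which has dimension $\dim\tcN-2$, and $\nu^P=\nu^P_{P_\alpha}\circ\nu^{P_\alpha}_B$.

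With these fixes, $r-|S_P|\le\mathrm{rk}\,\Pic(\tcN^P)\le\mathrm{rk}\,\mathrm{Cl}(\tcN^P)\le r-|S_P|$. Hence $\mathrm{Cl}(\tcN^P)/\Pic(\tcN^P)$ is finite and $\tcN^P$ is $\Q$-factorial.

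The paper instead reduces to the Levi and shows $\mathrm{Cl}(\N_L)\cong\Pic(L/L_x)$ is finite, via $X^{\bullet}(L_x)\to\Pic(L/L_x)\to\Pic(L)$ and $L_x=Z(L)\times U'$. Your route avoids any orbit computation by using the smooth resolution $\tcN$, and it gives $\mathrm{rk}\,\Pic(\tcN^P)=r-|S_P|$ as a by-product. It does front-load the exceptional-divisor analysis that the paper carries out only after this lemma; there is no circularity, since that analysis uses only (2).

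Your ``cleaner alternative'' is the paper's route, but its key input is unsupported as written. Rational singularities do not force a torsion class group. Also, $\mathrm{Cl}(\N_L)$ is not trivial in general: for $\mathfrak{sl}_2$ it is $\Z/2\Z$. Finally, you only need finiteness of $\mathrm{Cl}$ on each chart $\N_L\times\mathbb{A}^m$, not a global statement that divisors are pulled back from $G/P$.
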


\begin{proof}[Proof of \cref{tcNP is normal}]
If $P = B$, then $\tcN^P = \tcN = T^*(G/B)$ is smooth and therefore normal. This shows claims (1), (2), (4), and (5) for $P = B$. 

Now assume $P \neq B$. Let $\overline{U}_P$ be the unipotent radical of the parabolic subgroup containing the Borel $\overline{B}$ in opposite position to $B$ and containing $T$. Observe that there is an open cover given by the big cell $\overline{U}_P \cong \overline{U}_PP/P$, together with finitely many $G$-translates of $\overline{U}_P$, such that the preimage of these open sets under the projection map are isomorphic to $\overline{U}_P \times \LP \cap \N$. The scheme $\overline{U}_P \times \LP \cap \N$ is quasicompact and of finite type since it is the product of two finite type affine $k$-schemes and thus is a finite type affine $k$-scheme. Moreover, since the product of integral schemes (over our algebraically closed field $k$) is integral, we obtain from \cref{Integrality of Intersection} that the product $\overline{U}_P  \times \LP \cap \N$ is integral. This proves $\tcN^P$ is integral, and of finite type over $k$. 

We now prove that $\tcN^P$ is quasiprojective and separated. To this end, we claim that the canonical map $\varphi: \tcN^P \xrightarrow{} G/P \times \mathfrak{g}$ given by the projection and the action map is a closed embedding. Since we may cover $\tcN^P$ by $G$-translates of the big cell, it suffices to verify that the restriction $\varphi|_{\overline{U}_P \times \LP \cap \mathcal{N}}$ is a closed embedding. But observe that $\varphi_{\overline{U}_P \times \LP \cap \N}$ factors as the composite \[\overline{U}_P \times \mathfrak{p} \cap \mathcal{N} \xrightarrow{\mathrm{id} \times \iota} \overline{U}_P \times \mathfrak{g} \xrightarrow{(u, \xi) \mapsto (u, u\xi)} \overline{U}_P \times \mathfrak{g}\] where $\iota$ is the inclusion map. The second map in this composite is an isomorphism (with inverse $(u, \nu) \mapsto (u, u^{-1}\nu)$) and the first map is evidently a closed embedding, so we see that $\varphi|_{\overline{U}_P \times \LP \cap \mathcal{N}}$ is a closed embedding. This proves that $\varphi$ is a closed embedding. Since $G/P$ itself is projective (see, for example, the discussion below the proof of \cite[Corollary 21.3.A]{HumphreysLinearAlgebraicGroups}), it follows that $\tcN^P$ is a closed subscheme of a quasiprojective scheme and therefore is quasiprojective. This proves (1).

Next, we compute the dimension of $\tcN^P$. Observe that we have a canonical map \[\LP \cap \N \to \N_L = \N \cap \mathfrak{l}\] induced by the quotient map $\LP \to \mathfrak{l}$ and that the fiber of this map over any $x \in \N_L$ is $\mathfrak{u}_P$. We obtain 
\[\mathrm{dim}(\LP \cap \N) = \mathrm{dim}(\N_L) + \mathrm{dim}(\LUP) = 2\, \mathrm{dim}(\mathfrak{u}_L) + \mathrm{dim}(\mathfrak{u}_P),
\]
by \cite[Corollary 12.4.2]{VakilRisingSeaFoundationsofAlgebraicGeometry}, and \cite[Theorem 0.7]{KostantLieGroupRepresentationsonPolynomialRings} respectively. Therefore,
\[
    \mathrm{dim}(\tcN^P) = \mathrm{dim}(G/P) + \mathrm{dim}(\N_L) + \mathrm{dim}(\LUP) = 2\, \mathrm{dim}(\LUP) + \mathrm{dim}(\N_L) = 2(\mathrm{dim}(\LUP) + \mathrm{dim}(\mathfrak{u}_L)) = 2\, \mathrm{dim}(\mathfrak{u})
\]
by, say, the parabolic Bruhat decomposition, \cite[Theorem 0.7]{KostantLieGroupRepresentationsonPolynomialRings} for the reductive group $L$, and the fact that $\LU = \LU_L \oplus \LU_P$, respectively. This shows (2).

We now show (3), and first show that $\nu^P_{P'}$ is projective for any parabolic subgroups $P' \subseteq P$. Observe that $\nu_{P'}^{G}$ factors as the composite \[\tcN^{P'} := G \times^{P'} (\mathfrak{p}' \cap \N) \xhookrightarrow{} G/{P'} \times \mathcal{N} \xrightarrow{\mathrm{pr}_\N} \N\] of a closed embedding and the projection map, which is projective since $G/{P'}$ is projective. Therefore $\nu^G_{P'}$ is projective for any parabolic $P'$. 

We next claim that the (relative) diagonal morphism $\Delta_{\nu^G_P}: \tcN^P \to \tcN^P \times_{\N} \tcN^P$ is a closed embedding. To see this, first observe that the canonical map $f: \tcN^P \times_{\N} \tcN^P \to \tcN^P \times \tcN^P$ is a closed embedding. Indeed, this follows from the standard fact that \begin{equation}
\begin{tikzcd}
\tcN^P \times_{\N} \tcN^P \ar[d] \ar[r, "f"] & \tcN^P \times \tcN^P \ar[d, "\nu^G_P \times \nu^G_P"]  \\
\N \arrow[r, "\Delta_\N"] &  \N \times \N
\end{tikzcd}
\end{equation} is Cartesian (see for example \cite[Exercise 1.2.S]{VakilRisingSeaFoundationsofAlgebraicGeometry}) as well as the fact that $\Delta_\N$ is a closed embedding (since $\N$ is affine) and that the property of a morphism being a closed embedding is stable under base change. Moreover, since $\tcN^P$ is separated by our above analysis, the diagonal morphism $\Delta: \tcN^P \to \tcN^P \times \tcN^P$ is a closed embedding. Now observe that $f \Delta_{\nu^G_P} = \Delta$. We have shown that $f$ is a closed embedding; in particular, the diagonal morphism $\Delta_f$ associated to $f$ is a closed embedding since all affine morphisms are separated. This fact, as well as the fact that $\Delta$ itself is a closed embedding, implies that $\Delta_{\nu^G_P}$ is a closed embedding by \cite[Theorem 11.1.1]{VakilRisingSeaFoundationsofAlgebraicGeometry}. Finally, since $\nu_{P}^G\nu^P_{P'} = \nu_{P'}^G$, $\nu_{P'}^G$ is projective by our above analysis, and $\Delta_{\nu^G_P}$ is a closed embedding (and, in particular, projective), we obtain that $\nu_{P'}^{P}$ is projective, by \cite[Exercise 17.3.C]{VakilRisingSeaFoundationsofAlgebraicGeometry}. 

We next prove that $\nu^{P}_{P'}$ is birational in the special case where $P' = B$. When we additionally have that $P = G$, we recall that this birationality is a celebrated result of Springer \cite{SpringerTheUnipotentVarietyofaSemisimpleGroup}. 
Now if $P'$ is a standard parabolic subgroup distinct from $G$, we can write $\nu_B^G = \nu_{P'}^{G}\nu_{B}^{P'}$. Since $\nu_B^G$ is a dominant morphism of integral schemes, the morphism $\nu_{B}^{P'}$ is dominant as well. Therefore $\nu_{B}^{P'}$ induces a map on the respective function fields. Since $\nu_B^G$ is birational, the generic point of $\mathcal{N}$ is in the image of $\nu_B^G$. Since $\nu^G_B = \nu_{P'}^{G}\nu_{B}^{P'}$, the generic point of $\mathcal{N}$ lies in the image of $\nu_B^{P'}$. Since the dimensions of the spaces $\tcN, \tcN^{P'}$, and $\tcN^G = \mathcal{N}$ all agree, we see that the \textit{only} point of $\tcN^{P'}$ that can map to the generic point of $\mathcal{N}$ is the generic point of $\tcN^{P'}$, and so $\nu_{P'}^G$ is dominant as well. Therefore $\nu_{P'}^{G'}$ and $\nu_{B}^{P'}$ induce maps on the respective function fields. Since $\nu^G_B = \nu_{P'}^{G}\nu_{B}^{P'}$ and $\nu_B^G$ induces an isomorphism of function fields, the maps $\nu_{P'}^G$ and $\nu_B^{P'}$ induce isomorphisms of function fields as well. Therefore, $\nu_B^{P'}$ and $\nu_{P'}^G$ are birational. Repeating the same argument but replacing the birationality of $\nu_B^G$ with the birationality of $\nu_B^{P'}$ shows that $\nu^{P'}_P$ is also birational.


Next, we show (4). Since $\tcN^P$ is Zariski locally isomorphic to the product $U \times \N_L \times \LU_P$ for some open subset $U \subseteq G/P$, we deduce that $\tcN^P$ is normal from the fact that $\N_L$ is normal \cite[Theorem 0.8]{KostantLieGroupRepresentationsonPolynomialRings}. By the same argument, we deduce from the fact that $\N_L$ is a complete intersection \cite[Theorem 0.7]{KostantLieGroupRepresentationsonPolynomialRings} that $\tcN^P$ is a local complete intersection. One can also use Stein factorization together with the fact that there is an isomorphism $\O_{\tcN^P} \xrightarrow{\sim} \nu_*^P(\O_{\tcN})$ by \cite[Lemma 3.2(2)]{ChanRiderSobajeExoticTStructureForPartialResolutionsofNilpotentCone} to deduce that $\tcN^P$ is normal.


 As in the previous paragraph, to show that $\tcN^P$ is $\mathbb{Q}$-factorial it suffices to show that $\mathcal{N}_L$ is $\mathbb{Q}$-factorial. Since the nilpotent cone of any split reductive group equals the nilpotent cone of its derived subgroup, we may assume $L$ is semisimple. Since $\mathcal{N}_L$ is an affine cone, its Picard group is trivial, see for example \cite[Lemma 7.1.1]{LosevMasonBrownMatvieievskyiUnipotentIdealsandHarishChandraBimodules}. Therefore, we need to show that its class group is finite. This class group is isomorphic to the class group of the regular nilpotent orbit since the irregular nilpotent elements have codimension two in $\N_L$. Write the regular orbit as $L/L_x$ where $x$ is some regular nilpotent element. Since $L/L_x$ is smooth, the class group and Picard group of $L/L_x$ agree. Moreover, we have an exact sequence \begin{equation}\label{Picard SES}X^{\bullet}(L_x) \to \mathrm{Pic}(L/L_x) \to \mathrm{Pic}(L)\end{equation} by for example \cite[Theorem 18.32]{MilneAlgebraicGroupsBook}. Since $L$ is semisimple, its Picard group is finite. Moreover, by \cite[Theorem 4.11]{SpringerSomeArithmeticalResultsonSemisimpleLieAlgebras}, $L_x$ is the product of a unipotent group $U'$ and the center of $L$. Therefore $X^{\bullet}(L_x) = X^{\bullet}(Z(L))$ is finite since $Z(L)$ is a finite group. 
\end{proof}

We refer the reader to \cite{HuKeel} for the definition of a Mori dream space and \cite{Ohta} for the relative version. We note that $\tcN \to \mathcal{N}$ is a (relative) Mori dream space because it is a projective symplectic resolution \cite{NamikawaPoissonDeformationsAndBirationalGeometry}. As a consequence of Lemma~\ref{tcNP is normal}, we note that:

\begin{Proposition}
    The variety $\tcN^P \to \N$ is a (relative) Mori dream space. 
\end{Proposition}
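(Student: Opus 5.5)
The plan is to deduce the statement from the fact that $\tcN \to \N$ is a relative Mori dream space, combined with the structure of $\nu^P \colon \tcN \to \tcN^P$ recorded in \cref{tcNP is normal}. The guiding principle is that a small $\mathbb{Q}$-factorial modification of a Mori dream space is again a Mori dream space. More relevant here: if $Y \to X$ is a Mori dream space and $g\colon Y \to Z$ is a birational contraction over $X$ with $Z$ normal and $\mathbb{Q}$-factorial, then $Z \to X$ is also a Mori dream space. This is the relative analogue of \cite[Proposition 1.11]{HuKeel}; see \cite{Ohta} for the relative setting. I will verify the hypotheses in our situation and indicate how the defining conditions transfer.

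Step 1: list the properties of $\tcN^P$. By \cref{tcNP is normal}, $\tcN^P$ is integral and normal, and it is $\mathbb{Q}$-factorial by part (5). The map $\nu^G_P\colon \tcN^P \to \N$ is projective and birational by part (3), and $\nu^P\colon \tcN \to \tcN^P$ is projective and birational with $\nu^G_P \circ \nu^P = \nu^G_B$. Since $\tcN^P$ is normal, Zariski's main theorem gives $\nu^P_*\O_{\tcN} = \O_{\tcN^P}$, so $\nu^P$ is a contraction.

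Step 2: transfer the Mori dream space conditions. First, \cref{Picard Group of Vector BUndles on Partial Flag Variety}(2) shows that $\Pic(\tcN^P)$ is a lattice; its rank is finite, equal to $r - |S_P|$. Second, the relative Cox ring of $\tcN^P$ over $\N$ is finitely generated. To see this, use \cref{For Proper Birational Maps Unit Map Is Iso on Line Bundles}: for $\lambda$ vanishing on $S_P$ we have $\mathcal{L}^{\tcN}_\lambda = (\nu^P)^*\mathcal{L}^{\tcN^P}_\lambda$, so $H^0(\tcN^P, \mathcal{L}^{\tcN^P}_\lambda) = H^0(\tcN, \mathcal{L}^{\tcN}_\lambda)$. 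Hence the Cox ring of $\tcN^P$ is the Veronese-type subring of the Cox ring of $\tcN$ graded by the saturated sublattice $\{\lambda : \lambda(\alpha^\vee)=0,\ \alpha^\vee \in S_P\}$, which is finitely generated because it is the invariants of a torus acting on a finitely generated algebra. Third, the nef cone of $\tcN^P$ over $\N$ must be generated by finitely many semiample classes, and the movable cone must decompose into finitely many nef cones of small $\mathbb{Q}$-factorial modifications. By the Hu--Keel characterization, finite generation of the Cox ring for a normal $\mathbb{Q}$-factorial variety with finitely generated Picard group already implies these cone conditions, via VGIT of the Cox ring. Alternatively, one can get them directly: $\nu^P$ is $\mathcal{L}$-negative for no movable divisor, so the movable cone and the nef cone of $\tcN^P$ are the pullback-faces of those of $\tcN$, and those are polyhedral.

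Step 3, which I expect to be the main obstacle: make the relative Hu--Keel characterization rigorous in this setting. The subtle points are that $\N$ is affine but not a point, and that one must check $\mathcal{O}(\N) = H^0(\tcN^P,\O)$. The latter follows from $\nu^G_P$ being birational onto the normal variety $\N$ via \cref{For Proper Birational Maps Unit Map Is Iso on Line Bundles}. Because the action of the relevant torus on $\Spec$ of the Cox ring has $\N$ as affine quotient, the relative statements reduce to the absolute GIT arguments. If this reduction becomes delicate, the fallback is to invoke the relative version directly from \cite{Ohta}, applied to the contraction $\nu^P$ of the Mori dream space $\tcN \to \N$.
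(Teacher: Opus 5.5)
Your proposal is essentially the paper's proof. The paper deduces the statement directly from \cite[Theorem~1.3]{Ohta} applied to $\tcN \xrightarrow{\nu^P} \tcN^P \xrightarrow{\nu^G_P} \N$, using only that both maps are projective with $f_*\O = \O$ (which follows from normality and birationality in \cref{tcNP is normal}), and that is exactly what your Steps~1 and~3 verify. The Cox-ring sketch in Step~2 is not needed and is loose as written: \cref{Picard Group of Vector BUndles on Partial Flag Variety} does not apply to $\tcN^P$, which is a bundle over $G/P$ with fibre $\N_L \times \LU_P$ rather than a vector bundle, and your claim that the movable and nef cones of $\tcN^P$ are pullback faces of those of $\tcN$ is asserted rather than argued, so let the Ohta citation carry the proof.
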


\begin{proof}
Since $\tcN$ is a (relative) Mori dream space, the claim follows from \cite[Theorem~1.3]{Ohta} since both the morphisms $\tcN \to \tcN^P$ and $\tcN^P \to \mathcal{N}$ are \textit{algebraic fibre spaces}, meaning that they are projective morphisms $f \colon X \to Y$ with $f_* \mathcal{O}_X = \mathcal{O}_Y$. The latter condition holds in our case because the target is normal and $f$ is birational and proper by Lemma~\ref{tcNP is normal}.     
\end{proof}

It is very unusual that all crepant partial resolutions of a symplectic singularity are (relative) Mori dream spaces. Even if a symplectic singularity admits a projective symplectic resolution, the crepant partial resolutions are usually not $\mathbb{Q}$-factorial and hence cannot be (relative) Mori dream spaces.

\subsection{An Explicit Divisorial Contraction} Fix a simple root $\alpha$ and let $P := P_{\alpha}$ denote the parabolic associated to the subset $\{\alpha\}$. Let $\mathfrak{u}_{\alpha}$ denote the Lie algebra of the unipotent radical of $P_{\alpha}$, or equivalently the nilradical of $\LP := \mathrm{Lie}(P)$. In this case, $[L,L]$ has rank one and hence the singular locus of $\tcN^{P_{\alpha}}$ equals $G \times^P \mathfrak{u}_{\alpha}$, which is precisely the image of the irreducible divisor 
\[
D_{\alpha} := G \times^B \mathfrak{u}_{\alpha} \subset \tcN
\]
under the proper map $\nu^{P}$. 

\begin{Proposition}\label{Quotient Map is Divisorial Contraction}
    The map $\nu^{P}$ contracts the divisor $D_{\alpha}$. 
\end{Proposition}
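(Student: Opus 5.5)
Throughout, "contracts" means that $D_{\alpha}$ is an irreducible divisor of $\tcN$ whose image under $\nu^{P}$ has codimension at least two in $\tcN^{P}$. Equivalently, $\dim \nu^{P}(D_{\alpha}) < \dim D_{\alpha}$, so $D_\alpha$ is $\nu^P$-exceptional. The plan is to compute the dimensions of $D_\alpha$ and of its image directly from the associated-bundle descriptions, and then to identify the fibres.

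\emph{Step 1: the source.} The subspace $\mathfrak{u}_{\alpha}\subset \LU \cong (\LG/\LB)^*$ has codimension one, since $\LU = \mathfrak{u}_\alpha \oplus \LG_{\alpha}$ (using our identification $\LG\cong\LG^*$, so that $\tcN = G\times^B \LU$). It is $B$-stable because it is an ideal of $\LP \supseteq \LB$. Hence $D_{\alpha} = G\times^B \mathfrak{u}_{\alpha}$ is a subbundle of corank one of the vector bundle $\tcN \to G/B$. It is therefore irreducible and smooth of dimension $\dim \tcN - 1 = 2\dim \LU - 1$, using \cref{tcNP is normal}(2).

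\emph{Step 2: the image.} The map $\nu^P$ is given by $[g,x]\mapsto [g,x]$, viewed as a map $G\times^B(\LB\cap\N)\to G\times^P(\LP\cap \N)$. Since $\mathfrak{u}_\alpha$ is $P$-stable, $\nu^P(D_\alpha) = G\times^P \mathfrak{u}_\alpha$. This is a vector bundle over $G/P$ of dimension
\[
\dim G/P + \dim \mathfrak{u}_\alpha = 2\dim \mathfrak{u}_\alpha = 2(\dim \LU - 1),
\]
because $\dim G/P = \dim \mathfrak{u}_P$. Comparing with Step 1, the image has dimension $\dim D_\alpha - 1$. It therefore has codimension two in $\tcN^{P}$, which has dimension $2\dim\LU$. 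So $\nu^P|_{D_\alpha}$ has positive-dimensional fibres and $D_\alpha$ is contracted.

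\emph{Step 3: the fibres.} To make this concrete, I would check that the fibre over $[g,x]$ with $x\in\mathfrak{u}_\alpha$ is $g\cdot(P/B)\cong \mathbb{P}^1$. For every $p\in P$, the point $[gp,\operatorname{Ad}(p^{-1})x]$ lies in $D_\alpha$ and maps to $[g,x]$. Conversely, the fibre of $\nu^P$ over $[g,y]$ is $\{[gp, \operatorname{Ad}(p^{-1})y] : p \in P,\ \operatorname{Ad}(p^{-1})y\in\LB\}/B$, a closed subvariety of $P/B\cong\mathbb{P}^1$. For $y\in\mathfrak{u}_\alpha$ this condition holds for all $p\in P$, so the fibre is the whole $\mathbb{P}^1$. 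For $y\notin\mathfrak{u}_\alpha$, the component of $y$ in $\mathfrak{l}$ is a nonzero nilpotent element of $[\mathfrak{l},\mathfrak{l}]\cong\mathfrak{sl}_2$, and it lies in a unique Borel subalgebra of $\mathfrak{l}$. So the fibre is a single point, which recovers birationality and shows that the exceptional locus is exactly $D_\alpha$.

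There is no serious obstacle. The only point needing care is the bookkeeping of dimensions, namely $\dim G/P=\dim\mathfrak{u}_P$ and $\operatorname{codim}_{\LU}\mathfrak{u}_\alpha = 1$, together with the fact that $\mathfrak{u}_\alpha$ is $P$-stable, so that its image is the full bundle $G\times^P\mathfrak{u}_\alpha$.
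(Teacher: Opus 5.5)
Your proof is correct and is essentially the paper's argument: both identify $\nu^{P}(D_{\alpha})$ with $G \times^{P} \mathfrak{u}_{\alpha}$ and show that it has dimension one less than $D_{\alpha}$, hence codimension two in $\widetilde{\mathcal{N}}^{P}$. The paper gets the dimension drop from the quotient $G \times^{B} \mathfrak{u}_{\alpha} \to G \times^{P} \mathfrak{u}_{\alpha}$ being smooth of relative dimension $\dim P/B = 1$, whereas you count dimensions directly; your Step 3 description of the fibres is correct but goes beyond what the statement requires.
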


\begin{proof}
Since our parabolic subgroup has semisimple rank one, the closed subset $\mathfrak{u}_{\alpha} \subseteq \LU$ has codimension one. Therefore $G \times^B \mathfrak{u}_{\alpha} \subseteq G \times^B \LU$ has codimension one. Moreover, $\nu^{P}$ fits into the following commutative diagram: \begin{equation*}\xymatrix@R+2em@C+2em{G \times^B \mathfrak{u}_{\alpha} \ar[r]^{\subseteq} \ar[d] & G \times^B \LU \ar[d]_{\nu^{P}} \\
G \times^P \mathfrak{u}_{\alpha} \ar[r]^{\subseteq} &  G \times^P (\LP \cap \mathcal{N}) 
  }\end{equation*} where the unlabeled arrow is the natural quotient map. The natural quotient map is a smooth morphism of relative dimension 1 $(= \mathrm{dim}(P/B) = \mathrm{dim}(\mathbb{P}^1)$) so we see that the divisor $G \times^B \mathfrak{u}_{\alpha}$ is contracted onto a subscheme of smaller dimension. Since the dimension of the domain and codomain of $\nu^P$ agree by \cref{tcNP is normal}(2), our claim follows.
  \end{proof}

We noted in the proof of Lemma~\ref{tcNP is normal}(5) that the nilpotent cone is $\Q$-factorial. Therefore, van der Waerden's purity theorem says that every irreducible component of the exceptional locus of $\nu \colon \tcN \to \mathcal{N}$ is a divisor.  

\begin{Proposition}\label{Divisors of tcN and Which Are Contracted by nuP} The irreducible codimension one components of the exceptional divisor of $\nu \colon \tcN \to \mathcal{N}$ are precisely the closed subschemes $G \times^B \LU_{\alpha}$ for $\alpha \in \Pi$. If $P$ is an arbitrary parabolic subgroup of $G$ then the image of $G \times^B \LU_{\alpha}$ under $\nu_B^P$ has codimension one if and only if $\alpha \notin S_P$.
\end{Proposition}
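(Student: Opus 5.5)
Two things have to be shown. First, every exceptional divisor of $\nu$ is some $D_\alpha := G \times^B \LU_\alpha$ with $\alpha \in \Pi$. Second, $\nu_B^P(D_\alpha)$ has codimension one exactly when $\alpha \notin S_P$. Here $\LU_\alpha$ is the nilradical of $\mathrm{Lie}(P_\alpha)$, so $D_\alpha$ is the irreducible divisor of the previous subsection.

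\textbf{Step 1: the exceptional locus.} Let $\mathcal{O}_{\mathrm{reg}}$ be the regular nilpotent orbit, open dense in $\N$. By Springer, $\nu$ is an isomorphism over $\mathcal{O}_{\mathrm{reg}}$. Conversely, a point $(g,x)$ with $x$ non-regular has positive-dimensional fibre through it: non-regular $x$ lie in $\mathrm{Ad}(g)\LU_\alpha$ for some $\alpha$, and then the whole $\mathbb{P}^1 = gP_\alpha/B$ maps to $x$. So the exceptional locus is $\nu^{-1}(\N \setminus \mathcal{O}_{\mathrm{reg}})$. A nilpotent $y \in \LU$ is regular if and only if its component along every simple root space is nonzero. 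Hence
\[
\nu^{-1}(\N\setminus\mathcal{O}_{\mathrm{reg}}) = G\times^B\Big(\bigcup_{\alpha\in\Pi}\LU_\alpha\Big) = \bigcup_{\alpha \in \Pi} D_\alpha .
\]
Each $D_\alpha$ is an irreducible divisor, being a homogeneous bundle over $G/B$ with fibre the hyperplane $\LU_\alpha$. The $D_\alpha$ are pairwise distinct, since their fibres over $eB$ are distinct hyperplanes. They are exceptional by Proposition~\ref{Quotient Map is Divisorial Contraction} applied with $P = P_\alpha$ and composed with $\nu_{P_\alpha}^G$. This identifies the irreducible components of the exceptional divisor as exactly the $D_\alpha$.

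\textbf{Step 2: the contracted divisors, case $\alpha\in S_P$.} Then $P_\alpha\subseteq P$, so $\nu_B^P$ factors through $\nu_B^{P_\alpha}$. Proposition~\ref{Quotient Map is Divisorial Contraction} shows $\nu_B^{P_\alpha}(D_\alpha)$ already has codimension at least two. Since the further map $\nu_{P_\alpha}^P$ cannot increase dimension, $\nu_B^P(D_\alpha)$ is not a divisor.

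\textbf{Step 3: case $\alpha\notin S_P$.} This is the real content. I would show that $\nu_B^P$ restricted to $D_\alpha$ is generically finite, so that $\dim \nu_B^P(D_\alpha) = \dim D_\alpha = \dim\tcN - 1$, using Lemma~\ref{tcNP is normal}(2). Over the base point $eP$, the map $D_\alpha \to \tcN^P$ is the map
\[
P\times^B\LU_\alpha \to \LP\cap\N, \qquad (p,y)\mapsto \mathrm{Ad}(p)y .
\]
Write $\LU = \LU_L\oplus\LU_P$, where $\LU_L$ is the nilradical of the Borel of $L$. Since $\alpha\notin S_P$, the root space $\LG_\alpha$ lies in $\LU_P$, so $\LU_\alpha \supseteq \LU_L$. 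Take a generic $y \in \LU_\alpha$. Its $\LU_L$-component is regular in $\LL$, and its $\LU_P$-component is generic in the hyperplane of $\LU_P$ cut out by vanishing of the $\alpha$-coordinate. Then $\mathrm{Ad}(p)y \in \LU_\alpha$ should force $p\in B$ up to a finite ambiguity: regularity of the Levi part pins $p$ down modulo $B$ inside $L$. This is the step I expect to be the main obstacle. If the direct computation gets messy, I would instead use a dimension count on the image. The image consists of $x\in\LP\cap\N$ whose Levi part $x_{\LL}$ is nilpotent and which lies in some $\mathrm{Ad}(L)$-conjugate of $\LU_\alpha$. This should be an irreducible closed subset of codimension exactly one in $\N_L\times\LU_P$, cut out by the vanishing of an $L$-semi-invariant (the $\alpha$-isotypic coordinate on $\LU_P$ relative to the Levi part). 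It is not all of $\LP \cap \N$, because the image of $D_\alpha$ in $\N$ misses $\mathcal{O}_{\mathrm{reg}}$ while $\nu_P^G$ is surjective. So it has codimension one precisely when it has dimension $\dim\tcN - 1$, and that is what generic finiteness gives.
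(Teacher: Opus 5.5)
Your Steps 1 and 2 are correct. Step 2 is the paper's argument. Step 1 takes a more direct route than the paper. The paper first shows that any exceptional prime divisor is $G$-stable, then appeals to \cref{G Invariant Divisors on tcN}. That lemma classifies $G$-stable prime divisors of $\tcN$ by reducing, via generic flatness, to $B$-stable divisors of $\LU$ and then applying Kostant's criterion. You apply Kostant's criterion directly to get $\nu^{-1}(\N\setminus\mathcal{O}_{\mathrm{reg}})=\bigcup_{\alpha\in\Pi}D_\alpha$. This avoids the classification and also shows, without van der Waerden purity, that the whole exceptional locus is pure of codimension one.

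The gap is in Step 3, which is the real content of the second claim. You state that $\mathrm{Ad}(p^{-1})y\in\LU_\alpha$ ``should'' force $p\in B$, and you leave it as the expected obstacle. The fallback does not replace it. The description of the image as the zero set of a single $L$-semi-invariant is not justified. The fallback also ends by invoking generic finiteness, so it is circular. As written, nothing establishes $\dim\nu_B^P(D_\alpha)=\dim\tcN-1$.

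The missing argument is short, and it is exactly the one you gestured at:
\begin{enumerate}
\item Let $\pi\colon\LP\to\LL$ be the projection along $\LU_P$. It is $P$-equivariant, with $U_P$ acting trivially on the target, and $\pi(\LU_\alpha)=\LU_L$ because $\alpha\notin S_P$.
\item Let $Y\subseteq\LU_\alpha$ be the nonempty open set where $\pi(y)$ is regular in $\LL$.
\item For $y\in Y$, the fibre of $P\times^B\LU_\alpha\to\LP\cap\N$ over $y$ is $\{pB : \mathrm{Ad}(p^{-1})y\in\LU_\alpha\}$. Writing $p=lu$ with $l\in L$ and $u\in U_P$, membership forces $\mathrm{Ad}(l^{-1})\pi(y)\in\LU_L$.
\item Regular elements of $\LU_L$ form a single $B_L$-orbit, and the centralizer in $L$ of a regular nilpotent lies in $B_L$. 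These are the two facts behind \cref{Acting on Regular Elements in u by Ubar is Open embedding}. Together they give $l\in B_L$, hence $p\in B_LU_P=B$.
\item So the fibre is the single point $[1,y]$; there is no finite ambiguity. Therefore $\dim\nu_B^P(D_\alpha)=\dim D_\alpha=\dim\tcN^P-1$ by \cref{tcNP is normal}(2).
\end{enumerate}

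This fibre computation is equivalent to the paper's approach. The paper instead bounds $\dim\mathrm{Ad}(P)\LU_\alpha$ from below using injectivity of $\overline{U}_L\times Y\to\LP$. With this step supplied, your proof is complete.
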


We prove this after proving two Lemmas:

\begin{Lemma}\label{G Invariant Divisors on tcN}
    The $G$-invariant (prime) divisors on $\tcN$ are in bijective correspondence with the simple roots of $G$: more precisely, to any simple root $\alpha$, the subscheme $G \times^B \mathfrak{u}_{{\alpha}}$ is a $G$-invariant divisor and any $G$-invariant divisor has this form.
\end{Lemma}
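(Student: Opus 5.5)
Our plan is to reduce the classification of $G$-invariant prime divisors on $\tcN = G \times^B \LU$ to that of $B$-invariant prime divisors on the fibre $\LU$, and then to classify the latter directly. Pulling back along the smooth map $G \times \LU \to G \times^B \LU$ and restricting to the fibre $\{e\} \times \LU$ gives an inclusion-preserving bijection between $G$-stable closed subsets of $G \times^B \LU$ and $B$-stable closed subsets of $\LU$, via $Y \mapsto Y \cap \LU$ and $Z \mapsto G \times^B Z$. This bijection preserves irreducibility, since $G$ is connected and $G \times Z \to G \times^B Z$ is surjective. It also preserves codimension, because $\dim G \times^B Z = \dim G/B + \dim Z$. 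It therefore suffices to show that the $B$-stable irreducible hypersurfaces in $\LU$ are exactly the $\LU_\alpha$ for $\alpha \in \Pi$, where $\LU_\alpha$ denotes the nilradical of $\mathrm{Lie}(P_\alpha)$, that is, the span of the root spaces for positive roots other than $\alpha$.

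For the first direction, note that $\LU_\alpha$ is an ideal of $\LB$ and is $B$-stable, being the nilradical of a parabolic containing $B$. It is a linear hyperplane in $\LU$, so it is an irreducible divisor. For the converse, let $D \subset \LU$ be a $B$-stable prime divisor. Since $\LU$ is affine space, $D = V(f)$ for an irreducible polynomial $f$, unique up to scalar. $B$-stability of $D$ gives $b \cdot f = \chi(b) f$ for some character $\chi$ of $B$, so $f$ is a $U$-invariant $T$-semi-invariant. It therefore suffices to show that the irreducible $U$-invariants in $\O(\LU) = \Sym(\LU^*)$ are, up to scalar, exactly the linear forms $x_\alpha$ for $\alpha \in \Pi$. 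Here $x_\alpha$ is the coordinate on the root space $\LU_{\alpha}$-complement, i.e. the projection $\LU \to \LU/[\LU,\LU] \to \mathfrak{g}_\alpha$.

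To prove this we would use the open $B$-orbit. The element $e = \sum_{\alpha \in \Pi} e_\alpha$ is a regular nilpotent, and by Kostant the orbit $B \cdot e$ equals the set of $x \in \LU$ with all $x_\alpha(x) \neq 0$, namely $e + [\LU,\LU]$ scaled by $T$. Concretely, $U \cdot e = e + [\LU,\LU]$, because the centralizer of $e$ in $U$ has dimension $r = \dim \LU/[\LU,\LU]$, so the orbit $U\cdot e$ is closed (orbits of unipotent groups on affine varieties are closed) of dimension $\dim \LU - r$ inside the irreducible variety $e + [\LU,\LU]$ of the same dimension. Then $T$ rescales the simple coordinates independently, using that the simple roots are linearly independent. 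Consequently $\LU \setminus B\cdot e = \bigcup_{\alpha \in \Pi} V(x_\alpha)$. A $B$-stable divisor cannot meet the open orbit, since otherwise it would contain it and hence all of $\LU$. So $D$ lies in this union and, being irreducible of codimension one, equals some $V(x_\alpha) = \LU_\alpha$. Distinct $\alpha$ give distinct hyperplanes, which yields the bijection.

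The main obstacle is the identification of the open $B$-orbit, specifically the claim $U\cdot e = e + [\LU,\LU]$. This needs the inclusion $U\cdot e \subseteq e + [\LU,\LU]$, which holds because $\mathrm{Ad}(u)e - e \in [\LU,\LU]$, together with the dimension count $\dim \mathfrak{z}_{\LU}(e) = r$. The latter follows from the regularity of $e$: the centralizer $\mathfrak{z}_{\LG}(e)$ has dimension $r$ and lies in $\LU$.
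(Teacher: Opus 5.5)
Your proposal is correct and follows essentially the same route as the paper. Both arguments pass from $G$-stable prime divisors of $\tcN$ to $B$-stable prime divisors of $\LU$ via $Z \mapsto G\times^B Z$, and then use that the complement of the open $B$-orbit of regular elements is $\bigcup_{\alpha\in\Pi}V(e_\alpha^*)$. The differences are minor: you prove this orbit description yourself (closedness of unipotent orbits together with $\dim\mathfrak{z}_{\LU}(e)=r$), where the paper cites Chriss--Ginzburg and Kostant; and you obtain the bijection from the $B$-torsor $G\times\LU\to G\times^B\LU$, where the paper uses generic flatness.

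One small point. For an irreducible $G$-stable $Y$, showing that $Y\cap\LU$ is irreducible needs $B$ to be connected, not just $G$. Connectedness makes the irreducible components $Z_i$ of $Y\cap\LU$ $B$-stable, so each gives a closed subset $G\times^B Z_i\subseteq Y$, and irreducibility of $Y$ then forces $Y\cap\LU = Z_i$ for some $i$.
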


\begin{proof}
Recall that $p \colon \tcN = G \times^B \LU \to G/B$. We identify $\LU$ with the closed subscheme $p^{-1}(1B)$ of $\tcN$. Then, to any integral $G$-invariant codimension one subscheme $Z$ of $\tcN$, we can construct a subscheme $Z \cap \LU$ by taking the reduced subscheme of $Z \times_{\tcN} \, \LU$. Conversely, to any integral $B$-invariant codimension one subscheme $Z'$ of $\LU$, there is a natural closed embedding $G \times^B Z' \to \tcN = G \times^B \LU$; the fact that this is a closed embedding can be proved on the open cell and its $G$-translates. We claim that this gives a bijection: \[\{G\text{-stable divisors on }\tcN\}\leftrightarrow \{B\text{-stable divisors on }\LU\}.\]

We now prove this. Assume $Z$ is an integral nonempty $G$-invariant codimension one subscheme of $\tcN$. We first claim that the induced map $\dot{p}: Z \to G/B$ is flat. Indeed, by $G$-equivariance, this map is dominant, and so by generic flatness (\cite[Proposition 29.27.1, Tag 04PW]{StacksProject}) there exists some nonempty open subscheme $X \subseteq G/B$ such that $\dot{p}|_{\dot{p}^{-1}(X)}$ is flat, and by $G$-equivariance we have that $\dot{p}|_{\dot{p}^{-1}(gX)}$ is also flat for any $g \in G(k)$. Since $\dot{p}^{-1}(gX)$ is an open cover of $Z$ and flatness is local on the target, we see that indeed $\dot{p}$ is flat. In particular, $\dot{p}^{-1}(1B)$ has dimension $\mathrm{dim}(\LU) - 1$, and is some $B$-invariant codimension one subscheme of $\LU \cong p^{-1}(1B)$.

By our above analysis, the map $G \times^B (Z \cap \LU) \to Z$ is a closed embedding. Moreover, $\dim(G \times^B (Z \cap \LU)) = \dim(Z)$ and so this map is in fact an isomorphism. This can be used to show that $Z \cap \LU$ is integral: since $G \times^B (Z \cap \LU)$ is locally isomorphic to $G/B \times (Z \cap \LU)$ it is reduced as it is locally isomorphic to the product of two reduced schemes \cite[Lemma 28.3.2, Tag 01OL]{StacksProject}. Moreover, the $G$-translates of $\dot{p}^{-1}(Bw_0B/B)$ give a nonempty open cover on which $Z$ is locally isomorphic to $G/B \times Z \cap \LU$ and so $Z \cap \LU$ must be integral as well by \cite[Lemma 28.3.3, Tag 01OM]{StacksProject}. Thus $Z \cap \LU$ is integral \cite[Lemma 28.3.4, Tag 01ON]{StacksProject}. Conversely, it is not difficult to check directly that $\LU \cap (G \times^B Z') = Z'$ for any $B$-stable divisor of $\LU$. This gives our desired bijection; it remains to classify the $B$-stable divisors on $\LU$. 

Let $Z'$ denote any $B$-stable integral codimension one subscheme of $\LU$. Then, since the regular elements of $\LU$ are an open $B$-orbit \cite[Lemma 3.2.12]{ChrissGinzburgRepresentationTheoryandComplexGeometry}, $Z'$ is a closed subscheme of $\LU\setminus \LU_{\mathrm{reg}}$. Choose a basis of root vectors in $\LU$, and let $e_{\alpha}^*$ denote the dual basis. Then $\LU\setminus \LU_{\mathrm{reg}} = \cup_{\alpha \in \Pi}V(e_{\alpha}^*)$ by \cite[Theorem 5.3]{KostantThePrincipalThreeDimensionalSubgroupandtheBettiNumbersofaComplexSimpleLieGroup}. Therefore we see that $Z' \subseteq \cup_{\alpha \in \Pi}V(e_{\alpha}^*)$ and so, by irreducibility of $Z'$, $Z' = V(e_{\alpha}^{*})$ for some simple root $\alpha$. 

Thus, from our bijection above, we see that any $G$-stable divisor on $\tcN$ has our desired form.
\end{proof}

\begin{Lemma}\label{Acting on Regular Elements in u by Ubar is Open embedding}
For any reductive group $G$ with a choice of Borel subgroups $B, \overline{B}$ in opposite position and $U, \overline{U}$ their respective unipotent radicals, the action map $\overline{U} \times \LU^{\mathrm{reg}} \to \N$ is an open embedding.
\end{Lemma}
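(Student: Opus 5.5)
Our plan is to show that the action map $a \colon \overline{U} \times \LU^{\mathrm{reg}} \to \N$, $(u,x) \mapsto u\cdot x$, is injective on points, is étale (equivalently, has bijective differential everywhere), and that its image is open. An injective étale morphism is an open immersion, which gives the claim. Since $\N$ is normal by \cite[Theorem 0.8]{KostantLieGroupRepresentationsonPolynomialRings}, an alternative is Zariski's main theorem: it suffices that $a$ be quasi-finite, injective, and birational onto an open subset. We will aim for the étale route.

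\emph{Injectivity.} Suppose $u_1 x_1 = u_2 x_2$ with $u_i \in \overline{U}$ and $x_i \in \LU^{\mathrm{reg}}$. Setting $u = u_2^{-1}u_1$ gives $u x_1 = x_2$. A regular nilpotent element lies in a unique Borel subalgebra. Since $x_1 \in \LB$, the unique Borel subalgebra containing $x_2 = u x_1$ is both $\LB$ and $\mathrm{Ad}(u)\LB$. Hence $u \in N_G(B) = B$, so $u \in B \cap \overline{U} = \{1\}$, and therefore $x_1 = x_2$.

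\emph{Differential.} By $\overline{U}$-equivariance it suffices to work at points $(1,x)$. There the differential is $(\xi, y) \mapsto [\xi, x] + y$ for $\xi \in \overline{\LU}$ and $y \in \LU$. Since $\dim \overline{\LU} + \dim \LU = 2\dim \LU = \dim \N$, it suffices to prove injectivity of this map into $T_x\N$. Because $x$ is a smooth point of $\N$ (regular orbit), it is enough to check injectivity into $\LG$. If $[\xi,x] = -y \in \LU$, then $\xi \in \overline{\LU}$ satisfies $[\xi,x] \in \LB$. We claim this forces $\xi = 0$. Since $x$ is regular in $\LB$, the condition says $\xi$ lies in $\{z : [z,x] \in \LB\}$. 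This space contains $\LB$ and has dimension $\dim \LB + \dim(\LG/\LB \cap \ker)$. The map $\LG/\LB \to \LG/\LB$, $z \mapsto [z,x]$, is nilpotent, but that alone does not make the kernel small.

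A cleaner route is the following. The orbit $B\cdot x = \LU^{\mathrm{reg}}$ is open in $\LU$ \cite[Lemma 3.2.12]{ChrissGinzburgRepresentationTheoryandComplexGeometry}, so $\overline{U}\times\LU^{\mathrm{reg}}$ is the image of $\overline{U}\times B$ acting on $x$. The big cell map $\overline{U}\times B \to G$ is an open immersion, and $G \to G\cdot x = \N^{\mathrm{reg}}$ is smooth with fibres $G_x$-cosets. It therefore suffices to show that $\overline{U}B \cdot x \cong \overline{U}\times (B\cdot x)$, which holds because $G_x \subset B$. Indeed, $G_x = Z(G)\cdot U'$ with $U' \subset U$ by \cite[Theorem 4.11]{SpringerSomeArithmeticalResultsonSemisimpleLieAlgebras}, and the same containment $G_x \subset B$ is the relevant fact for general reductive $G$. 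Hence $\overline{U}B/G_x \to G/G_x$ is an open immersion, and the domain identifies with $\overline{U}\times B/G_x \cong \overline{U}\times \LU^{\mathrm{reg}}$. This identification is an isomorphism because the orbit map $B/G_x \to \LU^{\mathrm{reg}}$ is bijective and, in characteristic $0$, is an isomorphism onto the smooth orbit.

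We therefore commit to this second argument: open big cell, modulo $G_x \subset B$. It gives injectivity, openness and the isomorphism simultaneously, and the tangent computation becomes unnecessary. The main obstacle is justifying $G_x \subset B$ for a general reductive $G$. We will do this using the uniqueness of the Borel subalgebra containing a regular nilpotent: $G_x$ stabilizes that Borel subalgebra, so $G_x \subset N_G(B) = B$. This avoids relying on Springer's structure theorem.
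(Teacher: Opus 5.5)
Your committed argument is the paper's proof. In both, $G_x \subseteq B$ makes the open big cell $\overline{U}B \subseteq G$ descend to an open subset $\overline{U}\times B/G_x$ of $G/G_x \cong \N^{\mathrm{reg}}$, and the orbit map then identifies $B/G_x$ with $\LU^{\mathrm{reg}}$. The only difference is how $G_x \subseteq B$ is justified: you derive it from the uniqueness of the Borel subalgebra containing a regular nilpotent together with $N_G(B)=B$, whereas the paper simply cites Springer--Steinberg.
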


\begin{proof}
Let $e$ denote a regular nilpotent element contained in $\LU$. Since $e$ is regular, the action map induces an open embedding $G/G_e \to \N$, where $G_e$ is the stabilizer of $e$. It is known (see for example \cite[Chapter 3, \S 1.14(a)]{SpringerSteinbergConjugacyClasses}) that $G_e \subseteq B$. Therefore $G/G_e$ contains the open subset $\overline{U} \times B/G_e$. The diagram \begin{equation*}\xymatrix@R+2em@C+2em{\overline{U} \times B/G_e \times \{e\} \ar[r]^{\mathrm{id} \times \mathrm{act}} \ar[d]^{\subseteq} & \overline{U} \times \LU_{\mathrm{reg}} \ar[d]^{\mathrm{act}} \\
G/G_e \times \{e\} \ar[r]^{\mathrm{act}} & \N
  }\end{equation*} obviously commutes, and $\mathrm{id} \times \mathrm{act}$ is an isomorphism by say \cite[Lemma 3.2.12]{ChrissGinzburgRepresentationTheoryandComplexGeometry}. Therefore, since $\mathrm{act}$ is the composite of the inverse of an isomorphism and two open embeddings, it is an open embedding. 
\end{proof}
\newcommand{\orbit}{\mathbb{O}}
\begin{proof}[Proof of \cref{Divisors of tcN and Which Are Contracted by nuP}]
Let $Z$ be an irreducible codimension one subscheme contained in the exceptional set. Observe that $Z$ is $G$-invariant: if not, the image of $G \times Z$ in $\tcN$ of the map $(g,z) \mapsto g z$ would be dense since it is irreducible and properly contains $Z$. But this implies that there exists $(g,z)$ such that $\nu(g z) \in \mathcal{O}_{\mathrm{reg}} \subset \mathcal{N}$ since $\nu$ is birational. Since $\nu$ is $G$-equivariant, this implies that $\nu(z) \in \mathcal{O}_{\mathrm{reg}}$, contradicting the fact that $\nu$ is an isomorphism over $\mathcal{O}_{\mathrm{reg}}$. Therefore, $Z$ is one of the irreducible codimension one $G$-invariant subschemes of $\tcN$. By \cref{G Invariant Divisors on tcN}, therefore, set theoretically we have $Z = G \times^B \LU_{{\alpha}}$ for some $\alpha$. Conversely, \cref{Quotient Map is Divisorial Contraction} implies that $\nu^P$ contracts $G \times^B \LU_{{\alpha}}$ and so $\nu = \nu_P^G\nu^P_B$ contracts $G \times^B \LU_{{\alpha}}$ as well. This shows the first claim. 

Assume that $\alpha \in S_P$. Then, since $\nu_B^P = \nu_{P_{\alpha}}^P\nu^{P_{\alpha}}_B$ we see that $\nu_B^P$ contracts $G \times^{B}\LU_{{\alpha}}$ by \cref{Quotient Map is Divisorial Contraction}. 

Now assume $\alpha \notin S_P$. We will show that \[X_{\alpha} := \nu_B^P(G \times^B \LU_{{\alpha}}) = G \times^P \orbit\] has codimension one in $\tcN^P$, where $\orbit$ is the $P$-saturation of $\LU_{{\alpha}} \subseteq \N \cap \LP$. More specifically, we will show that \begin{equation}\label{Dimension Inequality for Xalpha}
    \mathrm{dim}(X_\alpha) \geq 2\mathrm{dim}(U) - 1
\end{equation} from which our codimension claim will follow since we certainly have \[\mathrm{dim}(X_\alpha) \leq 2\mathrm{dim}(U) - 1\] as $X_{\alpha}$ is the image of a subscheme whose dimension is exactly $2\mathrm{dim}(U) - 1)$. 

Observe that there is an open cover of $X_{\alpha}$ given by open subsets of the form $\overline{U}_P \times \orbit$ where $\overline{U}_P$ is the unipotent radical of the parabolic subgroup containing the Borel $\overline{B}$ in opposite position to $B$ and containing $T$. To prove \labelcref{Dimension Inequality for Xalpha}, we will give a lower bound on the dimension of $\orbit$. 

Write $\LP = \LL \oplus \LU_{P}$, where $\LL$ is the Lie algebra of some choice of Levi subgroup for $P$. Since $\LU_{\alpha} \cap \LL = \LU_{\alpha} \cap \LU_L$, where $\LU_L$ is the Lie algebra of the unipotent radical of $B \cap L$, it induces a direct sum decomposition $\LU_{{\alpha}} = (\LU_{\alpha} \cap \LU_L) \oplus (\LU_{{\alpha}} \cap \LU_{P})$. Since $\alpha \notin S_P$, the open subset $Y \subseteq \LU_{{\alpha}}$ of those $y \in \LU_{{\alpha}}$ for which the projection onto $\LU_{L}$ lies in $\N_L^{\mathrm{reg}}$ is nonempty. Moreover, we observe that the action map $a: L \times \LU_{{\alpha}} \to \LP$ respects our above direct sum decomposition in the sense that the rightmost square in the diagram \begin{equation}\label{Summary Diagram for Direct Sum Decomposition of p}\xymatrix@R+2em@C+2em{\overline{U}_{L} \times Y \ar[r]^{\subseteq} \ar[d]^{=} & \overline{U}_{L} \times \LU_{P_{\alpha}} \ar[r]^{a} \ar[d]^{=} & \LP \ar[d]^{=} \\
\overline{U}_{L} \times (\LU_{\alpha} \cap \LU_L^{\mathrm{reg}}) \oplus (\LU_{{\alpha}} \cap \LU_{P}) \ar[r]^{\subseteq} & \overline{U}_{L} \times (\LU_{\alpha} \cap \LU_L) \oplus (\LU_{{\alpha}} \cap \LU_{P}) \ar[r] &  \LL \oplus \LU_P
  }\end{equation} commutes, where the unlabeled map is $(g, \xi_{\LL}, \xi_{\LU_P}) \mapsto (g\xi_{\LL}, g\xi_{\LU_P})$. By \cref{Acting on Regular Elements in u by Ubar is Open embedding}, then, the composite $\varphi$ of the lower two horizontal arrows is injective. 
  Therefore, this generic injectivity guarantees \[\mathrm{dim}(\orbit) \geq \mathrm{dim}(\mathrm{im}(a)) = \mathrm{dim}(\mathrm{im}(\varphi)) = \mathrm{dim}(\overline{U}_L \times Y) = \mathrm{dim}(U_L) + \mathrm{dim}(U_{{\alpha}})\] by say \cite[Corollary 12.4.2]{VakilRisingSeaFoundationsofAlgebraicGeometry}. Therefore \[\mathrm{dim}(X_{\alpha}) = \mathrm{dim}(\overline{U}_P) + \mathrm{dim}(\orbit) \geq \mathrm{dim}(\overline{U}_P) + \mathrm{dim}(U_L) + \mathrm{dim}(U_{{\alpha}}) = \mathrm{dim}(U) + \mathrm{dim}(U_{{\alpha}}) = 2\mathrm{dim}(U) - 1\] which proves \labelcref{Dimension Inequality for Xalpha}, as desired.
\end{proof}

%
%
%
\begin{Corollary}
Every $\nu_P^{P'}$ has divisorial exceptional locus if $P \subsetneq P'$. More precisely, for any parabolic subgroups $P \subseteq P'$ containing $B$ and any $\alpha \in S_{P'}\setminus S_P$, the subscheme $\nu_B^P(G \times^B \LU_{{\alpha}})$ is an irreducible codimension one subscheme of $\tcN^P$ which is contracted by $\nu^{P'}_P$. 
\end{Corollary}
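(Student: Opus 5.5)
The argument reduces to \cref{Divisors of tcN and Which Are Contracted by nuP} applied twice, once to $P$ and once to $P'$, using the factorization $\nu_B^{P'} = \nu_P^{P'}\nu_B^P$.

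Fix standard parabolics $P \subseteq P'$ and a simple root $\alpha \in S_{P'}\setminus S_P$. Write $D_\alpha := G \times^B \LU_\alpha$, an irreducible divisor of $\tcN$ by \cref{G Invariant Divisors on tcN}, and set $X_\alpha := \nu_B^P(D_\alpha)$. The map $\nu_B^P$ is projective by \cref{tcNP is normal}(3), so $X_\alpha$ is closed. It is irreducible, being the image of an irreducible set, and we give it its reduced structure. Since $\alpha \notin S_P$, \cref{Divisors of tcN and Which Are Contracted by nuP} says $X_\alpha$ has codimension one in $\tcN^P$. This gives the first assertion.

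It remains to show that $\nu_P^{P'}$ contracts $X_\alpha$. We have
\[
\nu_P^{P'}(X_\alpha) = \nu_P^{P'}\nu_B^P(D_\alpha) = \nu_B^{P'}(D_\alpha).
\]
Since $\alpha \in S_{P'}$, \cref{Divisors of tcN and Which Are Contracted by nuP} (equivalently, \cref{Quotient Map is Divisorial Contraction} together with the factorization $\nu_B^{P'} = \nu_{P_\alpha}^{P'}\nu_B^{P_\alpha}$) says this image does not have codimension one in $\tcN^{P'}$. The image is irreducible of dimension at most $\dim D_\alpha = \dim \tcN^{P'} - 1$, using \cref{tcNP is normal}(2). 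So its dimension is strictly less than $\dim X_\alpha = \dim \tcN^{P'} - 1$, i.e. $\nu_P^{P'}$ drops the dimension of $X_\alpha$.

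Next we check that $X_\alpha$ lies in the exceptional locus, meaning the complement of the largest open set over which $\nu_P^{P'}$ is an isomorphism. The map $\nu_P^{P'}$ is birational and proper between integral varieties of the same dimension, and the target is normal by \cref{tcNP is normal}. Suppose some point of $X_\alpha$ lay in an open set $V$ on which $\nu_P^{P'}$ is an isomorphism onto its image. Then $X_\alpha \cap V$ would be a nonempty open, hence dense, subset of the irreducible $X_\alpha$, and it would map isomorphically onto its image. That image would then have dimension $\dim X_\alpha$, contradicting the dimension drop just shown. Hence all of $X_\alpha$ is contained in the exceptional locus.

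Finally, $S_{P'}\setminus S_P$ is nonempty whenever $P \subsetneq P'$, so the exceptional locus of $\nu_P^{P'}$ contains a prime divisor. This gives the first, ``divisorial'' sentence of the statement, read as saying that the exceptional locus contains a divisor.

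If the statement is meant in the stronger sense that every component of the exceptional locus is a divisor, the plan is as follows. We would note that $\tcN^{P'}$ is $\mathbb{Q}$-factorial by \cref{tcNP is normal}(5), and then apply van der Waerden purity as in the paragraph before \cref{Divisors of tcN and Which Are Contracted by nuP}. The main subtlety is confirming that purity applies here: the target is only $\mathbb{Q}$-factorial rather than smooth, but the same purity statement is already used there for $\mathcal{N}$, so we would cite it identically.
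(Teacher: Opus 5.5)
Your proposal is correct and is essentially the paper's proof. Codimension one of $\nu_B^P(G \times^B \LU_{\alpha})$ comes from \cref{Divisors of tcN and Which Are Contracted by nuP} because $\alpha \notin S_P$, and its image under $\nu_P^{P'}$ fails to be a divisor by $\nu_P^{P'}\nu_B^P = \nu_B^{P'}$ and the same proposition because $\alpha \in S_{P'}$. Your extra dimension-drop check that $X_\alpha$ lies in the exceptional locus, and the optional purity remark (valid, since $\tcN^{P'}$ is normal and $\mathbb{Q}$-factorial), only spell out details the paper leaves implicit.
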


\begin{proof}
With the above notation, the fact that $\nu_B^P(G \times^B \LU_{{\alpha}})$ has codimension one follows immediately from \cref{Divisors of tcN and Which Are Contracted by nuP}. Moreover, the fact that $\nu_P^{P'}(\nu_B^P(G \times^B \LU_{{\alpha}}))$ is not a divisor follows immediately from the equality 
$\nu_P^{P'}\nu_B^P = \nu_B^{P'}$ and \cref{Divisors of tcN and Which Are Contracted by nuP}.
\end{proof}

\section{Geometric Invariant Theory for $\overline{T^*(G/U)}$}

\subsection{The GIT fan}

Let $T$ be a torus acting on an affine variety $X$ and let $\theta \in \characterlatticeforT$ be a character of $T$. A function $f \in \mathcal{O}(X)$ is said to be $T_{\theta}$-invariant if $t \cdot f = \theta(t) f$ for all $t \in T$. The character $\theta$ is \textit{effective} if there exists an $n > 0$ and non-zero $T_{n\theta}$-invariant function on $X$. Given a closed point $x \in X$ and $\theta \in \characterlatticeforT$, we say that $x$ is $\theta$\textit{-semistable} if there is an $n > 0$ and $T_{n\theta}$-invariant function on $X$ such that $f(x) \neq 0$; we will use the notation $X^{\theta}$ or $X^{\theta\text{-ss}}$ to denote the semistable points. A point $x$ is said to be \textit{stable} if it is semistable, the $T$-stabilizer of $x$ is finite, and the orbit of $x$ is closed in  $X^{\theta\text{-ss}}$. We say that $\theta$ is \textit{generic} if every $\theta$-semistable point is $\theta$-stable. 

Let $\omega(X) \subset \characterlatticeforT_{\R}$ be the weight cone of $X$, given by the $\R^{\geq 0}$-span of those $\lambda \in \characterlatticeforT$ such that $\mathcal{O}(X)_{\lambda} \neq 0$. An element $\theta \in \characterlatticeforT_{\R}$ is \textit{integral} if $\theta \in \characterlatticeforT$. 
Notice that $\theta \in \characterlatticeforT$ is effective if and only if $\theta \in \omega(X)$. Let $\theta \in \omega(X)$. The set
\[
\left\{ \sigma \in \characterlatticeforT \;\middle|\; X^{\sigma} = X^{\theta} \right\}
\]
is the set of integral points in the relative interior of a rational polyhedral cone in $\characterlatticeforT_{\R}$. The \textit{GIT fan} $\Sigma(X) \subset \characterlatticeforT_{\R}$ of $X$ is the union of all such cones as $\theta$ varies over all integral elements in $\omega(X)$. By \cite[Theorem~3.2]{AH}, the cone $\omega(X)$ is the support of the GIT fan, which consists of only finitely many cones. The relative interior $\mathrm{Int}(C)$ of a GIT cone $C$ is said to be a \textit{GIT chamber} if all integral points in $\mathrm{Int}(C)$ are generic. Let $\theta$ be integral and 
\[
Y_{\theta} := X^{\theta\text{-ss}} \sslash T = \mathrm{Proj} \bigoplus_{n \ge 0} \mathcal{O}(X)_{n \theta}.
\]
Since $X$ is a variety, so too is each $Y_{\theta}$. There exists $m > 0$ such that the Serre twisting sheaf $\mathcal{O}(m)$ is an ample line bundle on $Y_{\theta}$ (see \cite[Lemma 27.10, Tag 01MW]{StacksProject}, \cite[Lemma 28.27.12, Tag 01Q2]{StacksProject}) and we set $\mathcal{O}_{Y_{\theta}}(1) = \frac{1}{m} \mathcal{O}(m) \in \mathrm{Pic}(Y_{\theta})_{\mathbb{R}}$, a \textit{fractional line bundle}. Inclusion of cones defines a partial ordering $\le$ on integral points of $\omega(X)$ such that $X^{\theta} \subseteq X^{\lambda}$ if and only if $\lambda \le \theta$. If $\lambda \le \theta$ then the embedding $X^{\theta} \subseteq X^{\lambda}$ induces a projective morphism (\textit{variation of GIT}) $\xi^{\theta}_{\lambda} \colon Y_{\theta} \to Y_{\lambda}$. 

Note that the set of one parameter subgroups of $T$ that act trivially on $X$ is $\omega(X)^{\perp}$. Therefore, if we assume that $T$ acts faithfully then $\omega(X)^{\perp} = 0$, meaning that $\omega(X)$ is a full dimensional cone in $\characterlatticeforT_{\R}$.  In particular, there must exist a GIT cone $C$ whose dimension equals $\dim \characterlatticeforT_{\R}$. 

\begin{Lemma}\label{Elements in Interior of GIT Cone Are Generic}
If $\dim C = \dim \characterlatticeforT_{\R}$ then the integral elements in the interior of $C$ are generic. In other words, if $\theta \in \mathrm{Int}(C)$ then every $\theta$-semistable point is $\theta$-stable. 
\end{Lemma}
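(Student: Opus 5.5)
The strategy is to prove the contrapositive. Suppose $\theta \in \mathrm{Int}(C)$ is integral and $x \in X^{\theta}$ is semistable but not stable. Then the orbit $T\cdot x$ is either not closed in $X^{\theta}$ or has positive-dimensional stabilizer. In the first case, replace $x$ by a point $x'$ in the unique closed orbit in $\overline{T\cdot x}\cap X^{\theta}$. This point is still semistable, since $\theta$-invariant functions are constant on orbit closures. Its stabilizer must be positive dimensional: the closed orbit lies in the boundary of $T\cdot x$, so it has strictly smaller dimension. So in every case we obtain a semistable point $y$ whose stabilizer $T_y$ contains a nontrivial subtorus $S$, i.e.\ there is a nontrivial one-parameter subgroup $\mu\colon \mathbb{G}_m \to T_y$.

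The key step is to show that $\theta$ must then lie in the hyperplane $\mu^{\perp}=\{\lambda : \langle \lambda,\mu\rangle = 0\}$. Since $y$ is semistable, there is a $T_{n\theta}$-invariant $f$ with $f(y)\neq 0$. Because $\mu$ fixes $y$,
\[
f(y) = f(\mu(t) y) = (t\cdot f)(y)^{\pm} = t^{\pm n\langle \theta,\mu\rangle} f(y)
\]
for all $t$, with the sign fixed by convention. Hence $\langle\theta,\mu\rangle = 0$.

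Next, one checks that every integral $\sigma$ near $\theta$ has the same semistable locus as $\theta$. This is exactly what $\theta \in \mathrm{Int}(C)$ with $\dim C = \dim \characterlatticeforT_{\R}$ provides: the cone $\{\sigma : X^{\sigma}=X^{\theta}\}$ is open in $\characterlatticeforT_{\R}$ near $\theta$. Pick an integral $\sigma \in \mathrm{Int}(C)$ with $\langle \sigma,\mu\rangle \neq 0$; this is possible because a full-dimensional open cone cannot be contained in the hyperplane $\mu^{\perp}$. Then $y \in X^{\theta} = X^{\sigma}$, so $y$ is $\sigma$-semistable. The same computation as above then forces $\langle\sigma,\mu\rangle = 0$, a contradiction.

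The main point requiring care is the reduction to a point with positive-dimensional stabilizer. Two standard facts are needed here. First, in the GIT quotient $X^{\theta}\to Y_\theta$ (an affine good quotient locally over the affine opens $X_f$), each fibre contains a unique closed orbit. Second, a non-closed torus orbit has boundary orbits of strictly smaller dimension, and a torus orbit of dimension less than $\dim T$ has positive-dimensional stabilizer. Because the stabilizer of a point under a torus action is a diagonalizable group, a positive-dimensional stabilizer contains a nontrivial subtorus and hence a cocharacter $\mu$. This justifies the choice of $\mu$ above.
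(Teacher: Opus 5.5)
Your proposal is correct and is essentially the paper's argument: a cocharacter $\mu$ fixing a semistable point forces $\langle \sigma,\mu\rangle=0$ for every integral $\sigma$ in the full-dimensional interior of $C$, which is impossible. Non-closed orbits are handled by passing to a lower-dimensional boundary orbit; the paper just orders this differently, first proving all stabilizers are finite and then deducing closedness. One small fix: semi-invariants are not constant on orbit closures, so justify the semistability of $x'$ by the fact that you chose it inside $\overline{T\cdot x}\cap X^{\theta}$.
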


\begin{proof}
    Let $\theta \in \mathrm{Int}(C)$ and $x \in X^{\theta}$. If $\mathrm{Stab}_{T}(x)$ is not finite then we can find a one-parameter subgroup $\lambda \in X_{\bullet}(T)$ such that $\lambda(t) x = x$ for all $t$. But since $\dim C = \dim \characterlatticeforT_{\Q}$, there exists $\theta_0 \in \mathrm{Int}(C)$ such that $\langle \lambda, \theta_0 \rangle \neq 0$. Choose such a $\theta_0$. Since $X^{\theta} = X^{\theta_0}$, there exists $k > 0$ and $f \in \mathcal{O}(X)_{k \theta_0}$ such that $f(x) \neq 0$. This is a contradiction. Hence $\mathrm{Stab}_{T}(x)$ is finite. Since this is true for all $x \in X^{\theta}$, we deduce that all orbits are closed in $X^{\theta\text{-ss}}$ (otherwise their closure would contain an orbit of dimension $< \dim T$) and hence every point is stable.  
\end{proof}

Lemma~\ref{Elements in Interior of GIT Cone Are Generic} says that the interior of every top dimensional GIT cone is a GIT chamber. The lemma is false if $T$ is replaced by an arbitrary connected reductive group; see \cite{RessayreThickWall}.

\subsection{Movable cones}

Let $X$ be an affine $T$-variety, $\theta \in \characterlatticeforT$ and $Y_\theta := X^{\theta\text{-ss}} /\!/ \, T$ the GIT quotient. Let $C$ be the GIT cone whose relative interior $C^{\circ} := \mathrm{Int}(C)$ contains $\theta$. Define
\[
\mathrm{St}(\theta) := \{ H \subset T \mid \exists \, x \in X^\theta \text{ such that } T \cdot x \text{ is a closed orbit in } X^\theta \text{ and } H = \operatorname{Stab}_T(x) \}.
\]
We define the orthogonal set $\mathrm{St}(\theta)^\perp \subset \characterlatticeforT$ to be all characters $\lambda$ such that $ \lambda(H) = 1 $ for all $ H \in \mathrm{St}(\theta)$. We note that there exists an integer $m > 0$ such that $m \theta \in \mathrm{St}(\theta)^{\perp}$. Descent \cite[Proposition~4.2]{KKV} defines a $\mathbb{Z}$-linear map $L_{\mathbb{Z}} \colon \mathrm{St}(\theta)^\perp \to \operatorname{Pic}(Y_{\theta})$ such that $\pi^*_{\theta} L_{\Z}(\sigma) = \mathcal{O}_{X^{\theta}} \otimes \sigma$. We write 
\[
L \colon \mathrm{St}(\theta)^{\perp} \otimes_{\mathbb{Z}} \mathbb{R} \to \operatorname{Pic}(Y_{\theta} / Y_0)_{\mathbb{R}}
\]
for the composite of $L_{\mathbb{Z}}$ with the quotient map, tensored over $\mathbb{R}$. 

\begin{Lemma}\label{lem:pullbackOm}
    If $\lambda \in C$ is integral, then $(\xi^{\theta}_{\lambda})^* \mathcal{O}_{Y_{\lambda}}(1) = L(\lambda)$. 
\end{Lemma}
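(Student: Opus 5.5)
The plan is to compare both sides after pulling back to the semistable locus $X^{\theta}$. The key fact is that pullback along the good quotient $\pi_\theta \colon X^\theta \to Y_\theta$ is injective on Picard groups, or at least on those line bundles that descend. Since $\lambda \in C$ and $\theta \in C^\circ$, we have $\lambda \le \theta$, so $X^\theta \subseteq X^\lambda$. The VGIT morphism $\xi^\theta_\lambda$ is then characterized by $\xi^\theta_\lambda \circ \pi_\theta = \pi_\lambda|_{X^\theta}$.

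First I would fix $m>0$ such that three conditions hold: $\mathcal{O}_{Y_\lambda}(m)$ is an invertible ample sheaf; $\bigoplus_n \mathcal{O}(X)_{nm\lambda}$ is generated in degree one; and $m\lambda \in \mathrm{St}(\theta)^\perp \cap \mathrm{St}(\lambda)^\perp$. It suffices to prove $(\xi^\theta_\lambda)^*\mathcal{O}_{Y_\lambda}(m) \cong L_\Z(m\lambda)$ in $\Pic(Y_\theta)$, and then divide by $m$. On $Y_\lambda = \Proj \bigoplus_n \mathcal{O}(X)_{nm\lambda}$, the sheaf $\mathcal{O}(m)$ pulls back along $\pi_\lambda$ to the trivial bundle twisted by the character $m\lambda$, that is, to $\mathcal{O}_{X^\lambda}\otimes m\lambda$. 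Concretely, on the chart $D_+(f)$ with $f \in \mathcal{O}(X)_{m\lambda}$, the section $f$ trivializes $\mathcal{O}(m)$, and its pullback is the $T$-semi-invariant function $f$ on $X_f$. This identifies $\pi_\lambda^*\mathcal{O}_{Y_\lambda}(m) \cong \mathcal{O}_{X^\lambda}\otimes m\lambda$ $T$-equivariantly, which says exactly that $\mathcal{O}_{Y_\lambda}(m) = L_\Z^{Y_\lambda}(m\lambda)$ by descent \cite[Proposition~4.2]{KKV}.

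Next I would pull back along $\xi^\theta_\lambda$. We get
\[
\pi_\theta^*(\xi^\theta_\lambda)^*\mathcal{O}_{Y_\lambda}(m) \cong (\pi_\lambda^*\mathcal{O}_{Y_\lambda}(m))|_{X^\theta} \cong \mathcal{O}_{X^\theta}\otimes m\lambda = \pi_\theta^* L_\Z(m\lambda),
\]
and these isomorphisms are $T$-equivariant. By Kempf's descent lemma in the form of \cite{KKV}, descent is an equivalence between $T$-linearized line bundles on $X^\theta$ whose stabilizers on closed orbits act trivially and line bundles on $Y_\theta$. Hence $(\xi^\theta_\lambda)^*\mathcal{O}_{Y_\lambda}(m) \cong L_\Z(m\lambda)$. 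Finally, pass to the relative Picard group $\Pic(Y_\theta/Y_0)_\R$ and divide by $m$. This is consistent with the definition $\mathcal{O}_{Y_\lambda}(1) = \frac1m\mathcal{O}(m)$, which is independent of the choice of $m$.

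I expect the main obstacle to be the equivariant identification of $\pi_\lambda^*\mathcal{O}_{Y_\lambda}(m)$ with the twisted trivial bundle. Two things must be checked carefully: the transition functions $f/g$ on overlaps match, and the sign convention for the character matches the one built into $L_\Z$. A second point needing care is that $m\lambda$ lies in $\mathrm{St}(\theta)^\perp$. For this, I would argue that every closed $T$-orbit in $X^\theta$ maps to a point of $Y_\lambda$ over which some nonvanishing $f\in\mathcal{O}(X)_{m\lambda}$ exists. Such an $f$ is constant and nonzero on the orbit and transforms by $m\lambda$, which forces the stabilizer to act trivially through $m\lambda$. Enlarging $m$ handles the finitely many stabilizer types.
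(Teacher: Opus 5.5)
Your proposal is correct and follows essentially the same route as the paper. You use the commutative square $\xi^\theta_\lambda\circ\pi_\theta=\pi_\lambda|_{X^\theta}$ and the injectivity of $\pi_\theta^*$ from \cite[Proposition~4.2]{KKV} to reduce to the identification $\pi_\lambda^*\mathcal{O}_{Y_\lambda}(m)\cong\mathcal{O}_{X^\lambda}\otimes m\lambda$, checked on the charts $D_+(f)$; your explicit check that $m\lambda\in\mathrm{St}(\theta)^\perp$ supplies a point the paper leaves implicit.
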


\begin{proof}
    We must show that there exists $m > 0$ such that $\mathcal{O}_{Y_{\lambda}}(m)$ is a line bundle on $Y_{\lambda}$ and $(\xi^{\theta}_{\lambda})^* \mathcal{O}_{Y_{\lambda}}(m) \cong L_{\Z}(m\lambda)$. Since the morphism $\xi^{\theta}_{\lambda}$ comes from the inclusion $X^{\theta} \hookrightarrow X^{\lambda}$ there is a commutative diagram
    \[
    \begin{tikzcd}
        X^{\theta} \ar[r,hook] \ar[d,"\pi_{\theta}"] & X^{\lambda} \ar[d,"\pi_{\lambda}"] \\
        Y_{\theta} \ar[r,"\xi^{\theta}_{\lambda}"] & Y_{\lambda}.
    \end{tikzcd}
    \]
    Pull-back is an embedding $\pi^*_{\theta} \colon \mathrm{Pic}(Y_{\theta}) \to \mathrm{Pic}_T(X_{\theta})$ \cite[Proposition~4.2]{KKV} and, by definition, $\pi^*_{\theta} L_{\Z}(\sigma) \cong \mathcal{O}_{X^{\theta}} \otimes \sigma$. Therefore, it suffices to show that $\pi^*_{\lambda} \mathcal{O}_{Y_{\lambda}}(m) \cong \mathcal{O}_{X^{\lambda}} \otimes (m \lambda)$. This is equivalent to $(\pi_{\lambda*}( \mathcal{O}_{X^{\lambda}} \otimes m \lambda))^T \cong \mathcal{O}_{Y_{\lambda}}(m)$, which can be checked on the affine open sets $D_+(f)$ for $f \in \mathcal{O}(X)_{k \lambda}$, where we assume $k$ divides $m$. 
\end{proof}


In particular, the proof of Lemma~\ref{lem:pullbackOm} shows that $L_{\mathbb{Z}}(m\theta) = \mathcal{O}_{Y_{\theta}}(m)$ is $\xi^{\theta}_0$-ample. For each $ \mathcal{L} \in \operatorname{Pic}(Y_{\theta}) $, let 
\[
R(Y_{\theta}, \mathcal{L}) := \bigoplus_{n \geq 0} \Gamma(Y_{\theta}, \mathcal{L}^{\otimes n})
\]
denote the associated the section ring. If $\mathcal{L}$ is effective, i.e., $\Gamma(Y_{\theta}, \mathcal{L}) \neq 0$, and $R(Y_{\theta}, \mathcal{L})$ is finitely generated then we have a rational map
\[
\psi_{\mathcal{L}} \colon Y_{\theta} \dashrightarrow Y(\mathcal{L}) := \operatorname{Proj} \, R(Y_{\theta}, \mathcal{L})
\]
of schemes over $Y_0$. 


\begin{Lemma}\label{lem:steinfactorizationxithetalambda}
Let $\theta \in C^{\circ}$ and $\lambda \in C$ be integral and $m > 0$ such that $m \lambda \in \mathrm{St}(\theta)^{\perp}$. Then $R(Y_{\theta},{L}_{\Z}(m\lambda))$ is finitely generated and there is an isomorphism $\widetilde{Y}_{\lambda} \cong Y({L}_{\Z}(m\lambda))$, where $\widetilde{Y}_{\lambda} := \Spec_{Y_{\lambda}} (\xi^{\theta}_{\lambda})_* \mathcal{O}_{Y_{\theta}}$ is the Stein factorization of $\xi^{\theta}_{\lambda}$, such that the following diagram commutes:
\begin{equation}\label{eq:commdiagramLtheta}
\begin{tikzcd}
& Y_{\theta} \arrow[dr, "\xi^{\theta}_{\lambda}"] \arrow[dl,"\psi_{{L}_{\Z}(m\lambda)}"'] &  \\
Y(L_{\Z}(m\lambda)) \arrow[r, "\sim"] & \widetilde{Y}_{\lambda} \ar[r] &  Y_{\lambda}.
\end{tikzcd}
\end{equation}
\end{Lemma}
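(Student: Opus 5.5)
The plan is to apply Proposition~\ref{prop:Steinamplefactor} to the factorization
\[
Y_{\theta} \xrightarrow{\ \xi^{\theta}_{\lambda}\ } Y_{\lambda} \xrightarrow{\ \xi^{\lambda}_0\ } Y_0 ,
\]
with $f = \xi^{\theta}_0$, $g = \xi^{\theta}_{\lambda}$ and $h = \xi^{\lambda}_0$. The identity $\xi^{\lambda}_0 \circ \xi^{\theta}_{\lambda} = \xi^{\theta}_0$ holds because all three maps are induced by the inclusions of semistable loci $X^{\theta} \subseteq X^{\lambda} \subseteq X^0 = X$. All schemes involved are Noetherian, since they are Proj's of finitely generated graded algebras over $\mathcal{O}(X)^T$. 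The maps are projective as variation of GIT morphisms. Finally, $Y_0 = \Spec \mathcal{O}(X)^T$ is affine.

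First, choose $m' > 0$ such that $m'\lambda \in \mathrm{St}(\theta)^{\perp}$ and $\mathcal{O}_{Y_{\lambda}}(m')$ is an $h$-ample line bundle; both conditions are preserved by passing to multiples. Lemma~\ref{lem:pullbackOm} then gives
\[
(\xi^{\theta}_{\lambda})^*\mathcal{O}_{Y_{\lambda}}(m') \cong L_{\Z}(m'\lambda).
\]
Proposition~\ref{prop:Steinamplefactor} with $M = \mathcal{O}_{Y_{\lambda}}(m')$ yields an isomorphism $\widetilde{Y}_{\lambda} \cong \Proj_{Y_0} \bigoplus_n f_* L_{\Z}(m'\lambda)^{\otimes n}$ compatible with the maps from $Y_{\theta}$. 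Since $Y_0$ is affine, this relative Proj is $\Proj R(Y_{\theta}, L_{\Z}(m'\lambda)) = Y(L_{\Z}(m'\lambda))$. Also, $L_{\Z}(m'\lambda) = g^*M$ is $f$-generated after a further power: $M$ is $h$-ample, hence some power is $h$-generated, and Lemma~\ref{lem:pullbackgeneratedisgenerated} applies. Lemma~\ref{lem:Rsectionringfg} then gives finite generation of $R(Y_{\theta}, L_{\Z}(m'\lambda))$, and the rational map $\psi$ is in fact a morphism agreeing with the factor $g'$ of the Stein factorization. This gives the commutativity of \eqref{eq:commdiagramLtheta}.

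It remains to pass from $m'$ to the given $m$. Because $L_{\Z}$ is $\Z$-linear, $L_{\Z}(mm'\lambda) = L_{\Z}(m\lambda)^{\otimes m'} = L_{\Z}(m'\lambda)^{\otimes m}$. The section ring of $L_{\Z}(m\lambda)$ is integral over its $m'$-th Veronese, which is the $m$-th Veronese of $R(Y_{\theta}, L_{\Z}(m'\lambda))$. The Veronese is finitely generated, so $R(Y_{\theta}, L_{\Z}(m\lambda))$ is finite over it and therefore finitely generated. Since a Veronese subring does not change Proj (Tag 0B5J), we get $Y(L_{\Z}(m\lambda)) \cong Y(L_{\Z}(m'\lambda))$ compatibly with the maps $\psi$. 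This reduction is straightforward.

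The main obstacle I expect is the bookkeeping of twists. First, we must ensure that the identification of Lemma~\ref{lem:pullbackOm} is a genuine isomorphism of line bundles at a common integer level; this is where the hypothesis $\lambda \in C$, i.e.\ $X^{\theta} \subseteq X^{\lambda}$, is used. Second, we must check that the isomorphism of section rings produced in the proof of Proposition~\ref{prop:Steinamplefactor} is multiplicative, so that the Proj's and the maps from $Y_{\theta}$ match. I would verify this second point locally on the charts $D_+(s)$ for $s \in \mathcal{O}(X)_{k\lambda}$.
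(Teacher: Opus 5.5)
Your argument is the paper's own proof. Both apply Proposition~\ref{prop:Steinamplefactor} to $Y_\theta \xrightarrow{\xi^\theta_\lambda} Y_\lambda \xrightarrow{\xi^\lambda_0} Y_0$ with $M$ a twist of $\mathcal{O}_{Y_\lambda}$, identify $L_{\Z}$ with the pullback of $M$ via Lemma~\ref{lem:pullbackOm}, and obtain finite generation from Lemmas~\ref{lem:pullbackgeneratedisgenerated} and~\ref{lem:Rsectionringfg}. The paper works directly at level $m$, tacitly assuming $\mathcal{O}_{Y_\lambda}(m)$ is invertible; your detour through a sufficiently divisible $m'$, followed by Veronese subrings, is a legitimate extra precaution.

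One small repair: \emph{integral over a finitely generated Veronese, hence finite} is not a valid inference on its own, since integrality does not imply module-finiteness. You do not need it. Since $L_{\Z}(m\lambda)^{\otimes m'} \cong (\xi^\theta_\lambda)^*(M^{\otimes m})$ with $M^{\otimes m}$ $\xi^\lambda_0$-ample, $L_{\Z}(m\lambda)$ is $\xi^\theta_0$-semiample, so Lemma~\ref{lem:Rsectionringfg} gives finite generation at level $m$ directly.
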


\begin{proof}
The proof is essentially the same as the proof of \cite[Lemma~3.9(iii)]{BellamyCrawSchedlerBirationalGeometryofQuiverVarietiesandOtherGITQuotients}. By Lemma~\ref{lem:pullbackOm}, $L_{\Z}(m \lambda) = (\xi^{\theta}_{\lambda})^* \mathcal{O}_{Y_{\lambda}}(m)$ for the morphism $\xi^{\theta}_{\lambda} \colon Y_{\theta} \to Y_{ \lambda}$ over $Y_0$ induced by VGIT. Since $\xi^{\theta}_{\lambda}$ is a projective morphism and $\mathcal{O}_{Y_{\lambda}}(m)$ is a $\xi^{\lambda}_{0}$-ample line bundle, $L_{\Z}(m \lambda)$ is $\xi^{\theta}_{0}$-semiample line bundle by Lemma~\ref{lem:pullbackgeneratedisgenerated}.  This implies that the section ring $R(Y_{\theta},{L}_{\Z}(m\lambda))$ is finitely generated by Lemma~\ref{lem:Rsectionringfg} and hence $Y(L_{\Z}(m \lambda))$ is well-defined. Finally, the existence of the commutative diagram follows from Proposition~\ref{prop:Steinamplefactor}. 

\end{proof}


\begin{Lemma}\label{lem:NefconeC0}
Assume that the map $L \colon \mathrm{St}(\theta)^{\perp} \otimes_{\mathbb{Z}} \mathbb{R} \to \operatorname{Pic}(Y_{\theta} / Y_0)_{\mathbb{R}}$ is an isomorphism. If, for every integral $ \lambda \in C\setminus C^{\circ}$, the morphism $ \xi^{\theta}_{\lambda}$ contracts a curve, then
\[
L(C^{\circ}) = \operatorname{Amp}(Y_{\theta} / Y_0), \quad L(C) = \operatorname{Nef}(Y_{\theta} / Y_{0}).
\]
In particular, $\operatorname{Nef}(Y_{\theta} / Y_0)$ is a rational polyhedral cone. 
\end{Lemma}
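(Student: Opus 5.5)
We first show $L(C) \subseteq \operatorname{Nef}(Y_{\theta}/Y_0)$ and $L(C^{\circ}) \subseteq \operatorname{Amp}(Y_{\theta}/Y_0)$. For integral $\lambda \in C$, Lemma~\ref{lem:pullbackOm} gives $L(\lambda) = (\xi^{\theta}_{\lambda})^*\mathcal{O}_{Y_{\lambda}}(1)$. Here $\mathcal{O}_{Y_\lambda}(1)$ is $\xi^{\lambda}_0$-ample and $\xi^{\theta}_0 = \xi^{\lambda}_0\circ\xi^{\theta}_{\lambda}$, so the pullback has nonnegative degree on every curve contracted to a point in $Y_0$. Hence it is relatively nef. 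If instead $\lambda \in C^{\circ}$, then $X^{\lambda} = X^{\theta}$, so $\xi^{\theta}_{\lambda}$ is an isomorphism and $L(\lambda)$ is $\xi^{\theta}_0$-ample, by the remark following Lemma~\ref{lem:pullbackOm}. Since $C$ is a rational polyhedral cone, its integral points span it over $\R_{\ge 0}$, and both the nef and ample cones are convex. Using linearity of $L$, this gives the two inclusions (for $C^{\circ}$, write each real point as a positive combination of integral interior points).

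For the reverse inclusions we use that $L$ is an isomorphism, so any class in $\operatorname{Nef}(Y_\theta/Y_0)$ has the form $L(\sigma)$ for a unique $\sigma \in \mathrm{St}(\theta)^{\perp}_{\R}$. Suppose $\sigma \notin C$. Since $\theta \in C^{\circ}$, the segment from $\theta$ to $\sigma$ leaves $C$ through a point of $C \setminus C^{\circ}$. Because $C$ is rational polyhedral, we can perturb $\sigma$ slightly, which keeps it outside $C$ since the complement of $C$ is open, and rescale so that this exit point $\lambda$ is integral and lies in the relative interior of a proper face of $C$. By hypothesis, $\xi^{\theta}_{\lambda}$ contracts a curve $\Gamma$, and $\Gamma$ maps to a point in $Y_0$. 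We have $L(\lambda)\cdot\Gamma = 0$ since $L(\lambda)$ is pulled back from $Y_\lambda$, and $L(\theta)\cdot\Gamma > 0$ since $L(\theta)$ is relatively ample. Writing $\lambda = (1-t)\theta + t\sigma$ with $0<t<1$, linearity forces $L(\sigma)\cdot\Gamma < 0$, which contradicts nefness. Hence $\operatorname{Nef} \subseteq L(C)$. Taking interiors, and using that $L$ is a linear isomorphism, that $\operatorname{Amp}$ is the interior of $\operatorname{Nef}$, and that $C^{\circ}$ is the interior of $C$ (since $L$ is full-dimensional on its domain), we get $\operatorname{Amp} = L(C^{\circ})$. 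Rational polyhedrality of $\operatorname{Nef}$ then follows because $C$ is rational polyhedral and $L$ is defined over $\Q$.

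The main obstacle is the perturbation in the second paragraph. We must make sure the chosen boundary point $\lambda$ is integral, or a positive multiple of an integral point, so that the hypothesis on $\xi^{\theta}_{\lambda}$ applies and Lemma~\ref{lem:pullbackOm} computes $L(\lambda)$. We must also make sure the contracted curve gives a strict inequality. A subtlety is that the hypothesis only applies to $\lambda$ in $C$, whereas $\sigma$ may lie in the span of $\mathrm{St}(\theta)^\perp$ but outside $\omega(X)$. The segment argument avoids this, because it only ever evaluates the hypothesis at points of $C$.
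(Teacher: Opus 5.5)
Your reverse inclusion has a genuine gap. The segment argument assumes, without proof, that $C$ is full-dimensional in $\mathrm{St}(\theta)^{\perp}_{\R}$, so that its relative interior $C^{\circ}$ is open there.

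Suppose instead that $C$ spans a proper subspace. Since $\operatorname{Amp}(Y_\theta/Y_0)$ is a nonempty open cone and $L$ is an isomorphism, there is some $\sigma$ with $L(\sigma)$ ample, hence nef, and $\sigma\notin\mathrm{span}(C)$. For such $\sigma$ the point $(1-t)\theta+t\sigma$ lies in $\mathrm{span}(C)$ only when $t=0$. So the segment leaves $C$ at $\theta\in C^{\circ}$ itself, the hypothesis on $\xi^{\theta}_{\lambda}$ is never used, and you get no contradiction.

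The contracted-curve hypothesis only controls directions inside $\mathrm{span}(C)$; if $C$ were a ray, it would only concern $\lambda=0$. So some additional GIT input is needed. Your closing remark that $C^{\circ}$ is the interior of $C$ ``since $L$ is full-dimensional on its domain'' is circular: full-dimensionality of $C$ would follow from $L(C)=\operatorname{Nef}(Y_\theta/Y_0)$, but that equality is exactly what the segment argument was supposed to prove.

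The paper supplies this step separately. If $\dim C<\dim \mathrm{St}(\theta)^{\perp}_{\R}$, choose $\lambda'\notin\mathrm{span}(C)$ with $L(\lambda')$ relatively ample. Ampleness gives $X^{\theta}\subseteq X^{\lambda'}$, which forces $\lambda'\in C$, a contradiction. Only after this does the paper compare boundaries, using Lemma~\ref{lem:intclosureequal}.

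There is also a smaller problem in the perturbation step. Moving $\sigma$ to a nearby rational point keeps it outside $C$, but it need not keep $L(\sigma)$ nef, and nefness is exactly what your final contradiction uses. The repair is as follows:
\begin{enumerate}
\item Replace $\sigma$ by $\sigma+\epsilon\theta$. Its image $L(\sigma)+\epsilon L(\theta)$ lies in $\operatorname{Nef}+\operatorname{Amp}\subseteq\operatorname{int}\operatorname{Nef}$, and the point stays outside $C$ for small $\epsilon$.
\item Perturb within the open set $L^{-1}(\operatorname{int}\operatorname{Nef})\setminus C$ to make $\sigma$ rational.
\item Once $\sigma$ is rational and $C$ is full-dimensional, the exit point is automatically rational, since it is computed from rational facet inequalities. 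So you do not need it to lie in the relative interior of a proper face.
\end{enumerate}

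With these two fixes your route is a legitimate alternative to the paper's:
\begin{itemize}
\item Pairing directly with the contracted curve $\Gamma$ replaces the paper's passage through the Stein factorization of Lemma~\ref{lem:steinfactorizationxithetalambda}.
\item Your explicit convexity argument replaces Lemma~\ref{lem:intclosureequal}.
\item For the ample cone you only need $\operatorname{Amp}\subseteq\operatorname{int}\operatorname{Nef}$, through the chain $L(C^{\circ})\subseteq\operatorname{Amp}\subseteq\operatorname{int}\operatorname{Nef}=L(C^{\circ})$.
\end{itemize}
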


\begin{proof}
If $\lambda \in C^{\circ}$, then the map $\xi^{\theta}_{\lambda} \colon Y_{\theta} \to Y_{\lambda} $ is an isomorphism, and $L(\lambda) = (\xi^{\theta}_{\lambda})^* \mathcal{O}(1)$ is an ample fractional line bundle on $ Y_{\theta} $ because $ \mathcal{O}(1) $ is ample on $ Y_{\lambda} $. Therefore, $L(C^{\circ}) \subseteq \operatorname{Amp}(Y_{\theta} / Y_{0})$. We claim that $\dim C = \dim \mathrm{St}(\theta)^{\perp} \otimes_{\mathbb{Z}} \mathbb{R}$. If this is not the case then there exists a one-parameter subgroup $\eta \in \mathbb{Y}^{\bullet}(T)$ such that $\langle \eta, \lambda \rangle = 0$ for all $\lambda \in C$ but $\langle \eta, \lambda' \rangle \neq 0$ for some $\lambda' \in L^{-1}(\operatorname{Amp}(Y_{\theta} / Y_{0}))$ since $\operatorname{Amp}(Y_{\theta} / Y_{0})$ is top dimensional and contains $L(C^{\circ})$. Since $L(\lambda')$ is $\xi^{\theta}_0$-ample, $X^{\theta} \subset X^{\lambda'}$. But this implies that $\lambda' \in C$, a contradiction. 

If $\lambda \in C \setminus C^{\circ}$, then by assumption the map $\xi_{\lambda}^{\theta}$ contracts a curve. Hence, diagram \eqref{eq:commdiagramLtheta} implies that the map $ \psi_{{L}(\lambda)} $ contracts a curve since the morphism $ \widetilde{Y}_{\lambda} \to Y_{\lambda}$ is finite and so doesn't contract any curves. This implies that the line bundle $L(\lambda)$ pairs to zero with the contracted curve. Thus, ${L}(\lambda)$ is not in the interior of $\operatorname{Nef}(Y_{\theta} / Y_0)$. But we have $L(C^{\circ}) \subset \operatorname{Amp}(Y_{\theta} / Y_0) \subseteq \operatorname{int}(\operatorname{Nef}(Y_{\theta} / Y_0))$. Therefore, $L(C) = \overline{L(C^{\circ})} \subseteq \operatorname{Nef}(Y_{\theta} / Y_0)$. Since $\operatorname{Nef}(Y_{\theta} / Y_0))$ is a convex cone this implies that $L(C) = \operatorname{Nef}(Y_{\theta} / Y_{0})$ by Lemma~\ref{lem:intclosureequal} below since $C^{\circ}$ is open in $\mathrm{St}(\theta)^{\perp}$ and we have assumed that $L$ is an isomorphism, which implies that $\dim C = \operatorname{Pic}(Y_{\theta} / Y_0)_{\mathbb{R}}$.
\end{proof}

\begin{Corollary}\label{cor:MoveconeGIT}
   Assume that $Y_{\theta}$ is normal and $\Q$-factorial and that $\xi_0^{\theta}$ is birational. If the map $L \colon \mathrm{St}(\theta)^{\perp} \otimes_{\mathbb{Z}} \mathbb{R} \to \operatorname{Pic}(Y_{\theta} / Y_0)_{\mathbb{R}}$ is an isomorphism and, for every $\lambda \in C \setminus C^{\circ}$, the morphism $\xi_{\lambda}^{\theta}$ contracts a divisor, then $L(C) = \operatorname{Nef}(Y_{\theta} / Y_{0}) = \operatorname{Mov}(Y_{\theta} / Y_0)$.
\end{Corollary}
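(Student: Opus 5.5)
The plan is to combine \cref{lem:NefconeC0} with the standard inclusions $\operatorname{Nef}(Y_{\theta}/Y_0) \subseteq \operatorname{Mov}(Y_{\theta}/Y_0)$, and then show the reverse inclusion $\operatorname{Mov} \subseteq \operatorname{Nef}$ using \cref{lem:technicalnonmovable}. First, a morphism that contracts a divisor certainly contracts a curve, since the divisor has positive dimension and maps onto something of smaller dimension, so its fibres contain curves. So the hypotheses of \cref{lem:NefconeC0} hold and $L(C) = \operatorname{Nef}(Y_\theta/Y_0)$ with $L(C^\circ) = \operatorname{Amp}(Y_\theta/Y_0)$. Next, every relatively nef class is a limit of relatively ample classes, and ample classes are semiample and hence movable. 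Since $\operatorname{Mov}$ is closed (we take it to mean the closed cone), this gives $\operatorname{Nef} \subseteq \operatorname{Mov}$.

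For the reverse inclusion, I argue by contradiction. Suppose some movable class $D$ lies outside $L(C) = \operatorname{Nef}$. Both cones are convex, $L$ is an isomorphism, and $L(C)$ is a full-dimensional rational polyhedral cone. Hence the segment from an interior class $L(\theta)$ to $D$ leaves $L(C)$ through a facet, and near the crossing the segment contains movable classes of the form $L(\lambda) + \epsilon N$. Here $\lambda$ is an integral point (after scaling) in the relative interior of a facet $F$ of $C$, so $\lambda \in C \setminus C^\circ$, and $N$ points out of $C$. By hypothesis $g := \xi^\theta_\lambda$ contracts a prime divisor $E$. Taking the Stein factorization of $g$ from \cref{lem:steinfactorizationxithetalambda}, with $Z := \widetilde{Y}_\lambda$ and $h \colon Z \to Y_0$, we get $L(\lambda) = g^*A$ with $A$ an $h$-ample class (up to scaling) by \cref{lem:pullbackOm}. \Cref{cor:semiample-induced-morphism-corollary} then gives that $Z$ is normal, integral and birational to $Y_\theta$. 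Now \cref{lem:technicalnonmovable} shows that $g^*A + \epsilon E$ is not movable for all $\epsilon > 0$.

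The main obstacle is matching the outward direction $N$ with the exceptional divisor $E$. I need to show that, modulo directions in $L(F)$ (that is, modulo pullbacks from $Z$), the class $[E]$ points out of $C$ across the facet $F$. Then every class just beyond $F$ can be written as $g^*A' + \epsilon E$ with $A'$ still $h$-ample, which contradicts movability.

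My first attempt at this step is a negativity argument. Take a curve $\gamma$ contracted by $g$ and lying in a general fibre of $E \to g(E)$. Then $E\cdot \gamma < 0$, by the negativity lemma for the birational morphism $g$ onto normal $Z$, using $\mathbb{Q}$-factoriality of $Y_\theta$. On the other hand, $L(\lambda)\cdot\gamma = 0$ and $L(\theta)\cdot\gamma > 0$. So the hyperplane $\gamma^\perp$ cuts out the face of $\operatorname{Nef}$ containing $L(F)$, and since $L(C)$ is full-dimensional this face is exactly $L(F)$. Moreover, $E$ lies on the negative side of $\gamma^\perp$. Any class on the far side of $F$ near $L(\lambda)$ can then be written as $L(\lambda') + \epsilon E$ with $\lambda'$ in the relative interior of $F$ and $\epsilon > 0$. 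Replacing $\lambda$ by $\lambda'$, which has the same VGIT morphism $g$ because it lies in the same relative interior, gives the contradiction. Hence $\operatorname{Mov} \subseteq L(C)$, and the equality follows.
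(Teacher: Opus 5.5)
Your proof is correct in substance, but its last step takes a different route from the paper's. Both arguments begin the same way. A divisorial contraction contracts a curve, so Lemma~\ref{lem:NefconeC0} gives $L(C)=\operatorname{Nef}(Y_\theta/Y_0)$, and $\operatorname{Nef}\subseteq\operatorname{Mov}$. Both then feed the Stein factor $Y_\theta\to\widetilde{Y}_\lambda$ of Lemma~\ref{lem:steinfactorizationxithetalambda} into Lemma~\ref{lem:technicalnonmovable}; your $g$ should be this Stein factor rather than $\xi^\theta_\lambda$ itself.

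The paper finishes purely convex-geometrically. For $L(\lambda)$ in the relative interior of a boundary face of $L(C)$, the class $L(\lambda)$ is movable, while the classes $L(\lambda)+\epsilon E$, arbitrarily close to it, are not. Hence $L(\lambda)\in\partial\operatorname{Mov}$, so $\partial\operatorname{Nef}\subseteq\partial\operatorname{Mov}$, and Lemma~\ref{lem:intclosureequal} forces $\operatorname{Nef}=\operatorname{Mov}$. That argument never needs to know on which side of the facet $[E]$ lies.

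You instead argue by contradiction along a segment leaving $L(C)$ through a facet. So you must show that $[E]$ is transverse to the facet and points outward, which is why you bring in the negativity lemma. Your version gives more explicit information: the facet is cut out by $\gamma^{\perp}$ for a contracted curve $\gamma$, and the exceptional divisor is the outward direction. In effect it reproves Lemma~\ref{lem:intclosureequal} in this situation. The paper's version is shorter and avoids intersection theory with curves altogether.

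Two points need tightening.

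First, once the fibres of $E\to g(E)$ have dimension at least two, it is not true that every curve in a general fibre has $E\cdot\gamma<0$. What the negativity lemma gives is that $E$ is not $g$-nef: otherwise, applied to $B=-E$ with $g_*B=0$, it would force $-E$ to be effective. So \emph{some} $g$-contracted curve $\gamma\subset E$ has $E\cdot\gamma<0$, and that is all you use. You could even skip the negativity lemma. If $[E]$ lay in the span of $L(F)$, or on the same side as $L(C)$, then $L(\lambda)+\epsilon E$ would lie in $L(C)$ for small rational $\epsilon$. It would then be semiample, hence movable, contradicting Lemma~\ref{lem:technicalnonmovable}.

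Second, Lemma~\ref{lem:technicalnonmovable} only says that $g^*A+\epsilon E$ is not a movable \emph{divisor}. To get the contradiction:
\begin{enumerate}
\item Take $D$ to be an honest rational movable divisor. This is possible since $\operatorname{Nef}$ is closed and $\operatorname{Mov}$ is the closure of the cone spanned by such divisors.
\item Work with rational points $x$ on the segment, so that $\epsilon=(x\cdot\gamma)/(E\cdot\gamma)$ and $\lambda'$ are rational.
\item Choose $\theta$ generically, so that the segment meets $\partial L(C)$ in the relative interior of a facet.
\end{enumerate}
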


\begin{proof}
The proof is based on the proof of \cite[Proposition~5.4]{Ohta}, though we do not need to assume $Y_{\theta}$ is a relative Mori dream space. If, for every $\lambda \in C \setminus C^{\circ}$, the morphism $\xi_{\lambda}^{\theta}$ contracts a divisor then $\xi_{\lambda}^{\theta}$ contracts at least one curve. Therefore, Lemma~\ref{lem:NefconeC0} implies that $\operatorname{Nef}(Y_{\theta} / Y_{0}) = L(C)$ is a top dimensional rational polyhedral cone. 

Since $\operatorname{Nef}(Y_{\theta} / Y_{0}) \subset \operatorname{Mov}(Y_{\theta} / Y_0)$ and $\operatorname{Nef}(Y_{\theta} / Y_{0})$ is top dimensional, the cones are equal by Lemma~\ref{lem:intclosureequal} if we can show that any point in the boundary of $\operatorname{Nef}(Y_{\theta} / Y_{0})$ is also in the boundary of $\operatorname{Mov}(Y_{\theta} / Y_0)$. For any cone $\sigma$ in the boundary of $\operatorname{Nef}(Y_{\theta} / Y_{0})$, we can choose $\lambda \in \mathrm{St}(\theta)^{\perp}$ such that $L(\lambda)$ is in the interior of $\sigma$ because $\operatorname{Nef}(Y_{\theta} / Y_{0}) = L(C)$. Then Lemma~\ref{lem:pullbackOm} says that $L(\lambda) = (\xi^{\theta}_{\lambda})^* \mathcal{O}_{Y_{\lambda}}(1)$. We wish to apply Lemma~\ref{lem:technicalnonmovable}. To do so, we take $Y = Y_{\theta}$, $Z = \widetilde{Y}_{\lambda}$ and $g = \psi_{L_{\mathbb{Z}}(\lambda)}$ as in the statement of Lemma~\ref{lem:steinfactorizationxithetalambda}. Then $g$ contracts a divisor (since $k \colon \widetilde{Y}_{\lambda} \to Y_{\lambda}$ is finite) and $L(\lambda) = \psi_{L_{\mathbb{Z}}(\lambda)}^* M$, where $M := k^* \mathcal{O}_{Y_{\lambda}}(1)$ is $(\xi^{\lambda}_0 \circ k)$-ample by \cite[Lemma 29.38.7(2), Tag 0892]{StacksProject}. In particular, $L(\lambda)$ is $\xi^{\theta}_0$-moveable so $L(\lambda) \in \operatorname{Mov}(Y_{\theta} / Y_0)$. To use the language of divisors, write $L(\lambda) = \mathcal{O}_{Y_{\theta}}(D)$. If $E$ is a prime divisor contracted by $\psi_{L_{\mathbb{Z}}(\lambda)}$ then Lemma~\ref{lem:technicalnonmovable} says that $D + \epsilon E$ is not $\xi^{\theta}_0$-movable for all rational $\epsilon > 0$. Thus, $D$ is $\xi^{\theta}_0$-movable but the $\Q$-divisors $D + \epsilon E$ lie arbitrarily close to $D$ and are not movable. This means that $D$ (equivalent, $L(\lambda)$) must be in the boundary of $\operatorname{Mov}(Y_{\theta} / Y_0)$.

\end{proof}

\subsubsection{Elementary Cone Lemma}

The following elementary result was used in the proof of Lemma~\ref{lem:NefconeC0}. 

\begin{Lemma}\label{lem:intclosureequal}
If $C,D \subseteq \R^n$ are closed convex $n$-dimensional cones for which $\mathrm{Int}(C) \subseteq \mathrm{Int}(D)$ and $\partial C \subseteq \partial D$ then $C = D$.
\end{Lemma}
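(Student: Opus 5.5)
The approach is to show $D \subseteq C$; the same argument with the roles of $C$ and $D$ reversed is not available, because the hypotheses are asymmetric. Instead I will get $C \subseteq D$ from closedness and then argue the reverse inclusion directly. Since $C$ is a closed convex cone of full dimension $n$, it is the closure of its interior. Hence
\[
C = \overline{\mathrm{Int}(C)} \subseteq \overline{\mathrm{Int}(D)} = D,
\]
where the last equality uses that $D$ is closed, convex and $n$-dimensional. This settles one inclusion.

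For the reverse inclusion $D \subseteq C$, I argue by contradiction. Suppose there is a point $y \in D \setminus C$. Because $D$ is convex of full dimension, $D = \overline{\mathrm{Int}(D)}$, and $D \setminus C$ is relatively open in $D$ since $C$ is closed. So I may perturb $y$ and assume $y \in \mathrm{Int}(D) \setminus C$. Next fix a point $x \in \mathrm{Int}(C)$, which is nonempty since $\dim C = n$. Consider the segment $[x,y]$; it lies in $\mathrm{Int}(D)$ because $\mathrm{Int}(D)$ is convex and contains both endpoints, using $\mathrm{Int}(C) \subseteq \mathrm{Int}(D)$. Define
\[
t_0 := \sup\{t \in [0,1] : x + t(y-x) \in C\}.
\]
The point $z := x + t_0(y-x)$ lies in $C$ because $C$ is closed. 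We have $t_0 < 1$ because $y \notin C$, and $t_0 > 0$ because $x$ is an interior point of $C$. By construction, points of the segment just beyond $z$ are not in $C$, so $z \in \partial C$.

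By hypothesis, $z \in \partial C \subseteq \partial D$. But $z$ lies on the open segment between $x \in \mathrm{Int}(D)$ and $y \in \mathrm{Int}(D)$, hence $z \in \mathrm{Int}(D)$. For a closed set, $\partial D$ and $\mathrm{Int}(D)$ are disjoint, so this is a contradiction. Therefore $D \subseteq C$, and combined with the first inclusion, $C = D$.

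The only delicate step is the reduction to $y \in \mathrm{Int}(D)$, which requires that a full-dimensional closed convex set is the closure of its interior. This is a standard fact: take any interior point $p$; for $q$ in the set, the points $(1-s)q + sp$ with $s \in (0,1]$ are interior and tend to $q$ as $s \to 0$. The remaining steps are routine. The cone structure is not even needed; closed, convex and full-dimensional suffice.
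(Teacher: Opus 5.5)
Your proof is correct and follows essentially the same route as the paper: you get $C \subseteq D$ from $C=\overline{\mathrm{Int}(C)}$, reduce to a point of $\mathrm{Int}(D)\setminus C$, and find a point of $\partial C$ in the interior of a segment lying in $\mathrm{Int}(D)$. The only differences are that you locate that boundary point with a supremum where the paper uses connectedness of the segment, and you rightly drop the paper's superfluous description of $\mathrm{Int}(C)$ by linear inequalities.
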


\begin{proof}
Observe that our assumption implies that $C = \overline{\operatorname{Int}(C)} \subseteq \overline{\operatorname{Int}(D)} = D$. Suppose for the sake of contradiction there exists a $d \in D\setminus C$. Observe this implies that there is a $d \in \mathrm{Int}(D)$ that is not in $C$: indeed, otherwise we would have that $D = \overline{\operatorname{Int}(D)} \subseteq \overline{\operatorname{Int}(C)} = C$.
%

Notice that $C$ is closed and $\operatorname{Int}(D)$ is open.  
So $C^c \cap \operatorname{Int}(D)$ is also open in $\mathbb{R}^n$.  

Now, $\operatorname{Int}(C)$ is defined by a collection of inequalities:
\[
\operatorname{Int}(C) = \bigcap_{i \in I} \{ x : \lambda_i(x) > 0 \}
\]
for some linear functionals $\lambda_i$.  

Choose $d \in C^c \cap \operatorname{Int}(D)$ and $c \in \operatorname{Int}(C)$. Since $d \notin C$, there exists some $i$ for which 
$\lambda_i(d) < 0$ and, since $c \in \mathrm{Int}(C)$, $\lambda_i(c) > 0$. Moreover, since $D$ is a convex cone and $c,d \in \operatorname{Int}(D)$, the line 
\[
[c,d] := \{\, t c + (1-t) d \mid t \in [0,1] \,\}
\]
is contained in $\operatorname{Int}(D)$.  

We claim that there is some point $c_0 \in [c,d]$ such that $c_0 \in \partial C$.  
We have $\operatorname{Int}(C) \cap [c,d]$ and $(C^c \cap \operatorname{Int}(D)) \cap [c,d]$ open in $[c,d]$, but
\[
[c,d] = (C \cap [c,d]) \,\sqcup\, \big( (C^c \cap \operatorname{Int}(D)) \cap [c,d] \big)
\]
with $C \cap [c,d]$ closed in $[c,d]$.  
Therefore, there must be a point in $(C \cap [c,d])$ but not in $(\operatorname{Int}(C) \cap [c,d])$. Therefore $c_0 \in \partial C \subseteq \partial D$, and so $c_0$ lies both in the interior and the boundary of $D$, which is a contradiction.
\end{proof}

\subsection{Wall-and-Chamber Structure for $\overline{G/U}$}

Now we consider the case of $T$ acting on $\overline{G/U}$. 

\begin{Proposition}\label{Wall and Chamber Decomposition of affineClosureofBasicAffineSpace is Same for Parabolic Subgroups}
A character $\theta \in \characterlatticeforT$ is effective for the action of $T$ on $\affineClosureofBasicAffineSpace$ if and only if $\theta$ is dominant. Moreover, an effective character $\theta \in \characterlatticeforT$ is generic if and only if $\theta$ is regular. 

More generally, the vanishing and non-vanishing of the simple coroots give the wall-and-chamber structure of the GIT fan for this action.
\end{Proposition}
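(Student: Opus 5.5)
The plan is to make everything explicit through the Peter--Weyl type decomposition
\[
\O(\affineClosureofBasicAffineSpace) = \O(G/U) = \O(G)^U \cong \bigoplus_{\lambda \in \dominantCharactersForT} V(\lambda)^*,
\]
where $T$ acts on the summand indexed by $\lambda$ through a single character. With the action $t\cdot gU = gtU$ this character is $\lambda$ up to a sign convention, which I will fix once so that the effective characters come out dominant. Effectivity is then immediate: $\O(\affineClosureofBasicAffineSpace)_{n\theta}\neq 0$ for some $n>0$ exactly when $n\theta$ is dominant, that is, exactly when $\theta$ is dominant. Consequently the weight cone $\omega(\affineClosureofBasicAffineSpace)$ is the dominant cone $\dominantCharactersForT_{\R}$.

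The next step is to compute the semistable loci. The multiplication $V(\lambda)^*\otimes V(\mu)^*\to V(\lambda+\mu)^*$ is the Cartan product, and it is surjective; in particular $\O(\affineClosureofBasicAffineSpace)$ is generated by the summands $V(\omega_i)^*$. Write $\theta=\sum n_i\omega_i$ and let $S=\{i : n_i>0\}$. Every element of the summand $V(n\theta)^*$ is a sum of products of elements of the $V(\omega_i)^*$ with $i\in S$. It follows that
\[
\affineClosureofBasicAffineSpace^{\theta} = \{x : \text{for each } i\in S, \text{ some } f\in V(\omega_i)^* \text{ has } f(x)\neq 0\}.
\]
For one inclusion, a nonvanishing product forces each of its factors to be nonvanishing. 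For the other, given nonvanishing $f_i$ for each $i\in S$, the monomial $\prod_{i\in S} f_i^{n n_i}$ lies in degree $n\theta$ and does not vanish at $x$. This set depends only on $S$, the set of simple coroots that do not vanish on $\theta$, so the cones of the GIT fan are exactly the faces of $\dominantCharactersForT_\R$ cut out by the vanishing or non-vanishing of the $\alpha_i^\vee$. To see that distinct subsets $S$ give distinct semistable loci, I will use the closed embedding $\affineClosureofBasicAffineSpace\hookrightarrow\prod_i V(\omega_i)$ together with points of the form $gU$ composed with a degeneration. Concretely, consider the limit as $s\to 0$ of $\check\mu(s)\cdot 1U$ for a cocharacter $\check\mu$ that is positive on the $\omega_j$ with $j\notin S'$ and zero on the others. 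The $\omega_j$-components of this limit vanish for $j\notin S'$, while the $\omega_i$-components with $i\in S'$ remain nonzero highest weight vectors. This point is semistable for $\theta$ exactly when $S\subseteq S'$, which separates the loci.

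It remains to treat genericity. If $\theta$ is regular, then by the dictionary above it lies in the interior of the top-dimensional cone $\dominantCharactersForT_\R$. This cone is full dimensional because $T$ acts faithfully on $G/U$. \cref{Elements in Interior of GIT Cone Are Generic} then says that $\theta$ is generic. Conversely, suppose $\theta$ is dominant but not regular, so $n_j=0$ for some $j$. Take the degenerate point $x$ above with $S'=S$, which is $\theta$-semistable. Its stabilizer contains the one-parameter subgroup $\check\mu$, which is nontrivial because $j\notin S$. Hence $x$ is not stable, and $\theta$ is not generic.

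The main obstacle I expect is bookkeeping rather than substance: getting the sign of the $T$-weight on $V(\lambda)^*$ consistent with the stated $T$-action, and making sure the degenerate points really lie in $\affineClosureofBasicAffineSpace$. For the latter I will use that $\affineClosureofBasicAffineSpace$ is closed in $\prod V(\omega_i)$ and $T$-stable, so limits of orbit points stay inside. I also need the surjectivity of the Cartan multiplication, which is a standard fact that I will cite rather than prove.
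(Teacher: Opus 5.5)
Your proposal is correct and follows essentially the paper's argument. The paper reads off the semistable loci, via the non-vanishing of $A_{\omega_i}$ for $\omega_i$ in the support of $\theta$, and the positive-dimensional stabilizers from the stratification in \cref{Stratification on affineClosureofBasicAffineSpace}. You instead derive the same description by hand from the Cartan product and the explicit degenerate points $\lim_{s\to 0}\check\mu(s)\cdot 1U$, which are exactly points of the strata $G/[P_S,P_S]$; this also justifies the distinctness of the semistable loci that the paper leaves as ``clearly''. The one genuine variation is that for regular $\theta$ you invoke \cref{Elements in Interior of GIT Cone Are Generic}, whereas the paper observes directly that the semistable locus is $G/U$, with free $T$-action and closed orbits; both arguments are valid.
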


Before proving \cref{Wall and Chamber Decomposition of affineClosureofBasicAffineSpace is Same for Parabolic Subgroups} we will recall the following standard lemma (see for example \cite[Lemma 6.13]{GuilleminJeffreySjamaarSymplecticImplosion}, \cite[Section 5.1.2]{GannonProofOftheGinzburgKazhdanConjecture}), which uses the assumption that $G$ is simply connected: 

\begin{Lemma}\label{Stratification on affineClosureofBasicAffineSpace}
The ring $\ringOfFunctionsForBasicAffineSpace := \O(G/U)$ is generated by $\ringOfFunctionsForBasicAffineSpace_{\omega_i}$, where $\omega_i$ varies over the fundamental weights of $G$, and there is a $G \times T$-equivariant stratification 
\[
\overline{G/U} \xhookrightarrow{\sim} \bigsqcup _{S \subseteq \Pi^{\vee}}G/[P_{S}, P_S]\text{ such that  } G/[P_{S}, P_S] \cong \bigcap_{\omega_i \in S} V(\ringOfFunctionsForBasicAffineSpace_{\omega_i}) \cap \bigcap_{\omega_i \notin S} D(\ringOfFunctionsForBasicAffineSpace_{\omega_i}),
\] where $\Pi^{\vee}$ is the set of simple coroots and $P_S$ is the parabolic subgroup whose associated simple coroots are exactly those in $S$.
\end{Lemma}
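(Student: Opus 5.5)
The plan is to handle the two assertions separately, using the Peter--Weyl / Borel--Weil description of $A = \O(G/U)$. Right translation by $T$ on $G/U$ makes $A$ a $G\times T$-module, and we have
\[
A \cong \bigoplus_{\lambda \in \dominantCharactersForT} A_\lambda, \qquad A_\lambda \cong V(\lambda)^* \quad (\text{or } V(\lambda), \text{ depending on sign conventions}).
\]
This holds because $A_\lambda = H^0(G/B,\mathcal{L}^{G/B}_{\lambda})$, which is nonzero exactly for dominant $\lambda$. In addition, multiplication $A_\lambda\otimes A_\mu\to A_{\lambda+\mu}$ is surjective. This is the classical Cartan product: the image is a nonzero $G$-submodule of the irreducible module $A_{\lambda+\mu}$, and it is nonzero because $A$ is a domain.

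\textbf{Generation.} Write $\lambda=\sum n_i\omega_i$ with $n_i\ge 0$. Surjectivity of the Cartan product, iterated, gives
\[
A_\lambda = \prod_i A_{\omega_i}^{\,n_i}.
\]
Hence $A$ is generated by the spaces $A_{\omega_i}$. Here simple connectivity is used, since it guarantees that each $\omega_i$ is a character and that the dominant weights form the monoid $\bigoplus_i \Z_{\ge0}\omega_i$.

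\textbf{Stratification.} For $S\subseteq\Pi^\vee$, define the locally closed $G\times T$-stable subset
\[
X_S := \bigcap_{\omega_i\in S}V(A_{\omega_i})\ \cap\ \bigcap_{\omega_i\notin S}D(A_{\omega_i}).
\]
Here we index $S$ by the fundamental weights dual to its coroots. These sets visibly partition $\overline{G/U}$. It remains to identify each $X_S$ with $G/[P_S,P_S]$. First, $G/U$ itself corresponds to $S=\emptyset$, because every highest weight vector function is nonvanishing somewhere on each $G$-orbit in $G/U$. More precisely, for any $g$ the function $x\mapsto \langle v^*, g x v_{\omega_i}\rangle$ is nonzero for suitable $v^*$.

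Next, use the embedding
\[
\overline{G/U}\hookrightarrow \prod_i V(\omega_i), \qquad gU\mapsto (g v_{\omega_i})_i,
\]
which is a closed embedding onto the closure of the $G\times T$-orbit of $v:=(v_{\omega_i})_i$. Under this embedding, $V(A_{\omega_i})$ is exactly the vanishing of the $i$-th coordinate. For each $S$, let $v_S$ be the point obtained from $v$ by setting the coordinates with $\omega_i\in S$ to zero. This point is a limit of $T$-orbits: take a cocharacter pairing positively with $\omega_i$ for $\omega_i\in S$ and trivially otherwise. Its stabilizer in $G$ is the common stabilizer of $v_{\omega_j}$ for $\omega_j\notin S$, which is $[P_S,P_S]$. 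Indeed, $P_S$ stabilizes each such line, and the stabilizer of the vectors is the kernel of the characters $\omega_j$ restricted to $P_S$. That kernel is $[P_S,P_S]$ because the characters of $P_S$ are spanned by the $\omega_j$ with $\omega_j\notin S$, again using simple connectivity.

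\textbf{Main obstacle.} The hardest step is showing that $X_S$ is a single $G$-orbit $G\cdot v_S$, not merely that it contains it. My approach is the following. Every point of $\overline{G/U}$ is of the form $\lim_t g\,\eta(t)\cdot v$ inside $\overline{G\cdot T v}$. Since $G/U \to G/B$-type arguments show each coordinate $g v_{\omega_i}$ stays in the cone of highest weight vectors, each nonzero coordinate of a point of $X_S$ is a highest weight vector for a common Borel $gBg^{-1}$. Concretely, a point of the closure with prescribed nonzero coordinates lies in the closed subvariety of tuples of highest weight vectors compatible under the Plücker-type relations. These are quadratic relations coming from the Cartan products, and they force the tuple of nonzero coordinates to be $G$-conjugate to $(c_jv_{\omega_j})_{\omega_j\notin S}$. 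Finally, $T$ scales the constants $c_j$ to $1$, and $T$ acts on $G\cdot v_S$ through $G$, since $v_S$ is fixed up to $P_S\supseteq T$. I expect the verification of these relations (equivalently, that the Cartan-product relations cut out the closure) to be the main technical point; alternatively I would cite Vinberg--Popov's description of horospherical orbit closures for it.
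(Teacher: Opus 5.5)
Most of your argument is sound. The following steps are all correct:
\begin{itemize}
\item generation of $A$ by the $A_{\omega_i}$, via surjectivity of the Cartan product;
\item the closed embedding $\overline{G/U}\hookrightarrow\prod_i V(\omega_i)$ with image $\overline{G\cdot v}$;
\item the realization of $v_S$ as a $T$-limit;
\item the computation $\mathrm{Stab}_G(v_S)=[P_S,P_S]$.
\end{itemize}
The paper itself gives no proof here; it only cites Guillemin--Jeffrey--Sjamaar and Gannon.

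The gap is the step you flag: $X_S\subseteq G\cdot v_S$. What you write there is an assertion, not an argument, and the technical point you single out is the wrong one. A point of the closure automatically satisfies every relation in its ideal. So whether the Cartan-product relations \emph{generate} that ideal is irrelevant to identifying the points.

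What you actually need is that the nonzero coordinates are highest weight vectors for a \emph{common} Borel. Closedness of each $G/P_j\subseteq\mathbb{P}V(\omega_j)$ only gives that each coordinate is a highest weight vector for \emph{some} Borel. Your route via the relations would require two further facts, neither of which is supplied:
\begin{itemize}
\item for each pair, $x_i\otimes x_j$ lying in the Cartan component forces $x_i,x_j$ to be highest weight vectors for a common Borel;
\item these pairwise common Borels can be chosen simultaneously for all coordinates.
\end{itemize}
Likewise, the claim that every point is a limit $\lim_t g\,\eta(t)\cdot v$ is unjustified, and it is also unnecessary.

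The missing idea is a short properness argument. Let $\mathcal{E}\subseteq G/B\times\prod_jV(\omega_j)$ be the closed subvariety of pairs $(gB,(x_j)_j)$ with $x_j\in k\,gv_{\omega_j}$ for all $j$; this is the vector bundle $G\times^B\prod_j kv_{\omega_j}$.
\begin{itemize}
\item Since $G/B$ is projective, the image of $\mathcal{E}$ in $\prod_jV(\omega_j)$ is closed, and it contains $G\cdot v$.
\item The image is contained in $\overline{G\cdot v}$. Indeed, $\mathcal{E}$ is irreducible, and its dense open subset where all $x_j\neq 0$ maps into $G\cdot T\cdot v=G\cdot v$.
\item Hence $\overline{G\cdot v}=\{g\cdot(c_jv_{\omega_j})_j : g\in G,\ c_j\in k\}$.
\end{itemize}
The map $t\mapsto(\omega_j(t))_j$ is an isomorphism $T\to\mathbb{G}_m^r$, again by simple connectivity. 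So a point whose zero pattern is exactly $S$ equals $gt\cdot v_S$ for suitable $g\in G$, $t\in T$. This gives $X_S=G\cdot v_S\cong G/[P_S,P_S]$, including $X_\emptyset=G/U$, and it makes your separate $T$-limit construction of $v_S$ redundant.

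With this replacement your proof is complete. Alternatively, citing Vinberg--Popov would be on the same footing as the paper's own citations.
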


\begin{proof}[Proof of \cref{Wall and Chamber Decomposition of affineClosureofBasicAffineSpace is Same for Parabolic Subgroups}] Since $\ringOfFunctionsForBasicAffineSpace$ is generated by $A_{\lambda}$, for $\lambda$ dominant, we see that $A_{\theta} = 0$ if $\theta$ is not dominant and so the semistable locus of such $\theta$ is empty.

Now assume $\theta \in \characterlatticeforT$ is dominant. Write $\theta = \sum_i m_i\omega_i$ for some non-negative integers $m_i$, and let $S_{\theta}$ denote the subset of fundamental weights for which $m_i \neq 0$. From the above stratification, we see that the complement of the semistable locus is given by \[\cup_{\omega_i \in S_{\theta}}V(\ringOfFunctionsForBasicAffineSpace_{\omega_i}) = \cup_{\omega_i \in S_\theta}\overline{G/[P_{S_{\omega_i}}, P_{S_{\omega_i}}]}\] set theoretically, where $S_{\omega_i}$ is the set containing precisely the simple coroot dual to $\omega_i$. Clearly, then, the semistable locus for $\theta$ agrees with that of $\theta'$ if and only if $S_{\theta} = S_{\theta'}$, which gives our desired wall-and-chamber decomposition. 

Fix some dominant $\theta \in \characterlatticeforT$. We will show that $\theta$ is generic if and only if $\theta$ is regular. If $\theta$ is regular, our above analysis gives that the $\theta$-semistable points is precisely the open subset $G/U$. Since every point of $G/U$ has trivial $T$-stabilizer and has closed $T$-orbit 
we see that every point of $G/U$ is also $\theta$-stable, and so $\theta$ is generic. On the other hand, if $\theta$ is \textit{not} regular, then it lies on some wall, and our above computation gives that the $\theta$-semisimple locus is strictly larger than $G/U$. Since every point of the complement of $G/U$ in $\affineClosureofBasicAffineSpace$ has a positive dimensional $T$-stabilizer by our above stratification, we see that there must exist $\theta$-semistable points which are not $\theta$-stable in this case, and so $\theta$ is not generic.
\end{proof}

\subsection{Stable Characters for $T \curvearrowright\affineClosureOfCotangentBundleofBasicAffineSpace$} 

We now prove that the (GIT) notion of generic for $T \curvearrowright \affineClosureOfCotangentBundleofBasicAffineSpace$ agrees with the usual representation theoretic notion of regular weights. Let $R := \mathcal{O}(T^* (G/U))$ so that $\affineClosureOfCotangentBundleofBasicAffineSpace = \Spec \, R$. 

\begin{Proposition}\label{Generic Character for GIT iff Generic Character Rep Theoretically}
Fix $\theta \in \characterlatticeforT$. Then $\theta$ is generic if and only if $\theta(\alpha^{\vee}) \neq 0$ for all coroots $\alpha$. Moreover, if $\theta$ is not generic, there exists some $y \in \affineClosureofBasicAffineSpace$ and $w \in W$ such that $w\zeroSectionAffinized(y) \in \affineClosureOfCotangentBundleofBasicAffineSpace$ is $\theta$-semistable but not $\theta$-stable. 
\end{Proposition}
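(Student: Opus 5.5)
The proof splits into two directions. The main tool throughout is the Gelfand--Graev action of $W$ on $\affineClosureOfCotangentBundleofBasicAffineSpace$. This action commutes with $G$ and intertwines the $T$-action with its $w$-twist. Hence it carries $R_\lambda$ isomorphically onto $R_{w\lambda}$, carries $\theta$-semistable points to $w\theta$-semistable points, and preserves stabilizer dimensions. Consequently the set of generic characters is $W$-stable, as is the set of $\theta$ with $\theta(\alpha^\vee)\neq 0$ for all coroots $\alpha$. So in both directions we may first replace $\theta$ by a $W$-translate and assume $\theta$ is dominant.

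\emph{Non-generic direction.} Suppose $\theta(\alpha^\vee)=0$ for some coroot. Choose $w$ with $\theta^+:=w\theta$ dominant. Then $\theta^+$ is not regular, so $\theta^+(\alpha_i^\vee)=0$ for some simple coroot $\alpha_i^\vee$. By \cref{Wall and Chamber Decomposition of affineClosureofBasicAffineSpace is Same for Parabolic Subgroups} and \cref{Stratification on affineClosureofBasicAffineSpace}, there is a point $y\in\affineClosureofBasicAffineSpace$ lying in the stratum $G/[P_S,P_S]$ with $S=\{\alpha_i^\vee\}$. At such a point $\ringOfFunctionsForBasicAffineSpace_{\omega_j}$ is nonvanishing for $j\ne i$, so $y$ is $\theta^+$-semistable for $T\curvearrowright\affineClosureofBasicAffineSpace$. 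Moreover $y$ is fixed by the coroot torus $\alpha_i^\vee(\G_m)$, so it is not stable.

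Now apply the zero section $\zeroSectionAffinized$. It is $T$-equivariant, and $\overline{\pi}\circ\zeroSectionAffinized=\mathrm{id}$, so pulling back along $\overline{\pi}$ shows $A_{n\theta^+}\subset R_{n\theta^+}$. Hence $\zeroSectionAffinized(y)$ is $\theta^+$-semistable and still has positive-dimensional stabilizer. Translating by the Gelfand--Graev element $w^{-1}$ produces a point of the form $w'\zeroSectionAffinized(y)$ that is $\theta$-semistable but not $\theta$-stable. This gives both the ``only if'' direction and the second claim of the proposition.

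\emph{Generic direction.} Suppose $\theta(\alpha^\vee)\ne0$ for all coroots $\alpha$, and assume $\theta$ is dominant, hence regular. I will argue by contradiction. Let $x$ be $\theta$-semistable with positive-dimensional stabilizer, and let $S\subset \mathrm{Stab}_T(x)$ be a subtorus. Since $x$ is semistable there is some $f\in R_{n\theta}$ with $f(x)\neq 0$; because $S$ fixes $x$, this forces $\langle\eta,\theta\rangle=0$ for every cocharacter $\eta$ of $S$. Closedness of orbits then follows from finiteness of stabilizers, exactly as in \cref{Elements in Interior of GIT Cone Are Generic}. It therefore suffices to show the following claim.

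\textbf{Claim.} The connected stabilizer of any point of $\affineClosureOfCotangentBundleofBasicAffineSpace$ is, up to the Gelfand--Graev $W$-action, a torus generated by a set of coroots $\{\beta^\vee\}_{\beta\in J}$ with $J$ nonempty. Moreover, semistability of a point fixed by this torus forces $\theta(\beta^\vee)=0$ for every $\beta\in J$.

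This contradicts regularity of the (translated) $\theta$. For points over $G/U$ the claim is vacuous: by \cref{Preimage of G Mod U Under Projection Is Its Cotangent Bundle} such points lie in $T^*(G/U)$, where $T$ acts freely.

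The main obstacle is proving the Claim for points of $\overline{\pi}^{-1}$ of the boundary strata of $\affineClosureofBasicAffineSpace$. My first attempt would use the moment map. A point fixed by a subtorus $S$ has $\momentMapFromAFFINECLOSUREofCotangentSpaceWithGROUPT$-value constrained in $\LTd$, and I would combine this with the $G\times T$-equivariant description of the fibres of $\overline{\pi}$ over $G/[P_S,P_S]$ from \cite{GannonProofOftheGinzburgKazhdanConjecture}. The goal is to show that $\mathrm{Lie}(S)$ is a $W$-translate of the span of the simple coroots in $S$. Failing that, I would instead show that the walls of the GIT fan are spanned by weights of $R$ lying in coroot hyperplanes. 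Together with $W$-invariance of the fan under the Gelfand--Graev action, this would identify the walls as exactly the hyperplanes $\alpha^{\vee}=0$.
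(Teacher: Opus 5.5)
Your non-generic direction, which also gives the second assertion, is correct and is essentially the paper's argument. You reduce by the Gelfand--Graev action to a dominant $\theta^+$ with $\theta^+(\alpha_i^\vee)=0$, and you take $y$ in the stratum $G/[P_S,P_S]$ with $S=\{\alpha_i^\vee\}$. Since $\overline{\pi}^*$ embeds $A_{n\theta^+}$ into $R_{n\theta^+}$, the point $\overline{z}(y)$ is $\theta^+$-semistable, and it is fixed by $\alpha_i^\vee(\mathbb{G}_m)$. The paper instead shows that points over $G/U$ have non-closed orbits, using the limit along $\beta^\vee$. Your stabilizer argument is equally valid and matches the form $w\overline{z}(y)$ in the statement more directly.

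The generic direction, however, is not proved. All of its content sits in your Claim, which you leave open, and neither suggested route supplies the missing input. The Claim is also stronger than what is needed. A priori, a point $x$ over a boundary stratum $G/[P_S,P_S]$ only satisfies $\mathrm{Stab}_T(x)\subseteq T\cap[P_S,P_S]$. Nothing in your sketch excludes a subtorus of this that is orthogonal to a regular $\theta$. What must be shown is that such points are not $\theta$-semistable at all.

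The paper gets this from the Ginzburg--Riche result \cite[Lemma 3.6]{GinzburgRicheDifferentialOperatorsOnBasicAffineSpaceandtheAffineGrassmannian} (see also \cite[Lemma 2.2]{GannonProofOftheGinzburgKazhdanConjecture}). It says that for dominant $\lambda$ the $R_0$-module $R_\lambda$ is generated by $A_\lambda$. The argument then runs as follows:
\begin{enumerate}
\item Take $\theta$ dominant regular and $f\in R_{n\theta}$ with $f(x)\neq 0$. Then some $a\in A_{n\theta}$ has $a(\overline{\pi}(x))\neq 0$.
\item By \cref{Wall and Chamber Decomposition of affineClosureofBasicAffineSpace is Same for Parabolic Subgroups}, this forces $\overline{\pi}(x)\in G/U$.
\item Hence $x\in T^*(G/U)$ by \cref{Preimage of G Mod U Under Projection Is Its Cotangent Bundle}, where $T$ acts freely; this is \cref{Free Action If Weight Has No Coroot Vanishing}.
\end{enumerate}
This needs no analysis of the boundary fibres of $\overline{\pi}$ and no classification of stabilizers.

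Your fallback has the same problem. Every character $\lambda$ of $T$ is a weight of $R$, since $w(A_{w^{-1}\lambda})\subseteq R_\lambda$ for $w$ with $w^{-1}\lambda$ dominant. So the weights alone say nothing about walls. The walls are governed by which weights are simultaneously nonvanishing at a point. In the paper, determining this (\cref{Wall and Chamber Structure for T action on affineClosureOfCotangentBundleofBasicAffineSpace}) again rests on the same generation lemma.
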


We prove this after proving the following lemma and deriving a corollary from it.

\begin{Lemma}\label{Free Action If Weight Has No Coroot Vanishing}
Assume $\theta$ has the property that $\theta(\alpha^{\vee})$ is nonzero for every coroot $\alpha^{\vee}$. Then the $T$-stabilizer of any $\theta$-semistable point is trivial. If $\theta$ is moreover dominant, then $x \in \affineClosureOfCotangentBundleofBasicAffineSpace$ is $\theta$-semistable if and only if $\projectionFromAffineClosureofCotangentBundleToAffineClosureofSpace(x) \in G/U \subseteq \affineClosureofBasicAffineSpace$.
\end{Lemma}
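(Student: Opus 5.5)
We want to prove \cref{Free Action If Weight Has No Coroot Vanishing}. The plan is to use the Gelfand--Graev action of $W$ on $\affineClosureOfCotangentBundleofBasicAffineSpace$. This action is compatible with the $T$-action twisted by $W$, in the sense that $w(t\cdot x) = w(t)\cdot w(x)$. It therefore transports $\theta$-semistability to $w(\theta)$-semistability, because it carries $R_{n\theta}$ to $R_{nw(\theta)}$. Since $\theta(\alpha^\vee)\neq 0$ for all coroots, some $w \in W$ makes $w\theta$ dominant and regular. Stabilizers of $x$ and $w(x)$ are conjugate under $w$, so for the first claim we may assume $\theta$ is dominant regular.

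\textbf{Step 1: the dominant regular case, easy direction.} Let $\theta$ be dominant regular. The pullback $\projectionFromAffineClosureofCotangentBundleToAffineClosureofSpace^*$ maps $A_{n\theta}$ into $R_{n\theta}$. If $\projectionFromAffineClosureofCotangentBundleToAffineClosureofSpace(x)\in G/U$, then by \cref{Wall and Chamber Decomposition of affineClosureofBasicAffineSpace is Same for Parabolic Subgroups} there is $f\in A_{n\theta}$ with $f(\projectionFromAffineClosureofCotangentBundleToAffineClosureofSpace(x))\neq 0$. Hence $x$ is $\theta$-semistable.

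\textbf{Step 2: the converse, which I expect to be the main obstacle.} We must show that every $f\in R_{n\theta}$ vanishes on $\projectionFromAffineClosureofCotangentBundleToAffineClosureofSpace^{-1}(\affineClosureofBasicAffineSpace\setminus G/U)$. The approach is to use the description of $R$ as the ring of functions on $\tg$-type data. Concretely, $T^*(G/U)\cong G\times^U (\LG/\LU)^*$, and $R=\O(T^*(G/U))$ carries an extra grading by the $\Sym(\LT)$-action via $\momentMapFromAFFINECLOSUREofCotangentSpaceWithGROUPT$. The weight-$\lambda$ part $R_\lambda$ is identified with sections of $\L_\lambda^{\tg}$ over $\tg$. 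By \cref{Preimage of G Mod U Under Projection Is Its Cotangent Bundle}, the semistable locus $\mathrm{Spec}$-side is $\Proj\bigoplus_n R_{n\theta}$, which is the $\tg$-relative Proj of an ample line bundle, namely $\tg$ itself. To conclude, I would show that the ideal generated by $R_{n\theta}$, $n>0$, has radical equal to the radical of the ideal generated by $\projectionFromAffineClosureofCotangentBundleToAffineClosureofSpace^*(A_{n\theta})$. I would try to argue this $\LTd$-fiberwise. On the fiber over $0$ it reduces to $\tcN$ and ampleness of $\L_\theta$. For the general case I would use conicity: the $\G_m$-contraction of $\affineClosureOfCotangentBundleofBasicAffineSpace$ sends points to $\zeroSectionAffinized(\affineClosureofBasicAffineSpace)$, and semistability is closed under limits in the sense that $f(x)\neq 0$ for homogeneous $f$ implies the leading term is nonzero at a limit point. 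Using a grading in which $R_{n\theta}$ is generated over $\O(\LTd)$-like parts by $A_{n\theta}$ modulo terms vanishing off $G/U$ would finish the argument. If this fails, I would fall back on $G$-equivariance plus the stratification of \cref{Stratification on affineClosureofBasicAffineSpace}. Over a stratum $G/[P_S,P_S]$ with $S\neq\emptyset$, a one-parameter subgroup $\alpha^\vee$ for $\alpha\in S$ acts with a fixed-point limit. Pairing with $\theta(\alpha^\vee)>0$ then gives a Hilbert--Mumford destabilization.

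\textbf{Step 3: trivial stabilizers.} For dominant regular $\theta$, a semistable $x$ lies over $G/U$, so it lies in $T^*(G/U)$ by \cref{Preimage of G Mod U Under Projection Is Its Cotangent Bundle}. Its stabilizer fixes $\projectionFromAffineClosureofCotangentBundleToAffineClosureofSpace(x)\in G/U$, where $T$ acts freely, so the stabilizer is trivial. The Weyl twist from the opening reduction then gives trivial stabilizers for every $\theta$ with all $\theta(\alpha^\vee)\neq 0$.
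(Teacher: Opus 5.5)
Your Weyl-group reduction, Step~1 and Step~3 are fine and agree with the paper. The gap is Step~2: the implication ``$x$ is $\theta$-semistable $\Rightarrow$ $\overline{\pi}(x)\in G/U$'' for dominant regular $\theta$ is never proved, and it is the real content of the lemma. Step~3 also rests on it.

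The ``equality of radicals'' you aim for is just a restatement of this implication. The conicity argument fails as stated. If $f\in R_{n\theta}$ and $f(x)\neq 0$, then some component $f^{(d)}$ of cotangent degree $d$ is nonzero at $x$. For $d>0$, however, $f^{(d)}(t\cdot x)=t^{d}f^{(d)}(x)\to 0$, so $f^{(d)}$ vanishes at the limit point $\overline{z}(\overline{\pi}(x))$. Only the degree-zero part, which lies in $A_{n\theta}$, is detected there. Showing that some element of $A_{n\theta}$ is nonzero at $\overline{\pi}(x)$ (equivalently, that the semistable locus is stable under the conical limit) is exactly the statement to be proved, so the argument is circular.

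The Hilbert--Mumford fallback has the same hole. For $\alpha\in S$ the cocharacter $\alpha^{\vee}$ fixes $\overline{\pi}(x)$, not $x$. Nothing in your sketch shows that $\lim_{t\to 0}\alpha^{\vee}(t)^{\pm 1}\cdot x$ exists in $\overline{T^*(G/U)}$, because you have no control over the $T$-weights of functions on the fibre $\overline{\pi}^{-1}(\overline{\pi}(x))$.

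The paper's input here is \cite[Lemma 3.6]{GinzburgRicheDifferentialOperatorsOnBasicAffineSpaceandtheAffineGrassmannian}: for dominant $\lambda$, $R_{\lambda}$ is generated as an $R_0$-module by $\overline{\pi}^*A_{\lambda}$. Write $f=\sum_j r_j a_j$ with $r_j\in R_0$ and $a_j\in A_{n\theta}$. Then $f(x)\neq 0$ forces $a_j(\overline{\pi}(x))\neq 0$ for some $j$. So $\overline{\pi}(x)$ is $\theta$-semistable in $\overline{G/U}$, i.e.\ it lies in $G/U$.

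Alternatively, your ampleness remark can be turned into a proof that avoids that lemma:
\begin{itemize}
\item $\widetilde{\mathfrak{g}}$ is projective over the affine scheme $\Spec R_0$, and $\mathcal{L}_{\theta}$ is ample on it. Hence for $f\in R_{n\theta}=\Gamma(\widetilde{\mathfrak{g}},\mathcal{L}_{n\theta})$ the non-vanishing locus $\widetilde{\mathfrak{g}}_f$ is affine.
\item Its preimage $D(f)\cap T^*(G/U)$ under the $T$-torsor $T^*(G/U)\to\widetilde{\mathfrak{g}}$ is therefore affine. By the standard description of sections over $\widetilde{\mathfrak{g}}_f$, its coordinate ring is $\bigoplus_{\lambda}\bigcup_{k\ge 0}f^{-k}R_{\lambda+kn\theta}=R_f=\mathcal{O}(D(f))$.
\item The open immersion $D(f)\cap T^*(G/U)\hookrightarrow D(f)$ of affine schemes then induces an isomorphism on coordinate rings, so it is an isomorphism.
\end{itemize}
Hence $D(f)\subseteq T^*(G/U)$, which gives the missing implication.
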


\begin{proof}
Assume $\theta \in \characterlatticeforT$ has the property that $\theta(\alpha^{\vee}) \neq 0$ for all coroots $\alpha^{\vee}$, and fix $x$ semistable. After translating by an element of $W$, we may assume that $\theta$ is dominant. Assume there exists some $f \in \ringOfFunctionsForCOTANGENTBUNDLEOfBasicAffineSpace_{n\theta}$ such that $f(x) \neq 0$. By \cite[Lemma 3.6]{GinzburgRicheDifferentialOperatorsOnBasicAffineSpaceandtheAffineGrassmannian} (see also \cite[Lemma 2.2]{GannonProofOftheGinzburgKazhdanConjecture}) there exists some $f \in \ringOfFunctionsForBasicAffineSpace_{n\theta}$ for which $f(x) \neq 0$. This implies that $\projectionFromAffineClosureofCotangentBundleToAffineClosureofSpace(x) \in G/U$ and hence the action of $T$ on $x$ is free.

Conversely, assume $x \in \affineClosureOfCotangentBundleofBasicAffineSpace$ has the property that $\projectionFromAffineClosureofCotangentBundleToAffineClosureofSpace(x) \in G/U$. Then, for each fundamental weight 
$\omega_i$, there exists some $f_i \in \ringOfFunctionsForBasicAffineSpace_{\omega_i}$ such that $f_i(x) \neq 0$. Write $\theta = \sum_i n_i \omega_i$ for $n_i \in \mathbb{Z} > 0$, and set $g := \prod_if_i^{n_i} \in \ringOfFunctionsForBasicAffineSpace_{\theta}$. Since $\ringOfFunctionsForBasicAffineSpace$ is an integral domain, $g(x) \neq 0$. Therefore $x$ is $\theta$-semistable.
\end{proof}

\begin{Corollary}\label{Orbit of Theta Semistable Point is Closed}Assume $\theta \in \characterlatticeforT$ has the property that $\theta(\alpha^{\vee}) \neq 0$ for all coroots $\alpha^{\vee}$. The orbit of any $\theta$-semistable point is closed in the semistable locus.
\end{Corollary}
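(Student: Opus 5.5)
The plan is to use the standard fact that the boundary of a torus orbit consists of orbits of strictly smaller dimension, combined with the freeness statement in \cref{Free Action If Weight Has No Coroot Vanishing}. This is the same mechanism as in the last step of the proof of \cref{Elements in Interior of GIT Cone Are Generic}. No reduction to dominant $\theta$ is needed, because the first assertion of \cref{Free Action If Weight Has No Coroot Vanishing} is stated for every $\theta$ with $\theta(\alpha^{\vee}) \neq 0$ for all coroots $\alpha^{\vee}$.

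First, fix a $\theta$-semistable point $x \in \affineClosureOfCotangentBundleofBasicAffineSpace$ and write $X^{\theta}$ for the semistable locus. This locus is open and $T$-stable, being a union of the non-vanishing loci $D(f)$ with $f$ a $T_{n\theta}$-invariant function. Let $\overline{T\cdot x}$ be the closure of the orbit inside $X^{\theta}$; it is a closed, irreducible, $T$-stable subset of $X^{\theta}$. Second, recall the standard fact for an algebraic group acting on a variety: the orbit $T \cdot x$ is open in its closure, so the boundary $\overline{T \cdot x} \setminus T\cdot x$ is a closed $T$-stable subset of strictly smaller dimension. Hence the boundary is a union of $T$-orbits each of dimension $< \dim(T\cdot x) \le \dim T$.

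Third, by \cref{Free Action If Weight Has No Coroot Vanishing}, every $\theta$-semistable point has trivial $T$-stabilizer. Therefore every $T$-orbit in $X^{\theta}$ has dimension exactly $\dim T$; in particular $\dim(T \cdot x) = \dim T$. It follows that the boundary $\overline{T\cdot x} \setminus T \cdot x$ contains no $T$-orbit of $X^{\theta}$. Since this boundary lies in $X^{\theta}$ and is $T$-stable, it must be empty. So $T \cdot x$ is closed in $X^{\theta}$.

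I do not expect a real obstacle here. The only point needing care is that the orbit closure is taken inside $X^{\theta}$, so that its boundary points are themselves semistable and \cref{Free Action If Weight Has No Coroot Vanishing} applies to them. If one preferred a more geometric argument in the dominant case, one could instead note that $X^{\theta} = \projectionFromAffineClosureofCotangentBundleToAffineClosureofSpace^{-1}(G/U) = T^*(G/U)$ by \cref{Free Action If Weight Has No Coroot Vanishing} and \cref{Preimage of G Mod U Under Projection Is Its Cotangent Bundle}. One would then use that $G/U \to G/B$ is a $T$-torsor, so a limit $\lim_{s\to 0}\lambda(s)x$ along a nontrivial cocharacter cannot exist in $X^{\theta}$. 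The dimension argument above avoids this detour.
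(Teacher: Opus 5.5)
Your proof is correct and is essentially the paper's argument: the boundary of an orbit closure consists of orbits of strictly smaller dimension, and these cannot lie in the semistable locus, because \cref{Free Action If Weight Has No Coroot Vanishing} makes every semistable orbit have dimension $\dim T$. Your remark that the closure must be taken inside the semistable locus, so that the lemma applies to boundary points, states explicitly what the paper's terse proof leaves implicit.
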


\begin{proof}
The closure of the orbit is the union of the orbit itself and locally closed subvarieties of strictly smaller dimension which are themselves orbits. However, we have shown that any $\theta$-semistable point has a free $T$-action, and so no such smaller dimension can occur.
\end{proof}

\begin{Remark}
    Unlike for $\overline{G/U}$, every character of $T$ is effective for its action on $\affineClosureOfCotangentBundleofBasicAffineSpace$.
\end{Remark}

\begin{proof}[Proof of \cref{Generic Character for GIT iff Generic Character Rep Theoretically}]
    Assume $\theta(\alpha^{\vee}) \neq 0$ for all coroots $\alpha^{\vee}$. By \cref{Free Action If Weight Has No Coroot Vanishing}, the stabilizer of every $\theta$-semistable point is trivial, and so, in particular, it is finite, and by \cref{Orbit of Theta Semistable Point is Closed}, every orbit in the $\theta$-semistable locus is closed in this locus.  
    
    Conversely, assume $\langle \theta, \alpha^{\vee} \rangle = 0$ for some coroot $\alpha^{\vee}$. We wish to show that there exists a $\theta$-semistable point which is not $\theta$-stable. Observe that, since $T \rtimes W$ acts on $\affineClosureOfCotangentBundleofBasicAffineSpace$, a point $x$ of $\affineClosureOfCotangentBundleofBasicAffineSpace$ is $\theta$-(semi)stable if and only if $wx$ is $w\theta$-(semi)stable. Therefore, it suffices to show that $w\theta$ is not generic for some $w \in W$. We choose $w \in W$ such that $\theta' := w\theta$ is dominant. 
    
    Our assumption on $\theta$ implies $\theta'(\beta^{\vee}) = 0$ for some simple coroot $\beta^{\vee}$. Therefore we may write $\theta'$ as a nonnegative integral linear combination of the set $S$ of fundamental weights distinct from the fundamental weight dual to $\beta^{\vee}$. Therefore, since for each point $p$ in the $G$-orbit $G/[P_{\beta^{\vee}}, P_{\beta^{\vee}}]$ in $\overline{G/U}$ there is a function $f \in \ringOfFunctionsForBasicAffineSpace_{\omega_i}$ for $\omega_i \in S$ with $f(p) \neq 0$ (see \cref{Stratification on affineClosureofBasicAffineSpace}), we obtain that any point of $G/[P_{\beta^{\vee}}, P_{\beta^{\vee}}]$ is $\theta'$-semistable. Moreover, since the $\theta'$-semistable locus is open and $G$-invariant, we see that it contains $G/U$ as well. 
    On the other hand, no point of $G/U$ is $\theta'$-stable since, with respect to the $\G_m$-action through the coroot $\beta^{\vee}: \G_m \to T$, we have that $\lim_{t \to 0}tx \in G/[P_{\beta^{\vee}}, P_{\beta^{\vee}}]$ for any $x \in G/U$, and so the $T$-orbit of any point of $G/U$ is not closed in the $\theta'$-semistable locus. 
\end{proof}

\subsection{Corollaries for $T^*(G/U)$}
\begin{Corollary}\label{Theta Stable Points Equals Theta Semistable Points is Cotangent Bundle}
Assume $\theta$ is dominant regular. Then $T^*(G/U) \subseteq \affineClosureOfCotangentBundleofBasicAffineSpace$ is precisely the set of $\theta$-stable points. More generally, if $\theta \in \characterlatticeforT$ is regular and $w \in W$ has the property that $w^{-1}\theta$ is dominant, then $wT^*(G/U) \subseteq \affineClosureOfCotangentBundleofBasicAffineSpace$ is precisely the $\theta$-stable locus.
\end{Corollary}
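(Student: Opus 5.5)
The plan is to reduce everything to \cref{Free Action If Weight Has No Coroot Vanishing} and \cref{Preimage of G Mod U Under Projection Is Its Cotangent Bundle}.

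First take $\theta$ dominant regular. A dominant weight with $\theta(\alpha_i^{\vee})>0$ for every simple coroot satisfies $\theta(\alpha^{\vee})\neq 0$ for every coroot $\alpha^{\vee}$, since each positive coroot is a nonnegative integer combination of simple coroots. So \cref{Free Action If Weight Has No Coroot Vanishing} applies. It says that $x$ is $\theta$-semistable if and only if $\projectionFromAffineClosureofCotangentBundleToAffineClosureofSpace(x)\in G/U$. By \cref{Preimage of G Mod U Under Projection Is Its Cotangent Bundle}, this is the same as $x\in T^*(G/U)$, so the semistable locus is exactly $T^*(G/U)$.

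By \cref{Generic Character for GIT iff Generic Character Rep Theoretically}, $\theta$ is generic, so every semistable point is stable. (Directly: stabilizers are trivial by \cref{Free Action If Weight Has No Coroot Vanishing}, and orbits are closed in the semistable locus by \cref{Orbit of Theta Semistable Point is Closed}.) Since stable points are semistable by definition, the stable locus equals $T^*(G/U)$.

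For general regular $\theta$ with $w^{-1}\theta$ dominant, I would use the $T\rtimes W$-action on $\affineClosureOfCotangentBundleofBasicAffineSpace$ (the Gelfand--Graev action), as in the proof of \cref{Generic Character for GIT iff Generic Character Rep Theoretically}. The key fact is that $x$ is $\lambda$-(semi)stable if and only if $wx$ is $w\lambda$-(semi)stable. To justify this, note that $w$ carries $\ringOfFunctionsForCOTANGENTBUNDLEOfBasicAffineSpace_{n\lambda}$ isomorphically onto $\ringOfFunctionsForCOTANGENTBUNDLEOfBasicAffineSpace_{nw\lambda}$, intertwines $T$-orbits up to the automorphism $w$ of $T$, and preserves stabilizer orders and orbit closedness.

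Applying this with $\lambda=w^{-1}\theta$, which is dominant and regular (regularity is $W$-invariant), the $\theta$-stable locus is $w$ applied to the $w^{-1}\theta$-stable locus. By the first case this is $wT^*(G/U)$.

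The only delicate point is the $W$-equivariance of (semi)stability. I expect this to be routine given that the paper already uses it in the proof of \cref{Generic Character for GIT iff Generic Character Rep Theoretically}.
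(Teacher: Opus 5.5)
Your proposal is correct and follows essentially the same route as the paper. You reduce to dominant $\theta$ via the $T \rtimes W$ (Gelfand--Graev) action, then combine \cref{Free Action If Weight Has No Coroot Vanishing}, \cref{Preimage of G Mod U Under Projection Is Its Cotangent Bundle}, and \cref{Orbit of Theta Semistable Point is Closed} to identify the stable locus with $T^*(G/U)$; you also spell out two points the paper leaves implicit, namely that dominant regular weights are nonzero on all coroots and that $W$ preserves (semi)stability.
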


\begin{proof}
Since there is a $T \rtimes W$-action on $\affineClosureOfCotangentBundleofBasicAffineSpace$, the second claim follows immediately from the first, so we may now assume $\theta$ itself is dominant.

If $x$ is $\theta$-stable, then by \cref{Free Action If Weight Has No Coroot Vanishing}, $\projectionFromAffineClosureofCotangentBundleToAffineClosureofSpace(x) \in G/U$. Therefore, by \cref{Preimage of G Mod U Under Projection Is Its Cotangent Bundle}, $x \in T^*(G/U)$. Conversely, assume $x \in T^*(G/U)$. Then $\projectionFromAffineClosureofCotangentBundleToAffineClosureofSpace(x) \in G/U$, and so the $T$-stabilizer of $\projectionFromAffineClosureofCotangentBundleToAffineClosureofSpace(x) $ is trivial. Moreover, by \cref{Free Action If Weight Has No Coroot Vanishing}, $x$ is $\theta$-semistable and, by \cref{Orbit of Theta Semistable Point is Closed}, the orbit of $x$ is closed in the $\theta$-semistable locus. Therefore $x$ is $\theta$-stable. 
\end{proof}

\begin{Corollary}\label{Hamiltonian Reduction at Dominant Regular Level for Affine Closure of Cotangent Bundle}
The Hamiltonian reduction of $\affineClosureOfCotangentBundleofBasicAffineSpace$ with respect to a dominant regular $\theta \in \characterlatticeforT$ is $T^*(G/B)$. Moreover, the GIT quotient $\affineClosureOfCotangentBundleofBasicAffineSpace\sslash_\theta T$ is isomorphic to $\tg$. In particular, these varieties are smooth and hence normal. 
\end{Corollary}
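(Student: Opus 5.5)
Since $\theta$ is dominant regular, \cref{Theta Stable Points Equals Theta Semistable Points is Cotangent Bundle} identifies the $\theta$-semistable locus of $\affineClosureOfCotangentBundleofBasicAffineSpace$ with the open subset $T^*(G/U)$, and every semistable point is stable with trivial stabilizer by \cref{Free Action If Weight Has No Coroot Vanishing}. The plan is therefore to compute both quotients as quotients of $T^*(G/U)$ by a free $T$-action. First, because the semistable locus consists of stable points with trivial stabilizers and closed orbits, the good quotient $T^*(G/U) \to \affineClosureOfCotangentBundleofBasicAffineSpace\sslash_\theta T$ is a geometric quotient and, by Luna's slice theorem (free action, all orbits closed), a principal $T$-bundle. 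Hence $\affineClosureOfCotangentBundleofBasicAffineSpace\sslash_\theta T \cong T^*(G/U)/T$ as the quotient of a free action.

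Next I would identify $T^*(G/U)/T$ concretely. Write $T^*(G/U) \cong G \times^U (\LG/\LU)^*$. The $T$-action is by right multiplication on $G$ twisted by the coadjoint action on $(\LG/\LU)^*$, so $T^*(G/U)/T \cong G \times^{B} (\LG/\LU)^* = \tg$, using $B = U \rtimes T$ and the fact that $(\LG/\LU)^*$ carries the restricted coadjoint $B$-action. The moment map $\momentMapFromAFFINECLOSUREofCotangentSpaceWithGROUPT$ restricted to $T^*(G/U)$ is $[g,\xi] \mapsto \xi|_{\LT}$, i.e. the composite $G\times^U(\LG/\LU)^* \to (\LG/\LU)^* \to (\LB/\LU)^* = \LTd$. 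Its zero fibre is $G \times^U (\LG/\LB)^*$, whose free $T$-quotient is $G \times^B (\LG/\LB)^* = \tcN \cong T^*(G/B)$. Since the semistable locus of $\momentMapFromAFFINECLOSUREofCotangentSpaceWithGROUPT^{-1}(0)$ for $\theta$ is its intersection with $T^*(G/U)$ (semistability for a closed $T$-stable subvariety is detected by restricting semi-invariants, which surject because $T$ is reductive), the Hamiltonian reduction is $\tcN$. Smoothness and hence normality of both follow since they are vector bundles over $G/B$.

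The main obstacle I expect is the scheme-theoretic compatibility in the Hamiltonian reduction. I need that $\momentMapFromAFFINECLOSUREofCotangentSpaceWithGROUPT^{-1}(0)$, defined scheme-theoretically inside $\affineClosureOfCotangentBundleofBasicAffineSpace$, restricts on $T^*(G/U)$ to the reduced subbundle $G\times^U(\LG/\LB)^*$. This holds because the moment map on $T^*(G/U)$ is a smooth (linear, surjective fibrewise) map. I also need that $\Proj$ of the semi-invariants of the closed subscheme agrees with the quotient of its semistable locus. The latter is the standard statement that GIT quotients commute with closed $T$-equivariant immersions, so that $(\momentMapFromAFFINECLOSUREofCotangentSpaceWithGROUPT^{-1}(0))\sslash_\theta T$ is a closed subscheme of $\tg$ equal to the image of $\momentMapFromAFFINECLOSUREofCotangentSpaceWithGROUPT^{-1}(0)\cap T^*(G/U)$, namely $\tcN$.
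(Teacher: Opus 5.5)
Your proposal is correct and follows the same route as the paper: the $\theta$-semistable locus is $T^*(G/U)$ (by \cref{Free Action If Weight Has No Coroot Vanishing} and \cref{Preimage of G Mod U Under Projection Is Its Cotangent Bundle}), semistability on the closed subscheme $\overline{\mu}_T^{-1}(0)$ is computed by intersecting with this locus, the intersection is $\mu_T^{-1}(0)$, and one then takes the free $T$-quotient. You spell out steps that the paper cites as standard, namely the principal-bundle structure of the quotient and the explicit models $G\times^U(\LG/\LB)^*/T \cong \tcN$ and $G\times^U(\LG/\LU)^*/T\cong\tg$; the latter is the case the paper only says is proved in a parallel manner.
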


\begin{proof}
    We have 
    \begin{multline*}
        \affineClosureOfCotangentBundleofBasicAffineSpace/\!/\!/_{\theta} T =: \momentMapFromAFFINECLOSUREofCotangentSpaceWithGROUPT^{-1}(0)^{\theta\mathrm{-ss}}\sslash T =  \momentMapFromAFFINECLOSUREofCotangentSpaceWithGROUPT^{-1}(0) \cap \affineClosureOfCotangentBundleofBasicAffineSpace^{\theta\mathrm{-ss}}\sslash T \\ = \momentMapFromAFFINECLOSUREofCotangentSpaceWithGROUPT^{-1}(0) \cap T^*(G/U)\sslash T = \mu_T^{-1}(0)\sslash T = T^*(G/B)
    \end{multline*}
    given by the definition of $/\!/\!/_{\theta}$, the fact that the $\theta$-semistable locus of a closed $T$-invariant subscheme is the intersection of the closed subscheme with the $\theta$-semistable locus of the ambient scheme, \cref{Free Action If Weight Has No Coroot Vanishing}, the definition of $\momentMapFromAFFINECLOSUREofCotangentSpaceWithGROUPT$, and the fact that the Hamiltonian reduction of a cotangent bundle of a space with a free $T$-action gives the cotangent bundle of the quotient. The proof of the isomorphism $\affineClosureOfCotangentBundleofBasicAffineSpace\sslash_\theta T \cong \tg$ is proved in a completely parallel manner. 
\end{proof}

\begin{Corollary}\label{Regular Dominant Weights are GIT Chamber}
    The set of regular dominant weights is a GIT chamber for the $T$-action on $\momentMapFromAFFINECLOSUREofCotangentSpaceWithGROUPT^{-1}(0)$.
\end{Corollary}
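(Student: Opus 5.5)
The plan is to identify the $\theta$-semistable locus of $Y := \momentMapFromAFFINECLOSUREofCotangentSpaceWithGROUPT^{-1}(0)$ for every character $\theta$, and then read off the chamber. I would show three things:
\begin{enumerate}
\item all dominant regular $\theta$ have the same semistable locus on $Y$;
\item every other $\theta$ has a different semistable locus;
\item every dominant regular $\theta$ is generic for $Y$.
\end{enumerate}
Then the dominant regular weights are exactly the integral points of the relative interior of one GIT cone. That cone is top dimensional, so by (3), or by \cref{Elements in Interior of GIT Cone Are Generic}, its interior is a GIT chamber.

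\textbf{Step 1: restriction to the closed subscheme.} Since $T$ is linearly reductive and $Y \subseteq \affineClosureOfCotangentBundleofBasicAffineSpace$ is a closed $T$-stable subscheme, the map $R_{n\theta} \to \O(Y)_{n\theta}$ is surjective. Hence $Y^{\theta\text{-ss}} = Y \cap \affineClosureOfCotangentBundleofBasicAffineSpace^{\theta\text{-ss}}$, which is the fact already used in the proof of \cref{Hamiltonian Reduction at Dominant Regular Level for Affine Closure of Cotangent Bundle}.

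\textbf{Step 2: the dominant regular case, giving (1) and (3).} For $\theta$ dominant regular, \cref{Free Action If Weight Has No Coroot Vanishing} and \cref{Preimage of G Mod U Under Projection Is Its Cotangent Bundle} give $Y^{\theta\text{-ss}} = Y \cap T^*(G/U)$. This set does not depend on $\theta$, which proves (1). On this locus the $T$-action is free, so $T$-orbits are closed in the semistable locus by the dimension argument of \cref{Orbit of Theta Semistable Point is Closed}. Hence every semistable point is stable, which proves (3).

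\textbf{Step 3: other characters give different loci, proving (2).} The key tool is the affinized zero section $\zeroSectionAffinized$. Its image lies in $Y$: on $G/U$ the moment map vanishes on the zero section, $G/U$ is dense in $\affineClosureofBasicAffineSpace$, and $Y$ is closed. Moreover $\zeroSectionAffinized^*$ is a $T$-equivariant map $R \to A$ which splits the inclusion $A \subseteq R$ coming from $\projectionFromAffineClosureofCotangentBundleToAffineClosureofSpace$.
\begin{itemize}
\item If $\theta$ is not dominant, then $A_{n\theta} = 0$ for all $n > 0$. So every $f \in R_{n\theta}$ vanishes on $\zeroSectionAffinized(\affineClosureofBasicAffineSpace)$, and in particular at $\zeroSectionAffinized(y)$ for $y \in G/U$. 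Thus the point $\zeroSectionAffinized(y)$, which lies in $Y \cap T^*(G/U)$, is not $\theta$-semistable.
\item If $\theta$ is dominant but not regular, \cref{Wall and Chamber Decomposition of affineClosureofBasicAffineSpace is Same for Parabolic Subgroups} gives a point $p \in \affineClosureofBasicAffineSpace \setminus G/U$, lying in some stratum $G/[P_S,P_S]$, and a function $f \in A_{n\theta} \subseteq R_{n\theta}$ with $f(p) \neq 0$. Then $f(\zeroSectionAffinized(p)) \neq 0$, so $\zeroSectionAffinized(p) \in Y$ is $\theta$-semistable. But $\zeroSectionAffinized(p) \notin T^*(G/U)$, because its image under $\projectionFromAffineClosureofCotangentBundleToAffineClosureofSpace$ is $p \notin G/U$.
\end{itemize}
In both cases the semistable locus differs from $Y \cap T^*(G/U)$, which proves (2).

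\textbf{Main obstacle.} The delicate point is controlling the zero section: checking that $\zeroSectionAffinized$ lands in $Y$, and that $\zeroSectionAffinized^* \circ \projectionFromAffineClosureofCotangentBundleToAffineClosureofSpace^* = \mathrm{id}_A$ respects the $T$-weights. Both follow from the corresponding facts on the open sets $G/U$ and $T^*(G/U)$ by density. Once these hold, the comparison of semistable loci in Step 3 is formal, and (1)–(3) identify the dominant regular weights as a GIT chamber.
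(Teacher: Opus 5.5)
Your proof is correct, but it pins down the chamber by a different mechanism than the paper.

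\textbf{The paper's route.} The paper argues entirely through genericity:
\begin{enumerate}
\item A dominant regular $\theta$ is generic, because every $\theta$-semistable point of $\overline{\mu}_T^{-1}(0)$ lies over $G/U$ by \cref{Free Action If Weight Has No Coroot Vanishing}.
\item Every non-regular $\lambda$ is non-generic. The witness $w\overline{z}(y)$ supplied by \cref{Generic Character for GIT iff Generic Character Rep Theoretically} already lies in $\overline{\mu}_T^{-1}(0)$, since the moment map vanishes on the zero section and is $W$-equivariant.
\end{enumerate}
The paper then simply appeals to the definition of a GIT chamber. The fact that the GIT class of a dominant regular weight is exactly the open dominant Weyl chamber is left implicit. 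It follows because GIT classes are relative interiors of convex cones, all of whose integral points share one semistable locus. Hence such a class cannot contain a non-generic weight, and so cannot cross a wall into another Weyl chamber.

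\textbf{Your route.} You compare semistable loci directly, using the $T$-equivariant retraction $\overline{z}^*\colon R \to A$ of $A \subseteq R$.
\begin{enumerate}
\item For non-dominant $\theta$, the vanishing $A_{n\theta}=0$ makes every point of $\overline{z}(G/U)$ unstable.
\item For dominant non-regular $\theta$, some boundary point $\overline{z}(p)$ with $p\notin G/U$ becomes semistable.
\end{enumerate}

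\textbf{What each buys.} Your approach gives an explicit, self-contained identification of the GIT class. It does not use the Gelfand--Graev $W$-action, the one-parameter-subgroup limit argument along a coroot, or the convexity step. The paper's approach proves something stronger along the way: every non-regular weight is non-generic on $\overline{\mu}_T^{-1}(0)$. So the walls of the fan are exactly the loci of non-generic weights, which your argument does not show.
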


\begin{proof}
By the definition of GIT chamber, it suffices to show that any regular dominant $\theta \in \characterlatticeforT$ is generic and any dominant $\lambda \in \characterlatticeforT$ which is not regular is not generic. Fix such a $\theta$; we will prove it is generic. To this end, fix a $\theta$-semistable point $x$. By \cref{Free Action If Weight Has No Coroot Vanishing} and the fact that $\affineClosureOfCotangentBundleofBasicAffineSpace^{\theta\mathrm{-ss}} \cap \momentMapFromAFFINECLOSUREofCotangentSpaceWithGROUPT^{-1}(0) = \momentMapFromAFFINECLOSUREofCotangentSpaceWithGROUPT^{-1}(0)^{\theta\mathrm{-ss}}$, $\projectionFromAffineClosureofCotangentBundleToAffineClosureofSpace$ maps $x$ into $G/U$. Therefore, by \cref{Theta Stable Points Equals Theta Semistable Points is Cotangent Bundle}, $x$ is $\theta$-stable. 

Now fix \textit{any} non-regular $\lambda$; we will show it is not generic. By \cref{Orbit of Theta Semistable Point is Closed}, we see that there exists some $w \in W$ and $y \in \affineClosureofBasicAffineSpace$ such that $w\zeroSectionAffinized(y)$ is $\lambda$-semistable but not $\lambda$-stable. Observe that $\momentMapFromAFFINECLOSUREofCotangentSpaceWithGROUPT(\zeroSectionAffinized(y)) = 0$ since the moment map applied to the zero section is zero. This, along with the fact that the moment map is $W$-equivariant, gives our claim
\end{proof}

\begin{Corollary}\label{Wall and Chamber Structure for T action on affineClosureOfCotangentBundleofBasicAffineSpace}
The $W$-translates of the walls and chambers of \cref{Wall and Chamber Decomposition of affineClosureofBasicAffineSpace is Same for Parabolic Subgroups} give the GIT wall-and-chamber structure for the $T$-action on $\affineClosureOfCotangentBundleofBasicAffineSpace$ and for the $T$-action on $\momentMapFromAFFINECLOSUREofCotangentSpaceWithGROUPT^{-1}(0)$.
\end{Corollary}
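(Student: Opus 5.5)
We have to show that the GIT fan for $T$ acting on $X := \affineClosureOfCotangentBundleofBasicAffineSpace$, and on $X_0 := \momentMapFromAFFINECLOSUREofCotangentSpaceWithGROUPT^{-1}(0)$, is the fan on $\characterlatticeforT_\R$ cut out by the vanishing and non-vanishing of all coroots. Equivalently, it is the union of the $W$-translates of the fan on the dominant cone from \cref{Wall and Chamber Decomposition of affineClosureofBasicAffineSpace is Same for Parabolic Subgroups}.

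The plan is to reduce to the dominant chamber using the $T \rtimes W$-action. Since $x$ is $\theta$-semistable iff $wx$ is $w\theta$-semistable, we get $X^{w\theta\text{-ss}} = w\,X^{\theta\text{-ss}}$. The same holds for $X_0$, because $\momentMapFromAFFINECLOSUREofCotangentSpaceWithGROUPT$ is $W$-equivariant and so $X_0$ is $W$-stable. Hence the GIT fan is $W$-invariant, and it suffices to show two things:
\begin{enumerate}
\item within the closed dominant cone, the cones are exactly those of \cref{Wall and Chamber Decomposition of affineClosureofBasicAffineSpace is Same for Parabolic Subgroups};
\item no GIT cone meets the interior of two different Weyl chambers.
\end{enumerate}

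For (1), let $\theta$ be dominant. The key input is \cite[Lemma 3.6]{GinzburgRicheDifferentialOperatorsOnBasicAffineSpaceandtheAffineGrassmannian}, as used in the proof of \cref{Free Action If Weight Has No Coroot Vanishing}. It implies that for dominant $\theta$, $x$ is $\theta$-semistable iff some $f \in A_{n\theta}$ has $f(x)\neq 0$, i.e. iff $\projectionFromAffineClosureofCotangentBundleToAffineClosureofSpace(x)$ is $\theta$-semistable in $\affineClosureofBasicAffineSpace$. Concretely, $R_{n\theta}$ and $A_{n\theta}$ generate the same ideal up to radical, or equivalently $R_{n\theta}$ is generated by $A_{n\theta}$ over $R^T$. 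I would check that the cited lemma gives this for all dominant (not only regular) $\theta$; I expect that it does, since it is a statement about weight spaces. Granting this, $X^{\theta\text{-ss}} = \projectionFromAffineClosureofCotangentBundleToAffineClosureofSpace^{-1}(\affineClosureofBasicAffineSpace^{\theta\text{-ss}})$. Intersecting with $X_0$ gives the analogous statement there. Thus for dominant $\theta,\theta'$ the two semistable loci coincide whenever they do in $\affineClosureofBasicAffineSpace$, i.e. whenever $S_\theta = S_{\theta'}$.

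For the converse within the dominant cone, suppose $S_\theta \neq S_{\theta'}$, say $\omega_i \in S_\theta \setminus S_{\theta'}$. By \cref{Stratification on affineClosureofBasicAffineSpace} there is a point $y$ in a stratum of $\affineClosureofBasicAffineSpace$ that is $\theta'$-semistable but not $\theta$-semistable. Since $\projectionFromAffineClosureofCotangentBundleToAffineClosureofSpace \circ \zeroSectionAffinized = \mathrm{id}$, the point $\zeroSectionAffinized(y)$ is then $\theta'$-semistable but not $\theta$-semistable. It also lies in $X_0$, because the moment map vanishes on the zero section. This distinguishes the loci on both $X$ and $X_0$.

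For (2), let $\theta$ and $\theta'$ lie in the interiors of different Weyl chambers. Write $\theta = w\theta_+$ with $\theta_+$ dominant regular. By \cref{Theta Stable Points Equals Theta Semistable Points is Cotangent Bundle} the $\theta$-semistable locus is $wT^*(G/U)$, and similarly for $\theta'$ with $w' \neq w$. I would show $wT^*(G/U) \neq w'T^*(G/U)$ by a zero-section point. Take $y \in \affineClosureofBasicAffineSpace \setminus G/U$ in a suitable stratum; then $w'\zeroSectionAffinized(y)$ is semistable for a character on a wall of $w'C_+$ but not for $\theta$. More directly, the regions where a given point is semistable differ. I expect this to be the main obstacle: one must compare the Gelfand–Graev translates $w\,T^*(G/U)$, e.g. via $\projectionFromAffineClosureofCotangentBundleToAffineClosureofSpace$ and the stratification. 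If the direct comparison is awkward, I would instead argue by genericity, using \cref{Generic Character for GIT iff Generic Character Rep Theoretically}: characters on coroot hyperplanes are non-generic and regular ones are generic, so the hyperplanes $\alpha^\vee=0$ must be GIT walls, and hence no cone crosses them. For $X_0$, the genericity statement is supplied by the proof of \cref{Regular Dominant Weights are GIT Chamber} together with $W$-translation. Combining (1), (2) and $W$-invariance gives the claim.
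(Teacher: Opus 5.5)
Your proposal is correct and follows essentially the same route as the paper. Both use $T \rtimes W$-equivariance to reduce to dominant characters, and \cite[Lemma 3.6]{GinzburgRicheDifferentialOperatorsOnBasicAffineSpaceandtheAffineGrassmannian} to identify the $\lambda$-semistable locus with $\projectionFromAffineClosureofCotangentBundleToAffineClosureofSpace^{-1}(\affineClosureofBasicAffineSpace^{\lambda\text{-ss}})$. Both use zero-section points together with \cref{Stratification on affineClosureofBasicAffineSpace} to separate distinct faces of the dominant cone; these points also lie in $\momentMapFromAFFINECLOSUREofCotangentSpaceWithGROUPT^{-1}(0)$. And both use \cref{Generic Character for GIT iff Generic Character Rep Theoretically} to keep GIT cones from crossing coroot hyperplanes.

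When you ``combine'' at the end, make explicit why the GIT class of a \emph{non-regular} character stays inside one closed Weyl chamber, since your step (2) as stated only concerns regular characters. This holds because every GIT cone is a face of a top-dimensional one. The interior of that cone is generic by \cref{Elements in Interior of GIT Cone Are Generic}, hence regular, hence contained in a single open Weyl chamber. Alternatively, it follows from (1), $W$-invariance and convexity of GIT classes. This is exactly the fact the paper uses implicitly when it asserts that each GIT cone is a $W$-translate of a cone of dominant weights.
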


\begin{proof}
We prove the claim for $\affineClosureOfCotangentBundleofBasicAffineSpace$; the claim for $\momentMapFromAFFINECLOSUREofCotangentSpaceWithGROUPT^{-1}(0)$ follows by essentially identical arguments.

Assume $\theta$ is GIT equivalent to $\theta'$. Then $\theta$ and $\theta'$ must lie in the relative interior of the same GIT cone. By \cref{Generic Character for GIT iff Generic Character Rep Theoretically}, the GIT cone is a $W$-translate of a cone of dominant weights. Choose a $w \in W$ which maps this cone onto the set of dominant weights, and let $\lambda := w\theta$ and $\lambda' := w\theta'$. Then, since the Gelfand-Graev action upgrades to an action of $T \rtimes W$, $\lambda$ is GIT equivalent to $\lambda'$ we obtain that 
\[\affineClosureofBasicAffineSpace^{\lambda\mathrm{-ss}} = \affineClosureofBasicAffineSpace \cap \affineClosureOfCotangentBundleofBasicAffineSpace^{\lambda\mathrm{-ss}} = \affineClosureofBasicAffineSpace \cap \affineClosureOfCotangentBundleofBasicAffineSpace^{\lambda'\mathrm{-ss}} = \affineClosureofBasicAffineSpace^{\lambda'\mathrm{-ss}}
\]
and so $\lambda(\alpha^{\vee}) = 0$ for some simple coroot $\alpha^{\vee}$ if and only if $\lambda'(\alpha^{\vee}) = 0$ by \cref{Wall and Chamber Decomposition of affineClosureofBasicAffineSpace is Same for Parabolic Subgroups}.

Conversely, assume that there is some $w \in W$ such that $w\theta(\alpha^{\vee}) = 0$ for some simple coroot $\alpha^{\vee}$ if and only if $w\theta'(\alpha^{\vee}) = 0$. As above, let $\lambda := w\theta$ and $\lambda' := w\theta'$. By using the Gelfand-Graev action on $\affineClosureOfCotangentBundleofBasicAffineSpace$ we see that it suffices to show the $\lambda$ and $\lambda'$ semistable subsets of $\affineClosureOfCotangentBundleofBasicAffineSpace$ agree. By \cite[Lemma 3.6]{GinzburgRicheDifferentialOperatorsOnBasicAffineSpaceandtheAffineGrassmannian}, for any dominant weight $\lambda \in \characterlatticeforT$ the $\ringOfFunctionsForCOTANGENTBUNDLEOfBasicAffineSpace_0$-module $\ringOfFunctionsForCOTANGENTBUNDLEOfBasicAffineSpace_{\lambda}$ is generated by $\ringOfFunctionsForBasicAffineSpace_{\lambda}$. Therefore the set $V(\ringOfFunctionsForCOTANGENTBUNDLEOfBasicAffineSpace_{\lambda})$ is the preimage of $V(\ringOfFunctionsForBasicAffineSpace_{\lambda})$ under $\projectionFromAffineClosureofCotangentBundleToAffineClosureofSpace$. As we have already recalled in \cref{Wall and Chamber Decomposition of affineClosureofBasicAffineSpace is Same for Parabolic Subgroups}, $V(\ringOfFunctionsForBasicAffineSpace_{\lambda}) = \cup_{\omega_i \in S_{\lambda}}V(\ringOfFunctionsForBasicAffineSpace_{\omega_i})$ as subsets of $\affineClosureofBasicAffineSpace$. Therefore, set theoretically, \[V(\ringOfFunctionsForCOTANGENTBUNDLEOfBasicAffineSpace_{\lambda}) = \projectionFromAffineClosureofCotangentBundleToAffineClosureofSpace^{-1}(\cup_{\omega_i \in S_{\lambda}}V(\ringOfFunctionsForBasicAffineSpace_{\omega_i})) = \cup_{\omega_i \in S_{\lambda}}\projectionFromAffineClosureofCotangentBundleToAffineClosureofSpace^{-1}(V(\ringOfFunctionsForBasicAffineSpace_{\omega_i})) = \cup_{\omega_i \in S_{\lambda}}V(\ringOfFunctionsForCOTANGENTBUNDLEOfBasicAffineSpace_{\omega_i}).\]
This implies that the $\lambda$-semistable locus $D(\ringOfFunctionsForCOTANGENTBUNDLEOfBasicAffineSpace_{\lambda})$ agrees with the $\lambda'$-semistable locus $D(\ringOfFunctionsForCOTANGENTBUNDLEOfBasicAffineSpace_{\lambda'})$ if and only if $S_{\lambda} = S_{\lambda'}$. 
\end{proof}

\subsection{(Semi-)stable points in $\momentMapFromAFFINECLOSUREofCotangentSpaceWithGROUPT^{-1}(0)$} 


\begin{Proposition}\label{There Are Zero-Stable Points in Moment Map Preimage}
For any $\theta \in \characterlatticeforT$, there are $\theta$-stable points in $\momentMapFromAFFINECLOSUREofCotangentSpaceWithGROUPT^{-1}(0)$.
  \end{Proposition}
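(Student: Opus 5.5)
By the $T \rtimes W$-equivariance used in the proof of \cref{Generic Character for GIT iff Generic Character Rep Theoretically}, and since $\momentMapFromAFFINECLOSUREofCotangentSpaceWithGROUPT$ is $W$-equivariant so that $\momentMapFromAFFINECLOSUREofCotangentSpaceWithGROUPT^{-1}(0)$ is $W$-stable, it is enough to find a single point $x \in \momentMapFromAFFINECLOSUREofCotangentSpaceWithGROUPT^{-1}(0)$ which is stable for \emph{every} $\theta$ simultaneously. The zero section is no good here. As the proof of \cref{Generic Character for GIT iff Generic Character Rep Theoretically} shows, for a non-regular dominant $\theta$ the $T$-orbits of points $\zeroSectionAffinized(y)$ with $y \in G/U$ degenerate into boundary strata. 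I will therefore aim for a point whose $T$-orbit is closed in all of $\affineClosureOfCotangentBundleofBasicAffineSpace$ and has trivial stabilizer, that is, a $0$-stable point.

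\textbf{Step 1 (reduction).} By the Hilbert--Mumford criterion for tori, suppose $x$ has the following property: for every Weyl chamber, i.e. for every $w \in W$ and every dominant regular $\lambda$, there is some $f \in \ringOfFunctionsForCOTANGENTBUNDLEOfBasicAffineSpace_{n w\lambda}$ with $f(x) \neq 0$. Then the set of weights of functions not vanishing at $x$ positively spans $\characterlatticeforT_{\R}$. Two consequences follow. First, for every cocharacter $\eta \neq 0$ some such $f$ has negative pairing with $\eta$, so $\lim_{t\to 0}\eta(t)x$ does not exist; hence $T\cdot x$ is closed in $\affineClosureOfCotangentBundleofBasicAffineSpace$, and therefore closed in every semistable locus. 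Second, for every integral $\theta$ some power $n\theta$ is a nonnegative integer combination of these weights, and the corresponding product of functions is nonvanishing at $x$ because $\ringOfFunctionsForCOTANGENTBUNDLEOfBasicAffineSpace$ is a domain, so $x$ is $\theta$-semistable. Finally, if $\projectionFromAffineClosureofCotangentBundleToAffineClosureofSpace(x) \in G/U$, the stabilizer of $x$ is trivial. So $x$ is $\theta$-stable for all $\theta$.

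\textbf{Step 2 (translating the condition).} By \cref{Free Action If Weight Has No Coroot Vanishing}, the condition for the chamber $w$ holds exactly when $\projectionFromAffineClosureofCotangentBundleToAffineClosureofSpace(w^{-1}x) \in G/U$. By \cref{Preimage of G Mod U Under Projection Is Its Cotangent Bundle}, this is equivalent to $x \in wT^*(G/U)$. It therefore suffices to find
\[
x \in \momentMapFromAFFINECLOSUREofCotangentSpaceWithGROUPT^{-1}(0) \cap \bigcap_{w \in W} wT^*(G/U).
\]

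\textbf{Step 3 (choosing $x$).} I take $x = (U, e) \in T^*(G/U)$, where $e \in (\LG/\LU)^*$ corresponds to a regular nilpotent element, chosen inside $(\LG/\LB)^*$ so that $\mu_T(x) = 0$. To see that $x \in wT^*(G/U)$ for all $w$, I would use the regular locus. Over $\LGd_{\mathrm{reg}}$, the map $\tg \to \affinizationOfGrothendieckSpringerResolution$ is an isomorphism, so the open subset of $\affineClosureOfCotangentBundleofBasicAffineSpace$ lying over $\LGd_{\mathrm{reg}}$ under $\momentMapFromAFFINECLOSUREofCotangentSpaceWithGROUPG$ is a $T$-torsor over a smooth space. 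On this open subset the Gelfand--Graev $W$-action should preserve the property that the point lies in $T^*(G/U)$. Concretely, since $\overline{\pi}$ is $W$-compatible only up to the Gelfand--Graev twist, I would check directly that $\overline{\pi}(w^{-1}x)$ avoids the boundary divisors $V(\ringOfFunctionsForBasicAffineSpace_{\omega_i})$, using that $\ringOfFunctionsForCOTANGENTBUNDLEOfBasicAffineSpace_{w\omega_i}$ is generated over $\ringOfFunctionsForCOTANGENTBUNDLEOfBasicAffineSpace_0$ by $w$-translates of $\ringOfFunctionsForBasicAffineSpace_{\omega_i}$ (\cite[Lemma 3.6]{GinzburgRicheDifferentialOperatorsOnBasicAffineSpaceandtheAffineGrassmannian}).

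This step is the main obstacle. What I would try first is the reduction to rank one. The boundary stratum meeting $V(\ringOfFunctionsForBasicAffineSpace_{\omega_i})$ corresponds to a Levi of semisimple rank one. Regularity of $e$ means that its component along each simple root vector is nonzero. In the $\SL_2$ case, $\affineClosureOfCotangentBundleofBasicAffineSpace$ is the explicit quadric $\{(v,\phi) : \phi(v) = 0\} \subset k^2 \times (k^2)^*$ (up to the standard identification), the simple reflection swaps $v$ and $\phi$, and the claim is visibly true when both $v \neq 0$ and $\phi \neq 0$. I expect the general case to reduce to this via the Levi $L_{\alpha_i}$, because the $T$-weight function detecting the $i$-th stratum only involves the $\alpha_i$-component of $e$.
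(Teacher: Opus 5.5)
Your Steps 1 and 2 are sound, and your point $x=(U,e)$ with $e\in\LU$ regular is exactly the point the paper uses. But Step 3, which you yourself flag as the main obstacle, is left unproved, and it carries all of the content.

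\textbf{The missing ingredient.} The paper takes it from \cite[Proposition 5.1.4]{Gin}: every point of $\affineClosureOfCotangentBundleofBasicAffineSpace$ whose image under $\momentMapFromAFFINECLOSUREofCotangentSpaceWithGROUPG$ is regular already lies in $T^*(G/U)$, and its $T$-orbit is closed in $\affineClosureOfCotangentBundleofBasicAffineSpace$.

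\textbf{Why your torsor sentence does not supply it.} You deduce a $T$-torsor over the regular locus from the isomorphism $\tg\times_{\LGd}\LGd_{\mathrm{reg}}\cong(\LGd\times_{\LTd\sslash W}\LTd)\times_{\LGd}\LGd_{\mathrm{reg}}$, but this does not follow as stated. An isomorphism of quotients says nothing by itself about boundary points of $\affineClosureOfCotangentBundleofBasicAffineSpace$ lying in the same fibres of the affine quotient map. For example, let $\G_m$ act on $\mathbb{A}^2$ by $t\cdot(a,b)=(ta,b)$. Then $\{a\neq 0\}/\G_m\to\mathbb{A}^2\sslash\G_m$ is an isomorphism, yet no orbit in $\{a\neq 0\}$ is closed in $\mathbb{A}^2$. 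You need extra input, or simply the citation. One option: $\affineClosureOfCotangentBundleofBasicAffineSpace$ is normal and the complement of $T^*(G/U)$ has codimension at least two. Then over each affine open of the regular locus, the (affine) torsor and its (affine) preimage have the same functions and so coincide.

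\textbf{With that fact, Step 3 is one line.} The Gelfand--Graev action commutes with $G$ and preserves $\momentMapFromAFFINECLOSUREofCotangentSpaceWithGROUPG$. Hence $w^{-1}x$ lies over the same regular element as $x$, and so lies in $T^*(G/U)$. Instead, you only say that $W$ ``should'' preserve this property, and fall back on a rank-one reduction that is never carried out.

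\textbf{Problems with the fallback.} For $\SL_2$, $\affineClosureOfCotangentBundleofBasicAffineSpace$ is $T^*\mathbb{A}^2\cong\mathbb{A}^4$, not the quadric $\{\phi(v)=0\}$; that quadric is $\momentMapFromAFFINECLOSUREofCotangentSpaceWithGROUPT^{-1}(0)$. You would also still need to pass from simple reflections to arbitrary $w\in W$, and to say precisely how the Gelfand--Graev $s_i$ acts in higher rank.

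\textbf{Steps 1--2 are then unnecessary.} Ginzburg's result shows directly that $T\cdot x$ is closed in all of $\affineClosureOfCotangentBundleofBasicAffineSpace$. The stabilizer is trivial because $T$ acts freely on $T^*(G/U)$. A closed orbit with trivial stabilizer in an affine $T$-variety is $\theta$-semistable for every $\theta$: lift the character on $T\cdot x\cong T$ to a weight function on $\affineClosureOfCotangentBundleofBasicAffineSpace$. It is therefore $\theta$-stable, and so is $x$ in the closed subscheme $\momentMapFromAFFINECLOSUREofCotangentSpaceWithGROUPT^{-1}(0)$ containing it. This is exactly the paper's argument.
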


  \begin{proof}
      Note that for any $\theta \in \characterlatticeforT$, \[\momentMapFromAFFINECLOSUREofCotangentSpaceWithGROUPT^{-1}(0)^{\theta\mathrm{-stable}} = \affineClosureOfCotangentBundleofBasicAffineSpace^{\theta\mathrm{-stable}} \cap \momentMapFromAFFINECLOSUREofCotangentSpaceWithGROUPT^{-1}(0),\] so it suffices to prove that $\momentMapFromAFFINECLOSUREofCotangentSpaceWithGROUPT^{-1}(0)$ contains a point which is $\theta$-stable in $\affineClosureOfCotangentBundleofBasicAffineSpace$. Note that a point is $\theta$-stable if and only if that point has a closed $T$-orbit inside the $\theta$-semistable locus and the stabilizer of that point is finite. Choose a regular element $e \in \LU$, and consider the point $(1, e) \in G \times^U \LB \cong T^*(G/U)$. Since the moment map to $\LG$ is given by multiplication, the moment map for the Hamiltonian $G$-action on $T^*(G/U)$ sends $(1, e)$ to a regular element of $\LG$. By \cite[Proposition 5.1.4]{Gin}, the $T$-orbit of $(1, e)$ is therefore closed in $\affineClosureOfCotangentBundleofBasicAffineSpace$ (and, in particular, closed inside the $\theta$-semistable locus of $\affineClosureOfCotangentBundleofBasicAffineSpace$) and the stabilizer in $T$ has dimension zero. Moreover, one can check directly (or appeal to \cite[Proposition 5.19]{GannonProofOftheGinzburgKazhdanConjecture}) that the $T$-stabilizer of $(1, e)$ is in fact trivial.
  \end{proof}



\begin{Proposition}\label{Torus Stabilizer of lamnbda Semistable Points}
If $\lambda$ is dominant and $P := P_{\lambda}$, the $T$-stabilizer of any point of $\momentMapFromAFFINECLOSUREofCotangentSpaceWithGROUPT^{-1}(0)^{\lambda\mathrm{-ss}}$ is contained in the kernel $T'$ of all $\omega_i$ corresponding to simple coroots $\alpha_i^{\vee} \notin S_P$. Moreover, there exists some point of $\momentMapFromAFFINECLOSUREofCotangentSpaceWithGROUPT^{-1}(0)^{\lambda\mathrm{-ss}}$ whose stabilizer is exactly $T'$.
\end{Proposition}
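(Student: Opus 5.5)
Fix $\lambda=\sum_{\alpha_i^\vee\notin S_P} m_i\omega_i$ with each $m_i>0$. The idea is to reduce both claims to the base $\affineClosureofBasicAffineSpace$ using \cite[Lemma 3.6]{GinzburgRicheDifferentialOperatorsOnBasicAffineSpaceandtheAffineGrassmannian}, which says $R_{n\lambda}$ is generated over $R_0$ by $A_{n\lambda}$, and then to read off stabilizers from the stratification of \cref{Stratification on affineClosureofBasicAffineSpace}.

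\textbf{Containment.} Let $x\in\momentMapFromAFFINECLOSUREofCotangentSpaceWithGROUPT^{-1}(0)^{\lambda\mathrm{-ss}}$. Then some $f\in R_{n\lambda}$ satisfies $f(x)\neq 0$. By the generation statement above, some $g\in A_{n\lambda}$ also satisfies $g(x)\neq 0$. Writing $g$ as a sum of products of elements of the $A_{\omega_i}$ (since $A$ is generated by these), we get a monomial $\prod_i f_i$ with $f_i\in A_{\omega_i}$ for $\alpha_i^\vee\notin S_P$ and $\prod_i f_i(\projectionFromAffineClosureofCotangentBundleToAffineClosureofSpace(x))\neq0$. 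The point is that the weight $n\lambda$ forces each factor to lie in $A_{\omega_i}$ for some $i$ with $m_i>0$.

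Now suppose $t\in\mathrm{Stab}_T(x)$. Then
\[
f_i(x)=f_i(t^{-1}x)=\omega_i(t)^{\pm1}f_i(x),
\]
and since $f_i(x)\neq 0$ this gives $\omega_i(t)=1$ for every $\alpha_i^\vee\notin S_P$. Hence $t\in T'$. This direction is routine; the only care needed is with the sign convention for the action on functions.

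\textbf{Existence of a point with stabilizer exactly $T'$.} I would use the zero section $\zeroSectionAffinized$, whose image lies in $\momentMapFromAFFINECLOSUREofCotangentSpaceWithGROUPT^{-1}(0)$ as observed in the proof of \cref{Regular Dominant Weights are GIT Chamber}. Take $y$ in the stratum $G/[P_S,P_S]$ with $S=S_P$, and set $x=\zeroSectionAffinized(y)$. By \cref{Stratification on affineClosureofBasicAffineSpace}, $A_{\omega_i}$ does not vanish at $y$ for $\omega_i\notin S$. Taking products as in the proof of \cref{Free Action If Weight Has No Coroot Vanishing} gives a function in $A_\lambda\subseteq R_\lambda$ that is nonzero at $x$, so $x$ is $\lambda$-semistable.

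It remains to compute $\mathrm{Stab}_T(x)$. Since $\zeroSectionAffinized$ is $T$-equivariant and injective (it has the left inverse $\projectionFromAffineClosureofCotangentBundleToAffineClosureofSpace$), we have $\mathrm{Stab}_T(x)=\mathrm{Stab}_T(y)$. On $G/[P_S,P_S]$ the torus acts on the right through $T\to P_S/[P_S,P_S]$. Therefore the stabilizer of $1\cdot[P_S,P_S]$ is $T\cap[P_S,P_S]$, and every other point of the stratum has the same stabilizer because $T$ is abelian and the $G$-action commutes with it.

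\textbf{Main obstacle.} The step I expect to be hardest is identifying $T\cap[P_S,P_S]=T\cap[L,L]$ with $T'$. My plan is to argue that $[L,L]$ is simply connected, because $G$ is, so its maximal torus $T\cap[L,L]$ is the image of the coroots $\alpha_i^\vee$ for $\alpha_i^\vee\in S$. The characters $\omega_j$ with $\alpha_j^\vee\notin S$ kill this torus. Conversely, $T/(T\cap[L,L])$ is the torus with character lattice $\{\mu:\mu(\alpha_i^\vee)=0\ \forall\,\alpha_i^\vee\in S\}$, which is spanned by those same $\omega_j$. Together these give equality with $T'$; in particular $T'$ is connected, as required.
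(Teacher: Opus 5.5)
Your proposal is correct and follows the paper's proof. For the containment, you use Ginzburg--Riche's generation of $R_{n\lambda}$ over $R_0$ by $A_{n\lambda}$ to force, for each $\alpha_i^\vee\notin S_P$, some element of $A_{\omega_i}$ to be nonzero at the point, which puts the stabilizer in $T'$; for existence, you take $\overline{z}$ of a point in the stratum $G/[P,P]$. Your check that $T\cap[P,P]=T\cap[L,L]=T'$, using that $[L,L]$ is simply connected, fills in a step the paper only asserts, and your weight-space argument for the containment is stated more precisely than the paper's.
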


\begin{proof}
Let $\overline{\ringOfFunctionsForCOTANGENTBUNDLEOfBasicAffineSpace} := \ringOfFunctionsForCOTANGENTBUNDLEOfBasicAffineSpace \otimes_{\Symt} k$ denote the ring of functions on $\momentMapFromAFFINECLOSUREofCotangentSpaceWithGROUPT^{-1}(0)$.\footnote{This is distinct from the ring of functions on $\mu_T^{-1}(0) \cong G \times^U \LU$, even when $G = \SL_2$.} The semistable locus is given by the non-vanishing of $\overline{\ringOfFunctionsForCOTANGENTBUNDLEOfBasicAffineSpace}_{n\lambda}$ for positive integers $n$. Since $\overline{\ringOfFunctionsForCOTANGENTBUNDLEOfBasicAffineSpace}_{n\lambda}$ is contained in the subring generated by $\ringOfFunctionsForCOTANGENTBUNDLEOfBasicAffineSpace_{\omega_i}$ for $\omega_i \notin S_P$ \cite[Lemma 3.6]{GinzburgRicheDifferentialOperatorsOnBasicAffineSpaceandtheAffineGrassmannian} the $\lambda$-semistable locus is equivalently given by the intersection of the non-vanishing of $R_{\omega_i}$ for each $\omega_i \notin S_P$. Therefore the stabilizer of any point in the $\lambda$-semistable locus is a closed subscheme of $\cap \mathrm{ker}(\omega_i) = \prod_{\alpha^{\vee} \in S_P}\G_m$, which shows the first claim. Now, for the second, we observe that any point of $G/[P, P]$ in the stratification of \cref{Stratification on affineClosureofBasicAffineSpace}, viewed as a locally closed subset of $\affineClosureOfCotangentBundleofBasicAffineSpace$ by the map $\zeroSectionAffinized$, has stabilizer exactly $T'$. Since this locally closed subset is in the non-vanishing set of $\ringOfFunctionsForBasicAffineSpace_{\omega_i}$ for $\omega_i$ corresponding to simple coroots not in $S_P$, we see that $G/[P, P]$ is $\lambda$-semistable as well. 
\end{proof}

\begin{Remark}
    Of course, using the Gelfand-Graev action, one can immediately derive an analogue of \cref{Torus Stabilizer of lamnbda Semistable Points} for an arbitrary $\lambda \in \characterlatticeforT$.
\end{Remark}

\begin{Corollary}\label{cor:stthetaequalset}
For any $\theta \in \characterlatticeforT$, the set $\{\lambda \in \characterlatticeforT : \lambda(\alpha^{\vee}) = 0\text{ for all } \alpha^{\vee} \in S_{P_{\theta
}}\}$ equals the orthogonal set $\mathrm{St}(\theta)^{\perp}$. 
\end{Corollary}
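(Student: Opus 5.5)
Since $\mathrm{St}(\theta)$ is defined in terms of the GIT problem under consideration, here the $T$-action on $\momentMapFromAFFINECLOSUREofCotangentSpaceWithGROUPT^{-1}(0)$, the plan is to read off the stabilizers from \cref{Torus Stabilizer of lamnbda Semistable Points}. First reduce to $\theta$ dominant. The Gelfand--Graev action of $W$ commutes with $T$ through the semidirect product $T \rtimes W$ and preserves $\momentMapFromAFFINECLOSUREofCotangentSpaceWithGROUPT^{-1}(0)$. It therefore carries $\theta$-semistable points with closed orbits to $w\theta$-semistable points with closed orbits, and conjugates stabilizers by $w$. Consequently $\mathrm{St}(w\theta)^{\perp} = w\,\mathrm{St}(\theta)^{\perp}$. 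On the other side, the set of characters vanishing on the coroots in $S_{P_\theta}$ transforms the same way, where $P_\theta$ for non-dominant $\theta$ is interpreted via the $W$-translate, as in the Remark after \cref{Torus Stabilizer of lamnbda Semistable Points}. So we may assume $\theta$ is dominant and set $P = P_\theta$.

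Next, identify the annihilator of $T'$. Let $T' := \bigcap_{\alpha_i^\vee \notin S_P} \ker \omega_i$. Since $G$ is simply connected, the $\omega_i$ form a basis of $\characterlatticeforT$, so $T \cong \G_m^r$ via $(\omega_1, \dots, \omega_r)$. Under this identification $T' = \prod_{\alpha_i^\vee \in S_P} \G_m$ is a subtorus. A character $\lambda = \sum n_i \omega_i$ is trivial on $T'$ if and only if $n_i = 0$ for all $i$ with $\alpha_i^\vee \in S_P$, that is, if and only if $\lambda(\alpha^\vee)=0$ for all $\alpha^\vee \in S_P$. Thus the set in the statement is exactly $(T')^{\perp}$.

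It remains to show $\mathrm{St}(\theta)^\perp = (T')^\perp$.

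\emph{Inclusion $\supseteq$.} By the first part of \cref{Torus Stabilizer of lamnbda Semistable Points}, every $H \in \mathrm{St}(\theta)$ is the stabilizer of a $\theta$-semistable point, hence $H \subseteq T'$. Any $\lambda$ trivial on $T'$ is therefore trivial on every such $H$.

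\emph{Inclusion $\subseteq$.} Here we need a point with closed orbit in the semistable locus whose stabilizer is all of $T'$. The natural candidate is the point from the second part of \cref{Torus Stabilizer of lamnbda Semistable Points}: a point $\zeroSectionAffinized(y)$ with $y$ in the stratum $G/[P,P]$. Its stabilizer is $T'$, and it is semistable. The main obstacle is closedness of its $T$-orbit in $\momentMapFromAFFINECLOSUREofCotangentSpaceWithGROUPT^{-1}(0)^{\theta\text{-ss}}$. One way around it is to avoid proving closedness directly: take any semistable point $x$ with stabilizer $T'$, and let $x_0$ be a point of the unique closed orbit in $\overline{T x} \cap \momentMapFromAFFINECLOSUREofCotangentSpaceWithGROUPT^{-1}(0)^{\theta\text{-ss}}$. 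Then $\mathrm{Stab}(x_0) \supseteq \mathrm{Stab}(x) = T'$, since $T$ is abelian and $T'$ fixes all of $\overline{Tx}$. Also $\mathrm{Stab}(x_0) \subseteq T'$ by the first part of \cref{Torus Stabilizer of lamnbda Semistable Points}. Hence $T' \in \mathrm{St}(\theta)$, and any $\lambda \in \mathrm{St}(\theta)^\perp$ is trivial on $T'$. This completes the equality.

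If one prefers to check closedness directly instead, use the stratification of \cref{Stratification on affineClosureofBasicAffineSpace}: the orbit $T\cdot y$ lies in the closed subset $\bigcap_{\alpha_i^\vee\in S_P} V(A_{\omega_i})$, and on it the residual torus $T/T'$ acts freely with weights $\omega_i$, $\alpha_i^\vee \notin S_P$, all of which are nonvanishing on the semistable locus. The limit-point argument above is shorter, so that is the route I would take.
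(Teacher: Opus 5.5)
Your proposal is correct and is essentially the paper's proof. The inclusion $\supseteq$ comes from the first part of \cref{Torus Stabilizer of lamnbda Semistable Points}, and the inclusion $\subseteq$ comes from the point with stabilizer exactly $T'$ in its second part, combined with $T' \cong \prod_{\alpha^{\vee} \in S_P}\G_m$. Your explicit reduction to dominant $\theta$, and especially your limit-point argument (passing to the closed orbit in $\overline{T x}$, which $T'$ still fixes because $T$ is abelian), fill in details the paper leaves implicit: namely, that this point can be taken to have closed orbit in the semistable locus, as the definition of $\mathrm{St}(\theta)$ requires.
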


\begin{proof}
If $\lambda$ lies in the former set and $S \subseteq T$ stabilizes some point of $\momentMapFromAFFINECLOSUREofCotangentSpaceWithGROUPT^{-1}(0)^{\lambda\mathrm{-ss}}$, then by \cref{Torus Stabilizer of lamnbda Semistable Points} we have that $S \subseteq T'$. Now, since $T' \cong \prod_{\alpha^{\vee} \in S_P}\G_m$, we see that $\lambda|_{T'}$ must vanish since by assumption $\lambda(\alpha^{\vee}) = 0$. Hence $\lambda \in \mathrm{St}(\theta)^{\perp}$.  

Conversely, assume $\lambda$ lies in $\mathrm{St}(\theta)^{\perp}$. Then since there is a point of $\momentMapFromAFFINECLOSUREofCotangentSpaceWithGROUPT^{-1}(0)^{\theta \mathrm{-ss}}$ whose stabilizer is exactly $T'$, we have $\lambda |_{T'} = 0$, and so, since $T' \cong \prod_{\alpha^{\vee} \in S_{P_{\theta}}}\G_m$, we have $\lambda(\alpha^{\vee}) = 0$ for all $\alpha^{\vee} \in S_{P_{\theta}}$.
\end{proof}

\section{Proof of Main Theorem} 

\subsection{Explicit Description of GIT Resolutions for the Nilpotent Cone}\label{Explicit Description of GIT Resolutions Subsection} Letting \[Y_{\lambda} := \momentMapFromAFFINECLOSUREofCotangentSpaceWithGROUPT^{-1}(0)\sslash_{\lambda} T := \momentMapFromAFFINECLOSUREofCotangentSpaceWithGROUPT^{-1}(0)^{\lambda\text{-ss}}\sslash T,
\] we observe that if $\theta$ is in the interior of a GIT cone and $\lambda$ on a wall of the cone then there is an open embedding $\momentMapFromAFFINECLOSUREofCotangentSpaceWithGROUPT^{-1}(0)^{\theta\text{-ss}} \subseteq \momentMapFromAFFINECLOSUREofCotangentSpaceWithGROUPT^{-1}(0)^{\lambda\text{-ss}}$ which induces the morphism of variation of GIT (VGIT) $\xi_{\theta}^{\lambda} \colon Y_{\theta} \to Y_{\lambda}$. We now prove the following result, which informally states that, if $\lambda$ is dominant, then $Y_{\lambda}$ can be identified with an appropriate $\tcN^P$ in a way that is compatible with the map from the Springer resolution:

\begin{Proposition}\label{Generically Isomorphisms Are What We Want}
For any parabolic subgroup $P$ and any dominant $\lambda \in \characterlatticeforT$ such that $\lambda(\alpha^{\vee}) = 0$ if and only if $\alpha^{\vee} \in S_{P}$, there is an isomorphism $\zeta_{\lambda} \colon \tcN^{P}\xrightarrow{\sim} Y_{\lambda}$. Moreover, if $\theta \in \characterlatticeforT$ is dominant regular, the diagram \begin{equation}\label{Compatibility of Key Isomorphism}\xymatrix@R+2em@C+2em{\tcN \ar[r]^{\nu^{P}} \ar[d]^{\zeta_{\theta}} & \tcN^{P} \ar[d]^{\zeta_{\lambda}}\\
\momentMapFromAFFINECLOSUREofCotangentSpaceWithGROUPT^{-1}(0)\sslash_{\theta} T \ar[r]^{\xi^{\lambda}_{\theta}} & Y_{\lambda} 
  }\end{equation} commutes, where $\zeta_{\theta}$ is the isomorphism of \cref{Hamiltonian Reduction at Dominant Regular Level for Affine Closure of Cotangent Bundle}. More generally, if $P'$ is a parabolic subgroup containing $P$ and $\lambda' \in \characterlatticeforT$ is a dominant weight such that $\lambda'(\alpha^{\vee}) = 0$ if and only if $\alpha^{\vee} \in S_{P'}$ then \begin{equation}\label{Compatibility of Key Isomorphism For Any Parabolics}\xymatrix@R+2em@C+2em{\tcN^{P} \ar[r]^{\nu^{P'}_{P}} \ar[d]^{\zeta_{\lambda}} & \tcN^{P'} \ar[d]^{\zeta_{\lambda'}}\\
Y_{\lambda} \ar[r]^{\xi^{\lambda'}_{\lambda}} & Y_{\lambda'} 
  }\end{equation} commutes. 
\end{Proposition}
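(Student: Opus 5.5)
The plan is to identify both sides as relative $\Proj$'s of section rings of the line bundle $\mathcal{L}^{\tcN}_{\lambda}$ on the Springer resolution $\tcN$. This reduces the statement to a comparison of graded rings. There are three steps: realize $\tcN^P$ as such a $\Proj$, compare the coordinate ring of $\momentMapFromAFFINECLOSUREofCotangentSpaceWithGROUPT^{-1}(0)$ with section rings on $\tcN$, and then check compatibility of the resulting isomorphisms.

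\emph{Step 1: $\tcN^P$ as a $\Proj$.} The bundle $\mathcal{L}^{\tcN^P}_{\lambda}$ is the pullback of $\mathcal{L}^{G/P}_{\lambda}$, which is ample by \cref{Ample Cone for Partial Flag Variety}. The map $\tcN^P \to G/P \times \N$ is a closed embedding (proof of \cref{tcNP is normal}), so $\mathcal{L}^{\tcN^P}_{\lambda}$ is $\nu^G_P$-ample, and $(\nu^P)^*\mathcal{L}^{\tcN^P}_\lambda = \mathcal{L}^{\tcN}_\lambda$. Since $\tcN^P$ is normal and $\nu^P$ is projective birational (\cref{tcNP is normal}), we have $\nu^P_*\O_{\tcN} = \O_{\tcN^P}$, so the Stein factorization of $\nu^P$ is trivial. \cref{prop:Steinamplefactor} then gives
\[
\tcN^P \cong \Proj \bigoplus_{n\ge 0}\Gamma(\tcN,\mathcal{L}^{\tcN}_{n\lambda}),
\]
compatibly with $\nu^P$.

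\emph{Step 2: comparison of graded rings (main obstacle).} We need a graded isomorphism, at least in all sufficiently divisible degrees,
\[
\overline{R}_{n\lambda} := (R\otimes_{\Symt}k)_{n\lambda} \xrightarrow{\ \sim\ } \Gamma(\tcN,\mathcal{L}^{\tcN}_{n\lambda}).
\]
The footnote in the proof of \cref{Torus Stabilizer of lamnbda Semistable Points} warns that $\overline{R}$ is not the ring of functions on $\mu_T^{-1}(0)$, so simply restricting to $\mu_T^{-1}(0)=G\times^U\LU$ is not enough. I would proceed in three sub-steps.
\begin{enumerate}
\item By \cite[Lemma 3.6]{GinzburgRicheDifferentialOperatorsOnBasicAffineSpaceandtheAffineGrassmannian}, identify $R_{\mu}\cong\Gamma(\tg,\mathcal{L}^{\tg}_{\mu})$ for dominant $\mu$, as $\Symt$-modules via $\tg\to\LTd$. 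This is the same identification that underlies \cref{Hamiltonian Reduction at Dominant Regular Level for Affine Closure of Cotangent Bundle}.
\item Use that $\tg\to\LTd$ is flat with fiber $\tcN$ over $0$, together with the vanishing $H^{>0}(\tg,\mathcal{L}_{\mu})=0$ for dominant $\mu$ (Broer/Kempf-type vanishing).
\item Cohomology and base change then gives $\Gamma(\tg,\mathcal{L}_\mu)\otimes_{\Symt}k\cong\Gamma(\tcN,\mathcal{L}_\mu)$.
\end{enumerate}
One must also check that the sign conventions match: the character $\lambda$ on functions should descend to $\mathcal{L}_{\lambda}$, not $\mathcal{L}_{-\lambda}$. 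This is consistent with the definition of $\mathcal{L}^{G/B}_\lambda$ via $\mathbb{A}^1_{-\lambda}$ and with \cref{lem:pullbackOm}. The isomorphisms must also be multiplicative, which is clear because all maps are restrictions of functions. Taking $\Proj$ of this ring isomorphism gives $Y_\lambda\cong\tcN^P$, which defines $\zeta_\lambda$. For $\lambda=\theta$ regular this reproduces $\zeta_\theta$.

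If the vanishing or base-change input proves troublesome, I would fall back on the birational machinery already set up. \cref{lem:steinfactorizationxithetalambda} identifies the Stein factorization $\widetilde{Y}_\lambda$ of $\xi^\lambda_\theta$ with $\Proj R(\tcN,\mathcal{L}_{m\lambda})\cong\tcN^P$. It would then remain to show that the finite map $\widetilde{Y}_\lambda\to Y_\lambda$ is an isomorphism, which again comes down to the ring comparison above.

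\emph{Step 3: compatibility.} Diagram \labelcref{Compatibility of Key Isomorphism} commutes because both composites $\tcN\to Y_\lambda$ are induced by the same map of graded rings. The map $\xi^\lambda_\theta$ comes from the open inclusion of semistable loci and is computed on the affine charts $D_+(f)$ for $f\in\overline{R}_{n\lambda}$. These charts correspond under Step 2 to the charts $D_+(f)$ defining the morphism $\tcN\to\Proj\bigoplus_n\Gamma(\mathcal{L}_{n\lambda})$, which is $\nu^P$ by Step 1. For \labelcref{Compatibility of Key Isomorphism For Any Parabolics}, both composites $\tcN^P\to Y_{\lambda'}$ become equal after precomposing with $\nu^P$. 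This follows from \labelcref{Compatibility of Key Isomorphism} applied to $\lambda$ and to $\lambda'$, together with $\nu^{P'}_P\nu^P=\nu^{P'}$ and $\xi^{\lambda'}_\lambda\xi^\lambda_\theta=\xi^{\lambda'}_\theta$. Since $\nu^P$ is dominant with $\tcN^P$ reduced, and $Y_{\lambda'}$ is separated, the two composites agree.
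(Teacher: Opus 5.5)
Your proposal is correct and follows essentially the paper's own route. The paper likewise identifies $Y_\lambda$ with $\Proj_{\N} R(\tcN,\L^{\tcN}_\lambda)$ via the Ginzburg--Riche base-change isomorphism $\Gamma(\tg,\L^{\tg}_{n\lambda})\otimes_{\Symt}k\cong\Gamma(\tcN,\L^{\tcN}_{n\lambda})$, and identifies $\tcN^P$ with the same $\Proj$ using normality, $(\nu^P)^*\L^{\tcN^P}_\lambda\cong\L^{\tcN}_\lambda$ and ampleness. It then deduces the second square from the first by a density/separatedness argument; the only cosmetic differences are that you go through \cref{prop:Steinamplefactor} (trivial Stein factorization) rather than applying \cref{For Proper Birational Maps Unit Map Is Iso on Line Bundles} to each tensor power, and that you spell out the flatness-plus-vanishing argument the paper simply cites from Ginzburg--Riche.
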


Before proving \cref{Generically Isomorphisms Are What We Want}, we highlight a special case that we will use in what follows, which follows immediately from the fact that $\tcN^G = \mathcal{N}$:

\begin{Corollary}\label{Hamiltonian Reduction of affineClosureOfCotangentBundleofBasicAffineSpace for T is Nilcone}
The Hamiltonian reduction of $\affineClosureOfCotangentBundleofBasicAffineSpace$ for the Hamiltonian $T$-action $\momentMapFromAFFINECLOSUREofCotangentSpaceWithGROUPT^{-1}(0)\sslash T$ is isomorphic to $\mathcal{N}$.
\end{Corollary}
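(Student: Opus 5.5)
The corollary is the special case $P = G$, $\lambda = 0$ of \cref{Generically Isomorphisms Are What We Want}. The plan is to check that this choice satisfies the hypotheses and then to unwind the definitions on both sides.

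First, take $P = G$. Its set of simple coroots $S_G$ is all of $\Pi^{\vee}$, so the hypothesis of \cref{Generically Isomorphisms Are What We Want} asks for a dominant $\lambda \in \characterlatticeforT$ with $\lambda(\alpha^{\vee}) = 0$ for every simple coroot. Since the simple coroots span $X_{\bullet}(T) \otimes \Q$ (as $G$ is semisimple), the only such weight is $\lambda = 0$. It is dominant, with all coefficients $n_i = 0$.

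Second, identify $Y_0$ with the affine Hamiltonian reduction. By definition,
\[
Y_0 = \momentMapFromAFFINECLOSUREofCotangentSpaceWithGROUPT^{-1}(0)\sslash_{0} T = \Proj\Big(\bigoplus_{n \ge 0}\O(\momentMapFromAFFINECLOSUREofCotangentSpaceWithGROUPT^{-1}(0))^T\Big).
\]
Every point is $0$-semistable, since the constant function $1$ is a $T_0$-invariant that vanishes nowhere. The graded ring here is the polynomial ring $\O(\momentMapFromAFFINECLOSUREofCotangentSpaceWithGROUPT^{-1}(0))^T[t]$, with $t$ placed in degree one. Its $\Proj$ is therefore
\[
\Spec\, \O(\momentMapFromAFFINECLOSUREofCotangentSpaceWithGROUPT^{-1}(0))^T = \momentMapFromAFFINECLOSUREofCotangentSpaceWithGROUPT^{-1}(0)\sslash T,
\]
which is the affine Hamiltonian reduction appearing in the statement.

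Third, \cref{Generically Isomorphisms Are What We Want} gives an isomorphism $\zeta_0 \colon \tcN^G \xrightarrow{\sim} Y_0$. By definition, $\tcN^G = G \times^G (\LG \cap \N) = \N$, where we use the section's identification $\LG \cong \LG^*$. Composing $\zeta_0$ with the identification of the second step proves the claim.

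The only point needing any care is the $\Proj$-of-degree-zero identification in the second step, and it is a routine check.
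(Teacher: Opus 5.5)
Your proposal is correct and is the paper's argument: the paper obtains this corollary as the special case $P = G$ (which forces $\lambda = 0$) of \cref{Generically Isomorphisms Are What We Want}, combined with $\tcN^G = \N$. Your explicit check that $Y_0 = \Proj$ of the polynomial ring $\O(\momentMapFromAFFINECLOSUREofCotangentSpaceWithGROUPT^{-1}(0))^T[t]$ is the affine quotient $\momentMapFromAFFINECLOSUREofCotangentSpaceWithGROUPT^{-1}(0)\sslash T$ is a detail the paper leaves implicit.
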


We now show \cref{Generically Isomorphisms Are What We Want}; this proof will occupy the entirety of \cref{Explicit Description of GIT Resolutions Subsection}. We first prove the following Lemma:

\begin{Lemma}\label{Appropriate Bundles Are Ample or Basepoint Free}
Fix an arbitrary parabolic subgroup $P$ with associated set of simple coroots $S_P$, and let $\lambda$ denote some dominant weight for which $\lambda(\alpha^{\vee}) = 0$ for all $\alpha^{\vee} \in S_P$ and $\lambda(\alpha^{\vee}) > 0$ when $\alpha^{\vee} \notin S_P$. The line bundle $\L_{\lambda}^{\tcN^P}$ is ample. Moreover, if $\lambda'$ is any dominant weight for which $\lambda'(\alpha^{\vee}) = 0$ for all $\alpha^{\vee} \in S_P$, $\L_{\lambda'}^{\tcN^P}$ is semiample.
\end{Lemma}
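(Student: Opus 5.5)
Both claims reduce to statements on $G/P$ via the structure map $p \colon \tcN^P = G \times^P (\LP \cap \N) \to G/P$. By definition, $\L_{\lambda}^{\tcN^P} = p^* \L_{\lambda}^{G/P}$ for any character $\lambda$ of $P$, i.e.\ any $\lambda$ with $\lambda(\alpha^{\vee}) = 0$ for all $\alpha^{\vee} \in S_P$.

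\textbf{Ampleness.} By \cref{Ample Cone for Partial Flag Variety}, the hypotheses on $\lambda$ say exactly that $\L_{\lambda}^{G/P}$ is ample on $G/P$. In the proof of \cref{tcNP is normal}(1) we showed that
\[
\varphi \colon \tcN^P \hookrightarrow G/P \times \LG
\]
is a closed embedding. The line bundle $\L^{\tcN^P}_\lambda$ is the restriction along $\varphi$ of $\mathrm{pr}_1^* \L_{\lambda}^{G/P}$. Since $\LG$ is affine, $\mathrm{pr}_1^*$ of an ample line bundle on $G/P$ is ample on $G/P \times \LG$; one way to see this is that it is relatively ample for the affine morphism $\mathrm{pr}_1$ over a base on which the bundle is ample. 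Restriction to a closed subscheme preserves ampleness. Hence $\L_\lambda^{\tcN^P}$ is ample.

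\textbf{Semiampleness.} Let $\lambda'$ be dominant with $\lambda'(\alpha^{\vee}) = 0$ for $\alpha^{\vee} \in S_P$. Let $Q \supseteq P$ be the standard parabolic with $S_Q = \{\alpha^{\vee} : \lambda'(\alpha^{\vee}) = 0\}$. Then $\lambda'$ is a character of $Q$, and by \cref{Ample Cone for Partial Flag Variety} the bundle $\L_{\lambda'}^{G/Q}$ is ample, hence some power of it is globally generated (very ample, even). Under the projection $q \colon G/P \to G/Q$ we have $\L_{\lambda'}^{G/P} = q^* \L_{\lambda'}^{G/Q}$, since both come from the character $-\lambda'$ of $Q$ restricted to $P$. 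It follows that
\[
\L_{\lambda'}^{\tcN^P} = (q \circ p)^* \L_{\lambda'}^{G/Q}.
\]
Pullbacks of globally generated bundles are globally generated, so a power of $\L_{\lambda'}^{\tcN^P}$ is globally generated, i.e.\ semiample. Alternatively, this is \cref{lem:pullbackgeneratedisgenerated} applied with $X = \Spec k$. If semiampleness is meant relative to $\N$, the same argument applies, since global generation over $k$ implies generation relative to the affine base $\N$. When $\lambda' = 0$ the statement is trivial.

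\textbf{Main obstacle.} The only point needing care is the identification $\L_{\lambda'}^{G/P} \cong q^* \L_{\lambda'}^{G/Q}$ and, in the first step, that ampleness passes through the closed embedding into $G/P \times \LG$. Both are standard: the first follows from
\[
G \times^P \mathbb{A}^1_{-\lambda'} \cong G/P \times_{G/Q} \bigl(G \times^Q \mathbb{A}^1_{-\lambda'}\bigr),
\]
and the second from ampleness of $\mathcal{O}(1)$-type bundles on products with an affine factor.
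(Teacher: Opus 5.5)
Your proposal is correct and follows essentially the same route as the paper. Ampleness of $\L_{\lambda}^{\tcN^P}$ comes from ampleness of $\L_{\lambda}^{G/P}$ together with the fact that pullback along an affine morphism preserves ampleness. The paper applies this directly to the affine map $\tcN^P \to G/P$, while you factor through the closed embedding $\tcN^P \hookrightarrow G/P \times \LG$ and the affine projection to $G/P$, which rests on the same fact. Semiampleness, as in the paper, comes from writing $\L_{\lambda'}^{\tcN^P}$ as the pullback of the ample bundle $\L_{\lambda'}^{G/Q}$ along $\tcN^P \to G/P \to G/Q$ and applying Lemma~\ref{lem:pullbackgeneratedisgenerated}.
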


\begin{proof}
    By \cref{Ample Cone for Partial Flag Variety}, the line bundle $\L_{\lambda}^{G/P}$ is ample on $G/P$. Since the morphism $\tcN^P \to G/P$ is affine (which can be checked on $G$-translates of the big cell of $G/P$, where this assertion is obvious) the bundle $\L_{\lambda}^{\tcN^P}$ is ample since the pullback of any ample bundle by an affine morphism is ample \cite[Proposition 5.1.12]{EGAII}. We deduce that $\L_{\lambda}^{\tcN^P}$ is ample. The final claim follows from Lemma~\ref{lem:pullbackgeneratedisgenerated} since $\L_{\lambda'}^{G/P'}$ is ample and $\L_{\lambda'}^{\tcN^P}$ is the pullback of $\L_{\lambda'}^{G/P'}$ under $\tcN^P \to G/P \to G/P'$.
\end{proof}

For any dominant weight $\lambda \in \characterlatticeforT$ such that $\lambda(\alpha^{\vee}) = 0$ for all $\alpha^{\vee} \in S_{P}$, respectively any dominant weight $\theta$, we let \[i^{P}_{\lambda}: \tcN^{P} \xrightarrow{} \Proj_\N(R(\tcN^P,\mathcal{L}_{\lambda}^{\tcN^{P}}))\text{ and  }i_{\theta}: \tcN \xrightarrow{} \Proj_\N(R(\tcN, \mathcal{L}_{\theta}^{\tcN}))\] denote the corresponding morphisms given by the semiample locally free sheaves $\mathcal{L}_{\lambda}^{\tcN^{P}}$ and $\mathcal{L}_{\theta}^{\tcN}$ respectively. 

\begin{Lemma}\label{Connection of Hamiltonian Reductions and Resolutions of Nilpotent Cones}
For any dominant weight $\lambda \in \characterlatticeforT$, there is an isomorphism \[\varphi_{\lambda}: \Proj_\N(R(\tcN, \mathcal{L}^{\tcN}_{\lambda})) \xrightarrow{\sim} Y_{\lambda}\] such that \begin{equation}\label{Commutative Diagram from Restriction}\xymatrix@R+2em@C+2em{\tcN  \ar[d]^{i_{\lambda}} \ar[r]^{\zeta_{\theta}} & \momentMapFromAFFINECLOSUREofCotangentSpaceWithGROUPT^{-1}(0)\sslash_{\theta} T \ar[d]^{\xi^{\lambda}_{\theta}}  \\
\Proj_{\mathcal{N}}(R(\tcN, \L_{\lambda}^{\tcN})) \ar[r]^{\varphi_{\lambda}}  & Y_\lambda
  }\end{equation} commutes for any regular dominant $\theta \in \characterlatticeforT$. 
\end{Lemma}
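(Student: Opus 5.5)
The plan is to build $\varphi_\lambda$ by comparing graded rings. Fix a dominant regular $\theta$ and set $X := \momentMapFromAFFINECLOSUREofCotangentSpaceWithGROUPT^{-1}(0)$. By \cref{Hamiltonian Reduction at Dominant Regular Level for Affine Closure of Cotangent Bundle}, $\zeta_\theta$ identifies $\tcN$ with $X^{\theta\text{-ss}}/T = (X \cap T^*(G/U))/T$, a free quotient. Under this identification the descended line bundle $L_\Z(\lambda)$ of Lemma~\ref{lem:pullbackOm} on $Y_\theta$ corresponds to $\L^{\tcN}_\lambda$, up to fixing the sign convention for $\mathbb{A}^1_{-\lambda}$. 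Here $\mathrm{St}(\theta)^\perp = \characterlatticeforT$ by \cref{cor:stthetaequalset}, since $S_{P_\theta} = \emptyset$. This is the standard identification: $G/U \to G/B$ is a $T$-torsor, and the line bundle attached to $\lambda$ has as its sections the $\lambda$-semi-invariant functions. It follows that $H^0(\tcN, (\L^{\tcN}_\lambda)^{\otimes n}) \cong \O(X^{\theta\text{-ss}})_{n\lambda}$, compatibly with multiplication.

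The key step is to show that restriction $\O(X)_{n\lambda} \to \O(X^{\theta\text{-ss}})_{n\lambda}$ is an isomorphism for every $n \ge 0$, as a map of graded $\O(X)^T$-algebras. Two facts go into this.
\begin{enumerate}
\item By \cref{Hamiltonian Reduction of affineClosureOfCotangentBundleofBasicAffineSpace for T is Nilcone}, the base $\O(X)^T = \O(\N)$, so both sides are $\O(\N)$-algebras.
\item The map $Y_\theta \to Y_0 = \N$ is projective and birational, and $\N$ is normal. So by Lemma~\ref{For Proper Birational Maps Unit Map Is Iso on Line Bundles} (with $\L = \O$), $H^0(\tcN, \O) = \O(\N)$.
\end{enumerate}
For the degree-$n\lambda$ parts, I would argue as follows. $X$ is normal (as the zero fibre of the flat moment map on the normal Cohen--Macaulay space $\overline{T^*(G/U)}$, per \cite{GannonProofOftheGinzburgKazhdanConjecture}), and the complement of $X^{\theta\text{-ss}}$ in $X$ has codimension $\ge 2$. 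Hartogs then gives $\O(X) = \O(X^{\theta\text{-ss}})$, hence equality in every weight. The codimension claim follows from Lemma~\ref{Free Action If Weight Has No Coroot Vanishing}: the unstable locus is $X \cap \overline{\pi}^{-1}(\overline{G/U}\setminus G/U)$. I would bound its dimension using the stratification of \cref{Stratification on affineClosureofBasicAffineSpace} together with the description of $\overline{T^*(G/U)}$ over each stratum. I expect this codimension/normality step to be the main obstacle. If it is awkward, the fallback is to use \cite[Lemma 3.6]{GinzburgRicheDifferentialOperatorsOnBasicAffineSpaceandtheAffineGrassmannian} to reduce to the statement that $R_{n\lambda}$ is generated over $R_0$ by $A_{n\lambda}$, and compare directly with sections of $\L_{n\lambda}$ on $\tcN$, which are induced from $G/B$-sections.

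Granting this, the isomorphism of graded rings $\bigoplus_n \O(X)_{n\lambda} \cong R(\tcN,\L^{\tcN}_\lambda)$ over $\O(\N)$ induces $\varphi_\lambda$ on $\Proj_\N$. For commutativity of \eqref{Commutative Diagram from Restriction}, note that both composites $\tcN \to Y_\lambda$ are defined on the affine pieces $D_+(f)$, for $f \in \O(X)_{n\lambda}$, by the same ring map $\O(X)_{(f)} \to \O(D(f)\cap X^{\theta\text{-ss}})^T$. The VGIT map $\xi^\lambda_\theta$ comes from the inclusion $X^{\theta\text{-ss}} \subseteq X^{\lambda\text{-ss}}$, while $i_\lambda$ is given by evaluating sections. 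The two agree by construction, as in the proof of Lemma~\ref{lem:pullbackOm}.
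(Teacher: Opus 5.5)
\paragraph{The Hartogs step is false.} You assert that $X\setminus X^{\theta\text{-ss}}$ has codimension at least two in $X=\overline{\mu}_T^{-1}(0)$, and deduce by Hartogs that $\O(X)=\O(X^{\theta\text{-ss}})$. Both claims already fail for $G=\SL_2$. Here $\overline{G/U}=\mathbb{A}^2$ and $\overline{T^*(G/U)}=\mathbb{A}^4$, with coordinates $x_1,x_2,\xi_1,\xi_2$ and $\overline{\mu}_T=x_1\xi_1+x_2\xi_2$. So $X$ is the three-dimensional quadric cone. By \cref{Free Action If Weight Has No Coroot Vanishing}, its $\theta$-unstable locus is $X\cap\{x_1=x_2=0\}\cong\mathbb{A}^2$, which is a divisor in $X$.

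Correspondingly, $\xi_2/x_1=-\xi_1/x_2$ is a regular function on $X^{\theta\text{-ss}}=\mu_T^{-1}(0)$, of the non-dominant weight $-\alpha$. It does not extend to $X$, because $\xi_2\notin x_1\O(X)$. This is exactly the phenomenon flagged in the footnote in the proof of \cref{Torus Stabilizer of lamnbda Semistable Points}.

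So restriction $\O(X)_\mu\to\O(X^{\theta\text{-ss}})_\mu$ is an isomorphism only for certain weights $\mu$. The lemma is precisely the statement that the weights $n\lambda$ with $\lambda$ dominant are among them. A codimension argument cannot detect dominance, so this route cannot be repaired.

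There are two smaller problems. First, normality of $X$ does not follow from flatness of $\overline{\mu}_T$: flatness over a regular base passes the Cohen--Macaulay property to the fibre, not normality. Moreover, the paper only proves integrality of $X$ later, and that proof uses this lemma. Second, \cref{Hamiltonian Reduction of affineClosureOfCotangentBundleofBasicAffineSpace for T is Nilcone} is deduced from \cref{Generically Isomorphisms Are What We Want}, whose proof uses this lemma, so invoking it here is circular. The identification $\O(X)^T\cong\O(\N)$ can instead be obtained directly from exactness of $T$-invariants and the Ginzburg--Riche description of $\overline{T^*(G/U)}\sslash T$.

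\paragraph{What the paper uses instead.} Since $\Symt\subset R_0$, one has $\O(X)_{n\lambda}=R_{n\lambda}\otimes_{\Symt}k$. By Ginzburg--Riche, $\bigoplus_n R_{n\lambda}=\bigoplus_n\Gamma(\tg,\L^{\tg}_{n\lambda})$. The real input is then the base-change isomorphism
$\Gamma(\tg,\L^{\tg}_{n\lambda})\otimes_{\Symt}k\xrightarrow{\sim}\Gamma(\tcN,\L^{\tcN}_{n\lambda})$ for $n\lambda$ dominant. This comes from two facts. First, $\tcN$ is the zero fibre of the flat map $\tg\to\LTd$, so it is cut out by a regular sequence and has a Koszul resolution. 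Second, the higher cohomology of $\L^{\tg}_{n\lambda}$ vanishes for dominant weights. That vanishing is where dominance enters, and it fails for non-dominant weights, consistently with the $\SL_2$ example.

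Your fallback gestures in this direction, but it does not supply this input. Generation of $R_{n\lambda}$ by $A_{n\lambda}$ over $R_0$ only yields surjectivity-type statements. Even the claim that sections of $\L^{\tcN}_{n\lambda}$ are generated over $\O(\N)$ by $G/B$-sections needs the same cohomological input. Nor does it show that an element of $R_{n\lambda}$ vanishing on $\mu_T^{-1}(0)$ lies in $\LT R_{n\lambda}$, which is the injectivity you need.

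Once the graded isomorphism $\bigoplus_n\O(X)_{n\lambda}\cong R(\tcN,\L^{\tcN}_\lambda)$ over $\O(\N)$ is in hand, your chart-by-chart argument on the $D_+(f)$ for commutativity of the square is fine.
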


\begin{proof} 
Recall that $\widetilde{\mathfrak{g}} = G \times^B (\LG / \mathfrak{n})^*$ is the domain of Grothendieck's simultaneous resolution. Recall also our notation for line bundles from \cref{Line Bundle Notation Subsection}. Since $n\lambda$ is dominant, the natural map 
\[
\Gamma(\widetilde{\mathfrak{g}},\mathcal{L}_{n\lambda}^{\widetilde{\LG}}) \otimes_{\mathrm{Sym}(\LT)} k \to \Gamma(\tcN,\mathcal{L}_{n\lambda}^{\tcN}) 
\]
is an isomorphism by the arguments of \cite[Proposition~3.2.3]{GinzburgRicheDifferentialOperatorsOnBasicAffineSpaceandtheAffineGrassmannian}; see also the proof of \cite[Lemma~3.6.2]{GinzburgRicheDifferentialOperatorsOnBasicAffineSpaceandtheAffineGrassmannian}. Thus the composite 
\[
\Proj_{\mathcal{N}}(R(\tcN, \mathcal{L}^{\tcN}_{\lambda})) = \Proj_{\mathcal{N}}(\oplus_{n \geq 0}\Gamma(\mathcal{L}^{\tcN}_{n\lambda})) \xrightarrow{\sim} \Proj_{\mathcal{N}}(\oplus_{n \geq 0}\Gamma(\widetilde{\LG},\mathcal{L}^{\widetilde{\LG}}_{n\lambda}) \otimes_{\Symt} k)\] \[= \Proj_{\mathcal{N}}(\oplus_{n \geq 0} (\ringOfFunctionsForCOTANGENTBUNDLEOfBasicAffineSpace_{n\lambda} \otimes_{\Symt} k)) =  \Proj_{\mathcal{N}}(\oplus_{n \geq 0}(\ringOfFunctionsForCOTANGENTBUNDLEOfBasicAffineSpace \otimes_{\Symt} k)_{n\lambda}) =: Y_{\lambda}\]
gives our desired isomorphism. Here we have used the fact, as shown in the proof of \cite[Lemma~3.6.2]{GinzburgRicheDifferentialOperatorsOnBasicAffineSpaceandtheAffineGrassmannian}, that $\oplus_nR_{n\lambda} = \oplus_{n \geq 0}\Gamma(\widetilde{\LG},\mathcal{L}^{\widetilde{\LG}}_{n\lambda})$ and that $\mathcal{O}(\overline{\mu}_T^{-1}(0))_{\lambda} = (R \otimes_{\Symt} k)_{\lambda}$.    
\end{proof}

\begin{Lemma}\label{isotildeNP}
There is an isomorphism $(\nu^{P})^* \mathcal{L}^{\widetilde{\mathcal{N}}^{P}}_{\lambda} \cong \mathcal{L}^{\widetilde{\mathcal{N}}}_{\lambda}$. 
\end{Lemma}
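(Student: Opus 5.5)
The plan is to reduce the statement to functoriality of pullback along a commutative square over the flag varieties. Both line bundles are defined in \cref{Line Bundle Notation Subsection} as pullbacks of line bundles on partial flag varieties. By definition, $\mathcal{L}^{\tcN^P}_{\lambda} = p_P^*\mathcal{L}^{G/P}_{\lambda}$, where $p_P \colon \tcN^P = G \times^P (\LP \cap \N) \to G/P$ is the bundle projection. Likewise $\mathcal{L}^{\tcN}_{\lambda} = p_B^*\mathcal{L}^{G/B}_{\lambda}$, with $p_B \colon \tcN \to G/B$.

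First I check that the square
\[
\begin{tikzcd}
\tcN \ar[r,"\nu^P"] \ar[d,"p_B"] & \tcN^P \ar[d,"p_P"] \\
G/B \ar[r,"q"] & G/P
\end{tikzcd}
\]
commutes, where $q$ is the natural quotient map. On points, $\nu^P$ sends $[g,x]_B$ to $[g,x]_P$, and both composites send this to $gP$. This is immediate from the construction of $\nu^P$ as the map induced by $B \subseteq P$ and the inclusion $\LU \subseteq \LP \cap \N$. Using this square, $(\nu^P)^*\mathcal{L}^{\tcN^P}_{\lambda} \cong (\nu^P)^* p_P^* \mathcal{L}^{G/P}_{\lambda} \cong p_B^* q^* \mathcal{L}^{G/P}_{\lambda}$. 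It therefore suffices to show that $q^*\mathcal{L}^{G/P}_{\lambda} \cong \mathcal{L}^{G/B}_{\lambda}$.

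For this last isomorphism I use the fibre product description of associated bundles. Because $\lambda$ is trivial on the coroots in $S_P$, it extends to a character of $P$; this is the standing hypothesis in the paper's convention for $\mathcal{L}^{G/P}_{\lambda}$. The restriction of $\mathbb{A}^1_{-\lambda}$ from $P$ to $B$ is the $B$-representation $\mathbb{A}^1_{-\lambda}$ pulled back from $T$. The natural map $G \times^B \mathbb{A}^1_{-\lambda} \to G \times^P \mathbb{A}^1_{-\lambda}$, given by $[g,v]_B \mapsto [g,v]_P$, then induces a morphism
\[
G \times^B \mathbb{A}^1_{-\lambda} \to G/B \times_{G/P} (G \times^P \mathbb{A}^1_{-\lambda}).
\]
This morphism is a linear isomorphism on each fibre over $G/B$. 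It is therefore an isomorphism of line bundles, which can be verified $G$-equivariantly over translates of the big cell, where both sides are trivialized. Taking sheaves of sections gives $q^*\mathcal{L}^{G/P}_{\lambda} \cong \mathcal{L}^{G/B}_{\lambda}$.

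The only point needing care is this compatibility of the restriction of the $P$-character $\lambda$ to $B$ with the character of $B$ inflated from $T$. Since $P = L \ltimes U_P$ and $\lambda$ is trivial on $U_P$ and on $[L,L]$, the two characters of $B$ agree on $T$ and on $U$, hence on $B = T \ltimes U$. The rest of the argument is formal.
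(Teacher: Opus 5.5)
Your proposal is correct and follows the paper's proof essentially verbatim. Like the paper, you reduce via the commutative square over $q\colon G/B\to G/P$ to showing $q^*\mathcal{L}^{G/P}_\lambda\cong\mathcal{L}^{G/B}_\lambda$, and you obtain that from the associated-bundle square being Cartesian, checked on $G$-translates of the big cell. Your extra check that the $P$-character $\lambda$ restricted to $B$ agrees with the character inflated from $T$ is a detail the paper leaves implicit in its conventions.
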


\begin{proof}
    Notice that in addition to the commutative diagram
    \[
    \begin{tikzcd}
        \tcN \ar[rr,"\nu^{P}"] \ar[dr] & & \tcN^{P} \ar[dl] \\
        & \mathcal{N} &
    \end{tikzcd}
    \]
    we also have a commutative diagram 
        \[
    \begin{tikzcd}
        \tcN \ar[r,"\nu^{P}"] \ar[d,"\pi_B"] & \tcN^{P} \ar[d,"\pi_P"] \\
        G / B \ar[r,"q"] & G/P 
    \end{tikzcd}
    \]
    and $\mathcal{L}^{\tcN^{P}}_{\lambda} = \pi_P^* \mathcal{L}^{G/P}_{\lambda}$, $\mathcal{L}^{\tcN}_{\lambda} = \pi_B^* \mathcal{L}^{G/B}_{\lambda}$, so it suffices to note that $q^* (\mathcal{L}^{G/P}_{\lambda}) = \mathcal{L}^{G/B}_{\lambda}$. To see this, observe that
    \[
        \begin{tikzcd}
      G \times^B \mathbb{A}^1_{-\lambda} \ar[r] \ar[d] & G \times^{P} \mathbb{A}^1_{-\lambda} \ar[d] \\
        G / B \ar[r,"q"] & G/P 
    \end{tikzcd}
    \]
    is Cartesian--indeed, it is not difficult to check this after restricting to the big cell \begin{equation}\label{Big Cell for Partial Flag Variety}\overline{U}_{P} \cong \overline{U}_{P}P/P \subseteq G/P\end{equation} of the parabolic Bruhat decomposition and, since each map in the above diagram is $G$-equivariant, this implies that this diagram is Cartesian after restricting to any $G$-translate of the big cell \labelcref{Big Cell for Partial Flag Variety}.
\end{proof}

\begin{proof}[Proof of \cref{Generically Isomorphisms Are What We Want}] The variety $\tcN^{P}$ is normal by \cref{tcNP is normal}. Therefore, from \cref{For Proper Birational Maps Unit Map Is Iso on Line Bundles} and \cref{isotildeNP} we obtain an isomorphism 
\[\mathcal{L}_{\lambda}^{\tcN^{P}} \xrightarrow{\sim} (\nu^{P})_*(\nu^{P})^* (\mathcal{L}_{\lambda}^{\tcN^{P}}) \cong (\nu^{P})_*(\mathcal{L}_{\lambda}^{\tcN})\] and a similar isomorphism for every tensor power of $\mathcal{L}_{\lambda}^{\tcN^{P}}$, which therefore induces an isomorphism \[\eta_{P}: \Proj_\N(R(\tcN, \mathcal{L}_{\lambda}^{\tcN})) \xrightarrow{\sim} \Proj_\N(R(\tcN^{P}, \mathcal{L}_{\lambda}^{\tcN^{P}}))\] of varieties over $\mathcal{N}$. Observe that, by construction, \begin{equation}\label{The Map on Proj Induced by Nu}
        \begin{tikzcd}\
      \tcN^P  \ar[d, "i^{P}_{\lambda}"] & \tcN \ar[l, "\nu^{P}"] \ar[d, "i_{\lambda}"] \\
    \Proj_\N(R(\tcN^{P}, \L_{\lambda}^{\tcN^{P}})) &     \Proj_\N(R(\tcN, \L_{\lambda}^{\tcN})) \ar[l,"\eta_{P}"]  
    \end{tikzcd}
    \end{equation} commutes. 
    Our desired isomorphism is given by the composite $\varphi_{\lambda} \circ \eta^{-1}_{P} \circ i^{P}_{\lambda}$, and the commutativity of the diagram \labelcref{Compatibility of Key Isomorphism} follows formally from combining the commutativity of \labelcref{Commutative Diagram from Restriction} and \labelcref{The Map on Proj Induced by Nu}. 

    Finally, we prove that \labelcref{Compatibility of Key Isomorphism For Any Parabolics} commutes. First, observe that we obtain a commutative diagram  \begin{equation}\xymatrix@R+2em@C+2em{\tcN \ar@/^1pc/[rr]^{\nu^{P'}} \ar[d]^{\zeta_{\theta}}\ar[r]_{\nu^{P}} & \tcN^{P} \ar[r]_{\nu^{P'}_{P}} & \tcN^{P'} \ar[d]^{\zeta_{\lambda'}}\\ Y_{\theta} \ar@/_1pc/[rr]_{\xi_{\theta}^{\lambda'}} \ar[r]^{\xi_{\theta}^{\lambda}} & 
Y_{\lambda} \ar[r]^{\xi^{\lambda'}_{\lambda}} & Y_{\lambda'} 
  }\end{equation} 
  by using the fact that the diagram analogous to \labelcref{Compatibility of Key Isomorphism} commutes when $P$ is replaced with the parabolic $P'$, the definition of $\nu^{P'}$, and the universal property of the categorical quotient. Now, the map $\nu^P$, and thus $\xi_{\theta}^{\lambda'}$, are birational. Therefore the locus where the diagram \labelcref{Compatibility of Key Isomorphism For Any Parabolics} commutes is open in $\tcN^{P}$ and nonempty. Moreover, since $\nu_{P}^{P'}$ is a separated map, 
  the locus where \labelcref{Compatibility of Key Isomorphism For Any Parabolics} commutes is also closed. Therefore, since $\tcN^{P}$ is irreducible, we see that the composites in the diagram \labelcref{Compatibility of Key Isomorphism For Any Parabolics} agree everywhere.
\end{proof}

\begin{Corollary}\label{Movable Cone of tcnP}
    The movable cone of $\tcN^P$ is given by the dominant $\lambda \in \characterlatticeforT$ for which $\lambda(\alpha^{\vee}) = 0$ for all $\alpha^{\vee} \in S_P$.
\end{Corollary}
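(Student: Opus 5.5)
The plan is to apply \cref{cor:MoveconeGIT} to the $T$-action on $X := \momentMapFromAFFINECLOSUREofCotangentSpaceWithGROUPT^{-1}(0)$. Take $\theta$ dominant with $\theta(\alpha^{\vee}) = 0$ exactly for $\alpha^{\vee} \in S_P$, and let $C$ be the closure of its GIT cone. By \cref{Wall and Chamber Structure for T action on affineClosureOfCotangentBundleofBasicAffineSpace} (restricted to $X$), $C$ is the set of dominant $\lambda$ with $\lambda(\alpha^{\vee}) = 0$ for all $\alpha^{\vee} \in S_P$. By \cref{Generically Isomorphisms Are What We Want}, $Y_\theta \cong \tcN^P$, and $Y_0 \cong \mathcal{N}$ by \cref{Hamiltonian Reduction of affineClosureOfCotangentBundleofBasicAffineSpace for T is Nilcone}. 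Since $\Pic(\mathcal{N})$ is trivial, $\Pic(\tcN^P/\mathcal{N}) = \Pic(\tcN^P)$. Once the hypotheses of \cref{cor:MoveconeGIT} are checked, it gives $\Mov(\tcN^P/\mathcal{N}) = L(C)$, which is the claimed set.

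The hypotheses are checked as follows.

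First, $Y_\theta \cong \tcN^P$ is normal and $\Q$-factorial by \cref{tcNP is normal}(4),(5).

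Second, $\xi^\theta_0$ is birational. Under $\zeta$ it is identified with $\nu^G_P$ by \eqref{Compatibility of Key Isomorphism For Any Parabolics} with $P' = G$, and $\nu^G_P$ is birational by \cref{tcNP is normal}(3).

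Third, $L$ is an isomorphism. By \cref{cor:stthetaequalset}, $\mathrm{St}(\theta)^\perp$ is exactly the lattice of $\lambda$ vanishing on $S_P$. By \cref{Picard Group of Vector BUndles on Partial Flag Variety}(2), this lattice is also $\Pic(\tcN^P)$ via $\lambda \mapsto \L^{\tcN^P}_\lambda$. We need $L(\lambda) \cong \L^{\tcN^P}_\lambda$, at least up to a positive multiple and after tensoring with $\R$. To see this, use \cref{lem:pullbackOm} together with the proof of \cref{Generically Isomorphisms Are What We Want}: there $\zeta_\lambda$ comes from $\Proj$ of the section ring of $\L_\lambda$, so $\zeta_\theta^*\mathcal{O}_{Y_\theta}(m) \cong \L^{\tcN^P}_{m\theta}$. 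For general $\lambda \in C$, compose with $\xi^\theta_\lambda$ and use \eqref{Compatibility of Key Isomorphism For Any Parabolics} and \cref{isotildeNP}. Both assignments are linear and agree on the full-dimensional cone $C$, so they agree on all of $\mathrm{St}(\theta)^\perp$.

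Fourth, and the main obstacle, every boundary point of $C$ must contract a divisor. Take $\lambda \in C \setminus C^\circ$. Then $\lambda$ vanishes on $S_{P'}$ for some $P' \supsetneq P$ determined by $\lambda$. By \eqref{Compatibility of Key Isomorphism For Any Parabolics}, $\xi^\theta_\lambda$ corresponds to $\nu^{P'}_P$. The Corollary following \cref{Divisors of tcN and Which Are Contracted by nuP} says that $\nu^{P'}_P$ contracts the divisor $\nu^P_B(G\times^B\LU_\alpha)$ for any $\alpha \in S_{P'}\setminus S_P$. Hence the hypothesis holds.

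The main care needed is the identification of $L$ with $\lambda\mapsto\L_\lambda$, including signs and normalizations, and that $L$ is an isomorphism after $\otimes\R$. Once these points are in place, \cref{cor:MoveconeGIT} yields $\Mov(\tcN^P) = \operatorname{Nef}(\tcN^P) = L(C)$, which is exactly the set of dominant $\lambda$ vanishing on $S_P$.
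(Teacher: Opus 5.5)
Your proposal is correct and follows the paper's own argument. The paper proves this in one line by invoking \cref{cor:MoveconeGIT} together with \cref{cor:stthetaequalset}. You supply the hypothesis checks that the paper leaves implicit: normality and $\Q$-factoriality from \cref{tcNP is normal}, birationality of $\xi^\theta_0$ via $\nu^G_P$, the identification of $L$ with $\lambda\mapsto\L^{\tcN^P}_\lambda$, and divisorial contractions at boundary points via the corollary after \cref{Divisors of tcN and Which Are Contracted by nuP}.

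One small correction concerns the Picard group. \cref{Picard Group of Vector BUndles on Partial Flag Variety}(2) is stated only for vector bundles over $G/P$, and $\tcN^P$ is not one when $P\neq B$. You should instead obtain $\Pic(\tcN^P)\cong\{\lambda : \lambda(\alpha^{\vee})=0 \text{ for all } \alpha^{\vee}\in S_P\}$ as follows. First, $(\nu^P)^*$ is injective on Picard groups, by the projection formula and $\nu^P_*\O_{\tcN}=\O_{\tcN^P}$. Second, pullbacks are trivial on the fibres $P/B$ of $\nu^P$ over the zero section, which forces $\lambda(\alpha^{\vee})=0$ for $\alpha^{\vee}\in S_P$. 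Third, \cref{isotildeNP} shows every such $\lambda$ is attained.
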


\begin{proof}
This follows from Corollary~\ref{cor:MoveconeGIT} and Corollary~\ref{cor:stthetaequalset}. 
\end{proof}

\subsection{Integrality of $\momentMapFromAFFINECLOSUREofCotangentSpaceWithGROUPT^{-1}(0)$} We claim the following:
\begin{Theorem}\label{Preimage of affinized moment map is integral}
    The scheme $\momentMapFromAFFINECLOSUREofCotangentSpaceWithGROUPT^{-1}(0)$ is an integral Cohen-Macaulay scheme.
\end{Theorem}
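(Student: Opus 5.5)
We argue via regular sequences and dimension counts. Write $\ringOfFunctionsForCOTANGENTBUNDLEOfBasicAffineSpace = \O(T^*(G/U))$ and $\overline{\ringOfFunctionsForCOTANGENTBUNDLEOfBasicAffineSpace} = \ringOfFunctionsForCOTANGENTBUNDLEOfBasicAffineSpace \otimes_{\Symt} k$, so that $\momentMapFromAFFINECLOSUREofCotangentSpaceWithGROUPT^{-1}(0) = \Spec(\overline{\ringOfFunctionsForCOTANGENTBUNDLEOfBasicAffineSpace})$.

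\textbf{Cohen--Macaulay property.} By the main result of \cite{GannonProofOftheGinzburgKazhdanConjecture}, $\affineClosureOfCotangentBundleofBasicAffineSpace$ is a symplectic singularity, hence rational and in particular Cohen--Macaulay. The ring $\ringOfFunctionsForCOTANGENTBUNDLEOfBasicAffineSpace$ is nonnegatively graded with degree zero part $k$: use the dilation action on cotangent fibres, possibly combined with a $\rhocheck$-twist so the grading is positive. The coordinate functions of $\momentMapFromAFFINECLOSUREofCotangentSpaceWithGROUPT$ are homogeneous of positive degree. It is therefore enough to show
\[
\dim \momentMapFromAFFINECLOSUREofCotangentSpaceWithGROUPT^{-1}(0) \le \dim \affineClosureOfCotangentBundleofBasicAffineSpace - \dim T = 2\dim(G/U) - r .
\]
Then these $r$ homogeneous elements form part of a system of parameters in a graded CM ring, hence a regular sequence, and the quotient $\overline{\ringOfFunctionsForCOTANGENTBUNDLEOfBasicAffineSpace}$ is Cohen--Macaulay of pure dimension $2\dim U + r$.

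\textbf{Dimension bound.} This step needs care because $\affineClosureOfCotangentBundleofBasicAffineSpace$ has a boundary. We stratify by $\projectionFromAffineClosureofCotangentBundleToAffineClosureofSpace$ and the strata $G/[P_S,P_S]$ of \cref{Stratification on affineClosureofBasicAffineSpace}.
\begin{itemize}
\item Over the open stratum $G/U$, \cref{Preimage of G Mod U Under Projection Is Its Cotangent Bundle} identifies the preimage with $T^*(G/U)$. There $\mu_T^{-1}(0) \cong G\times^U \LU$ has dimension $\dim G - \dim U + \dim U = 2\dim U + r$.
\item For the boundary strata, we would bound $\dim\big(\projectionFromAffineClosureofCotangentBundleToAffineClosureofSpace^{-1}(G/[P_S,P_S]) \cap \momentMapFromAFFINECLOSUREofCotangentSpaceWithGROUPT^{-1}(0)\big)$. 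One approach uses the explicit description of $\affineClosureOfCotangentBundleofBasicAffineSpace$ as $\Spec$ of $\bigoplus_\lambda \Gamma(\tg, \L_\lambda)$ (\cite[Lemma 3.6.1]{GinzburgRicheDifferentialOperatorsOnBasicAffineSpaceandtheAffineGrassmannian}). Alternatively, use the Gelfand--Graev $W$-action: the translates $wT^*(G/U)$ cover all points with semistable image for some regular chamber. Combined with $\theta$-stable points existing in every chamber (\cref{There Are Zero-Stable Points in Moment Map Preimage}), this shows every component meets $\bigcup_w w\,T^*(G/U)$ or lies in small-dimensional loci.
\end{itemize}

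A cleaner route to the bound is to pick a homogeneous $T$-invariant regular element, or to use that the fibres of $\momentMapFromAFFINECLOSUREofCotangentSpaceWithGROUPT$ have dimension at most that of the generic fibre by upper semicontinuity with respect to the contracting $\G_m$. For a conic map to $\LTd$, the zero fibre has maximal dimension among fibres, so this bound must instead come from flatness. The key input is then flatness of $\ringOfFunctionsForCOTANGENTBUNDLEOfBasicAffineSpace$ over $\Symt$. This follows since each $\ringOfFunctionsForCOTANGENTBUNDLEOfBasicAffineSpace_\lambda = \Gamma(\tg,\L_\lambda)$ is a free $\Symt$-module: use the base change isomorphism $\Gamma(\tg,\L_\lambda)\otimes_{\Symt}k \cong \Gamma(\tcN,\L_\lambda)$ of \cite[Proposition 3.2.3]{GinzburgRicheDifferentialOperatorsOnBasicAffineSpaceandtheAffineGrassmannian} for dominant $\lambda$, transport to other $\lambda$ via $W$, and conclude by graded Nakayama. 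Flatness gives the dimension equality directly and makes the regular sequence argument immediate. This flatness for non-dominant $\lambda$ is the step I expect to be the main obstacle; I would try first to reduce it to the dominant case via the Gelfand--Graev action.

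\textbf{Integrality.} Since $\overline{\ringOfFunctionsForCOTANGENTBUNDLEOfBasicAffineSpace}$ is CM, it has no embedded components, so it suffices to show it is generically reduced and irreducible. The open subset $\momentMapFromAFFINECLOSUREofCotangentSpaceWithGROUPT^{-1}(0)\cap T^*(G/U) = G\times^U\LU$ is smooth and irreducible of full dimension $2\dim U + r$. We must show its complement, the preimage of the boundary $\affineClosureofBasicAffineSpace\setminus G/U$, has dimension strictly less than $2\dim U+r$; then, by equidimensionality, no component lies there. Using the $W$-translates, the complement of $\bigcup_w wT^*(G/U)$ lies over strata with positive-dimensional stabilizers. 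We would bound its dimension stratum by stratum: $\dim G/[P_S,P_S]$ plus the fibre dimension, the latter controlled via \cref{Generically Isomorphisms Are What We Want} and the dimension count of \cref{tcNP is normal}(2). Distinct translates $wT^*(G/U)\cap\momentMapFromAFFINECLOSUREofCotangentSpaceWithGROUPT^{-1}(0)$ overlap in a dense open set (they all contain the regular locus of \cref{There Are Zero-Stable Points in Moment Map Preimage}), so irreducibility follows. Reducedness then follows from CM plus generic reducedness (Serre's $R_0+S_1$).
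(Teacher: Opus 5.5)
\textbf{Cohen--Macaulay part.} In its final form your argument is the paper's. The paper quotes flatness of $\overline{\mu}_T$ from Ginzburg--Riche (Proposition 3.2.3), takes Cohen--Macaulayness of $\overline{T^*(G/U)}$ from Gannon, and applies miracle flatness. The Gelfand--Graev reduction to dominant $\lambda$ does work, since $\overline{\mu}_T$ is $W$-equivariant. If you reprove flatness yourself, however, the underived base change isomorphism plus graded Nakayama does not give freeness of $R_\lambda$. You also need $\mathrm{Tor}_1^{\mathrm{Sym}(\mathfrak t)}(R_\lambda,k)=0$. That comes from derived base change along the flat map $\widetilde{\mathfrak g}\to\mathfrak t^*$ together with $H^{>0}(\widetilde{\mathcal N},\mathcal L_\lambda)=0$ for dominant $\lambda$.

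The following steps are also fine: reducing reducedness to $S_1$ plus generic reducedness, and showing that all $wT^*(G/U)\cap\overline{\mu}_T^{-1}(0)$ have the same closure. For the latter, they share the dense $W$-stable locus over $\mathfrak g^*_{\mathrm{reg}}$, which lies in $T^*(G/U)$ by Ginzburg's Proposition 5.1.4. This does the job of the paper's $Z_w=Z_1$ step.

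\textbf{The gap: excluding a component that misses every translate.} The other half of irreducibility is not established: you must exclude a component $V$ of $X:=\overline{\mu}_T^{-1}(0)$ that misses every $wT^*(G/U)$. Your plan bounds $\dim\bigl(X\setminus\bigcup_w wT^*(G/U)\bigr)$ stratum by stratum, as $\dim G/[P_S,P_S]$ plus a fibre dimension, but nothing supports it.
\begin{itemize}
\item Nothing available controls the fibres of $\overline{\pi}|_X$ over boundary strata.
\item The isomorphism $Y_\lambda\cong\widetilde{\mathcal N}^{P}$ bounds the dimension of a GIT quotient, not of the locus upstairs. Fibres of $X^{\lambda\text{-ss}}\to Y_\lambda$ can exceed $\dim T$ in dimension because of non-closed orbits.
\item Knowing that the bad locus has positive-dimensional stabilizers does not by itself bound its dimension either.
\end{itemize}

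\textbf{The missing idea.} It is the paper's componentwise GIT count.
\begin{enumerate}
\item Since $V$ misses all $wT^*(G/U)$, Gannon's Proposition 5.10 ($D(f)\cap\overline{T^*(G/U)}\subseteq wT^*(G/U)$ for $f$ of regular weight) shows that $\omega(V)$ contains no regular weight.
\item Hence $T$ acts on $V$ through $T'=T/K$ with $\dim K\ge 1$.
\item Take $\lambda$ in the interior of a full-dimensional GIT cone for $T'\curvearrowright V$. Then $V$ has $\lambda$-stable points, so by purity of dimension
$\dim V\sslash_\lambda T=\dim V-\dim T'\ge\dim G-\dim T+1$.
\item Replace $(V,\lambda)$ by a $W$-translate so that $\lambda$ is dominant.
\item The surjection $\bigoplus_n\mathcal O(X)_{n\lambda}\to\bigoplus_n\mathcal O(V)_{n\lambda}$ embeds $V\sslash_\lambda T$ into $Y_\lambda\cong\widetilde{\mathcal N}^{P_\lambda}$. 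That space has dimension $\dim\mathcal N=\dim G-\dim T$, a contradiction.
\end{enumerate}
Without this or an equivalent argument, irreducibility remains unproved. So does reducedness, which needs the generic point to lie in $G\times^U\mathfrak u$.
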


We prove that $\momentMapFromAFFINECLOSUREofCotangentSpaceWithGROUPT^{-1}(0)$ is Cohen-Macaulay in \cref{Cohen Macaulay Subsubsection}. After some preliminary work in \cref{Irreducible Components Meeting W Translates of Cotangent Bundle}, we prove that $\momentMapFromAFFINECLOSUREofCotangentSpaceWithGROUPT^{-1}(0)$ is irreducible in \cref{Proof of Irreduciblility}. 

\subsubsection{Cohen-Macaulay-ness}\label{Cohen Macaulay Subsubsection}To prove the Cohen-Macaulay-ness in \cref{Preimage of affinized moment map is integral}, we first recall the following result, which follows immediately from specialization of \cite[Proposition 3.2.3]{GinzburgRicheDifferentialOperatorsOnBasicAffineSpaceandtheAffineGrassmannian} to $\hbar = 0$:

\begin{Lemma}\label{Affinized moment map is flat}
    The map $\momentMapFromAFFINECLOSUREofCotangentSpaceWithGROUPT: \affineClosureOfCotangentBundleofBasicAffineSpace \to \LTd$ is flat.
\end{Lemma}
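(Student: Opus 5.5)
The plan is to check flatness one $T$-weight space at a time, and to get each weight space from a line bundle on $\tg$ via cohomology and base change.

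\emph{Reduction to weight spaces.} Write $\ringOfFunctionsForCOTANGENTBUNDLEOfBasicAffineSpace = \bigoplus_{\lambda \in \characterlatticeforT} \ringOfFunctionsForCOTANGENTBUNDLEOfBasicAffineSpace_{\lambda}$. Since $\momentMapFromAFFINECLOSUREofCotangentSpaceWithGROUPT$ is $T$-invariant, $\Symt$ acts through $T$-weight-zero functions, so each $\ringOfFunctionsForCOTANGENTBUNDLEOfBasicAffineSpace_{\lambda}$ is a $\Symt$-submodule. A direct sum is flat if and only if each summand is, so it suffices to prove each $\ringOfFunctionsForCOTANGENTBUNDLEOfBasicAffineSpace_{\lambda}$ is flat over $\Symt$.

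Moreover, every $\ringOfFunctionsForCOTANGENTBUNDLEOfBasicAffineSpace_{\lambda}$ is non-negatively graded by the conic $\G_m$-action, compatibly with the grading on $\Symt$ in which $\LT$ sits in positive degree. For such graded modules over a polynomial ring, flat, projective and free coincide, and all are equivalent to the vanishing of $\mathrm{Tor}_1^{\Symt}(\ringOfFunctionsForCOTANGENTBUNDLEOfBasicAffineSpace_{\lambda}, k)$. This graded Nakayama-type criterion is the reformulation I will aim at.

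\emph{Dominant weights.} For dominant $\lambda$, the proof of \cite[Lemma~3.6.2]{GinzburgRicheDifferentialOperatorsOnBasicAffineSpaceandtheAffineGrassmannian}, already used in \cref{Connection of Hamiltonian Reductions and Resolutions of Nilpotent Cones}, identifies $\ringOfFunctionsForCOTANGENTBUNDLEOfBasicAffineSpace_{\lambda} \cong \Gamma(\tg, \mathcal{L}^{\tg}_{\lambda})$ as $\Symt$-modules. Here $\Symt$ acts through $\tg \to \LTd$. This map is flat, since locally over $G/B$ it is a linear surjection of vector bundles; its fibre over $0$ is $\tcN$.

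By the Broer/Andersen–Jantzen vanishing, $H^i(\tcN, \mathcal{L}^{\tcN}_{\lambda}) = 0$ for $i > 0$ and dominant $\lambda$. By the same vanishing on the other fibres (or by equivariance plus the contracting $\G_m$-action, which moves every point of $\LTd$ towards $0$), cohomology and base change along $\tg \to \LTd$ shows that the pushforward of $\mathcal{L}^{\tg}_{\lambda}$ to $\LTd$ is locally free. Its global sections $\Gamma(\tg, \mathcal{L}^{\tg}_{\lambda})$ are therefore a projective, hence free, $\Symt$-module, and base change to $0$ recovers $\Gamma(\tcN, \mathcal{L}^{\tcN}_{\lambda})$. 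This is the $\hbar = 0$ specialization of \cite[Proposition 3.2.3]{GinzburgRicheDifferentialOperatorsOnBasicAffineSpaceandtheAffineGrassmannian}.

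\emph{Arbitrary weights.} For arbitrary $\lambda$, choose $w \in W$ with $w\lambda$ dominant. The Gelfand–Graev action of $T \rtimes W$ on $\affineClosureOfCotangentBundleofBasicAffineSpace$ sends $\ringOfFunctionsForCOTANGENTBUNDLEOfBasicAffineSpace_{\lambda}$ isomorphically onto $\ringOfFunctionsForCOTANGENTBUNDLEOfBasicAffineSpace_{w\lambda}$. It is semilinear for the $W$-twist of $\Symt$, because $\momentMapFromAFFINECLOSUREofCotangentSpaceWithGROUPT$ is $W$-equivariant, as already used in the proof of \cref{Regular Dominant Weights are GIT Chamber}. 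Twisting by an automorphism of $\Symt$ preserves flatness, so flatness of $\ringOfFunctionsForCOTANGENTBUNDLEOfBasicAffineSpace_{\lambda}$ follows from the dominant case.

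\emph{Main obstacle.} The key step is the higher cohomology vanishing for dominant line bundles on $\tcN$, together with its extension to all fibres of $\tg \to \LTd$, which is what makes cohomology and base change apply. This is exactly the input that \cite[Proposition 3.2.3]{GinzburgRicheDifferentialOperatorsOnBasicAffineSpaceandtheAffineGrassmannian} packages, so I would cite it rather than reprove it. A secondary point to check is that the identification $\ringOfFunctionsForCOTANGENTBUNDLEOfBasicAffineSpace_{\lambda} \cong \Gamma(\tg, \mathcal{L}^{\tg}_{\lambda})$ is $\Symt$-linear, which is where the moment map enters.
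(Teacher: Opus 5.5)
Your proposal is correct and ends up on the same citation as the paper, which proves this lemma in one line by specializing \cite[Proposition 3.2.3]{GinzburgRicheDifferentialOperatorsOnBasicAffineSpaceandtheAffineGrassmannian} to $\hbar = 0$. Your outline (reduction to weight spaces, Broer vanishing for dominant $\lambda$, Gelfand--Graev twist for the remaining weights) is a sketch of what that citation supplies.

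One caution if you write out the dominant case yourself: $\tg \to \LTd$ is not proper, so classical cohomology and base change does not apply, and fibrewise vanishing alone would not suffice. Instead use Tor-independent base change,
\[
R\Gamma(\tg,\mathcal{L}^{\tg}_{\lambda}) \otimes^{L}_{\Symt} k \simeq R\Gamma(\tcN,\mathcal{L}^{\tcN}_{\lambda}),
\]
which is valid because $\tg \to \LTd$ is flat. Combine it with your graded Nakayama criterion, applied by descending induction on cohomological degree. That criterion applies because these cohomology groups are non-negatively graded, since $\tg$ is a vector bundle over $G/B$.
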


We also recall the main theorem of \cite{GannonProofOftheGinzburgKazhdanConjecture}: 

\begin{Theorem} The scheme $\affineClosureOfCotangentBundleofBasicAffineSpace$ has symplectic singularities and, in particular, is Cohen-Macaulay. 
\end{Theorem}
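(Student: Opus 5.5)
Since this statement is the main theorem of \cite{GannonProofOftheGinzburgKazhdanConjecture}, the plan is to retrace the structure of that argument. Cohen-Macaulayness will then follow formally from the symplectic singularity statement. Varieties with symplectic singularities have rational singularities by Beauville, and rational singularities are Cohen-Macaulay. So all the work goes into showing that $\affineClosureOfCotangentBundleofBasicAffineSpace$ has symplectic singularities.

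\emph{Step 1: normality.} I would first record that $\affineClosureOfCotangentBundleofBasicAffineSpace = \Spec(\O(T^*(G/U)))$ is normal. This holds because it is the affinization of the smooth, hence normal, quasi-affine variety $T^*(G/U)$. Its coordinate ring is an intersection of local rings of a normal variety and is finitely generated, for instance by \cite[Lemma 3.6]{GinzburgRicheDifferentialOperatorsOnBasicAffineSpaceandtheAffineGrassmannian}.

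\emph{Step 2: a symplectic form on the regular locus.} Next I would construct a symplectic form on a big open subset of the regular locus. The Gelfand--Graev action of $W$ on $\affineClosureOfCotangentBundleofBasicAffineSpace$ is by Poisson automorphisms. The translates $wT^*(G/U)$ are open subsets on which the canonical symplectic form lives, and these forms glue because the Poisson bracket on $\affineClosureOfCotangentBundleofBasicAffineSpace$ is globally defined. By \cref{Theta Stable Points Equals Theta Semistable Points is Cotangent Bundle}, the open set $V := \bigcup_{w \in W} wT^*(G/U)$ is the union of the $\theta$-stable loci over the regular chambers $\theta$. I would then bound the codimension of the complement $\affineClosureOfCotangentBundleofBasicAffineSpace \setminus V$. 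A point lies outside $V$ exactly when, for every $w$, the projection $\projectionFromAffineClosureofCotangentBundleToAffineClosureofSpace(w^{-1}x)$ lands in the boundary strata $G/[P_S,P_S]$ of \cref{Stratification on affineClosureofBasicAffineSpace} with $S\neq\emptyset$. I would estimate the dimension of this complement stratum by stratum, using the fibers of $\projectionFromAffineClosureofCotangentBundleToAffineClosureofSpace$ together with \cref{Affinized moment map is flat}.

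\emph{Step 3: extension to a resolution.} The aim is to show that the complement of $V$ has codimension at least $4$. Granting this, Flenner's extension theorem applies to the normal variety $\affineClosureOfCotangentBundleofBasicAffineSpace$: a regular $2$-form defined off a closed subset of codimension $\ge 4$ extends to a regular form on any resolution. Hence the form extends and $\affineClosureOfCotangentBundleofBasicAffineSpace$ has symplectic singularities.

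The main obstacle is the codimension-$4$ estimate in Step 2. If the direct estimate falls short, I would fall back on Namikawa's criterion: a normal Gorenstein variety with rational singularities that is symplectic on its regular locus has symplectic singularities. For this route I would prove rationality and Gorensteinness directly. I would present $\affineClosureOfCotangentBundleofBasicAffineSpace$ via the $G$-equivariant description $\oplus_\lambda \Gamma(\widetilde{\LG},\L_\lambda^{\widetilde{\LG}})$ used in the proof of \cref{Connection of Hamiltonian Reductions and Resolutions of Nilpotent Cones}. This exhibits it as a total-space (Cox-type) construction over $\tg$, and I would then use Kempf-type cohomology vanishing for dominant line bundles on $\tg$.
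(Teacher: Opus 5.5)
The paper does not prove this statement; it quotes it as the main theorem of \cite{GannonProofOftheGinzburgKazhdanConjecture}. So the question is whether your sketch is a proof on its own, and it is not.

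Your outer reductions are fine:
\begin{itemize}
\item symplectic singularities are rational, hence Cohen--Macaulay;
\item $\affineClosureOfCotangentBundleofBasicAffineSpace$ is normal;
\item once the singular locus has codimension $\geq 4$, Hartogs followed by Flenner extends the form to a resolution.
\end{itemize}
As a side remark, you do not need the Gelfand--Graev translates to get a symplectic form on the whole regular locus. The complement $\affineClosureOfCotangentBundleofBasicAffineSpace \setminus T^*(G/U)$ already has codimension $\geq 2$, because $\affineClosureOfCotangentBundleofBasicAffineSpace$ is the affinization of a normal quasi-affine variety. The degeneracy locus of the extended form is empty or a divisor, so it is empty.

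\textbf{The gap in the main route.} The codimension-$4$ estimate for the complement of $V := \bigcup_{w \in W} w\,T^*(G/U)$ is the entire content of the theorem, and you only announce it. The tools you name do not give it:
\begin{itemize}
\item \cref{Preimage of G Mod U Under Projection Is Its Cotangent Bundle} describes $\projectionFromAffineClosureofCotangentBundleToAffineClosureofSpace$ only over the open stratum $G/U$.
\item \cref{Affinized moment map is flat} bounds fibres of $\momentMapFromAFFINECLOSUREofCotangentSpaceWithGROUPT$, not of $\projectionFromAffineClosureofCotangentBundleToAffineClosureofSpace$.
\end{itemize}
Over the boundary strata the fibres of $\projectionFromAffineClosureofCotangentBundleToAffineClosureofSpace$ are large and not controlled by anything you cite. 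For example, the fibre over the cone point of $\affineClosureofBasicAffineSpace$ has dimension at least $\dim G/U$. It contains $w_0\,\zeroSectionAffinized(\affineClosureofBasicAffineSpace)$, because a function of non-dominant weight vanishes on the zero section.

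Here is what actually has to be proved. Comparing orbit cones with Weyl chambers shows that $x \notin V$ if and only if $x$ is fixed by $\beta^{\vee}(\G_m)$ for some root $\beta$. So you need these fixed loci to have codimension $\geq 4$. The obvious bound gives only codimension $2$: the fixed locus embeds as a proper closed subset of the quotient by $\beta^{\vee}(\G_m)$. One can do better for fixed points whose $T$-orbit is closed. Combining \cite[Proposition 5.1.4]{Gin} with the fact that non-regular elements of $\LGd$ have codimension $3$ gives codimension $4$ for those points. The remaining fixed points need a genuine analysis of $\affineClosureOfCotangentBundleofBasicAffineSpace$ near the walls, and your sketch does not provide it.

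\textbf{The fallback has the same gap in another form.} For Namikawa's criterion, rationality of the singularities is the whole difficulty, and Kempf vanishing for dominant line bundles cannot supply it. The ring $\ringOfFunctionsForCOTANGENTBUNDLEOfBasicAffineSpace = \bigoplus_{\lambda \in \characterlatticeforT} \Gamma(\tg, \L^{\tg}_{\lambda})$ involves every weight, since every character is effective. For non-dominant weights the higher cohomology on $\tg$ genuinely does not vanish. Already for $G = \SL_2$ one has $H^1(\tg, \L^{\tg}_{-\alpha}) \supseteq H^1(\mathbb{P}^1, \O(-2)) \neq 0$, even though $\affineClosureOfCotangentBundleofBasicAffineSpace \cong \mathbb{A}^4$ is smooth. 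So cohomology of line bundles on the single model $\tg$ is not the right mechanism. You would need to use all the chambers at once, or a Cox-ring/klt type argument, and in either case explain how vanishing yields rational singularities.
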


Thus, by the miracle flatness package, see for example \cite[Exercise 26.2.H]{VakilRisingSeaFoundationsofAlgebraicGeometry}, since $\affineClosureOfCotangentBundleofBasicAffineSpace$ is Cohen-Macaulay and $\momentMapFromAFFINECLOSUREofCotangentSpaceWithGROUPT$ is flat, we obtain:
\begin{Corollary}\label{Momemt map inverse is CM and pure dimension of G}
    The scheme $\momentMapFromAFFINECLOSUREofCotangentSpaceWithGROUPT^{-1}(0)$ is Cohen-Macaulay and pure of dimension $\mathrm{dim}(G)$.
\end{Corollary}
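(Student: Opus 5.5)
The statement is a formal consequence of \cref{Affinized moment map is flat} and the main theorem of \cite{GannonProofOftheGinzburgKazhdanConjecture}. The plan is to work with the graded ring $R = \O(T^*(G/U))$ and its quotient $\overline{R} := R \otimes_{\Symt} k$, and to show that a system of linear coordinates on $\LTd$ pulls back to a regular sequence in $R$.

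Choose a basis $h_1,\dots,h_r$ of $\LT$, viewed as linear functions on $\LTd$, and let $x_i := \momentMapFromAFFINECLOSUREofCotangentSpaceWithGROUPT^*(h_i) \in R$. The scheme $\momentMapFromAFFINECLOSUREofCotangentSpaceWithGROUPT^{-1}(0)$ is $\Spec(R/(x_1,\dots,x_r))$. Since $h_1,\dots,h_r$ is a regular sequence in $\Symt$ and $R$ is flat over $\Symt$ by \cref{Affinized moment map is flat}, the sequence $x_1,\dots,x_r$ is $R$-regular. To see this, tensor the Koszul complex of $(h_i)$, which is a resolution of $k$, with the flat module $R$. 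Equivalently, one checks inductively that $R/(x_1,\dots,x_j)$ is flat over $\Symt/(h_1,\dots,h_j)$.

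Next, the main theorem of \cite{GannonProofOftheGinzburgKazhdanConjecture} gives that $R$ is Cohen--Macaulay. A quotient of a Cohen--Macaulay ring by a regular sequence is again Cohen--Macaulay, so $\overline{R}$ is Cohen--Macaulay. This is the miracle flatness input cited above.

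For the dimension, I will use the grading. $R$ is nonnegatively graded with $R_0 = k$, via the conic action dilating cotangent fibers combined with the grading on $\O(G/U)$, and the $x_i$ are homogeneous of positive degree. The scheme $\overline{\mu}_T^{-1}(0)$ is nonempty, since it contains the cone point. Every irreducible component of $\overline{\mu}_T^{-1}(0)$ is conic, hence passes through the cone point. Localizing at the cone point, the Cohen--Macaulay local ring $\overline{R}$ has all minimal primes of dimension $\dim R - r$. Here $\dim R = \dim T^*(G/U) = 2\dim G/U = 2(\dim U + r)$. Therefore
\[
\dim \overline{R} = 2\dim U + 2r - r = 2\dim U + r = \dim G.
\]
Unmixedness of Cohen--Macaulay rings gives purity of dimension.

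The main point needing care is this last step: passing from the local statement at the cone point to global purity. It uses conicity so that every component passes through the origin. Alternatively, one can argue directly that $\dim \Spec(R/(x_1,\dots,x_j))$ drops by exactly one at each step, using that $x_{j+1}$ is a nonzerodivisor and Krull's principal ideal theorem, together with the Cohen--Macaulay property at every maximal ideal.
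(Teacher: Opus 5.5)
Your proposal is correct and is essentially the paper's argument. The paper invokes miracle flatness: flatness of $\overline{\mu}_T$, together with the Cohen--Macaulayness of $\overline{T^*(G/U)}$ from Gannon's theorem, shows that the fiber $\overline{\mu}_T^{-1}(0)$ is Cohen--Macaulay and pure of dimension $\dim \overline{T^*(G/U)} - \dim \mathfrak{t}^* = \dim G$. You have unpacked that citation into the regular-sequence argument and the unmixedness/dimension count; the grading step can also be bypassed, as in your alternative, since every maximal ideal of the affine domain $R$ has height $\dim R$.
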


\subsubsection{Irreducible components that meet $w(T^*(G/U))$}\label{Irreducible Components Meeting W Translates of Cotangent Bundle}
Write $X := \momentMapFromAFFINECLOSUREofCotangentSpaceWithGROUPT^{-1}(0)$. Since the open subset \labelcref{Inverse image of zero is irreducible for moment map not affinized} is irreducible, we see that there exists a unique irreducible component $Z_1$ of $X$ such that $Z_1 \cap T^*(G/U)$ is nonempty. Moreover, since $X$ is closed under the $W$-action, for any $w \in W$ the subscheme $Z_w := w(Z_1)$ clearly satisfies the following Lemma:

\begin{Lemma}\label{Definition of Zw}
There exists a unique irreducible component $Z_w$ of $X$ such that $Z_w \cap w(T^*(G/U))$ is nonempty.
\end{Lemma}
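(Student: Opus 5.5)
The statement reduces to the case $w = 1$ by transport of structure along the Gelfand--Graev action. So the plan is first to establish the existence and uniqueness of $Z_1$, and then to carry it to an arbitrary $w$.

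For $w=1$, note that $T^*(G/U)$ is open in $\affineClosureOfCotangentBundleofBasicAffineSpace$, so $X \cap T^*(G/U)$ is open in $X$. Since $\momentMapFromAFFINECLOSUREofCotangentSpaceWithGROUPT$ extends the moment map of $T^*(G/U)$, this open set is $\mu_T^{-1}(0) \cong G \times^U \LU$. The latter is a vector bundle over $G/U$, hence irreducible; this is the open subset \labelcref{Inverse image of zero is irreducible for moment map not affinized}.

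Now let $V$ be a nonempty irreducible open subset of a Noetherian scheme $X$. Its closure $\overline{V}$ is irreducible and hence lies in some irreducible component $Z_1$. Any irreducible component $Z$ meeting $V$ meets it in a nonempty open subset $Z \cap V$ of the irreducible space $V$, so $Z \cap V$ is dense in $V$. If $Z \neq \overline{V}$, then $Z \cap V$ would be contained in the union of the other components, which is a proper closed subset of $V$. That contradicts density unless $V \subseteq Z$, in which case $Z \supseteq \overline{V}$. More cleanly: every component meeting $V$ contains the generic point of $V$, and the generic point of an open subset lies in exactly one irreducible component when the scheme is Noetherian, since a point lying on two components cannot be a generic point of an open irreducible set. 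Either way, $Z_1 = \overline{V}$ is the unique component meeting $T^*(G/U)$.

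For general $w$, the Gelfand--Graev action of $W$ on $\affineClosureOfCotangentBundleofBasicAffineSpace$ consists of automorphisms for which $\momentMapFromAFFINECLOSUREofCotangentSpaceWithGROUPT$ is $W$-equivariant, as already used in the proof of \cref{Regular Dominant Weights are GIT Chamber}. Hence $w$ preserves $X = \momentMapFromAFFINECLOSUREofCotangentSpaceWithGROUPT^{-1}(0)$ and permutes its irreducible components. Applying the automorphism $w$ gives a bijection between components meeting $T^*(G/U)$ and components meeting $w(T^*(G/U))$. Therefore $Z_w := w(Z_1)$ is the unique component meeting $w(T^*(G/U))$.

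The only point needing care is the $W$-stability of $X$, that is, the equivariance of $\momentMapFromAFFINECLOSUREofCotangentSpaceWithGROUPT$ for the twisted $W$-action on $\LTd$. Since $0$ is $W$-fixed, it suffices that $w$ maps fibers to fibers, which is the equivariance already invoked earlier in the paper.
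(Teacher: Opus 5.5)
Your proposal is correct and follows the paper's argument. The open piece $X\cap T^*(G/U)=\mu_T^{-1}(0)\cong G\times^U\LU$ is irreducible, so exactly one component $Z_1$ meets it, and $Z_w := w(Z_1)$ works because the Gelfand--Graev action preserves $X=\overline{\mu}_T^{-1}(0)$. One small slip in your first topological argument: $Z\cap V$ is open in $Z$, not in $V$. It is this openness that puts the generic point of $Z$ inside $V$ and forces $Z=\overline{V}$, as in your second, cleaner version.
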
 

\begin{Proposition}
    For all $w, w' \in W$, we have $Z_w = Z_{w'}$.
\end{Proposition}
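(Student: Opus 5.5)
It suffices to show $Z_{s} = Z_1$ for every simple reflection $s = s_\alpha$, because then $Z_{ws} = w(Z_s) = w(Z_1) = Z_w$, and induction on length gives $Z_w = Z_1$ for all $w$. By \cref{Definition of Zw}, $Z_s = Z_1$ will follow once I exhibit a single point of $X$ that lies in $T^*(G/U) \cap s(T^*(G/U))$ and is a smooth point of $X$. A point lying on only one irreducible component suffices, and since $Z_1$ and $Z_s$ are the unique components meeting $T^*(G/U)$ and $s(T^*(G/U))$ respectively, such a point forces $Z_1 = Z_s$.

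To find the point, I would use the $\theta$-stable loci. Take $\theta$ regular with $\theta$ dominant and $s\theta$ also regular. By \cref{Theta Stable Points Equals Theta Semistable Points is Cotangent Bundle}, $T^*(G/U)$ is the $\theta$-stable locus and $s(T^*(G/U))$ is the $s\theta$-stable locus. A point stable for both is therefore exactly what is needed.

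Consider the point $x = (1,e)$ with $e \in \LU$ regular, as in the proof of \cref{There Are Zero-Stable Points in Moment Map Preimage}. It lies in $T^*(G/U) \cap X$, because its $T$-moment map value is the image of $e$ in $\LTd$, which is $0$. That proof shows its $T$-orbit is closed in all of $\affineClosureOfCotangentBundleofBasicAffineSpace$ with trivial stabilizer. It remains to check that $x$ is $\lambda$-semistable for every $\lambda$. If the $T$-orbit of $x$ is closed in the affine variety and $T$ acts freely on it, then the orbit is isomorphic to $T$. Restriction of functions to this closed orbit is surjective onto $\O(T)$, so for every character $\lambda$ there is a $T$-semi-invariant function of weight $\lambda$ that is nonzero at $x$. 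Hence $x$ is $\lambda$-stable for all $\lambda$, in particular for $s\theta$, and so $x \in s(T^*(G/U))$.

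Finally, $x$ must lie on a single component of $X$. Near $x$, the scheme $X$ is the open subset $\mu_T^{-1}(0) \subseteq T^*(G/U)$. Since $T$ acts freely on $G/U$, the map $\mu_T$ is smooth there, so $\mu_T^{-1}(0)$ is smooth and $x$ is a smooth point of $X$. (The irreducibility of this open subset was already used earlier, in defining $Z_1$.) Thus $x \in Z_1 \cap Z_s$ lies on a unique component, which gives $Z_1 = Z_s$.

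The main obstacle is justifying that $\lambda$-semistability holds for all $\lambda$ at a point with closed free orbit. I would argue this as above, via surjectivity of restriction to the closed orbit together with reductivity of $T$. Alternatively, I would cite that points over the regular locus of $\LGd$ are $\lambda$-stable for all $\lambda$ \cite[Proposition 5.1.4]{Gin}.
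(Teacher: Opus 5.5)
Your proof is correct, but it takes a different route from the paper's.

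\textbf{The paper's route.} The paper argues globally through the $G$-moment map. Using $X\sslash T\cong\mathcal{N}$ (Ginzburg--Riche), some irreducible component of $X$ must dominate $\mathcal{N}$ under $\overline{\mu}_G$. Any component other than $Z_1$ misses $T^*(G/U)$, so by \cref{Preimage of G Mod U Under Projection Is Its Cotangent Bundle} it lies over $\overline{G/U}\setminus G/U$. By \cite[Proposition 5.1.4]{Gin} it then maps into the irregular locus, so it cannot dominate. Since $\overline{\mu}_G$ is $W$-invariant, each $Z_w=w(Z_1)$ also dominates $\mathcal{N}$, which forces $Z_w=Z_1$.

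\textbf{Your route.} You argue pointwise with a single point. The point $x=(1,e)$ has a closed, free $T$-orbit in $\overline{T^*(G/U)}$. Restriction $\mathcal{O}(\overline{T^*(G/U)})\to\mathcal{O}(T\cdot x)\cong\mathcal{O}(T)$ is surjective, and taking $T$-isotypic components is exact. So every weight space contains a function nonvanishing at $x$. Hence $x$ is $\lambda$-stable for every $\lambda$, and by \cref{Theta Stable Points Equals Theta Semistable Points is Cotangent Bundle} it lies in every $w(T^*(G/U))$.

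\textbf{Comparison.} Your argument avoids the identification of $X\sslash T$ with $\mathcal{N}$ and the dominance argument. It also makes explicit the semistability of $(1,e)$, which the proof of \cref{There Are Zero-Stable Points in Moment Map Preimage} passes over silently. Both routes ultimately rest on \cite[Proposition 5.1.4]{Gin}: you use it for the closed orbit over a regular element, the paper for the irregularity of the complement. The paper's argument gives, as a by-product, that $Z_1$ is the only component of $X$ whose image under $\overline{\mu}_G$ is dense in $\mathcal{N}$.

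\textbf{Superfluous steps.} Two parts of your write-up can be dropped.
\begin{itemize}
\item Smoothness of $x$ is not needed. \cref{Definition of Zw} says $Z_w$ is the \emph{unique} component meeting $w(T^*(G/U))$, so any component through $x$ already equals both $Z_1$ and $Z_s$.
\item The reduction to simple reflections and the induction on length are not needed. Running your stability argument with $w\theta$ in place of $s\theta$ gives $x\in w(T^*(G/U))$ for every $w\in W$ at once.
\end{itemize}
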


\begin{proof}
    It obviously suffices to show this when $w' = 1$. Observe that \[X\sslash T \xrightarrow{\sim} \left( \affineClosureOfCotangentBundleofBasicAffineSpace\sslash T \right)\times_{\LTd} \{0\} = (\LGd \times_{\LGd\sslash G} \LTd) \times_{\LTd} \{0\} \cong \N\] since tori are reductive and via a result of Ginzburg-Riche \cite[Lemma 3.6]{GinzburgRicheDifferentialOperatorsOnBasicAffineSpaceandtheAffineGrassmannian} stated explicitly in \cite[Lemma 2.2]{GannonProofOftheGinzburgKazhdanConjecture}. Therefore the restriction of the moment map  \[\momentMapFromAFFINECLOSUREofCotangentSpaceWithGROUPG: X \to X\sslash T \subset \mathfrak{g}^* \]
    for the Hamiltonian $G$-action on $X$ is a dominant map onto an irreducible scheme. 
    
    We claim that $\momentMapFromAFFINECLOSUREofCotangentSpaceWithGROUPG |_{Z_1} \colon Z_1 \to X \sslash T$ is dominant. Indeed, since $\momentMapFromAFFINECLOSUREofCotangentSpaceWithGROUPG$ is $W$-equivariant, it suffices to show that $\momentMapFromAFFINECLOSUREofCotangentSpaceWithGROUPG |_{Z_w} \colon Z_w \to X \sslash T$ is dominant for some $w \in W$. Since $\momentMapFromAFFINECLOSUREofCotangentSpaceWithGROUPG$ is dominant, there must exist some irreducible component $Z \subseteq X$ such that $\momentMapFromAFFINECLOSUREofCotangentSpaceWithGROUPG |_{Z}$ is dominant. If $Z \neq Z_1$ then $Z\cap T^*(G/U) = \emptyset$ by \cref{Definition of Zw}. By \cref{Preimage of G Mod U Under Projection Is Its Cotangent Bundle}, we therefore see that $\projectionFromAffineClosureofCotangentBundleToAffineClosureofSpace(Z)$ lies in the complement of $G/U$. Therefore by \cite[Proposition 5.1.4]{Gin} we obtain that $\momentMapFromAFFINECLOSUREofCotangentSpaceWithGROUPG(Z)$ factors through the irregular locus of $\LGd$, and thus the restriction $\momentMapFromAFFINECLOSUREofCotangentSpaceWithGROUPG |_Z \colon Z \to  X \sslash T = \N$ cannot be dominant. Therefore $Z = Z_1$. Now, by $W$-equivariance, since $\momentMapFromAFFINECLOSUREofCotangentSpaceWithGROUPG: Z_1 \to \N$ is dominant, $\momentMapFromAFFINECLOSUREofCotangentSpaceWithGROUPG: Z_w \to \N$ is dominant. Thus, by our above discussion, $Z_w = Z_1$, as desired.
\end{proof}

We hereafter omit the subscript and let $Z := Z_1$. Thus, by \cref{Definition of Zw}, $Z$ is the unique irreducible component of $\momentMapFromAFFINECLOSUREofCotangentSpaceWithGROUPT^{-1}(0)$ such that $Z \cap w(T^*(G/U))$ is non-empty for some (equivalently, any) $w \in W$.

\subsubsection{Proof of Irreducibility}
\label{Proof of Irreduciblility}
Assume that there is an irreducible component $V$ of $X$ other than $Z$. 

\begin{Lemma}\label{For extra irreducible component weight cone is small}
    The perpendicular $\omega(V)^{\perp}$ to the weight cone contains some non-zero $\gamma \in X_{\bullet}(T)$.
\end{Lemma}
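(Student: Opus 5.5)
Since $V \neq Z$, \cref{Definition of Zw} together with the preceding Proposition shows that $V$ meets no $w(T^*(G/U))$. By \cref{Theta Stable Points Equals Theta Semistable Points is Cotangent Bundle}, for every regular $\theta \in \characterlatticeforT$ the $\theta$-stable locus of $\affineClosureOfCotangentBundleofBasicAffineSpace$ is $w(T^*(G/U))$, with $w^{-1}\theta$ dominant. Moreover, by \cref{Free Action If Weight Has No Coroot Vanishing} and \cref{Orbit of Theta Semistable Point is Closed}, for regular $\theta$ the semistable and stable loci coincide. Hence $V \cap \affineClosureOfCotangentBundleofBasicAffineSpace^{\theta\mathrm{-ss}} = \emptyset$ for every regular $\theta$. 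Since semistability on the closed $T$-stable subscheme $V$ is computed by restricting semivariant functions from $\affineClosureOfCotangentBundleofBasicAffineSpace$, this gives $V^{\theta\mathrm{-ss}} = \emptyset$ for all regular $\theta$. Equivalently, $\O(V)_{n\theta}$ consists of nilpotent elements for every $n>0$; as $V$ is integral with its reduced structure, $\O(V)_{n\theta}=0$. So no regular character is effective for $V$.

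The regular characters are the complement of the finitely many root hyperplanes $\ker(\alpha^{\vee})$, so they are dense in $\characterlatticeforT_{\R}$. The weight cone $\omega(V)$ is a rational polyhedral cone. If $\omega(V)$ were full dimensional, its interior would be a nonempty open set, hence would contain a regular integral point $\theta$. Such a $\theta$ is a positive rational combination of weights appearing in $\O(V)$. Because $\O(V)$ is a domain, products of nonzero weight vectors are nonzero, so some multiple $n\theta$ would have $\O(V)_{n\theta}\neq 0$. This contradicts the previous paragraph. Therefore $\omega(V)$ is not full dimensional: its linear span is a proper rational subspace of $\characterlatticeforT_{\Q}$.

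A proper rational subspace is annihilated by a nonzero rational cocharacter. Clearing denominators via \labelcref{cocharacterlatticeforT tensors to give Lie Algebra} gives a nonzero $\gamma \in X_{\bullet}(T)$ with $\langle \gamma, \omega(V)\rangle = 0$, as required.

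The main obstacle is the step from ``$\O(V)_{n\theta}$ vanishes for regular $\theta$'' to ``interior points of $\omega(V)$ carry nonzero semivariants''. This relies on $V$ being integral, which holds because we give the irreducible component its reduced structure. One must also check that semistability of points of $V$ inside $X$ agrees with semistability inside $\affineClosureOfCotangentBundleofBasicAffineSpace$. This is exactly the restriction statement used in \cref{Hamiltonian Reduction at Dominant Regular Level for Affine Closure of Cotangent Bundle}. It holds because $T$ is linearly reductive, so semivariants on $V$ lift (after taking powers) to semivariants on the ambient affine scheme.
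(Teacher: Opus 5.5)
Your proof is correct and follows essentially the same route as the paper. Both arguments show that no regular character is effective on $V$, because for regular $\theta$ the $\theta$-semistable locus of $\overline{T^*(G/U)}$ lies in $w(T^*(G/U))$, which $V$ does not meet, and then conclude that $\omega(V)$ cannot be full-dimensional. The differences are minor: you obtain the description of the regular semistable locus from the paper's own free-action lemma and stable-locus corollary instead of citing Proposition~5.10 of \cite{GannonProofOftheGinzburgKazhdanConjecture}, and you make explicit two points the paper glosses over, namely the use of integrality of $V$ and the lifting of semi-invariants along $\mathcal{O}(\overline{T^*(G/U)}) \twoheadrightarrow \mathcal{O}(V)$ (for a torus no powers are needed, since weight spaces surject).
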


\begin{proof}
First, we claim that the weight cone $\omega(V)$ does not contain any regular weight. Indeed, if $\lambda \in \omega(V)$ for some regular weight $\lambda$, then for some $f \in \O(V)_{\lambda}$, $D(f) \cap V$ is a nonempty open subset of $D(f) \cap \affineClosureOfCotangentBundleofBasicAffineSpace$. Let $w \in W$ denote the unique element such that $w^{-1}\lambda$ is dominant. By \cite[Proposition 5.10]{GannonProofOftheGinzburgKazhdanConjecture}, $D(f) \cap \affineClosureOfCotangentBundleofBasicAffineSpace \subseteq w(T^*(G/U))$. Therefore $D(f) \cap V$ is a nonempty open subset of $w(T^*(G/U))$. Thus, by \cref{Definition of Zw}, $V = Z$, which contradicts our choice of $V$. Thus the weight cone cannot span all of $\characterlatticeforT_{\Q}$. Since $\omega(V)$ is closed under scaling, our claim follows. 
\end{proof}

Assume now that the kernel of the action map of $T$ on $V$ is $K$, and let $T' := T/K$. By \cref{Elements in Interior of GIT Cone Are Generic}, since $V$ is irreducible, there exists $\lambda \in X^{\bullet}(T') \subset X^{\bullet}(T)$ such that $V$ contains a $\lambda$-stable point.\footnote{Note that this point is not stable with regards to $T$ since $T \neq T'$ by \cref{For extra irreducible component weight cone is small}.} Using the $W$-action, we may assume $\lambda$ is dominant.

Now consider $V \sslash_{\lambda} T' = V \sslash_{\lambda} T$. Since there is a $\lambda$-stable point,
\begin{equation}\label{Dimension is too large for GIT reasons}
\dim V \sslash_{\lambda} T' = \dim V - \dim T'  = \dim G - \dim T' > \dim G - \dim T
\end{equation} 
by \cref{Momemt map inverse is CM and pure dimension of G}, and \cref{For extra irreducible component weight cone is small} (which implies $K$ has dimension at least one) respectively.

On the other hand, we have a quotient map 
\[
R' := \bigoplus_n \mathcal{O}(X)_{n \lambda} \longrightarrow \bigoplus_n \mathcal{O}(V)_{n \lambda} =: R''
\]
and we know that $\dim \Proj \, R' = \dim \N$ since $\Proj\, R' \cong \tcN^P$ by \cref{Generically Isomorphisms Are What We Want}, so $\dim R' = \dim \N + 1$. Thus $\dim R'' \leq \dim \N + 1$, and so \begin{equation}\label{dimension is less than nilpotent cone}\dim V\sslash_{\lambda} T' \leq \dim \N = \dim G - \dim T
\end{equation} as well. Combining \labelcref{Dimension is too large for GIT reasons} and \labelcref{dimension is less than nilpotent cone}, we obtain a contradiction. Therefore $Z = V$ and $\momentMapFromAFFINECLOSUREofCotangentSpaceWithGROUPT^{-1}(0)$ is irreducible.

Finally, we show that $\momentMapFromAFFINECLOSUREofCotangentSpaceWithGROUPT^{-1}(0)$ is reduced. Observe that, since $\momentMapFromAFFINECLOSUREofCotangentSpaceWithGROUPT^{-1}(0)$ is Cohen-Macaulay by \cref{Momemt map inverse is CM and pure dimension of G}, $\momentMapFromAFFINECLOSUREofCotangentSpaceWithGROUPT^{-1}(0)$ has property $(S_1)$. Moreover, the ring is generically reduced since the variety 
\begin{equation}\label{Inverse image of zero is irreducible for moment map not affinized}G \times^U (\LG / \LB)^* \xrightarrow{\sim} (G \times^U (\LG / \LU)^*) \times_{\LT^*} \{0\} = \mu_T^{-1}(0) = \momentMapFromAFFINECLOSUREofCotangentSpaceWithGROUPT^{-1}(0) \cap T^*(G/U)\end{equation} is an open subset of $\momentMapFromAFFINECLOSUREofCotangentSpaceWithGROUPT^{-1}(0)$. Therefore, by \cite[Lemma 28.12.4, Tag 0344]{StacksProject}, we obtain the following, which completes the proof of \cref{Preimage of affinized moment map is integral}:

\begin{Corollary}
    The scheme $\momentMapFromAFFINECLOSUREofCotangentSpaceWithGROUPT^{-1}(0)$ is reduced.
\end{Corollary}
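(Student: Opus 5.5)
The statement is that $\momentMapFromAFFINECLOSUREofCotangentSpaceWithGROUPT^{-1}(0)$ is reduced. The plan is to use Serre's criterion in the form ``$(R_0)$ plus $(S_1)$ implies reduced'' (\cite[Lemma 28.12.4, Tag 0344]{StacksProject}), which applies to any Noetherian scheme. The two inputs are that $X := \momentMapFromAFFINECLOSUREofCotangentSpaceWithGROUPT^{-1}(0)$ is $(S_1)$ and that $X$ is generically reduced.

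\textbf{The $(S_1)$ property.} By \cref{Momemt map inverse is CM and pure dimension of G}, $X$ is Cohen--Macaulay. Every local ring is therefore $(S_k)$ for all $k$, in particular $(S_1)$. Concretely, $X$ has no embedded associated points: every associated point of $X$ is the generic point of an irreducible component.

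\textbf{Generic reducedness.} The preceding subsection shows that $X$ is irreducible, so it has a single generic point $\eta$, and it suffices to show that $\O_{X,\eta}$ is reduced. Since $T^*(G/U)$ is open in $\affineClosureOfCotangentBundleofBasicAffineSpace$, the intersection $X \cap T^*(G/U)$ is open in $X$. It is nonempty because it contains the zero section, and it equals $\mu_T^{-1}(0)$, because the affinized moment map restricts to the honest moment map on $T^*(G/U)$.

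I would identify $\mu_T^{-1}(0)$ as follows. Use $T^*(G/U) \cong G\times^U(\LG/\LU)^*$ with $\mu_T$ given by the projection $(\LG/\LU)^* \to \LT^*$. This map is a linear surjection, so its scheme-theoretic fiber over $0$ is the vector subspace $(\LG/\LB)^*$, which is reduced. Hence $\mu_T^{-1}(0) \cong G\times^U(\LG/\LB)^*$ is smooth, in particular reduced. This nonempty open subset contains $\eta$, so $\O_{X,\eta}$ is a field.

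\textbf{Conclusion and main obstacle.} Any nilpotent section of $\O_X$ would have to be supported at an associated point. The only associated point is $\eta$, and the stalk there is reduced, so the nilradical vanishes. The one step requiring care is the scheme-theoretic (not merely set-theoretic) identification of $X\cap T^*(G/U)$ with the smooth fiber $\mu_T^{-1}(0)$. This holds because $X$ is defined by the ideal generated by $\Symt_+$ via $\momentMapFromAFFINECLOSUREofCotangentSpaceWithGROUPT^*$, and that ideal restricts on the open $T^*(G/U)$ to the ideal generated by the components of $\mu_T$.
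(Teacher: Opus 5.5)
Your proposal is correct and follows the paper's own argument essentially verbatim. It obtains $(S_1)$ from the Cohen--Macaulay property and generic reducedness from the smooth open subset $\mu_T^{-1}(0) \cong G \times^U (\LG/\LB)^*$, which contains the generic point because of the irreducibility proved just before, and then applies \cite[Lemma 28.12.4, Tag 0344]{StacksProject}. The paper also uses implicitly that $\momentMapFromAFFINECLOSUREofCotangentSpaceWithGROUPT^{-1}(0)\cap T^*(G/U) = \mu_T^{-1}(0)$ holds scheme-theoretically and not just set-theoretically; your justification of this point is sound.
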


\subsection{Proof of Main Theorem}
In this section, we complete the proof of \cref{Intro Main Theorem}. We first state an equivalent formulation:

\begin{Theorem}\label{Copy of BCS Theorem in Our Setup}
Let $C \subseteq \characterlatticeforT_{\R}$ denote the set of dominant weights. \begin{enumerate}
    \item The linearization map $\lambda \mapsto \L_{\lambda}^{\tcN}$ induces an isomorphism \[\characterlatticeforT \xrightarrow{\sim} \mathrm{Pic}(\tcN)\] of abelian groups that, after realization, identifies the wall-and-chamber decomposition of $C$ with the Mori fan $\mathrm{Mov}(\tcN)$.
    \item Assume $\lambda$ is a dominant weight, and let $P$ denote the parabolic subgroup corresponding to those coroots $\alpha^{\vee}$ for which $S_P := \{\alpha^{\vee} \in \Pi^{\vee} : \lambda(\alpha^{\vee}) = 0\}$. Then $\momentMapFromAFFINECLOSUREofCotangentSpaceWithGROUPT^{-1}(0)\sslash_{\lambda} T \cong \tcN^{P}$.
    \item The variety $\tcN$ is a Mori dream space over $\mathcal{N}$.  
\end{enumerate}

In particular, every small birational model of $\tcN$ has the form $Y_{\theta}$ for some dominant weight $\theta$.
\end{Theorem}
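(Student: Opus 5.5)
The plan is to assemble Theorem~\ref{Copy of BCS Theorem in Our Setup} almost entirely from results already established, treating the three parts in the order (2), (1), (3).

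For (2), I would take the dominant weight $\lambda$ and the parabolic $P$ with $S_P = \{\alpha^{\vee} : \lambda(\alpha^{\vee}) = 0\}$. Then \cref{Generically Isomorphisms Are What We Want} supplies the isomorphism $\zeta_\lambda \colon \tcN^P \xrightarrow{\sim} Y_\lambda = \momentMapFromAFFINECLOSUREofCotangentSpaceWithGROUPT^{-1}(0)\sslash_\lambda T$. Nothing further is needed, except to note that the reduced/integral structure of $\momentMapFromAFFINECLOSUREofCotangentSpaceWithGROUPT^{-1}(0)$ from \cref{Preimage of affinized moment map is integral} guarantees that $Y_\lambda$ is the variety appearing in the GIT formalism of Section~4.

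For (1), the group isomorphism $\characterlatticeforT \xrightarrow{\sim} \Pic(\tcN)$ is \cref{Picard Group of Vector BUndles on Partial Flag Variety}(1), since $\tcN$ is a vector bundle over $G/B$. Because $\Pic(\mathcal{N}) = 0$, this is also $\Pic(\tcN/\mathcal{N})$.

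Next I would check that this isomorphism agrees with the descent map $L$ for a regular dominant $\theta$. By \cref{cor:stthetaequalset}, $\mathrm{St}(\theta)^\perp = \characterlatticeforT$. Comparing $\pi_\theta^* L(\sigma) = \O \otimes \sigma$ with the identification $\zeta_\theta$ of \cref{Hamiltonian Reduction at Dominant Regular Level for Affine Closure of Cotangent Bundle} shows $L(\sigma) = \L^{\tcN}_{\sigma}$, up to the sign convention fixed in \cref{Line Bundle Notation Subsection}. This sign/convention check is the step I expect to be fiddliest. The cleanest route is probably \cref{lem:pullbackOm} combined with the proof of \cref{Connection of Hamiltonian Reductions and Resolutions of Nilpotent Cones}, which identifies $\O(\momentMapFromAFFINECLOSUREofCotangentSpaceWithGROUPT^{-1}(0))_{n\lambda}$ with $\Gamma(\tcN, \L^{\tcN}_{n\lambda})$. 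Knowing $L$ on the dominant cone then forces it everywhere by linearity.

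With $L$ identified, I would apply \cref{cor:MoveconeGIT}. Its hypotheses are:
\begin{enumerate}
\item $Y_\theta \cong \tcN$ is smooth, hence normal and $\Q$-factorial.
\item $\xi^\theta_0$ is birational, by \cref{tcNP is normal}(3) together with \cref{Generically Isomorphisms Are What We Want}.
\item For each $\lambda$ on the boundary of $C$, the map $\xi^\theta_\lambda = \zeta_\lambda \circ \nu^{P_\lambda} \circ \zeta_\theta^{-1}$ contracts a divisor. This follows from \cref{Divisors of tcN and Which Are Contracted by nuP}, since $S_{P_\lambda} \neq \emptyset$.
\end{enumerate}
The corollary then gives $L(C) = \mathrm{Nef} = \Mov(\tcN/\mathcal{N})$.

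To match the fans, I would use that the faces of $C$ are exactly the GIT cones of \cref{Wall and Chamber Structure for T action on affineClosureOfCotangentBundleofBasicAffineSpace} lying in $C$. For each face with relative interior point $\lambda$, the map $\xi^\theta_\lambda$ realizes the contraction $\nu^{P_\lambda}$, and $\L_\lambda$ is its pullback of an ample class by \cref{Appropriate Bundles Are Ample or Basepoint Free} and \cref{isotildeNP}. Distinct faces give distinct $P_\lambda$, hence non-isomorphic contractions over $\tcN$. This is exactly the Mori fan structure.

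For (3), I would cite the earlier Proposition that $\tcN^P \to \mathcal{N}$ is a relative Mori dream space, specialized to $P = B$, or alternatively Namikawa.

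For the final sentence, I would argue that a small birational model of $\tcN$ corresponds to a chamber of $\Mov(\tcN/\mathcal{N})$. Since $\Mov = \mathrm{Nef}$ is the single chamber $C$, every small model is $\tcN \cong Y_\theta$ for a regular dominant $\theta$.
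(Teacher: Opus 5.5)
Your proposal is correct, but for part (1) and the closing sentence it uses a different key result than the paper. The paper's proof does not invoke \cref{cor:MoveconeGIT}. It checks three hypotheses: normality of $Y_\theta\cong\tcN$; that $C$ is a GIT region; and that each wall of $C$ gives a divisorial contraction, identified with some $\nu^{P}$ via \cref{Generically Isomorphisms Are What We Want} and \cref{Quotient Map is Divisorial Contraction}. It then lets \cite[Theorem 1.1]{BellamyCrawSchedlerBirationalGeometryofQuiverVarietiesandOtherGITQuotients} output, in one stroke, the identification of $C$ with the Mori fan of $\Mov(\tcN/\mathcal{N})$, the Mori dream space property, and the description of small models. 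You instead apply the paper's internal \cref{cor:MoveconeGIT}, exactly as the paper later does for \cref{Movable Cone of tcnP}, to get $L(C)=\mathrm{Nef}(\tcN/\mathcal{N})=\Mov(\tcN/\mathcal{N})$. You take (2) directly from \cref{Generically Isomorphisms Are What We Want}, import (3), and then read off the fan identification and the small-model statement from standard Mori dream space theory \cite{HuKeel}: faces of $\mathrm{Nef}$ correspond to contractions, and chambers of $\Mov$ to small $\mathbb{Q}$-factorial modifications.

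Your route is self-contained within the paper. It also makes explicit that the descent map $L$ agrees with $\lambda\mapsto\L^{\tcN}_\lambda$, with the sign fixed automatically by passing through \cref{lem:pullbackOm} and the isomorphisms $\zeta_\lambda$. The paper assumes this compatibility tacitly when it cites \cref{Picard Group of Vector BUndles on Partial Flag Variety} to verify the linearization hypothesis of the BCS theorem. The paper's route, by contrast, obtains (3) and the small-model statement as outputs of a single theorem. You need (3) as an external input and Hu--Keel-type results to turn $\Mov=\mathrm{Nef}$ into the fan and small-model claims.

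Two small corrections. First, citing the earlier Proposition at $P=B$ for (3) is circular, since that Proposition takes the $P=B$ case as its input from \cite{NamikawaPoissonDeformationsAndBirationalGeometry}; Namikawa is your actual source. Second, the claim that distinct faces give non-isomorphic contractions should be justified by \cref{Divisors of tcN and Which Are Contracted by nuP}, which shows that $\nu^{P}$ contracts $G\times^B\LU_\alpha$ exactly when $\alpha\in S_P$.
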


\begin{Remark}\label{Partial Crepant Resolutions Are Fces of Mori Fan}
It is well known (see, for example, \cite[Corollary 4.10]{SanMiguelMalaneyPartialResolutionsOfAffineSymplecticSingularities}\footnote{See also the remark just after Lemma 4.5, where \lq partial resolutions\rq{} are defined to be partial symplectic resolutions and thus by \cite[Theorem 4.4]{SanMiguelMalaneyPartialResolutionsOfAffineSymplecticSingularities} are crepant.}) that crepant partial resolutions of $\mathcal{N}$ are in bijective correspondence with the faces of the Mori fan of $\mathrm{Mov}(Y_{\theta}/X)$. 
Therefore, \cref{Copy of BCS Theorem in Our Setup} implies \cref{Intro Main Theorem}. 
\end{Remark}
\begin{proof}[Proof of \cref{Copy of BCS Theorem in Our Setup}]
Recall that we have argued that the linearization map $\characterlatticeforT \to \Pic(\tcN)$ is an isomorphism in \cref{Picard Group of Vector BUndles on Partial Flag Variety}. Let $C^{\circ}$ be the interior of $C$ (consisting of regular dominant weights). As we have seen in \cref{Regular Dominant Weights are GIT Chamber}, the set $C^{\circ}$ is a GIT chamber for the action of $T$ on $\momentMapFromAFFINECLOSUREofCotangentSpaceWithGROUPT^{-1}(0)$. Therefore, by \cite[Theorem 1.1]{BellamyCrawSchedlerBirationalGeometryofQuiverVarietiesandOtherGITQuotients}, it suffices to show the following three claims: \begin{enumerate}
    \item[(A)] The variety $Y_{\theta}$ is normal.
    \item[(B)] The closure $C$ of the GIT chamber $C^{\circ}$ is in fact a \textit{GIT region} in the sense of \cite[Definition 3.1]{BellamyCrawSchedlerBirationalGeometryofQuiverVarietiesandOtherGITQuotients} so that, in particular, its interior $C^{\circ}$ contains no walls.
    \item[(C)] For any $\theta \in C$ and $\lambda$ lying on a wall of $C$, the variation of GIT quotient map $\xi^{\lambda}_{\theta}$ is a divisorial contraction.
\end{enumerate}

Since we have identified $Y_{\theta}$ with the smooth variety $\tcN$ in \cref{Hamiltonian Reduction at Dominant Regular Level for Affine Closure of Cotangent Bundle}, we see that $Y_{\theta}$ is normal, proving (A). To show (B) we observe that, unwinding the definition of GIT region, we see that it suffices to show that, for every character $\lambda$ which is generic in the wall of $C$, the variation of GIT quotient map $\xi^{\lambda}_{\theta}: Y_{\theta} \to Y_{\lambda}$ is not a small contraction. In \cref{Generically Isomorphisms Are What We Want}, we identified $\xi^{\lambda}_\theta$ with the map $\nu^{P'}$, which is birational by Lemma~\ref{tcNP is normal}(3). It therefore suffices to show that $\nu^{P'}$ contracts at least one divisor. We have shown this in \cref{Quotient Map is Divisorial Contraction}. This shows $C$ is a GIT region, proving (B), and also proves (C).
\end{proof}

\section{Proof of \cref{Intro Main Theorem2}} \label{Proof of Analogue Section}
After proving some preliminary results, we prove \cref{Intro Main Theorem2} in \cref{Final Proof Subsection}. 
\subsection{Geometry of $\LG^* \times_{\LT^*\sslash W} \LT^*$} Let $\tgP := G \times^P (\LG / \LU_P)^*$. This scheme admits a map to $\LL^*\sslash L$ induced by the composite \[G \times (\LG / \LU_P)^* \to (\LG / \LU_P)^* \to \LL^* \to \LL^*\sslash L\] of the projection map and the quotient maps.

We let $\tgTHISBaseChanged{P} := \tgP \times_{\LL^*\sslash L} \LT^*$ denote the base change of $\tgP$ along this map. Observe that if $P \subseteq P'$ are parabolic subgroups, then we have a canonical map $\tilde{\nu}_P^{P'}: \tgTHISBaseChanged{P} \to \tgTHISBaseChanged{P'}$.

\begin{Proposition}\label{Geometry of tgTHISBaseChanged New}
For any parabolic $P$, the scheme $\tgTHISBaseChanged{P}$ is integral, normal, and has dimension $\dim(\LG)$. Moreover, the maps $\tilde{\nu}_P^{P'}$ are proper and birational for any parabolic subgroups $P \subseteq P'$.
\end{Proposition}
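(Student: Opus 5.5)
We will mirror the proof of \cref{tcNP is normal}, replacing $\LP \cap \N$ by the fibre product $\LP \times_{\LL\sslash L} \LT$ (identifying $(\LG/\LU_P)^* \cong \LP$ via the fixed invariant form). Since the map $\tgP \to \LL^*\sslash L$ is induced from the $P$-invariant map $\LP \to \LL \to \LL\sslash L$, base change commutes with the associated bundle construction, so
\[
\tgTHISBaseChanged{P} \cong G \times^P \left(\LP \times_{\LL\sslash L} \LT\right) \cong G \times^P \left(\LU_P \times (\LL \times_{\LL\sslash L} \LT)\right).
\]
Here we use the decomposition $\LP = \LL \oplus \LU_P$ and the fact, established in the proof of \cref{Integrality of Intersection}, that $\LP \to \LL\sslash L$ factors through the projection to $\LL$. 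Over $G$-translates of the big cell $\overline{U}_P$, $\tgTHISBaseChanged{P}$ is therefore isomorphic to $\overline{U}_P \times \LU_P \times (\LL \times_{\LL\sslash L}\LT)$. Consequently integrality, normality and dimension all reduce to the corresponding statements for the reductive group $L$ and the scheme $\LL \times_{\LL\sslash L}\LT$.

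For this scheme we will invoke \cite[Lemma~14]{BorhoBrylinski2}, already used above for $\LG$: applied to $L$, it gives that $\LL \times_{\LL\sslash L} \LT$ is normal. It is also irreducible, of dimension $\dim \LL$. This follows because it is a complete intersection, flat over $\LT$ (the map $\LL \to \LL\sslash L$ is flat by Kostant applied to $L$), and generically reduced and irreducible over the regular semisimple locus, where it is a $W_L$-Galois cover that is trivialized to $\LL^{rs}\times_{\LT/W_L}\LT \cong L\times^T \LT^{rs}$. Taking the product with the smooth irreducible factors then shows that $\tgTHISBaseChanged{P}$ is integral and normal, with dimension
\[
\dim G/P + \dim \LU_P + \dim \LL = \dim \LG .
\]
The reductive-but-not-semisimple issue is harmless here, since the center of $\LL$ splits off as a product factor on both sides.

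For properness, we will factor $\tilde\nu_P^{P'}$ through a closed embedding. Exactly as in \cref{tcNP is normal}(1), the map
\[
\tgTHISBaseChanged{P} \hookrightarrow G/P \times \LG \times \LT
\]
given by (projection, action, map to $\LT$) is a closed embedding, which is checked on the big cell. Hence $\tgTHISBaseChanged{P}$ maps properly to $\LG\times\LT$, and this map factors through $\LGd \times_{\LTd\sslash W}\LTd$. The cancellation argument via the relative diagonal used in \cref{tcNP is normal}(3) then shows that $\tilde\nu_P^{P'}$ is proper.

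For birationality, we will show that each $\tgTHISBaseChanged{P} \to \LGd \times_{\LTd\sslash W}\LTd$ is an isomorphism over the regular semisimple locus. Over a regular semisimple $(x,t)$, a point of the fibre corresponds to a parabolic $gPg^{-1}\ni x$ together with the compatible datum in $\LT$. Using the Levi-type description, this datum is a choice of the $W$-ordering of the eigenvalues matching $t$, and there is exactly one such choice. Since all spaces involved are integral of the same dimension $\dim\LG$ and the maps are compatible, $\tilde\nu_P^{P'}$ is birational by the two-out-of-three argument for function fields used in \cref{tcNP is normal}(3). I expect the main obstacle to be this generic-isomorphism claim, specifically verifying that the fibre over a regular semisimple point is a single reduced point rather than $|W_L|$ points. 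The base change along $\LT \to \LL\sslash L$ is exactly what should rigidify the $W_L$-ambiguity, and I would check this carefully on the open cell $L\times^T\LT^{rs}$.
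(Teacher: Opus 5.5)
Your proposal is correct. For integrality, normality and properness it is essentially the paper's argument: the paper also rewrites $V_P$ as $G\times^P\bigl((\LG/\LU_P)^*\times_{\LL^*\sslash L}\LT^*\bigr)$. It also uses the big cell and $\LP\cong\LU_P\times\LL$ to reduce to $\LL^*\times_{\LL^*\sslash L}\LT^*$, and cites \cite[Lemma~14]{BorhoBrylinski2}, for integrality as well as normality. Your argument for irreducibility (flatness, complete intersection, generic reducedness) is a self-contained substitute for the integrality half of that citation. Properness is handled in parallel with \cref{tcNP is normal} in both versions.

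The routes differ for dimension and birationality.

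\textbf{Dimension.} The paper gets $\dim V_P=\dim\LG$ from the fact that $V_P\to\tgP$ is a base change of the finite flat map $\LT^*\to\LL^*\sslash L$. You read it off the local product structure, which is equally quick.

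\textbf{Birationality.} The paper proves only the case $\tilde\nu_B^G$, quoting \cite[Proposition~3.1.36]{ChrissGinzburgRepresentationTheoryandComplexGeometry}. It then propagates to all $P\subseteq P'$ by the function-field argument of \cref{tcNP is normal}(3). You instead show directly that every $V_P\to\mathcal{X}'$ is an isomorphism over the regular semisimple locus. This is self-contained and makes the two-out-of-three step unnecessary, at the price of the fibre count you flag. That count does go through:
\begin{itemize}
\item Write $x=\mathrm{Ad}(g)t$ with $t\in\LT$ regular.
\item The parabolics of type $P$ containing $x$ are those containing the Cartan $\mathrm{Ad}(g)\LT$, i.e.\ the points $g\dot wP$ for $w\in W/W_L$.
\item For the representative $g\dot w$, the $\LL$-component is $w^{-1}t$. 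This agrees with $t$ modulo $W_L$ only if $w\in W_L$, so the fibre is a single point.
\end{itemize}
Reducedness is then automatic in characteristic zero. The image is closed and contains the dense regular semisimple locus, so the map is dominant and generically finite. Its degree equals the number of points in a general fibre, namely one. Alternatively, $V_P$ is smooth over that locus, because the $\LL$-component of a regular semisimple element of $\LP$ is regular semisimple in $\LL$; Zariski's main theorem then applies.

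Two small wording points. First, the cover $\LL^{\mathrm{rs}}\times_{\LT/W_L}\LT\to\LL^{\mathrm{rs}}$ is connected, not trivialized. What you actually use is that it is isomorphic to $L/T\times\LT^{\mathrm{rs}}$, hence irreducible. Second, the factorization of $\LP\to\LL\sslash L$ through $\LL$ holds by definition. What the proof of \cref{Integrality of Intersection} really supplies is the identity $\chi(\xi)=\chi(\xi_{\LL})$ for the quotient map $\chi\colon\LG\to\LG\sslash G$. You need this to see that $V_P\to\LG\times\LT$ lands in $\mathcal{X}'$.
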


\begin{proof}
    First, observe that there is a canonical isomorphism 
    \[
    G \times^P ((\LG / \LU_P)^* \times_{\LL^*\sslash L} \LT^*) \xrightarrow{\sim} \tgP \times_{\LL^*\sslash L} \LT^*.
    \]
    Therefore it suffices to show that the scheme $G \times^P ((\LG / \LU_P)^* \times_{\LL^*\sslash L} \LT^*)$ is integral and normal. By the Bruhat decomposition, it suffices to show that the scheme $(\LG / \LU_P)^* \times_{\LL^*\sslash L} \LT^*$ is normal and integral. Since $\LP \cong \LU_P \times \LL$, it suffices to show that the scheme $\LL^* \times_{\LL^*\sslash L} \LT^*$ is normal and integral. This is precisely \cite[Lemma~14]{BorhoBrylinski2}.
    Finally, it is not difficult to check that $\tgP$ itself has dimension $\mathrm{dim}(\LG)$ and, by base change, the map $\tgP \times_{\LL\sslash L} \LT^* \to \tgP$ is finite flat, so our claim follows.

    The fact that $\tilde{\nu}_P^{P'}$ is proper and birational follows in a completely parallel fashion to the proof of \cref{tcNP is normal}: the fact that the map $\tilde{\nu}_B^G$ is birational can be deduced from \cite[Proposition 3.1.36]{ChrissGinzburgRepresentationTheoryandComplexGeometry}.
\end{proof}

\subsection{Gelfand-Graev action and the Namikawa Weyl group}\label{Gelfand Graev and Namikawa Weyl Subsection} Using results of Namikawa, we can also show that any $\Q$-factorial terminalization of the variety $\LGd \times_{\LGd\sslash G} \LTd$ is obtained by a variation of GIT quotient for the $T$-action on $\affineClosureOfCotangentBundleofBasicAffineSpace$, and that, informally speaking, each of these resolutions are related through the Gelfand-Graev $W$-action on $\affineClosureOfCotangentBundleofBasicAffineSpace$. 
We first recall the following results of Namikawa, which we also use to set notation:
\begin{Theorem}\label{Summary of Namikawa Results New}Let $p: Y \to X$ denote a symplectic resolution of a variety $X$ with conical symplectic singularities.
\begin{enumerate}
    \item \cite{NamikawaPoissonDeformationsOfAffineSymplecticVarieties}, \cite{NamikawaPoissonDeformationsOfAffineSymplecticVarietiesII} (See also \cite[Section 2]{LosevDeformationsOfSymplecticSingularitiesAndOrbitMethodForSemisimpleLieAlgebras}) There are universal conic Poisson deformations $\mathcal{Y}$ and $\mathcal{X}$ of $Y$ and $X$, and we may identify the base of $\mathcal{Y}$ with $H^2(Y, \mathbb{C})$. Moreover, there exists a (unique) action of $W$ on $H^2(Y, \mathbb{C})$ such that the base of $\mathcal{X}$ identifies with $H^2(Y,\mathbb{C})/ W$ and such that if $\mathcal{X}' := \Spec (\Gamma(\mathcal{Y},\O_{\mathcal{Y}}))$ then $\mathcal{X}' \xrightarrow{\sim} \mathcal{X} \times_{H^2(Y,\mathbb{C}) / W} H^2(Y,\mathbb{C})$.
    \item \cite{NamikawaPoissonDeformationsOfAffineSymplecticVarietiesII} In the above notation, if $p$ is the Springer resolution, there is a $W$-equivariant isomorphism $\mathfrak{t}^* \xrightarrow{\sim} H^2(Y,\mathbb{C})$ which lifts to an isomorphism of $\widetilde{\LG} \to \mathfrak{t}^*$ with $\mathcal{Y} \to H^2(Y, \mathbb{C})$.
    \item \cite[Corollary 7]{NamikawaPoissonDeformationsAndBirationalGeometry} For a fixed universal Poisson deformation $\mathcal{Y'}$ of a symplectic resolution of $X$, define $\mathcal{Y}_w$ so that \begin{equation}\label{Definition of calYw}
\begin{tikzcd}
\mathcal{Y}'_w \ar[r] \ar[d, "\pi_w"] & \mathcal{Y}' \ar[d, "\pi"] \\
H^2(Y, \mathbb{C}) \ar[r, "w"] & H^2(Y, \mathbb{C})
\end{tikzcd}
\end{equation} is Cartesian for every $w \in W$. For every $\Q$-factorial terminalization of $\mathcal{X}'$, there exists some crepant projective resolution of $X$ and some universal Poisson deformation $\mathcal{Y}'$ such that this $\Q$-factorial terminalization is given by the obvious map $\mathcal{Y}'_w \to \mathcal{X}'$.
\end{enumerate} 
\end{Theorem}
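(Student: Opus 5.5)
The statement is a compilation of results of Namikawa, so the plan is to check that each item follows from the cited references, specialized to our setting. No new geometry is needed beyond matching notation. Throughout, $Y$ is a projective symplectic resolution of a conic symplectic singularity $X$. Such a $Y$ has $H^1(Y,\O_Y)=H^2(Y,\O_Y)=0$ by Grauert--Riemenschneider, and this is what Namikawa's deformation theory requires.

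For (1), I would invoke \cite{NamikawaPoissonDeformationsOfAffineSymplecticVarieties} and \cite{NamikawaPoissonDeformationsOfAffineSymplecticVarietiesII} in three steps.
\begin{enumerate}
\item The Poisson deformation functors of $Y$ and $X$ are pro-representable and unobstructed, and the tangent space of $\mathrm{PD}_Y$ is $H^2(Y,\mathbb{C})$. The conic $\G_m$-action then lets one algebraize the formal universal families to universal conic deformations $\mathcal{Y}\to H^2(Y,\mathbb{C})$ and $\mathcal{X}\to \mathrm{Spec}$ of a polynomial ring.
\item The natural map of bases $H^2(Y,\mathbb{C})\to \mathrm{Base}(\mathcal{X})$ is a finite Galois cover whose group is, by definition, the Namikawa Weyl group $W$. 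This group acts linearly, giving $\mathrm{Base}(\mathcal{X})\cong H^2(Y,\mathbb{C})/W$.
\item For the identification $\mathcal{X}'\cong \mathcal{X}\times_{H^2/W}H^2$, apply the relative affinization of $\mathcal{Y}$. Over a generic point of $H^2(Y,\mathbb{C})$ the fiber $\mathcal{Y}_t$ is affine, so $\mathcal{Y}\to\mathcal{X}'$ is birational. Both $\mathcal{X}'$ and the fiber product are normal and finite over $\mathcal{X}$, and they agree generically, so they coincide; this is \cite[Section 2]{LosevDeformationsOfSymplecticSingularitiesAndOrbitMethodForSemisimpleLieAlgebras}.
\end{enumerate}

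For (2), take $Y=\tcN$. The map $\tg\to\LTd$ is a conic Poisson deformation of $\tcN$, so it is pulled back from the universal one along a classifying map $\LTd\to H^2(\tcN,\mathbb{C})$. This map is linear and sends $\lambda$ to $c_1(\L_\lambda^{\tcN})$, using the isomorphism $\characterlatticeforT\cong \Pic(\tcN)$ of \cref{Picard Group of Vector BUndles on Partial Flag Variety} together with $H^2(\tcN,\mathbb{C})=\Pic(\tcN)\otimes\mathbb{C}$. It is an isomorphism because its derivative is the Kodaira--Spencer map, which is nondegenerate: the fiber over a regular $t$ is $G/T$, which is affine. Hence $\tg$ is the universal deformation.

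The main obstacle is the $W$-equivariance in (2), that is, showing that the Namikawa Weyl group is the usual Weyl group acting in the usual way. My approach is to compare the two descriptions of the base of $\mathcal{X}$. Since $\LGd\to\LTd/W$ is the universal Poisson deformation of $\N$, the induced finite map $H^2\cong\LTd\to\LTd/W$ must be the Galois quotient by the Namikawa group. Uniqueness of the Galois group of the cover $\LTd\to\LTd/W$ then forces the two groups, and their actions, to agree. This is the content of \cite{NamikawaPoissonDeformationsOfAffineSymplecticVarietiesII}.

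For (3), I would cite \cite[Corollary 7]{NamikawaPoissonDeformationsAndBirationalGeometry} directly. There, $\mathbb{Q}$-factorial terminalizations of $\mathcal{X}'$ correspond to chambers of the movable-cone decomposition of $H^2$ together with their $W$-translates. The terminalization attached to $w$ times a chamber is the base change $\mathcal{Y}'_w$ of the universal deformation of the corresponding crepant resolution along $w$, exactly as in the Cartesian square \eqref{Definition of calYw}.
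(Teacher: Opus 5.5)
Your plan matches the paper's treatment. The paper gives no argument for this theorem beyond the citations to Namikawa (and Losev) attached to each item, and your sketches are meant to be read off from those same references.

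One local correction in (2) concerns your reason for nondegeneracy. Affineness of $\tg_t$ for regular $t$ does not force the linear classifying map $\LTd\to H^2(\tcN,\mathbb{C})$ to be nondegenerate. It only shows that the image of this map is not contained in any single discriminant hyperplane. Nondegeneracy instead follows at once from your own description of the map as $\lambda\mapsto c_1(\L^{\tcN}_\lambda)$ (up to normalization). Combine this with $\characterlatticeforT\cong\Pic(\tcN)$ from \cref{Picard Group of Vector BUndles on Partial Flag Variety} and $H^2(\tcN,\mathbb{C})\cong\Pic(\tcN)\otimes\mathbb{C}$.

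Similarly, the $W$-equivariance step needs one more sentence. You must identify the induced map $\LTd\to\LTd/W$ with the standard quotient. This uses that $\tg\to\LGd\times_{\LTd/W}\LTd$ is the affinization of $\tg$, so that Grothendieck's square is exactly the one Namikawa's theory produces.
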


For any $w \in W$, define $\tg_w$ so that \begin{equation}
\begin{tikzcd}\label{Definition of tgw new}
\tg_w \ar[r, "\tilde{w}"] \ar[d, "\pi'_w"] & \tg \ar[d, "\pi'"] \\
\LTd \ar[r, "w"] & \LTd
\end{tikzcd}
\end{equation} is Cartesian. Observe that, although $\tg_w$ and $\tg_{w'}$ are isomorphic as schemes for any $w, w' \in W$, they are not isomorphic as schemes over $\LTd$ if $w \neq w'$.

From the fact that \labelcref{Definition of calYw} and \labelcref{Definition of tgw new} are Cartesian, together with the $W$-equivariance of the isomorphism $\LTd \xrightarrow{\sim} H^2(Y, \mathbb{C})$, we obtain canonical isomorphisms $\tg_w \cong \mathcal{Y}_w$ compatible with the projection onto $\LTd \cong H^2(Y, \mathbb{C})$. For $\lambda \in \characterlatticeforT$, set
\[
Z_{\lambda} := \affineClosureOfCotangentBundleofBasicAffineSpace\sslash_{\lambda} T.
\]

\begin{Corollary}\label{cor:simreschambersnew} 
With the above notation, we have: \begin{enumerate}
    \item If $\theta \in \characterlatticeforT$ is dominant regular, there is an isomorphism $Z_{w\theta} \cong \tg_w$ compatible with the projection onto $\LTd$ for any $w \in W$.  
    \item For any $\lambda \in \characterlatticeforT$, the Gelfand-Graev action induces an isomorphism $Z_{w\lambda} \xrightarrow{\sim} Z_{\lambda}$ which are compatible with the isomorphisms $\tilde{w}$ in \labelcref{Definition of tgw new}.
    \item Every $\Q$-factorial terminalization of $\LGd \times_{\LGd\sslash G} \LTd$ is obtained from the variation of GIT map \[Z_{\theta} \to Z_0 = \affineClosureOfCotangentBundleofBasicAffineSpace\sslash T = \LGd \times_{\LGd\sslash G} \LTd\] for some regular (not necessarily dominant) $\theta$.
    \item The $\Q$-factorial terminalizations associated to regular $\theta, \theta'$ are the same if and only if $\theta$ and $\theta'$ lie in the same chamber.
\end{enumerate}
\end{Corollary}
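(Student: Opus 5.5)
We prove the four parts in order; each is a formal consequence of earlier results together with the $T \rtimes W$ (Gelfand--Graev) symmetry of $\affineClosureOfCotangentBundleofBasicAffineSpace$.

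For (1), first take $w = 1$. \cref{Hamiltonian Reduction at Dominant Regular Level for Affine Closure of Cotangent Bundle} already identifies $Z_\theta$ with $\tg$. We must check that this identification is compatible with the maps to $\LTd$. By \cref{Theta Stable Points Equals Theta Semistable Points is Cotangent Bundle}, the semistable locus is $T^*(G/U)$, and
\[T^*(G/U)/T \cong G \times^B (\LG/\LU)^* = \tg .\]
The descended moment map $\momentMapFromAFFINECLOSUREofCotangentSpaceWithGROUPT$ becomes the projection $(\LG/\LU)^* \to (\LB/\LU)^* = \LTd$, and this is exactly $\pi'$.

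For general $w$, the Gelfand--Graev element $w$ is an automorphism of $\affineClosureOfCotangentBundleofBasicAffineSpace$ twisting the $T$-action by $w$. Hence it sends $\theta$-semistable points to $w\theta$-semistable points, i.e. $w(T^*(G/U))$ is the $w\theta$-stable locus (\cref{Theta Stable Points Equals Theta Semistable Points is Cotangent Bundle}). It therefore induces an isomorphism $Z_\theta \xrightarrow{\sim} Z_{w\theta}$. Because the moment map is $W$-equivariant, this isomorphism intertwines the projection to $\LTd$ with $w$ acting on $\LTd$. Comparing with the Cartesian square \labelcref{Definition of tgw new} gives $Z_{w\theta} \cong \tg_w$ over $\LTd$. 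The one point to verify carefully is the direction convention, $w$ versus $w^{-1}$, so that the base map is exactly the one in \labelcref{Definition of tgw new}.

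Part (2) is the same argument for arbitrary $\lambda$. The Gelfand--Graev action identifies the graded rings
\[\oplus_n R_{nw\lambda} \cong \oplus_n R_{n\lambda},\]
which yields an isomorphism of their Proj's over the $W$-twisted base. At a regular $\lambda$ this isomorphism reduces to the $\tilde w$ of (1).

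For (3) we combine \cref{Summary of Namikawa Results New}(3) with \cref{Copy of BCS Theorem in Our Setup}.
1. Every crepant projective resolution of $\N$ is $\tcN$. By \cref{Copy of BCS Theorem in Our Setup}, the only small birational model is $\tcN$ itself, since the dominant chamber is the only chamber of $\Mov$.
2. Its universal Poisson deformation is $\tg$, by \cref{Summary of Namikawa Results New}(2).
3. Hence every $\Q$-factorial terminalization of $\mathcal{X}'$ is of the form $\tg_w \to \mathcal{X}'$ for some $w \in W$.
4. By (1), $\tg_w \cong Z_{w\theta}$ over $\LTd$.
5. The map $\tg_w \to \mathcal{X}'$ is the affinization. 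Since $Z_0 = \affineClosureOfCotangentBundleofBasicAffineSpace \sslash T = \LGd \times_{\LGd\sslash G} \LTd$ by Ginzburg--Riche, this affinization coincides with the VGIT map $Z_{w\theta} \to Z_0$.

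For (4), one direction is immediate: if $\theta, \theta'$ lie in the same chamber, they have the same semistable locus, hence the same quotient over $Z_0$. For the converse, suppose $\theta = w\theta_+$ and $\theta' = w'\theta_+'$ with $\theta_+, \theta'_+$ dominant regular and $w \neq w'$. We must show that $\tg_w$ and $\tg_{w'}$ are not isomorphic over $\mathcal{X}'$.

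This is the main obstacle. The plan is as follows.
1. Any isomorphism over $\mathcal{X}'$ is in particular an isomorphism over $\LTd$, because $\mathcal{X}' \to \LTd$ is the second projection.
2. Compose with the identifications of (1) to get an automorphism of $\tg$ over $\mathcal{X}' = \LGd \times_{\LTd\sslash W} \LTd$ which covers $w^{-1}w' \neq 1$ on $\LTd$.
3. Such an automorphism would act on $\mathcal{X}'$ itself as $\mathrm{id} \times w^{-1}w'$, since it must be compatible with the affinization map.
4. But the automorphism is over $\mathcal{X}'$, so it induces the identity on $\mathcal{X}'$. This is a contradiction, because $\mathrm{id} \times u$ for $u \neq 1$ is not the identity on $\mathcal{X}'$: already over a regular semisimple point the fibre is a free $W$-orbit.

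Alternatively, for (4) one can argue through Namikawa's result that the distinct $\mathcal{Y}'_w$ give distinct terminalizations, i.e. that $W$ acts simply transitively on them.
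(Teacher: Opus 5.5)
Your parts (1) and (2) follow the paper's argument: the stable locus is $T^*(G/U)$ for $w=1$, and then the Gelfand--Graev Cartesian square handles general $w$. For (3) you take a different route from the paper. The paper cites Fu--Namikawa for uniqueness of the symplectic resolution of $\N$, together with Fu's result that crepant is equivalent to symplectic. You instead use the single-chamber Mori fan of \cref{Copy of BCS Theorem in Our Setup}. That route is valid and more self-contained, but it needs one additional standard step: a crepant resolution $Y\to\N$ is isomorphic to $\tcN$ in codimension one. (A prime divisor of $Y$ exceptional over $\tcN$ would have discrepancy $0$ over the smooth variety $\tcN$, which is impossible.) Only then is $Y$ a small $\Q$-factorial modification of $\tcN$, to which the Mori fan applies.

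The genuine gap is the converse in (4).
\begin{itemize}
\item Write $\mu\colon\tg\to\LGd$ for the moment map, so the structure map $\tg_w\to\mathcal{X}'$ is $(\mu\circ\tilde w,\pi'_w)$. Suppose $\psi\colon\tg_w\to\tg_{w'}$ is an isomorphism over $\mathcal{X}'$, and set $\Phi:=\tilde w'\circ\psi\circ\tilde w^{-1}$. Then $\mu\circ\Phi=\mu$ and $\pi'\circ\Phi=u\circ\pi'$ with $u=w'w^{-1}\neq 1$.
\item So $\Phi$ is an automorphism of $\tg$ \emph{lifting} the automorphism $\mathrm{id}\times u$ of $\mathcal{X}'$. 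It is not an automorphism over $\mathcal{X}'$. Your steps 2 and 4 attribute both properties to $\Phi$, and your contradiction comes from that inconsistency, not from the hypothesis.
\item In fact, the existence of such a $\Phi$ is equivalent to $\tg_u\cong\tg$ over $\mathcal{X}'$. The argument is therefore circular.
\end{itemize}
No reasoning on the base can close this gap. The map $\mathrm{id}\times u$ is a genuine automorphism of $\mathcal{X}'$. It even lifts to the open part of $\tg$ over $\LGd_{\mathrm{reg}}$, where $\tg\to\mathcal{X}'$ is an isomorphism. What must be shown is that it does not extend across the complement, which has codimension at least two, and this is detected by line bundles.

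One way to do this within the paper's framework:
\begin{itemize}
\item An isomorphism $Z_\theta\cong Z_{\theta'}$ over $Z_0$ is the identity on the common open subset $\affineClosureOfCotangentBundleofBasicAffineSpace_{\mathrm{reg}}/T$.
\item That subset maps isomorphically onto the regular locus of $Z_0$, and its complement has codimension at least two, as in the proof of \cref{Intro Main Theorem2}.
\item Hence the isomorphism intertwines the two linearization maps out of $\characterlatticeforT$. It therefore identifies the relative ample cones over $Z_0$, which are the Weyl chambers containing $\theta$ and $\theta'$.
\end{itemize}
Your fallback, citing Namikawa's chamber decomposition of the movable cone of $\mathcal{Y}'\to\mathcal{X}'$, also suffices. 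Note that the paper itself disposes of (4) in one line via the GIT chamber structure. On its own that yields only the forward direction, so the converse is exactly where an argument like the one above is needed.
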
 

\begin{proof}
In the case $w = 1$, \cref{Theta Stable Points Equals Theta Semistable Points is Cotangent Bundle} implies that \[
Z_\theta = T^*(G/U)/T =: \tg\] by definition of $\tg$. Next observe that the diagram \begin{equation}\begin{tikzcd}\label{Gelfand Graev is Automatically Cartesian New}
Z_{w\lambda} \ar[d, "\momentMapFromAFFINECLOSUREofCotangentSpaceWithGROUPT"] \ar[r, "w"] & Z_{\lambda} \ar[d, "\momentMapFromAFFINECLOSUREofCotangentSpaceWithGROUPT"] \\
\LTd \ar[r, "w"] & \LTd
\end{tikzcd}
\end{equation} is Cartesian, where the top arrow is given by the Gelfand-Graev action. Since the isomorphism $\tilde{\zeta}_{\lambda}$ is compatible with the respective projections onto $\LTd$, our desired isomorphism is given by the universal property of the resulting Cartesian diagram. In other words, our desired isomorphism is given by $(\momentMapFromAFFINECLOSUREofCotangentSpaceWithGROUPT, \tilde{\zeta}_{\lambda}\circ w)$. Observe the fact that \labelcref{Gelfand Graev is Automatically Cartesian New} is Cartesian gives (2). By \cref{Summary of Namikawa Results New}(3), it suffices to show that any crepant projective resolution of $X$ is given by the Springer resolution, but this follows immediately from the fact that the only symplectic resolution of the nilpotent cone is the Springer resolution \cite[Corollary 3.2]{FuNamikawaUniquenessofCrepantResolutionsAndSymplecticSingularities} and a resolution of a symplectic variety is crepant if and only if it is symplectic \cite[Proposition 1.1]{FuSymplecticResolutionsforNilpotentOrbits}. The wall-and-chamber strucutre of the GIT quotient (\cref{Regular Dominant Weights are GIT Chamber}) gives (4). 
\end{proof}

\subsection{Proof of \cref{Intro Main Theorem2}}\label{Final Proof Subsection}
We now prove \cref{Intro Main Theorem2}. We first show \cref{Intro Main Theorem2}(1), which we give an expanded statement of here:
\begin{Proposition}\label{prop:parabolicpartialresGrothendieckNew}
For any parabolic subgroup $P$ and any dominant $\lambda \in \characterlatticeforT$ such that $\lambda(\alpha^{\vee}) = 0$ if and only if $\alpha^{\vee} \in S_{P}$, there is an isomorphism $\tilde{\zeta}_{\lambda}: \tgTHISBaseChanged{P} \xrightarrow{\sim} Z_{\lambda}$. Moreover, if $P'$ is any parabolic subgroup containing $P$ and $\lambda' \in \characterlatticeforT$ is a dominant weight such that $\lambda'(\alpha^{\vee}) = 0$ if and only if $\alpha^{\vee} \in S_{P'}$ then \begin{equation}\label{Compatibility of Tilded Key Isomorphism For Any Parabolics}\xymatrix@R+2em@C+2em{\tgTHISBaseChanged{P} \ar[r]^{\tilde{\nu}^{P'}_{P}} \ar[d]^{\tilde{\zeta}_{\lambda}} & \tgTHISBaseChanged{P'} \ar[d]^{\tilde{\zeta}_{\lambda'}}\\
Z_{\lambda} \ar[r]^{\tilde{\xi}^{\lambda'}_{\lambda}} & Z_{\lambda'} 
  }\end{equation} commutes.
\end{Proposition}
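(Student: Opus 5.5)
I will mirror the proof of \cref{Generically Isomorphisms Are What We Want}, replacing $\tcN$, $\tcN^P$, $\mathcal{N}$ by $\tg$, $\tgTHISBaseChanged{P}$ and $\mathcal{X}' = \LGd \times_{\LTd\sslash W} \LTd$, and dropping the Hamiltonian reduction. The plan is to identify $Z_\lambda$ with a relative Proj of a section ring on $\tg$. Then I will show that this section ring agrees with the section ring of an ample line bundle on $\tgTHISBaseChanged{P}$.

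First, I use the result of Ginzburg--Riche already invoked in the proof of \cref{Connection of Hamiltonian Reductions and Resolutions of Nilpotent Cones}, namely $\bigoplus_{n\ge 0} R_{n\lambda} = \bigoplus_{n \geq 0}\Gamma(\tg,\mathcal{L}^{\tg}_{n\lambda})$ for dominant $\lambda$. This gives an isomorphism
\[Z_\lambda = \Proj_{\mathcal{X}'}\Big(\bigoplus_{n\ge0} R_{n\lambda}\Big) \cong \Proj_{\mathcal{X}'}\big(R(\tg,\mathcal{L}^{\tg}_{\lambda})\big).\]
For regular $\theta$ it is compatible with the isomorphism $Z_\theta \cong \tg$ of \cref{cor:simreschambersnew}(1).

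Next, I produce a line bundle on $\tgTHISBaseChanged{P}$. Pull back $\mathcal{L}^{G/P}_\lambda$ along $\tgTHISBaseChanged{P} \to \tgP \to G/P$; call the result $\mathcal{L}^{V_P}_\lambda$. The map $\tgTHISBaseChanged{P} \to G/P$ is affine, being the composite of an affine bundle map and a finite base change. Hence $\mathcal{L}^{V_P}_\lambda$ is ample by \cref{Ample Cone for Partial Flag Variety} and \cite[Proposition 5.1.12]{EGAII}, exactly as in \cref{Appropriate Bundles Are Ample or Basepoint Free}. Since $\tgTHISBaseChanged{P}$ is projective over the affine $\mathcal{X}'$, we get $\tgTHISBaseChanged{P} \cong \Proj_{\mathcal{X}'} R(\tgTHISBaseChanged{P}, \mathcal{L}^{V_P}_\lambda)$.

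The analogue of \cref{isotildeNP} gives $(\tilde{\nu}^P_B)^*\mathcal{L}^{V_P}_\lambda \cong \mathcal{L}^{\tg}_\lambda$, by the same Cartesian square over $G/B \to G/P$; here $\tgTHISBaseChanged{B} = \tg$, since $\LT^*\sslash T = \LT^*$. Now \cref{Geometry of tgTHISBaseChanged New} says $\tgTHISBaseChanged{P}$ is normal and integral and $\tilde{\nu}^P_B$ is proper and birational. So \cref{For Proper Birational Maps Unit Map Is Iso on Line Bundles}, applied to every tensor power, yields an isomorphism of section rings
\[R(\tgTHISBaseChanged{P},\mathcal{L}^{V_P}_\lambda) \cong R(\tg,\mathcal{L}^{\tg}_\lambda).\]
Composing the three identifications defines $\tilde{\zeta}_\lambda$. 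By construction it intertwines $\tilde{\nu}^P_B$ with $\tilde{\xi}^\lambda_\theta$, as in diagram \labelcref{The Map on Proj Induced by Nu}.

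For the diagram \labelcref{Compatibility of Tilded Key Isomorphism For Any Parabolics}, I argue as at the end of the proof of \cref{Generically Isomorphisms Are What We Want}. Both composites agree after precomposing with $\tilde{\nu}^P_B$, which is birational and surjective, so they agree on a dense open set. Since $Z_{\lambda'}$ is separated and $\tgTHISBaseChanged{P}$ is integral, hence reduced, they agree everywhere.

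I expect the main obstacle to be the first step. One needs the Ginzburg--Riche identification of $R_{n\lambda}$ with sections on $\tg$ as $\O(\mathcal{X}')$-modules, compatibly with multiplication, rather than merely after specializing to the zero fiber. One also needs the structure morphisms to $\mathcal{X}'$ to match, so that $\Proj_{\mathcal{X}'}$ is taken over the same base; this uses $R_0 = \O(\mathcal{X}')$. If this module identification is only available over $\mathcal{N}$, I would instead deduce it from flatness over $\LTd$ (\cref{Affinized moment map is flat}) together with the graded Nakayama lemma.
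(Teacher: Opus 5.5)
Your proposal is correct and follows the paper's proof essentially step for step. The steps are: ampleness of $\mathcal{L}^{V_P}_\lambda$ via the affine map $V_P\to G/P$; the analogue of the pullback isomorphism $(\tilde{\nu}^P_B)^*\mathcal{L}^{V_P}_\lambda\cong\mathcal{L}^{\tg}_\lambda$; normality of $V_P$ together with the unit-map lemma for proper birational maps to identify the section rings; the Cox-ring identification $\bigoplus_n R_{n\lambda}\cong R(\tg,\mathcal{L}^{\tg}_\lambda)$ to land in $Z_\lambda$; and the dense-open plus separatedness argument for the square.

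The obstacle you flag does not actually arise. Since $T^*(G/U)\to\tg$ is a $T$-torsor, each $R_\mu$ is already the module of sections of the corresponding line bundle on $\tg$ over $R_0=\mathcal{O}(\tg)=\mathcal{O}(\mathcal{X}')$, compatibly with multiplication. This is exactly the Cox ring statement the paper cites, so no flatness or Nakayama fallback is needed.
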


\begin{proof}
This is proved in a parallel manner to \cref{Generically Isomorphisms Are What We Want}. More specifically, one can use essentially identical arguments as in the proof of \cref{Appropriate Bundles Are Ample or Basepoint Free} to prove that $\L^{\tgTHISBaseChanged{P}}_{\lambda}$ is ample, respectively $\L^{\tgTHISBaseChanged{P}}_{\lambda'}$ is semiample, for $\lambda$ and $\lambda'$ as in \cref{Appropriate Bundles Are Ample or Basepoint Free}. One moreover proves the obvious variant of \cref{isotildeNP} in a parallel manner, constructing an isomorphism $(\tilde{\nu}_B^{P})^*(\L_{\lambda}^{\tgTHISBaseChanged{P}}) \cong \L_{\lambda}^{\tg}$. By the normality of $\tgTHISBaseChanged{P}$ (\cref{Geometry of tgTHISBaseChanged New}) we obtain, parallel to the proof of \cref{Generically Isomorphisms Are What We Want}, obtain an isomorphism \[\tilde{\eta}_{P}: \Proj_{\LG^* \times_{\LT^*\sslash W} \LT^*}(R(\tg, \L^{\tg}_{\lambda})) \xrightarrow{\sim} \Proj_{\LG^* \times_{\LT^* \sslash W} \LT^*}(R(\tgTHISBaseChanged{P}, \L^{\tgTHISBaseChanged{P}}_{\lambda}))\] for which the analogue of \labelcref{The Map on Proj Induced by Nu} commutes. But since $\affineClosureOfCotangentBundleofBasicAffineSpace$ is the Cox ring of $\tg$ (see for example \cite[Section 3.6]{GinzburgRicheDifferentialOperatorsOnBasicAffineSpaceandtheAffineGrassmannian}) we see that \[\Proj_{\LG^* \times_{\LT^*\sslash W} \LT^*}(R(\tg, \L^{\tg}_{\lambda})) = \Proj_{\ringOfFunctionsForCOTANGENTBUNDLEOfBasicAffineSpace_0}(\oplus_{n = 0}^{\infty}\ringOfFunctionsForCOTANGENTBUNDLEOfBasicAffineSpace_{n\lambda}) =: \affineClosureOfCotangentBundleofBasicAffineSpace\sslash_{\lambda} T = Z_{\lambda}.\] Therefore our desired morphism is given by the composite of the isomorphism \[\tgTHISBaseChanged{P} \xrightarrow{\sim} \Proj_{\LG^* \times_{\LT^* \sslash W} \LT^*}(R(\tgTHISBaseChanged{P}, \L^{\tgTHISBaseChanged{P}}_{\lambda}))\] obtained from the fact that $\L_{\lambda}^{\tgTHISBaseChanged{P}}$ is ample and $\tilde{\eta}_P^{-1}$. Finally, the commutativity of \labelcref{Compatibility of Tilded Key Isomorphism For Any Parabolics} is completely parallel as in the proof of \cref{Generically Isomorphisms Are What We Want}.
\end{proof}

Now, before we prove \cref{Intro Main Theorem2}(2), we prove the following Proposition and derive a Corollary from it:

\begin{Proposition}\label{prop:smalGrothediecksimnew}
For regular $\theta \in \characterlatticeforT$ and any $\lambda \in \characterlatticeforT$ on the wall of the cone containing $\theta$ in its relative interior, the variation of GIT map $Z_\theta \to Z_\lambda$ does not contract any divisors.
\end{Proposition}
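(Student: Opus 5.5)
We reduce to the dominant case and then make a geometric comparison with the Springer-type picture over $\LTd$. By \cref{cor:simreschambersnew}(2), the Gelfand-Graev action gives isomorphisms $Z_{w\mu} \xrightarrow{\sim} Z_{\mu}$ for every $\mu$, compatible with variation of GIT. So we may replace $(\theta,\lambda)$ by $(w\theta,w\lambda)$ with $w\theta$ dominant regular. Then $\lambda$ lies in the closure of the dominant chamber of the fan cut out by all coroots, so $\lambda$ is dominant. By \cref{prop:parabolicpartialresGrothendieckNew} (together with \cref{cor:simreschambersnew}(1)), the map $Z_\theta \to Z_\lambda$ is identified with $\tilde{\nu}_B^{P} \colon V_B = \tg \to V_P$, where $P = P_\lambda$. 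Here $V_B = \tg$ because $\LT^*\sslash T = \LT^*$.

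It therefore suffices to show that $\tilde{\nu}^P_B \colon \tg \to V_P = \tgP \times_{\LL^*\sslash L} \LT^*$ contracts no divisor. The plan is to show that its exceptional locus lies over a proper closed subset of $\LTd$ of codimension at least one, and has codimension at least two in $\tg$.

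\begin{itemize}
\item Both $\tg$ and $V_P$ are flat over $\LTd$; for $V_P$ this follows from the base change description in \cref{Geometry of tgTHISBaseChanged New}. The map $\tilde{\nu}^P_B$ commutes with the projections to $\LTd$.
\item Over the open set $\LTd_{P\text{-reg}}$ of $t$ with $\alpha(t) \neq 0$ for all roots $\alpha$ of $L$, the map $\tilde{\nu}^P_B$ is an isomorphism. Fibrewise, $G \times^B (t + \LU) \to G \times^P (\text{fibre of } \LP \text{ over } t)$ is $G \times^P$ applied to the Levi statement that $L \times^{B_L}(t+\LU_L) \to L\cdot t + \LU_L$, over the point $t$, is an isomorphism when $t$ is $L$-regular semisimple. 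This is the standard Grothendieck-Springer fact, applied to $L$ and then induced up through $\LU_P$ and $G \times^P$.
\item Hence the exceptional locus $E$ is contained in the preimage of $\bigcup_{\alpha \in \Phi_L} \ker \alpha \subset \LTd$. This preimage is a union of hypersurfaces $H_\alpha$, each of which is irreducible: it is $\tg|_{\ker\alpha}$, a vector-bundle-like family over $G/B$ with irreducible fibres.
\end{itemize}

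We must show that $\tilde{\nu}^P_B$ does not contract any $H_\alpha$, that is, $\dim \tilde{\nu}^P_B(H_\alpha) = \dim H_\alpha$. It suffices to find one point of $H_\alpha$ where the map is quasi-finite. Take $t \in \ker\alpha$ generic, so that $t$ is subregular for $L$, and let $x = t + e$ with $e$ a regular nilpotent in the centralizer $\LL_t$ (of semisimple rank one). Then $x$ is regular in $\LL$. For $x$ regular, the Grothendieck-Springer fibre of $x$ over the point $t \in \LTd$ is a single point. Indeed, $\tg \to \LG^*\times_{\LTd\sslash W}\LTd$ is an isomorphism over the regular locus, and the same holds for $L$. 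So near the corresponding point, $\tilde{\nu}^P_B$ is quasi-finite and $H_\alpha$ maps onto a set of the same dimension. Therefore no divisor is contracted.

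The main obstacle is making the Levi reduction precise. We must verify that the fibre of $\tilde{\nu}^P_B$ over a point of $V_P$ lying over $t \in \LTd$ is exactly the fibre of $\tg_L \to \LL^*\times_{\LL^*\sslash L}\LTd$ over the projected Levi point, and that regular elements of $\LL^*$ give one-point fibres after the base change along $\LTd$. If we instead identify $V_P$ with a normal scheme that is finite over $\LG^*\times_{\LTd\sslash W}\LTd$ via \cref{Geometry of tgTHISBaseChanged New}, quasi-finiteness follows more cleanly. This is because $\tg$ itself is then finite over that base on the regular locus, which has complement of codimension at least two in $\tg$ (Kostant). So every contracted set lies over the irregular locus and has codimension at least two.
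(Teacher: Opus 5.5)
Your argument is correct, but it follows a different route from the paper. The paper never passes through $V_P$. It notes that the composite $Z_\theta\cong\tg\to Z_0\cong\mathcal{X}'$ is the Grothendieck--Springer map, which is small in the sense of Borho--MacPherson. A fibre-dimension count then shows that a prime divisor contracted by $Z_\theta\to Z_\lambda$ would have image of some codimension $c\ge 2$ in $\mathcal{X}'$ with fibres of dimension $c-1$. That forces $\dim X_{c-1}\ge \dim X-2(c-1)$, contradicting smallness.

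You instead use \cref{prop:parabolicpartialresGrothendieckNew} to realize the wall-crossing map as $\tilde\nu^P_B\colon\tg\to V_P$. You show it is an isomorphism over the $L$-regular part of $\LTd$, so the only candidate contracted prime divisors are the irreducible hypersurfaces $H_\alpha=\tg|_{\ker\alpha}$ with $\alpha\in\Phi_L^+$. You then rule out each $H_\alpha$ by exhibiting a point whose fibre is a single point. Your route needs only the elementary fact that $\tg\to\mathcal{X}'$ is an isomorphism over the regular locus, rather than full smallness, and it describes the exceptional locus explicitly. The paper's route is shorter and needs neither the identification with $V_P$ nor any root-hyperplane analysis.

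Two repairs are needed.

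First, $x=t+e$ must be regular in $\LG$, not merely in $\LL$. Take $e\in\LG_\alpha$ with $\alpha$ positive, so that $(1,x)\in H_\alpha$. Take $t\in\ker\alpha$ off every other root hyperplane of $G$, so that $\LG_t=\LT\oplus\LG_\alpha\oplus\LG_{-\alpha}$ and $x$ is $G$-regular. Since $\tilde\nu^P_B$ is a morphism over $\mathcal{X}'$, its fibre through $(1,x)$ lies inside the one-point Grothendieck--Springer fibre over $(x,t)$. So the Levi reduction you call the main obstacle is unnecessary. In fact the whole hypersurface argument can be run directly on the composite $\tg\to\mathcal{X}'$ using all positive roots, bypassing $V_P$ entirely.

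Second, in your last paragraph, $V_P$ is not finite over $\mathcal{X}'$ unless $P=G$. What you actually use is that $\tg\to\mathcal{X}'$ is an isomorphism over $\LGd_{\mathrm{reg}}$, and that the preimage of the irregular locus has codimension at least two in $\tg$. The latter is \cite[Proposition 1.9.3]{BezrukavnikovRicheAffineBraidGroupActionsonDerivedCategoriesofSpringerResolutions}. It does not follow just from Kostant's codimension-three bound in $\LG$, because $\tg\to\LG$ has positive-dimensional fibres over irregular elements. With that input your fallback is itself a complete proof: a divisor contracted by $Z_\theta\to Z_\lambda$ is contracted by the composite to $Z_0$, hence lies in that codimension-two locus, which is impossible.

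A minor correction: in your Levi statement the target should be $L\cdot t$, not $L\cdot t+\LU_L$. The fibre of $V_P$ over an $L$-regular $t$ is $G\times^P(L\cdot t+\LU_P)$.
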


\begin{proof}
Using the Gelfand-Graev action, we may assume that $\theta$ is regular dominant.

Recall \cite{BorhoMacphersonPartialResolutionsOfNilpotentVarieties} that a proper map $Y \to X$ is said to be \textit{small} if 
\begin{equation}\label{Smallness Condition Spelled Out}
\mathrm{dim}(X_i) < \mathrm{dim}(X) - 2i \text{ }\textrm{for any integer $i$, where } X_i := \{\mathfrak{q} \in X : \mathrm{dim}(p^{-1}(\mathfrak{q})) \geq i\}
\end{equation} 
The Grothendieck-Springer resolution is small in this sense \cite[Section 3]{LusztigGreenPolynomialsAndSingularitiesofUnipotentClasses}, see also \cite[Lemma 8.2.5]{AcharPerverseSheavesandApplicationstoRepresentationTheory}. Therefore, the induced map $\tilde{\LG} \to \LGd \times_{\LGd\sslash G} \LTd$ is also a small map. This map, however, is equivalently the affinization map $\tilde{\LG} \to \Spec(\O(\tilde{\LG}))$ for $\tilde{\LG}$. Since the variation of GIT map $\tilde{\xi}_0^{\theta}$ is also given by the affinization map, by \cref{cor:simreschambersnew} we obtain that $\tilde{\xi}_0^{\theta}$ is a small map.

Now suppose for the sake of contradiction that the variation of GIT quotient map $\tilde{\xi}_\lambda^{\theta}$ does contract a divisor. By assumption, there exists some irreducible closed subscheme $Z \subseteq \affineClosureOfCotangentBundleofBasicAffineSpace\sslash_\theta T$ of codimension one such that the codimension of $\tilde{\xi}_\lambda^{\theta}(Z)$ is at least 2. Since $\tilde{\xi}_0^\theta$ is proper, the image $\tilde{\xi}_0^\theta(Z)$ is an irreducible closed subscheme of $\LGd \times_{\LGd\sslash G} \LTd$ and hence is the closure of a generic point $\mathfrak{q}$. Since $\tilde{\xi}_0^\theta(Z) = \tilde{\xi}_0^\lambda\tilde{\xi}_\lambda^\theta(Z)$, the codimension $c$ of $\tilde{\xi}_0^\theta(Z)$ is at least 2. Now consider the map $\tilde{\xi}_0^\theta: Z \to \tilde{\xi}_0^\theta(Z)$. By construction, this map is dominant. Therefore, by generic flatness of this map, the dimension of $(\tilde{\xi}_0^\theta)^{-1}(\mathfrak{q})$ is $(d - 1) - (d - c) = c - 1$. In particular, setting $X := \LGd \times_{\LGd\sslash G}\LTd$, we see that $\mathfrak{q} \in X_{c - 1}$. By the upper semicontinuity of the fiber dimension of the projective morphism $\tilde{\xi}_0^\theta$, the closure $\tilde{\xi}_0^\theta(Z)$ of $\mathfrak{q}$ lies in $X_{c - 1}$ as well. Therefore, \begin{equation}\label{Bigness of fiber dimension}\mathrm{dim}(X_{c - 1}) \geq \mathrm{dim}(\tilde{\xi}_0^\theta(Z)) = \mathrm{dim}(X) - c \geq \mathrm{dim}(X) - 2(c - 1)\end{equation} by this containment, the definition of codimension, and the fact that $c > 1$. On the other hand, since $c \geq 2$, $c - 1 \geq 1$, and so the smallness condition implies that \[\mathrm{dim}(X_{c -1}) < \mathrm{dim}(X) - 2(c - 1)\] which directly contradicts \labelcref{Bigness of fiber dimension}.
\end{proof}

\begin{Corollary}\label{GIT Region for affineclosureofBaseAffineSpace is All of real characterlattice}
A GIT region for the $T$-action on $\affineClosureOfCotangentBundleofBasicAffineSpace$ is given by the entirety of $\characterlatticeforT_\R$. 
\end{Corollary}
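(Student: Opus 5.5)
The plan is to unwind the definition of GIT region from \cite[Definition 3.1]{BellamyCrawSchedlerBirationalGeometryofQuiverVarietiesandOtherGITQuotients}. A GIT region is the closure of a union of GIT chambers whose separating walls are all ``small'': crossing them corresponds to flops and not to divisorial contractions. Concretely, a union of closed top-dimensional GIT cones forms a region precisely when, for every chamber $C^{\circ}$ in it and every generic $\lambda$ on a codimension-one wall of $\overline{C^{\circ}}$ that is interior to the union, the VGIT map $Z_\theta \to Z_\lambda$ contracts no divisor. So the statement reduces to three checks.

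First, I need the support of the GIT fan for $T \curvearrowright \affineClosureOfCotangentBundleofBasicAffineSpace$ to be all of $\characterlatticeforT_\R$. This holds because every character is effective, as in the Remark following \cref{Orbit of Theta Semistable Point is Closed}; the support is $\omega(\affineClosureOfCotangentBundleofBasicAffineSpace)$ by \cite[Theorem~3.2]{AH}.

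Second, I will identify the chambers. By \cref{Wall and Chamber Structure for T action on affineClosureOfCotangentBundleofBasicAffineSpace} and \cref{Generic Character for GIT iff Generic Character Rep Theoretically}, the GIT chambers are exactly the Weyl chambers $w C^{\circ}$, where $C^{\circ}$ is the set of regular dominant weights. Their closures cover $\characterlatticeforT_\R$, and they meet along the root hyperplanes $\theta(\alpha^{\vee}) = 0$.

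Third, for each pair of adjacent chambers separated by a wall, I take $\theta$ regular in one chamber and $\lambda$ generic on the common wall. By \cref{prop:smalGrothediecksimnew}, the map $Z_\theta \to Z_\lambda$ contracts no divisor, so every wall is of the required small type. The normality of $Z_\theta$, which the definition requires, holds because \cref{cor:simreschambersnew}(1) gives $Z_{w\theta} \cong \tg_w$, which is smooth. Since all chambers are therefore connected through small walls, the whole space $\characterlatticeforT_\R$ is a single GIT region.

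The main obstacle is matching the precise formulation of \cite[Definition 3.1]{BellamyCrawSchedlerBirationalGeometryofQuiverVarietiesandOtherGITQuotients}. Depending on how that definition is phrased, I may need to check the condition for walls of every codimension, not just codimension one. If so, I would repeat the argument of \cref{prop:smalGrothediecksimnew} verbatim for an arbitrary $\lambda$ in the boundary of the cone containing $\theta$, since that proof already applies to any $\lambda$ on the boundary. The key input there is that $\tilde{\xi}_0^\theta$ is small, which forces every $\tilde{\xi}_\lambda^\theta$ to be small as well, because it factors $\tilde{\xi}_0^\theta$.
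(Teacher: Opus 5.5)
Your proposal is correct and follows essentially the same route as the paper. You identify the GIT chambers with the Weyl chambers via \cref{Wall and Chamber Structure for T action on affineClosureOfCotangentBundleofBasicAffineSpace}, then invoke \cref{prop:smalGrothediecksimnew} (smallness of the Grothendieck--Springer map, transported to every chamber by the Gelfand--Graev action) to see that no wall is divisorial, so all walls are removed and the region is all of $\characterlatticeforT_\R$. Your extra checks (full support of the fan because every character is effective, and normality of $Z_\theta \cong \tg_w$) are harmless additions that the paper leaves implicit.
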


\begin{proof}
Recall \cite[Section 3.1]{BellamyCrawSchedlerBirationalGeometryofQuiverVarietiesandOtherGITQuotients} that the construction of a GIT region is obtained by taking the GIT chamber closures and removing any wall where neither of the two resolutions, corresponding to the two complements of the wall, are divisorial contractions. The chambers for $\affineClosureOfCotangentBundleofBasicAffineSpace$ are given by the Weyl chambers by \cref{Wall and Chamber Structure for T action on affineClosureOfCotangentBundleofBasicAffineSpace}. If we let $\theta$ be any element in a Weyl chamber (i.e. a regular, not necessarily dominant weight) and we let $\lambda \in \characterlatticeforT$ denote some element on a wall then \cref{prop:smalGrothediecksimnew} shows that the variation of GIT map is not a divisorial contraction. Therefore, to construct the GIT chamber, we remove all walls, which means that the GIT region is the whole of $\characterlatticeforT_\R$. 
\end{proof}

\begin{proof}[Proof of \cref{Intro Main Theorem2}]
It remains to show that the above constructions give \textit{all} crepant partial resolutions of $\mathcal{X}'$, which will finish our proof of \cref{Intro Main Theorem2}(2). This will be similar to the proof of \cref{Intro Main Theorem}. Specifically, we will appeal to \cite[Theorem 1.1]{BellamyCrawSchedlerBirationalGeometryofQuiverVarietiesandOtherGITQuotients}, which, combined with \cite[Corollary 3.26]{BellamyCrawSchedlerBirationalGeometryofQuiverVarietiesandOtherGITQuotients} (see also \cite[Remark 3.27]{BellamyCrawSchedlerBirationalGeometryofQuiverVarietiesandOtherGITQuotients}) says that all crepant partial resolutions of $\mathcal{X}'$ can be exhibited as a variation of GIT map provided that there exists a chamber $C^{\circ}$ (which we will take to be the dominant Weyl chamber) such that: 
\begin{enumerate}
    \item For $\theta \in C^{\circ}$ the GIT quotient $Z_\theta$ is a $\Q$-factorial normal variety and the linearization map gives an isomorphism $\characterlatticeforT_\Q \xrightarrow{\sim} \mathrm{Pic}(Z_\theta/Z_0)$.
    \item Every interior wall of the GIT region $R_C$ is a \textit{flipping wall}, i.e. every interior wall separating two GIT chambers $C_{\pm}^{\circ}$ has the property that $\theta_{\pm} \in C_\pm^{\circ}$ and $\theta_0$ lies on the wall separating these chambers, then $X_{\pm \theta}$ are normal, the unstable loci associated to $\tilde{\xi}_{\theta_0}^{\theta_+}$ and $\tilde{\xi}_{\theta_0}^{\theta_-}$ have codimension at least 2, and each of these morphisms contracts at least one curve.
    \item Each boundary wall of $R_C$ is divisorial or of fibre type.
\end{enumerate}
We first show (1). By \cref{Theta Stable Points Equals Theta Semistable Points is Cotangent Bundle}, we may identify $Z_\theta \cong \tilde{\LG}$. This space is smooth and therefore $\Q$-factorial normal. The fact that the linearization map is an isomorphism is precisely the content of \cref{Picard Group of Vector BUndles on Partial Flag Variety}(1). By \cref{GIT Region for affineclosureofBaseAffineSpace is All of real characterlattice}, there are no boundary walls of the GIT region associated to $C$, so (3) is vacuous. It remains to check (2) that every wall of $\characterlatticeforT_\R$ is a flipping wall. 

Using the Gelfand-Graev action, we see that it suffices to prove that, if $\theta$ is regular dominant and $\lambda$ lies on exactly one wall, the morphism $\tilde{\xi}_{\lambda}^{\theta}$ has unstable locus of codimension at least two and contracts a curve. 

We first prove that $\tilde{\xi}_{\lambda}^{\theta}$ has unstable locus of codimension at least two. We recall that, by \cite[Proposition 5.1.4]{Gin} the canonical map \[T^*(G/U)_{\mathrm{reg}} := T^*(G/U) \times_{\LGd} \LGd_{\mathrm{reg}} \xhookrightarrow{}\affineClosureOfCotangentBundleofBasicAffineSpace \times_{\LGd} \LGd_{\mathrm{reg}} =: \affineClosureOfCotangentBundleofBasicAffineSpace_{\mathrm{reg}}\] is an isomorphism. In other words, any point of $\affineClosureOfCotangentBundleofBasicAffineSpace$ which projects to a regular point of $\LGd$ in fact lies in $T^*(G/U)$ itself. Moreover, the $T$-orbit of any point of $T^*(G/U)_{\mathrm{reg}}$ is closed. Since any point of $T^*(G/U)$ is $\theta$-semistable by \cref{Theta Stable Points Equals Theta Semistable Points is Cotangent Bundle}, any point of $T^*(G/U)$ is $\lambda$-semistable as well. Moreover, since $T^*(G/U)_{\mathrm{reg}}$ is an open subset of $T^*(G/U)$, any closed point of $T^*(G/U)_{\mathrm{reg}}$ has trivial $T$-stabilizer. Therefore, any point of $\affineClosureOfCotangentBundleofBasicAffineSpace_{\mathrm{reg}}$ is $\lambda$-stable. Therefore the unstable locus for $\xi^{\theta}_\lambda$, which by definition is a subset of $Z_\theta \cong \tg$, must be contained in the complement of $\tg \times_{\LGd} \LGd_{\mathrm{reg}}$. But this complement has codimension two by \cite[Proposition 1.9.3]{BezrukavnikovRicheAffineBraidGroupActionsonDerivedCategoriesofSpringerResolutions}.

In this case, we observe that, since $\momentMapFromAFFINECLOSUREofCotangentSpaceWithGROUPT^{-1}(0)$ is a closed subscheme of $\affineClosureOfCotangentBundleofBasicAffineSpace$ compatibly with the $T$-action, we obtain a closed embedding \[\momentMapFromAFFINECLOSUREofCotangentSpaceWithGROUPT^{-1}(0)\sslash_{\theta} T \xhookrightarrow{} Z_\theta \] for \textit{any} character $\theta$, which intertwines the variation of GIT maps in the natural way. Since we have identified the variation of GIT map with the map $\nu^P$ by proving \cref{Intro Main Theorem}, the variation of GIT map is a divisorial contraction by \cref{Quotient Map is Divisorial Contraction}. It is standard that this implies it contracts a curve. (Indeed, since the variation of GIT map is projective, upper semicontinuity of the fiber dimension gives that the set of points in the codomain whose preimage has dimension at least one is closed. This closed set contains a $k$-point by our assumption on $k$. The preimage of this $k$-point thus by construction has dimension at least one, and thus contains a dimension one closed subscheme which is therefore contracted.) Therefore, the map $\tilde{\xi}^{\theta}_{\lambda}$ must also contract a curve.  
\end{proof}


    

    %
\printbibliography
\end{document}